\colorlet{darkblue}{blue!50!black}
\newcommand{\LLLLL}{\mathbb{L}}
\newcommand{\SSSSS}{\mathbb{S}}
\newcommand{\Q}{{\mathbb Q}}
\newcommand{\gggg}{{\mathbf g}}
\newcommand{\ppP}{{\mathsf P}}
\newcommand{\qqQ}{{\mathsf Q}}
\newcommand{\GGGG}{{\mathfrak G}}
\newcommand{\A}{{\mathcal A}}
\newcommand{\bmnu}{{\bm\nu}}
\DeclareMathOperator{\per}{Per}
\DeclareMathOperator{\ent}{Ent}
\newcommand{\F}{{\boldsymbol{F}}}
\newcommand{\R}{{\mathbb R}}
\newcommand{\I}{{\mathbb I}}
\newcommand{\E}{{\mathbb E}}
\newcommand{\bmlambda}{{\bm{\lambda}}}
\newcommand{\bmLambda}{{\bm{\Lambda}}}
\newcommand{\N}{{\mathbb N}}
\newcommand{\ty}{\infty}
\newcommand{\MMM}{\mathfrak{M}}
\newcommand{\LLLL}{{\mathfrak L}}
\newcommand{\wwww}{{\mathfrak w}}
\newcommand{\BH}{{\boldsymbol{\mathrm{H}}}}
\newcommand{\MU}{{\boldsymbol{\mathfrak{{A}}}}}
\newcommand{\uu}{{\boldsymbol{ {u}}}}
\newcommand{\vv}{{\boldsymbol{ {v}}}}
\newcommand{\III}{{\boldsymbol{I}}}
\newcommand{\bmzeta}{\bm{\zeta}}
\newcommand{\BB}{{\cal B}}
\newcommand{\CC}{{\cal C}}
\newcommand{\DD}{{\cal D}}
\newcommand{\EE}{{\cal E}}
\newcommand{\FF}{{\cal F}}
\newcommand{\GG}{{\cal G}}
\newcommand{\II}{{\cal I}}
\newcommand{\JJ}{{\cal J}}
\newcommand{\KK}{{\cal K}}
\newcommand{\LL}{{\cal L}}
\newcommand{\MM}{{\cal M}}
\newcommand{\PP}{{\cal P}}
\newcommand{\RR}{{\cal R}}
\newcommand{\TT}{{\cal T}}
\newcommand{\VV}{{\cal V}}
\newcommand{\WW}{{\cal W}}
\newcommand{\XX}{{\cal X}}
\newcommand{\YY}{{\cal Y}}
\newcommand{\ZZ}{{\cal Z}}
\newcommand{\PPPPPP}{{\mathbb{P}}}
\newcommand{\dd}{{\textup d}}
\newcommand{\bI}{\bm{I}}
\newcommand{\PPPP}{{\mathfrak P}}
\newcommand{\RRRR}{{\mathfrak R}}
\newcommand{\FFFF}{{\mathfrak F}}
\newcommand{\XXXX}{{\mathfrak X}}
\newcommand{\VVV}{\boldsymbol{V}}
\newcommand{\DDDD}{{\mathfrak D}}
\newcommand{\SSS}{{\mathscr S}}
\newcommand{\fff}{{\boldsymbol{\mathit f}}}
\newcommand{\uuu}{{\boldsymbol{\mathit u}}}
\newcommand{\www}{{\boldsymbol{\mathit w}}}
\newcommand*\mcap{\mathbin{\mathpalette\mcapinn\relax}}
\newcommand*\mcapinn[2]{\vcenter{\hbox{$\mathsurround=0pt
			\ifx\displaystyle#1\textstyle\else#1\fi\bigcap$}}}
\newcommand*\mcup{\mathbin{\mathpalette\mcupinn\relax}}
\newcommand*\mcupinn[2]{\vcenter{\hbox{$\mathsurround=0pt
			\ifx\displaystyle#1\textstyle\else#1\fi\bigcup$}}}
\newcommand{\lspan}{\mathop{\rm span}\nolimits}
\newcommand{\supp}{\mathop{\rm supp}\nolimits}
\newcommand{\diver}{\mathop{\rm div}\nolimits}
\newcommand{\dist}{\mathop{\rm dist}\nolimits}
\newcommand{\Osc}{\mathop{\rm Osc}\nolimits}
\theoremstyle{plain}
\newtheorem*{lemma*}{Lemma}
\newtheorem{theorem}{Theorem}[section]
\newtheorem{lemma}[theorem]{Lemma}
\newtheorem{proposition}[theorem]{Proposition}
\newtheorem{corollary}[theorem]{Corollary}
\theoremstyle{definition}
\newtheorem{remark*}{Remark}
\theoremstyle{remark}
\numberwithin{equation}{section}
\begin{document} 
	\author{Meng~Zhao\,\footnote{School of Mathematical Sciences, Shanghai Jiao Tong University, 200240 Shanghai, China; Universit\'e Paris Cit\'e and Sorbonne Universit\'e, IMJ-PRG, F-75013 Paris, France, e-mail: \href{mailto:mathematics_zm@sjtu.edu.cn}{mathematics$\_$zm@sjtu.edu.cn}}
	}

	\title{Level-3 large deviations for the white-forced 2D Navier--Stokes system in a bounded domain} 
	\date{\today}
	\maketitle
	\begin{abstract}
		We study the large deviations principle (LDP) of Donsker--Varadhan type for the white-forced Navier--Stokes system in a bounded domain. Under the assumption that the noise is non-degenerate, we establish level-2 and level-3 LDPs with rate functions given by the Donsker--Varadhan formulas. The proof relies on an improved version of Kifer's criterion, a lift argument inspired from \cite{DV1983}, an improved abstract result on the large-time asymptotics of generalized Markov semigroups, and a delicate approximation scheme utilizing the resolvent operators of the Markov semigroup.
	\end{abstract}
	\tableofcontents
	\section{Introduction}\label{Intro}
	In this paper, we establish both level-2 and level-3 large deviations principles (LDP) of Donsker--Varadhan type for the 2D white-forced Navier--Stokes system for incompressible fluids in a smooth bounded domain:
	\begin{align}
		\label{I1}
		\begin{cases}
			\partial_tu+ (u\cdot\nabla)u-\nu\Delta u+\nabla p=h+ \eta,\\
			\diver u=0,\\
			u(0)=u_0,\qquad u|_{\partial D}=0,
		\end{cases}
	\end{align}
	where $u$ is the velocity field of the fluid, $p$ is the pressure, and $\nu>0$ is the kinematic viscoity. Applying the Leray projection $\Pi$ on both sides of \eqref{I1}$_1$, setting $\nu=1$ for simplicity, and writing 
	\[Lu:=-\Pi\Delta u,\qquad B(u):=\Pi(u\cdot \nabla)u,\]
	we eliminate the pressure and consider the system as an abstract evolutionary equation 
	\[\partial_t u+B(u)+ Lu=h+\eta\]
	in the space of divergence-free vector fields
	\begin{align}
		\label{I3}
		H:=\{u\in L^2| \diver u=0, (u\cdot n)|_{\partial D}=0\}.
	\end{align}
    The space $H$ is endowed with the inner product $\langle u,v\rangle$
	and the norm $\|u\|^2:=\langle u,u\rangle$ inherited from $L^2$. The driving force consists of two parts: $h\in H$ is a given deterministic function and $\eta$ is a white-in-time noise of the form
	\begin{align}\label{I4}
		\eta(t,x):=\partial_t\sum_{j\ge 1} b_je_j(x)\beta_j(t),
	\end{align}
	where $\{b_j\}_{j\ge 1}$ is a sequence of real numbers satisfying 
	\begin{align}
		\label{I2}
		\BB_1:=\sum_{j\ge 1} \alpha_jb_j^2<\infty,
	\end{align}
	$\{e_j\}_{j\ge 1}$ is an orthonormal basis in $H$ formed by eigenfunctions of the Stokes operator $L$ associated with the eigenvalues $\{\alpha_j\}_{j\ge 1}$, and $\{\beta_j\}_{j\ge 1}$ are independent standard Brownian motions defined on a probability space $(\Omega,\mathscr{F},\mathscr{F}_t,\mathbb{P})$  satisfying the usual conditions (e.g., see Definition 2.25 in \cite{KS91}).
	
	\subsection{Main results}  
	Under the above assumptions, the system \eqref{I1} defines a Markov family $(u_t,\mathbb{P}_u)$ parameterized by the initial condition $u\in H$. The large-time asymptotic behaviour of this family is now well understood. In particular, it is known that there is a unique invariant measure which attracts exponentially the distribution of all solutions, provided that the noise is sufficiently non-degenerate. We refer the readers to the papers \cite{FM95,EMS01,KS02,BKL02,HM06} for the first results, and the book \cite{KS12} and reviews \cite{D13,KS17} for the subsequent developments. Besides, the central limit theorem (CLT) holds, which describes the fluctuations of time average of a functional around its mean value, cf. \cite{Shi06}.
    In the present paper, our main goal is to quantify the probabilities of large deviations from the mean value, which is a natural extension of the CLT. To formulate the main results of the paper, let us denote the Markov operators associated with $(u_t,\PPPPPP_u)$ by 
	\begin{align}
		\PPPP_tf(u)&:=\int_{H}f(v)P_t(u,\mathrm{d} v), &\PPPP_t&:C_b(H)\to C_b(H),\label{Markov1}\\\label{Markov2}\PPPP^*_t\lambda(\Gamma)&:=\int_{H}P_t(u,\Gamma)\lambda(\mathrm{d} u),  &\PPPP^*_t&:\PP(H)\to \PP(H),\end{align}
	where \begin{align}\label{Markov3}P_t(u,\Gamma):=\mathbb{P}_u\left\{u_t\in\Gamma\right\}\end{align}
	is the transition function and $\PP(H)$ denotes the space of all Borel probability measures on $H$ endowed with the weak topology. For $\sigma\in\PP(H)$, we write 
	\[\mathbb{P}_{\sigma}(\Gamma):=\int_H\mathbb{P}_u(\Gamma)\sigma(\dd u),\qquad \Gamma\in\mathscr{F},\]
	and consider the occupation measures 
	\begin{align}
		\label{MR1}
		\zeta_t:=\frac{1}{t}\int_0^t\delta_{u_s}\dd s,\qquad t>0
	\end{align}
	defined on $(\Omega,\mathscr{F},\mathbb{P}_{\sigma})$, where $\delta_u$ is the Dirac measure centered at $u$. Note that $\{\zeta_t\}_{t>0}$ is a family of random probability measures on $H$. Recall that a function $I: \PP(H)\to [0,\infty]$ is said to be a good rate function, if the level set $\{\lambda\in \PP(H)| I(\lambda)\le \alpha\}$ is compact for any $\alpha\ge 0$.
 
	Our first result establishes a level-2 LDP for $\{\zeta_t\}_{t>0}$ with a rate function $I$ given by the Donsker--Varadhan variational formula.
	\begin{theorem}
		\label{MT1}
		Suppose that $h\in U:=H_0^1\mcap H$, $b_j>0$ for all $j\ge 1$, and \eqref{I2} holds. Then, for any $\gamma,M>0$, the family $\{\zeta_t\}_{t>0}$ satisfies an LDP uniformly in 
		\begin{align}\label{MR0}\sigma\in \Lambda(\gamma,M):=\left\{\sigma\in \PP(H)\Big| \int_H e^{\gamma\|u\|^2}\sigma(\dd u)\le M\right\},\end{align}
		with a good rate function $I:\PP(H)\to [0,\infty]$ that is independent of $\gamma,M$. More precisely, the following upper and lower bounds hold:
		\begin{enumerate}
			\item for any closed set $F\subset \PP(H)$, we have 
			\begin{align}
				\label{MR2}
				\limsup_{t\to+\infty}\frac{1}{t}\log\left(\sup_{\sigma\in \Lambda(\gamma, M)}\mathbb{P}_{\sigma}\{\zeta_t\in F\}\right)\le -\inf_{\lambda\in F} I(\lambda);
			\end{align}
			\item for any open set $G\subset \PP(H)$, we have 
			\begin{align}
				\label{MR3}
				\liminf_{t\to+\infty}\frac{1}{t}\log\left(\inf_{\sigma\in \Lambda(\gamma, M)}\mathbb{P}_{\sigma}\{\zeta_t\in G\}\right)\ge -\inf_{\lambda\in G} I(\lambda).
			\end{align}
		\end{enumerate}
		Moreover, the rate function $I$ is given by the Donsker--Varadhan variational formula
		\begin{align}
			\label{MR4}
			I(\lambda)=\sup_{\substack{f\in \mathcal{D}(\LLLL)\\ \inf_H f>0}}\int_H\frac{-\LLLL f}{f}\dd\lambda,
		\end{align}
		where $\LLLL$ is the generator of the Markov semigroup $\{\PPPP_t\}_{t\ge0}$ defined on the domain
		\begin{align}\label{MR5}
			\mathcal{D}(\LLLL):=\left\{f\in C_b(H)\Big|\exists g\in C_b(H),\ \PPPP_tf-f=\int_0^t\PPPP_sg\dd s,\ \forall t\ge 0\right\},
		\end{align}
		and $\LLLL f:=g$ for $f\in \mathcal{D}(\LLLL)$.
	\end{theorem}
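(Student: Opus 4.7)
The plan is to apply an enhanced version of Kifer's criterion, which reduces the LDP with uniformity over $\Lambda(\gamma,M)$ to two ingredients. For a sufficiently rich class of test functions $V:H\to\mathbb{R}$, one must establish (a) the existence, finiteness, and $\sigma$-independence on $\Lambda(\gamma,M)$ of the pressure
\[
Q(V):=\lim_{t\to\infty}\frac{1}{t}\log\mathbb{E}_{\sigma}\exp\!\Bigl(\int_0^tV(u_s)\,\dd s\Bigr),
\]
and (b) the uniqueness of the equilibrium state, i.e.\ of the maximiser $\lambda^V$ of $\lambda\mapsto\int_HV\,\dd\lambda-I^{*}(\lambda)$, where $I^{*}$ denotes the Legendre transform of $Q$. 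Kifer's criterion then delivers \eqref{MR2}--\eqref{MR3} with rate function $I^{*}$, and the uniformity in $\Lambda(\gamma,M)$ is inherited from the exponential moment embedded in \eqref{MR0} together with the standard energy estimate $\mathbb{E}_u e^{\gamma\|u_t\|^2}\le C(1+e^{-ct}\|u\|^2)$ for \eqref{I1}.

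To obtain (a)--(b), I would read the Feynman--Kac semigroup
\[
\PPPP^V_tf(u):=\mathbb{E}_u\!\left[f(u_t)\exp\!\int_0^tV(u_s)\,\dd s\right]
\]
as a generalised (sub-Markov) semigroup and invoke the improved abstract large-time asymptotic theorem announced in the introduction. Its hypotheses must be verified directly from the structure of \eqref{I1}: a Lyapunov/hyper-exponential recurrence bound, coming from dissipativity of $L$ through It\^o's formula applied to $e^{\gamma\|u\|^{2}}$; a uniform Feller-type regularisation, supplied by a coupling construction in the spirit of Hairer--Mattingly and Kuksin--Shirikyan after absorbing the bounded perturbation $V$; and topological irreducibility, immediate from $b_j>0$ for all $j$ combined with the approximate controllability of 2D Navier--Stokes. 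The abstract theorem then produces a principal eigenvalue $Q(V)$, a unique strictly positive eigenfunction $h_V\in C_b(H)$, and an exponentially attracting dual invariant measure $\mu_V$; the eigenpair yields the uniqueness required in (b).

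To identify the Legendre transform $I^{*}$ with the Donsker--Varadhan formula \eqref{MR4}, I would follow the lift argument of \cite{DV1983}. The bound $I\le I^{*}$ is soft: for any $f\in\mathcal{D}(\LLLL)$ with $\inf_Hf>0$, the choice $V:=-\LLLL f/f$ and the exponential martingale identity $\PPPP^V_tf=f$ give $Q(V)\le 0$, hence $\int V\,\dd\lambda\le I^{*}(\lambda)$ for every $\lambda$, and taking a supremum produces $I\le I^{*}$. For $I^{*}\le I$, the eigenfunction $h_V$ is the natural competitor in \eqref{MR4}, since formally $-\LLLL h_V/h_V=V-Q(V)$. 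The difficulty is that $h_V$ is in general only in $C_b(H)$, not in $\mathcal{D}(\LLLL)$; here enters the resolvent approximation announced in the introduction, where one replaces $h_V$ by $R_\alpha h_V:=\alpha\int_0^\infty e^{-\alpha t}\PPPP_th_V\,\dd t\in\mathcal{D}(\LLLL)$ and passes to the limit $\alpha\to\infty$ while controlling the ratio $-\LLLL(R_\alpha h_V)/R_\alpha h_V$ against $V-Q(V)$.

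I expect the principal obstacle to be covering a class of potentials $V$ large enough that the Legendre duality with $I^{*}$ fully recovers \eqref{MR4}. This forces one to handle unbounded $V$, for which neither the Feynman--Kac semigroup nor the coupling estimates behave well automatically. The route I would pursue is to truncate $V$ at level $n$, apply the abstract theorem to each bounded $V_n$, and then pass to the limit with convergence of both $Q(V_n)\to Q(V)$ and the equilibrium states $\mu_{V_n}\to \mu_V$. The control of these limits hinges on coupling the Foster--Lyapunov bound above with the resolvent approximation scheme, and preserving the uniformity over $\sigma\in\Lambda(\gamma,M)$ throughout is, in my view, the genuinely delicate point of the argument.
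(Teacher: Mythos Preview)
Your overall architecture---Kifer's criterion, multiplicative ergodicity of the Feynman--Kac semigroup, then identification of the Legendre transform with the Donsker--Varadhan formula---matches the paper. The soft direction (DV formula $\le$ Legendre transform via $V=-\LLLL f/f$ and $\PPPP^V_tf=f$) is exactly right. But the hard direction contains a genuine gap.

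You assert that the abstract theorem produces a principal eigenfunction $h_V\in C_b(H)$. This is false in the present setting: because the bounded domain kills the $H^1$-dissipation that one has on the torus, the eigenfunction is only known to lie in the weighted space $C_{\www,\kappa}(H)=\{f\in C(H):|f(u)|\lesssim e^{\kappa\|u\|^2}\}$. Consequently your proposed competitor $R_\alpha h_V$ is still only in the weighted domain $\DD_{\www,\kappa}(\LLLL)$, not in $\DD(\LLLL)\subset C_b(H)$, and cannot be inserted into \eqref{MR4}. The paper handles this by first proving an intermediate variational formula with the supremum taken over $\DD_{\www,\kappa}(\LLLL)$, and then passing from a possibly unbounded $f\in\DD_{\www,\kappa}(\LLLL)$ with $\LLLL f/f\in C_b(H)$ to bounded test functions by a two-step approximation: truncate $f_N:=(f\wedge N)\vee N^{-1}$ and \emph{then} apply the resolvent, so that $\RR_\alpha f_N\in\DD(\LLLL)$ with $\inf_H\RR_\alpha f_N>0$. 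The key analytic point is a Fatou-type inequality showing $\int(-\LLLL\RR_\alpha f)/\RR_\alpha f\,\dd\lambda\le\liminf_N\int(-\LLLL\RR_\alpha f_N)/\RR_\alpha f_N\,\dd\lambda$, which is not automatic and uses the resolvent identity $\LLLL\RR_\alpha g=\alpha\RR_\alpha g-g$ together with positivity.

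Relatedly, you misidentify the principal obstacle. There is no need to treat unbounded potentials $V$: the Legendre transform over $V\in C_b(H)$ already yields $I$, and in fact the paper shows (via an exponential-tightness argument) that the supremum may be restricted to the even smaller class $\VV$ of bounded Lipschitz functions factoring through finite-dimensional projections. The real difficulty is the growth of $h_V$, not of $V$; your proposed truncation-in-$V$ scheme is aimed at the wrong target.
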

	\begin{remark*}\label{remark1}
		Let us note that $\{\PPPP_t \}_{t\ge 0}$ may fail to be a $C_0$-semigroup, due to the non-compactness of the phase space $H$. However, from the Feller property of $\PPPP_t$ and the continuity of the sample paths, we have the continuity of the map $(t,u)\mapsto \PPPP_tf(u)$, which, by the Fubini theorem, allows us to integrate $s\mapsto \PPPP_sg(u)$ for fixed $u\in H$. Therefore, the integral term on the right hand-side of \eqref{MR5} is well-defined.
	\end{remark*}
	To present our next result, let us fix any $T>0$ and set $X^{T}:=C([0,T];H)$. We define the empirical measures of finite-length trajectories by
	\begin{align} 
		\label{MR7} 
		\bmzeta^T_t:=\frac{1}{t}\int_0^t\delta_{u_{[s,s+T]}}\dd s,
	\end{align}
    where $u_{[t,t+T]}$ denotes the trajectory $\{u_r\}_{r\in[t,t+T]}$. Then, $\{\bmzeta^T_t\}_{t>0}$ is a family of random probability measures on $X^{T}$. The following result establishes an LDP for $\{\bmzeta^T_t\}_{t>0}$.
	\begin{theorem}
		\label{MT2}
		Under the assumptions of Theorem~\ref{MT1}, for any $\gamma,M>0$, the family $\{\bmzeta^T_t\}_{t>0}$ satisfies an LDP, uniformly in $\sigma\in \Lambda(\gamma,M)$, with a good rate function $I_T:\PP(X^{T})\to [0,\infty]$ that is independent of $\gamma,M$.
	\end{theorem}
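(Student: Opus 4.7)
The plan is to deduce Theorem~\ref{MT2} from Theorem~\ref{MT1} via a Donsker--Varadhan style lift argument. The key observation is that $\bmzeta^T_t$ is precisely the level-2 empirical measure associated with the $X^T$-valued Markov process $y_t := u_{[t,t+T]}$, so one may re-apply the level-2 framework of Theorem~\ref{MT1} in the lifted phase space $X^T$. Concretely, for $\varphi \in X^T$ one continues the Navier--Stokes evolution from $\varphi(T)$ and defines $y_t$ by concatenation with $\varphi$ on the overlap $[0,(T-t)_+]$; this yields a Feller Markov family $(y_t,\mathbb{P}_\varphi)$ on $X^T$ with Markov semigroup $\QQQ_t$ on $C_b(X^T)$ and generator $\LLLL_T$.

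The next step is to verify for $\QQQ_t$ the hypotheses used in the proof of Theorem~\ref{MT1}. First, exponential moment bounds of the form $\int_{X^T}\exp(\gamma\|\varphi\|_{X^T}^2)\,\tilde\sigma(\dd\varphi)\le M'$ for the induced initial distribution $\tilde\sigma$ of $y_0$ on $X^T$, which follow from standard Navier--Stokes bounds on $\sup_{s\in[0,T]}\|u_s\|^2$ (via It\^o's formula and the Burkholder--Davis--Gundy inequality) together with the assumption $\sigma\in\Lambda(\gamma,M)$. Second, exponential mixing of $\QQQ_t$, which for $t\ge T$ reduces to that of $(u_t,\mathbb{P}_u)$ since $y_t$ then depends on $\varphi$ only through $\varphi(T)$. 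Third, Feller continuity, compactness of sublevel sets in $X^T$, and the regularity properties needed to set up the resolvent-based approximation scheme.

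With these ingredients in place, applying the same abstract machinery (improved Kifer criterion plus the resolvent approximation scheme) to $\QQQ_t$ yields a level-2 LDP for $y_t$, uniformly over initial distributions $\tilde\sigma$ lying in a suitable exponential-moment class on $X^T$. Since $\bmzeta^T_t=\tfrac{1}{t}\int_0^t\delta_{y_s}\dd s$ is exactly the occupation measure of the lifted process, this translates directly into the claimed LDP for $\bmzeta^T_t$ with a good rate function given by the Donsker--Varadhan variational formula
\[I_T(\lambda)=\sup_{\substack{F\in\mathcal{D}(\LLLL_T)\\ \inf_{X^T}F>0}}\int_{X^T}\frac{-\LLLL_T F}{F}\,\dd\lambda.\]

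I expect the main obstacle to lie in handling the short time scale $t<T$: during this phase $y_t$ retains a deterministic memory of the initial trajectory $\varphi$, so the gradient-type estimates that drive the approximation scheme for $\{\PPPP_t\}$ must be supplemented by a time-shift argument to exploit the eventual mixing at times $t\ge T$. In addition, the passage from the hypothesis $\sigma\in\Lambda(\gamma,M)$ on $H$ to a uniform exponential-moment control of $\tilde\sigma$ on $X^T$ requires path-space estimates on $\sup_{s\in[0,T]}\|u_s\|^2$ that are slightly stronger than those used for Theorem~\ref{MT1}, so some care is needed in tracking the dependence of the constants on $\gamma$, $M$ and $T$.
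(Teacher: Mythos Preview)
Your high-level strategy---lift to the $X^T$-valued Markov process and rerun the level-2 machinery---is the right starting point, and indeed the paper does exactly this (see Section~\ref{proofMT2}). However, you have misidentified the main obstacle and glossed over the genuine difficulty.

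The real issue is \emph{exponential tightness of the occupation measures on $X^T$}. In the proof of Theorem~\ref{MT1}, the exponential growth condition required by the Kifer-type criterion is verified via \eqref{A2}, using the function $\Phi(u)=\kappa\|u\|_1^2$, whose level sets are compact in $H$ by the compact embedding $U\hookrightarrow H$. On $X^T=C([0,T];H)$, the analogous condition would require a functional $\bm\Phi:X^T\to[0,\infty)$ with compact level sets satisfying \eqref{EGC}. But compactness in $X^T$ (via Aubin--Lions) needs control of $\|u\|_{C([0,T];U)}$ or similar higher-order norms, and for these only the doubly-logarithmic moment bounds \eqref{A5}, \eqref{A10} are available---far from exponential integrability. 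Your claimed ``exponential moment bounds of the form $\int_{X^T}\exp(\gamma\|\varphi\|_{X^T}^2)\tilde\sigma(\dd\varphi)\le M'$'' concern only the initial distribution and the non-compact $L^\infty_tL^2_x$ norm; they do not yield exponential tightness of $\{\bmzeta^T_t\}$ in $\PP(X^T)$. This is precisely the gap flagged in Section~\ref{scheme} of the paper.

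The paper's resolution is substantially more involved than ``re-apply the level-2 framework.'' It first proves an \emph{improved} Kifer criterion (Proposition~\ref{proposition-kifer}) that replaces the exponential growth condition by the weaker requirement of exponential tightness along subsequences. It then verifies this weaker condition (Proposition~\ref{proposition4-4}) by an indirect route: establish an LD upper bound for the \emph{periodized} empirical measures $\bmzeta^{\per}_t$ with the Donsker--Varadhan entropy $\BH$ as rate function (Proposition~\ref{proposition4-1}), prove $\BH$ is good via a contraction inequality linking back to the level-2 rate function $I$ of Theorem~\ref{MT1} (Lemma~\ref{lemma4-2}, Proposition~\ref{proposition4-2}), and transfer this to $\bmzeta_t$ by exponential equivalence. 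None of this machinery appears in your proposal. By contrast, the ``short time $t<T$'' issue you flag as the main obstacle is handled trivially by the exponential equivalence between $\bmzeta^T_t$ and its shifted version~\eqref{MT1-1}.
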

	Finally, to formulate our result on level-3 LDP, we recall some basic definitions. Let us consider the empirical measures of full trajectories 
	\begin{align}\label{MR8}
		\bmzeta_t:=\frac{1}{t}\int_0^t\delta_{u_{[s,\infty)}}\dd s,
	\end{align}
	which are random probability measures on the space $X:=C([0,\infty);H)$ endowed with the topology of uniform convergence on bounded intervals. A measure $\bmlambda\in\PP(X)$ is said to be shift-invariant, if it is invariant under the shift operator $\theta_s, s>0$ defined by
	\begin{align}
    \label{MR11}
    \theta_su(\cdot):= u(s+\cdot),\qquad \theta_s: X\to X.
    \end{align}
    Let us denote by $\PP_s(X)$ the space of all shift-invariant measures on $X$. According to the Kolmogorov theorem, each shift-invariant measure $\bmlambda\in \PP_s(X)$ can be uniquely extended to a shift-invariant measure on $X^{\R}:=C(\R;H)$. With a slight abuse of notation, we still denote this extension by $\bmlambda$. Let $\bmlambda(u_{(-\infty,0]},\cdot)$ denote the regular conditional probability of $\bmlambda$ given $\mathscr{F}^{-\infty}_{0}$,
	where for any $-\infty\le s\le t\le \infty$, $\mathscr{F}^s_t$ is the $\sigma$-algebra generated by the family of projections $\{\pi_r\}_{r\in [s,t]}$ defined by 
	\[\pi_r u_{(-\infty,\infty)}:=u(r),\qquad \pi_r: X^\R\to H.\]
	For any interval $\JJ\subset \R$, we define the projection $\pi_\JJ$ by 
    \[\pi_{\JJ}u_{(-\infty,\infty)}:=u_{\JJ},\qquad \pi_\JJ: X^\R\to C(\JJ;H).\]
	\begin{theorem}\label{MT3}
		Under the assumptions of Theorem~\ref{MT1}, for any $\gamma,M>0$, the family $\{\bmzeta_t\}_{t>0}$ satisfies an LDP, uniformly in $\sigma\in \Lambda(\gamma,M)$, with a good rate function $\bI:\PP(X)\to [0,\infty]$ given by the Donsker--Varadhan entropy formula
		\begin{align}\label{MR9}
			\bI(\bmlambda)=\begin{cases}
				\int_{X^{\R}}\ent\left(\pi_{[0,1]}^*\bmlambda(u_{(-\infty,0]},\cdot)\Big| \pi_{[0,1]}^*\mathbb{P}_{u(0)}\right)\bmlambda(\dd u_{(-\infty,\infty)})&\mbox{if $\bmlambda\in \PP_s(X)$},\\
				+\infty &\mbox{otherwise},
			\end{cases}
		\end{align}
		where $\pi_{[0,1]}^*$ is the push-forward operator associated with $\pi_{[0,1]}$ and $\ent(\mu_1|\mu_2)$ is the relative entropy of $\mu_1$ with respect to $\mu_2$ defined by \eqref{relative-entropy}. 
	\end{theorem}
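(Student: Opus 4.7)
The plan is to deduce Theorem~\ref{MT3} from Theorem~\ref{MT2} via a Dawson-Gärtner projective-limit argument, followed by an identification of the resulting abstract rate function with the Donsker-Varadhan entropy formula \eqref{MR9}.

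First, since $X=C([0,\infty);H)$ is the projective limit of $\{X^n=C([0,n];H)\}_{n\in\N}$ under the natural restriction maps, $\PP(X)$ is likewise the projective limit of $\{\PP(X^n)\}_{n\in\N}$ under the induced push-forwards $r_n^*$. Noting that $r_n^*\bmzeta_t=\bmzeta_t^n$, Theorem~\ref{MT2} supplies a uniform LDP at each finite level with good rate function $I_n$. A uniform version of the Dawson-Gärtner inverse contraction principle then produces a uniform LDP for $\{\bmzeta_t\}_{t>0}$ on $\PP(X)$ with the good rate function
\begin{align*}
\widetilde{\bI}(\bmlambda):=\sup_{n\in\N}I_n(r_n^*\bmlambda),
\end{align*}
the goodness and uniformity in $\sigma\in\Lambda(\gamma,M)$ being inherited from Theorem~\ref{MT2}.

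Second, I would verify that $\widetilde{\bI}=+\infty$ outside $\PP_s(X)$. The key observation is that for every fixed $s>0$, $\theta_s^*\bmzeta_t-\bmzeta_t=O(s/t)$ in the sense of weak convergence, so $\bmzeta_t$ and $\theta_s^*\bmzeta_t$ are superexponentially equivalent; hence the rate function is invariant under every $\theta_s^*$, forcing any $\bmlambda$ in the effective domain of $\widetilde{\bI}$ to be shift-invariant. By Kolmogorov's extension, such a $\bmlambda$ admits a canonical lift to a shift-invariant measure on $X^\R$.

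The decisive step is the identification of $\widetilde{\bI}$ with the right-hand side of \eqref{MR9} on $\PP_s(X)$. I would first establish, for each finite $n$, a Donsker-Varadhan variational formula for $I_n$ analogous to \eqref{MR4} but with generator replaced by that of the lifted Markov family $s\mapsto u_{[s,s+n]}$ on $X^n$; then, using the Markov property of $(u_t,\pP_u)$ together with the chain rule for relative entropy, I would recast each $I_n(r_n^*\bmlambda)$ as an integral against $\bmlambda$ of the relative entropy of the $n$-window conditional law given the past with respect to the Markov kernel $\pi_{[0,n]}^*\pP_{u(0)}$. By shift-invariance, the monotonicity of $\{I_n(r_n^*\bmlambda)\}_{n\in\N}$ (which follows from the contraction principle applied to the further restrictions $r_{n,m}:X^m\to X^n$), and martingale convergence for conditional entropies with respect to the increasing family of past $\sigma$-algebras, the supremum identifies with the one-step specific-entropy functional \eqref{MR9}.

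The principal technical difficulties will be, in order: (i) establishing the finite-level entropy representation of $I_n$ in the present infinite-dimensional non-compact setting, where the semigroup is merely Feller rather than strong Feller, which will reuse the improved Kifer criterion, the lift construction of \cite{DV1983}, and the resolvent approximation scheme developed for Theorems~\ref{MT1}--\ref{MT2}; and (ii) justifying the limiting procedure in $n$ in the absence of phase-space compactness, for which the exponential moment hypothesis built into $\Lambda(\gamma,M)$ provides the necessary tightness and uniform integrability.
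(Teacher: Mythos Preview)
Your Dawson--G\"artner step matches the paper exactly: the full LDP with a good rate function $\widetilde{\bI}=\sup_T I_T(\pi_{[0,T]}^*\bmlambda)$ is obtained precisely this way (Section~\ref{proofMT3} refers to \cite{JNPS15}, Section~A.3). Your observation that $\widetilde{\bI}=+\infty$ off $\PP_s(X)$ is also right.

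The gap is in the identification $\widetilde{\bI}=\bI$ on $\PP_s(X)$. The Donsker--Varadhan variational formula for $I_T$ (your step (i)) only produces the inequality $I_T(\pi_{[0,T]}^*\bmlambda)\le \II(\bmlambda)$, hence $\widetilde{\bI}\le\II$; this is exactly Step~2 of the paper's proof. What it does \emph{not} give you is an equality ``$I_n(r_n^*\bmlambda)=$ $n$-window conditional entropy'' that you could then pass to the limit by martingale convergence. To show $I_n$ equals (rather than is bounded by) the conditional-entropy expression is essentially equivalent to already knowing the level-3 LDP with rate $\II$, so your identification argument is circular at this point. The martingale convergence you invoke is the content of Proposition~\ref{propositionC1} (i.e.\ $\BH(t,\bmlambda)=t\BH(\bmlambda)$), which is a statement about the entropy functional alone and says nothing about $I_n$.

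The paper closes the reverse inequality $\widetilde{\bI}\ge\II$ by an entirely separate route that you are missing: Proposition~\ref{proposition4-3} establishes, independently of Theorem~\ref{MT2}, an LD \emph{upper} bound for $\{\bmzeta_t\}$ with the good rate function $\II$ itself. This comes from Section~\ref{ET}: one passes to the periodized empirical measures $\bmzeta_t^{\per}$, lifts the level-2 exponential growth estimate to the trajectory level (Lemma~\ref{lemma4-1}), proves the contraction inequality $I(\pi_0^*\bmlambda)\le\BH(\bmlambda)$ (Lemma~\ref{lemma4-2}) and hence the goodness of $\BH$ (Proposition~\ref{proposition4-2}), obtains the upper bound for $\bmzeta_t^{\per}$ (Proposition~\ref{proposition4-1}), and transfers it to $\bmzeta_t$ by exponential equivalence. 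With this in hand, Step~3 of Section~\ref{proofMT3} is a short contradiction argument: if $\widetilde{\bI}(\bmlambda)<\II(\bmlambda)$, comparing the lower bound (rate $\widetilde{\bI}$) with the upper bound (rate $\II$) on a small neighbourhood of $\bmlambda$ forces $\widetilde{\bI}(\bmlambda)\ge\II(\bmlambda)$. Your proposal needs to incorporate this machinery, or supply a genuine alternative for the direction $\widetilde{\bI}\ge\II$.
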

    \subsection{Literature review and scheme of the proof}\label{scheme}
	These types of LDPs were first established by Donsker and Varadhan \cite{DV1975,1DV1975,DV1976,DV1983} for general Markov processes under various assumptions. Later, this topic was studied by many authors, and we refer to the book \cite{DZ2010} for a detailed account of the main achievements in this field. 
    
    In the context of stochastic PDEs, only a few results have been obtained regarding the Donsker--Varadhan type LDP. The first results are due to Gourcy \cite{Gou07b,Gou07a}, who established a level-2 LDP for the stochastic Navier--Stokes and Burgers equations on the torus driven by white-in-time and irregular-in-space noise. In these papers, the spatial irregularity of the noise is utilized to ensure the strong Feller property of the corresponding Markov family. However, from a physical point of view, irregular random forces are not very natural. This assumption was later relaxed by Jak\v{s}i\'c et al. in \cite{JNPS15,JNPS18,JNPS21}, where they established a level-3 LDP for a class of dissipative PDEs perturbed by random kick forces and bounded noise. Their proof combines a non-compact version of Kifer's criterion with a study of the large-time behaviour of generalized Markov semigroups. This approach was further extended to white-forced PDEs, leading to level-2 LDP. For the case of non-degenerate noises, see \cite{MN18,Ner19}; for the degenerate case, we refer to \cite{NPX23}. Recently, by combining the asymptotic compactness property of the dynamics and the Kifer-type framework, Chen et al. \cite{CLXZ2024} proved a level-2 LDP for a nonlinear wave equation with localized damping and bounded noise. We also mention  \cite{LL23}, where the authors generalize the results in \cite{NPX23} to the quasi-periodic case. 
	
    To the best of our knowledge, Theorem~\ref{MT3} is the first result that establishes a level-3 LDP of Donsker--Varadhan type for white-forced stochastic PDEs. Compared to existing results, the most important new difficulty is the verification of the exponential growth condition required by the generalized Kifer's criterion, see (3.4) in \cite{JNPS18}. As shown in the proof of Theorem 3.3 therein, this condition is not only required for proving exponential tightness, but also plays a pivotal role in ensuring an important covering property for the level sets of the rate function. However, in the context of level-3 LDP for white-forced stochastic PDEs, such a condition is not necessarily satisfied. To be precise, one has to find a functional $\bm\Phi:X^T\to [0,\infty)$ with compact level sets such that
    \begin{align}\label{EGC}
        \E_{u}\exp\left(\int_0^t\bm{\Phi}(u_{[s,s+T]})\dd s\right)\lesssim e^{ct},
    \end{align}
    in order to verify the condition. In contrast, due to the a priori estimates \eqref{A5} and \eqref{A10}, only doubly-logarithmic moment estimates can be expected for the higher-order norms. Even in the case of periodic boundary conditions, one can only have an estimate similar to \eqref{EGC} but with the right hand-side replaced by $\exp(ct^2)$.
    
    To address this challenge, our approach combines two ingredients. First, using a crucial lower bound estimate for the rate function, see (3.6) in \cite{CLXZ2024}, we improve the generalized Kifer's criterion by replacing the exponential growth condition with a weaker condition, that is, exponential tightness along any subsequence, see Proposition~\ref{proposition-kifer}. Second, in order to verify this weaker condition, we develop a lift argument by utilizing some ideas from Donsker and Varadhan \cite{DV1983}, where the authors established a level-3 LDP for general Markov processes whose transition function and generator satisfy seven conditions. In our current situation, verifying these conditions seems to be impossible, as, for example, one of these conditions requires the existence of density of the transition function with respect to a reference measure. Nevertheless, we observe that by lifting the level-2 exponential growth condition \eqref{A2} to the trajectory level, see Lemma \ref{lemma4-1}, a level-3 LD upper bound holds for the empirical measures of periodized trajectories
	\begin{align}\label{MR10}
		\bmzeta_t^{\per}:=\frac{1}{t}\int_0^t\delta_{\theta_su_{\per,t}}\dd s,
	\end{align}
    where $u_{\per,t}$ is the periodized trajectory defined by 
	\begin{align*}
		u_{\per,t}(r)&=u(r),\qquad 0\le r<t,\\
		u_{\per,t}(r+t)&=u_{\per,t}(r),\qquad r\in\R.
	\end{align*}
    Furthermore, with the help of a contraction inequality \eqref{contraction_inequality} and the goodness of the level-2 rate function $I$ obtained in Theorem \ref{MT1}, the aforementioned LD upper bound admits a good rate function $\BH$ given by the Donsker--Varadhan entrophy \eqref{ET1}. This, together with the exponential equivalence of $\{\bmzeta_t^{\per}\}_{t>0}$ and $\{\bmzeta_t\}_{t>0}$, implies this weaker condition. In addition, we mention that with the help of the doubly logarithmic moment estimates \eqref{A5} and \eqref{A10}, the exponential tightness holds for the whole family $\{\zeta_t\}_{t>0}$. As discussed previously, this property is unlikely to be attainable via purely PDE-based methods, although a weaker form is still sufficient for the level-3 LDP.

    The verification of the remaining two conditions in Proposition~\ref{proposition-kifer} relies on an analysis of large-time asymptotics of the Feynman--Kac semigroup. However, due to the presence of a physical boundary, the system \eqref{I1} exhibits weaker dissipation: the $H^1$-norm cannot be fully dissipated, posing a significant challenge. Specifically, the weak dissipativity leads to the existence of eigenvectors satisfying only a doubly-logarithmic moment estimate for the $H^1$-norm, while the uniqueness guaranteed by the abstract criteria in \cite{JNPS18,MN18} holds in a class requiring higher-order polynomial moment estimates. Consequently, the existence is established in a broader class than the uniqueness. To resolve this discrepancy, we expand the class in which the uniqueness holds by utilizing a stronger uniform irreducibility property ensured by the parabolic smoothing effect. This leads to an improved abstract criterion for large-time asymptotics of generalized Markov semigroups, see Proposition~\ref{propositionE2}, which is well-suited for the Feynman--Kac semigroup with weak dissipation.

    Let us emphasize that Theorems~\ref{MT1} and \ref{MT3} are the first results establishing the Donsker--Varadhan formulas \eqref{MR4} and \eqref{MR9} in the context of stochastic PDEs with non-compact phase spaces. These results can be viewed as a generalization of the compact case studied in \cite{JNPS21}. In our setting, the main difficulty arises from the potential growth of the eigenfunctions of the Feynman--Kac semigroup. More precisely, the growth of the eigenfunctions leads to a variational formula similar to \eqref{MR4}, but with the supremum taken over a class of unbounded continuous test functions, as detailed in \eqref{lvl2-24}. To address this issue, we approximate the unbounded test function through carefully designed cut-off and regularization techniques, utilizing the resolvent operators of the Markov semigroup. This approach enables the justification of the Donsker--Varadhan variational formula \eqref{MR4}, see Section~\ref{level2} for further details. The Donsker--Varadhan entropy formula \eqref{MR9} is proved by generalizing the approach of \cite{JNPS21} to the continuous-time setting, combined with the varational formula and a contradiction argument. Further details can be found in Section~\ref{proofMT3}.

    Finally, our method is flexible and can be applied to other stochastic dissipative PDEs on compact domains with or without boundary. We believe that by combining our framework and the Malliavin calculus method in \cite{NPX23}, this level-3 LDP holds for the stochastic Navier--Stokes system on the torus with degenerate white-in-time noise. 
    \subsection*{Structure of the paper}
    The paper is organized as follows. Section~\ref{level2} is devoted to the proof of Theorem~\ref{MT1}. Sections~\ref{ET} and \ref{largetimeFK} focus on the verification of the conditions in the improved version of Kifer's criterion, cf. Proposition~\ref{proposition-kifer}. This verification relies on Theorem~\ref{MT1}, a lift argument inspired by \cite{DV1983}, and an improved abstract criterion for large-time asymptotics of generalized Markov semigroups, cf. Proposition~\ref{propositionE2}. The proofs of Theorem~\ref{MT2} and \ref{MT3} are presented in Sections~\ref{proofMT2} and \ref{proofMT3}, respectively. Finally, the Appendix gathers a priori estimates for the solutions of the system \eqref{I1}, some properties of Markov semigroups on weighted phase spaces, basic properties of the Donsker--Varadhan entropy, and the proofs of the abstract criteria stated in Propositions~\ref{proposition-kifer} and \ref{propositionE2}.
 
    \subsection*{Acknowledgement}  

    The author's research was partially supported by the National Natural Science Foundation of China under Grant Nos. 12171317, 12331008, 12250710674, and 12161141004. This paper was completed during the author's visit to Universit\'e Paris Cit\'e. He would like to express sincere gratitude to the China Scholarship Council for its financial support, as well as to Professors Sergei Kuksin, Armen Shirikyan, and Vahagn Nersesyan, for the invitation to Paris, and for the valuable discussions and support throughout the visit.

	\subsection*{Notation}
	We shall use the following standard notations.
	\begin{itemize}
		\item  For any Polish space $Y$, let $C_b(Y)$ be the space of all bounded continuous functions $V:Y\to\R$ endowed with the norm
		\[\|V\|_{\infty}:=\sup_{y\in Y}|V(y)|,\]
		and let $L_b(Y)$ be the space of bounded Lipschitz functions endowed with the norm
		\begin{align}
		    \label{lipnorm}\|V\|_{L_b}:=\|V\|_{\infty}+\sup_{\substack{y_1,y_2\in Y\\ y_1\neq y_2}}\frac{|V(y_1)-V(y_2)|}{\dist(y_1,y_2)}.
		\end{align}
		\item By $\mathscr{B}(Y)$, we denote the Borel $\sigma$-algebra of $Y$. For $A\in\mathscr{B}(Y)$, $\I_A$ is the indicator function of the set $A$. For $R>0$ and $y\in Y$, let $B_Y(y,R)$ be the open ball centered at $y$ of radius $R$, and we write $B_Y(R):=B_Y(0,R)$, if $Y$ is a separable Banach space.
	\item Let $\MM^+(Y)$ be the set of non-negative finite Borel measures on $Y$ endowed with topology of weak convergence, and let $\PP(Y)$ denote the space of all Borel probability measures on $Y$. For a measurable function $f:Y\to\R$ and a measure $\mu\in\MM^+(Y)$, we write
		\[\langle f,\mu\rangle:=\int_Y f(y)\dd\mu.\]
        The weak convergence in $\PP(Y)$ is equivalent to the convergence in the following dual-Lipschitz metric:
        \[\|\mu_1-\mu_2\|^*_{L(Y)}:=\sup_{\substack{f\in L_b(Y)\\\|f\|_{L_b}\le 1}}|\langle f,\mu_1\rangle-\langle f,\mu_2\rangle|,\qquad \mu_1,\mu_2\in \PP(Y).\]
		\item Let us define
		\[H:=\{u\in L^2| \diver  u=0, (u\cdot n)|_{\partial D}=0\}\]
		with the norm $\|\cdot\|$ and inner product $\langle\cdot,\cdot\rangle$ inherited from $L^2$. For any integer $j\ge 1$, we write $U^j:=H^j\mcap H_0^1\mcap H$ with the norm $\|\cdot\|_j$ and inner product $\langle\cdot,\cdot\rangle_j$ inherited from the usual Sobolev space $H^j$. Let $U^*$ be the dual space of $U:=U^1$ and denote by $\|\cdot\|_{-1}$ its norm. Let $\ppP_N$ be the orthogonal projection in $H$ onto the space 
		\[H_N:=\lspan\{e_1,e_2,\ldots,e_N\},\]
		where $\{e_j\}_{j\ge 1}$ is the orthonormal basis entering \eqref{I4}. Let $\qqQ_N:=I-\ppP_N$.
		\item For any interval $\JJ\subset \R$, we define the space
		\[X^{\JJ}:=C(\JJ;H)\]
        endowed with the topology of uniform convergence on any bounded intervals. Furthermore, by $\DDDD^{\JJ}$ we denote the space of functions $f:\JJ\to H$ with discontinuities only of the first kind and normalized to be right-continuous. The convergence in $\DDDD^{\JJ}$ is induced by the Skorokhod topology, cf. (16.4) in \cite{Bil1999}. For simplicity, we write \[\DDDD:=\DDDD^{[0,\infty)},\qquad\DDDD^T:=\DDDD^{[0,T]},\]
		and 
		\[X:=X^{[0,\infty)},\qquad X^T:=X^{[0,T]}.\]
		\item For any $r\in \R$, we introduce the projection from $\DDDD^\R$ to $H$:
		\[\pi_ru_{(-\infty,\infty)}=u(r),\qquad u_{(-\infty,\infty)}\in \DDDD^\R.\]
        The associated push-forward operator $\pi_r^*: \PP(\DDDD^\R)\to \PP(H)$ is defined as
        \begin{align}\label{pushforward1}
            \pi_r^*\bmlambda(A)=\bmlambda(\pi_r^{-1}(A)),\qquad\forall A\in \mathscr{B}(H).
        \end{align}
		For any interval $\JJ\subset \R$, we introduce the projection from $\DDDD^\R$ to $\DDDD^\JJ$:
		\[\pi_{\JJ}u_{(-\infty,\infty)}=u_{\JJ},\qquad u_{(-\infty,\infty)}\in \DDDD^\R.\]
        The associated push-forward operator $\pi_\JJ^*: \PP(\DDDD^\R)\to \PP(\DDDD^\JJ)$ is defined as
        \begin{align}\label{pushforward2}\pi_\JJ^*\bmlambda(\mathbf{A})=\bmlambda(\pi_\JJ^{-1}(\mathbf{A})),\qquad\forall \mathbf{A}\in \mathscr{B}(\DDDD^\JJ).
        \end{align}
		For any $-\infty\le s\le t\le \infty$, $\mathscr{F}^s_t$ is the $\sigma$-algebra generated by the family of projections $\{\pi_r,r\in [s,t]\}$.
		\item For Banach spaces $X,Y$, we denote by $\LL(X\to Y)$ the space of bounded linear operators from $X$ to $Y$ and write $\LL(X):=\LL(X\to X)$.
		\item Given two measures $\mu_1,\mu_2$ defined on a measurable space $(\Omega,\mathscr{F})$, the relative entropy of $\mu_1$ with respect to $\mu_2$ is defined by
		\begin{align}\label{relative-entropy}
			\ent (\mu_1| \mu_2):=\sup_{F\in \BB(\mathscr{F})}\left(\langle F,\mu_1\rangle-\log \langle e^F,\mu_2\rangle\right),
		\end{align}
		where $\BB(\mathscr{F})$ denotes the space of all bounded $\mathscr{F}$-measurable functions $F:\Omega\to\R$. 
        \item For $a,b\in \R$, we write 
		\[a\wedge b:=\min\{a,b\},\qquad a\vee b:= 
		\max\{a,b\}.\]
		By $[a]$, we denote the largest integer that does not exceed $a$.
		\item By $C$, $C_a$, $C_T$, and so on, we denote positive constants that are not important to the analysis, with subscripts indicating dependence on specific parameters. For simplicity, we shall frequently use the notation $\lesssim$, $\lesssim_a$, $\lesssim_T $, etc., to indicate inequalities that hold up to an unessential multiplicative constant, such as  $C$, $C_a$, $C_T$, and so on.
	\end{itemize}

	\section{Proof of Theorem~\ref{MT1}}
	\label{level2}
	The LD upper and lower bounds with a good rate function $I$ can be established as in \cite{Ner19}. The proof relies on a Kifer-type criterion, cf. Theorem 3.3 in \cite{JNPS18}, combined with the $H^1$-doubly logarithmic moment estimate \eqref{A5}. We will only briefly outline the proof of the LD upper and lower bounds, and focus our attention on the proof of the Donsker--Varadhan variational formula~\eqref{MR4}.

    The exponential growth condition (3.4) in \cite{JNPS18} is guaranteed by \eqref{A2}. The remaining two conditions, namely existence of pressure and uniqueness of equilibrium required by the Kifer-type criterion are verified through a multiplicative ergodic theorem (MET), cf. Theorem 2.1 in \cite{Ner19}. However, as discussed earlier, the weak dissipativity of the system \eqref{I1}, caused by the presence of a physical boundary, prevents the direct application of the proof of Theorem 2.1 in \cite{Ner19} to establish the MET. This issue is resolved by employing Proposition~\ref{propositionE2}, which is an improved abstract criterion for large-time asymptotics of generalized Markov semigroups. All conditions in this criterion can be verified similarly to the arguments in Sections 3 and 4 of \cite{Ner19}; see also Section~\ref{largetimeFK} in this paper for a more general case, where this criterion is applied to establish a level-3 MET. Therefore, we omit the details here. By applying the Kifer-type criterion, the LD upper and lower bounds are established, with a good rate function $I$ given by
	\begin{align}\label{lvl2-1}
		I(\lambda)=\sup_{V\in C_b(H)}\left(\int_HV\mathrm{d}\lambda-Q(V)\right),\qquad \lambda\in\PP(H),
	\end{align}
	where 
	\begin{align}\label{lvl2-2}
		Q(V):=\lim_{t\to+\infty}\frac{1}{t}\log\E_{\sigma}\exp\left(\int_0^tV(u_s)\dd s\right),\qquad V\in C_b(H),
	\end{align}
	and the limit on the right hand-side does not depend on $\sigma\in \Lambda(\gamma,M)$. Here and throughout the rest of the paper, we fix the positive parameters $\gamma,M>0$.
	
	Let us now turn to the justification of the Donsker--Varadhan variational formula \eqref{MR4}. Let $\VV$ be the space of all functions $V: H\to \R$ for which there exists an integer $N\ge 1$ and a function $F\in L_b(H)$ such that
	\[V(u)=F(\ppP_Nu),\qquad u\in H.\]
	Here, $\ppP_N$ is the orthogonal projection in $H$ onto the space $H_N$ spanned by $e_1,e_2,\ldots,e_N$. We say that $\{V_n\}_{n\ge 1}\subset C_b(H)$ buc-converges to $V\in C_b(H)$, if 
	\begin{align}
		\label{buc}\sup_{n\ge 1}\|V_n\|_{\infty}<\infty,\qquad\lim_{n\to+\infty}\|V-V_n\|_{L^{\infty}(K)}=0
	\end{align}
	for any compact set $K$ in $H$. It is known that any function $V\in C_b(H)$ can be buc-approximated by functions in $\VV$, cf. Section 5.6 in \cite{JNPS18}. 

    We need the following auxiliary result, which shows that the rate function $I$ can be recovered from the pressure $Q$ via the Legendre transform, but with the supremum taken over the class $\VV$.
	\begin{lemma}\label{lemma1}
		Let $I$ be the rate function given by \eqref{lvl2-1}. Then,
		\begin{align}\label{lvl2-3}
			I(\lambda)=\sup_{V\in \VV}\left(\int_HV\mathrm{d}\lambda-Q(V)\right)
		\end{align}
		for any $\lambda\in\PP(H)$.
	\end{lemma}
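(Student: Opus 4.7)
The plan is to reduce $\sup_{V \in C_b(H)}$ in the definition \eqref{lvl2-1} of $I$ to $\sup_{V \in \VV}$ via a buc-approximation. The inequality $\sup_{V \in \VV}(\int_H V \dd\lambda - Q(V)) \le I(\lambda)$ is immediate since $\VV \subset C_b(H)$. For the reverse, I would fix $V \in C_b(H)$ and choose a buc-approximating sequence $\{V_n\} \subset \VV$ with $\sup_n \|V_n\|_\infty < \infty$; the existence of such a sequence is recalled in Section~5.6 of \cite{JNPS18}. It then suffices to establish
\[\int_H V_n \dd\lambda \to \int_H V \dd\lambda \quad\text{and}\quad Q(V_n) \to Q(V).\]
The first convergence is a direct consequence of bounded convergence: the Borel probability measure $\lambda$ is tight on the Polish space $H$, so for $\varepsilon > 0$ one can find a compact $K \subset H$ with $\lambda(K^c) < \varepsilon$, and the buc-convergence gives $\sup_K |V_n - V| \to 0$.

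The decisive point is the buc-continuity of $Q$ on sup-norm-bounded subsets of $C_b(H)$. I would derive it from the dual formula
\[Q(V) = \sup_{\mu \in \PP(H)}\left(\int_H V \dd\mu - I(\mu)\right),\]
which follows from convex duality once one checks that $Q$ is convex (a direct consequence of H\"older's inequality applied to the definition \eqref{lvl2-2}) and $1$-Lipschitz in the sup norm (immediate from $V \le W + \|V - W\|_\infty$). The lower bound $\liminf_n Q(V_n) \ge Q(V)$ follows by testing against an $\varepsilon$-maximizer $\mu_0$ of $V$ in this dual formula and passing to the limit in $Q(V_n) \ge \int_H V_n \dd\mu_0 - I(\mu_0)$. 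For the upper bound $\limsup_n Q(V_n) \le Q(V)$, I would pick $(1/n)$-maximizers $\mu_n$ of $V_n$; the bound $I(\mu_n) \le 2\sup_n \|V_n\|_\infty + 1$ places them in a compact sublevel set of $I$ by goodness. Any weak limit point $\mu^*$ of $\{\mu_n\}$ then satisfies $\int_H V_{n_k} \dd\mu_{n_k} \to \int_H V \dd\mu^*$ (by tightness plus buc-convergence) and $I(\mu^*) \le \liminf_k I(\mu_{n_k})$ by lower semicontinuity, whence $\limsup_k Q(V_{n_k}) \le \int_H V \dd\mu^* - I(\mu^*) \le Q(V)$.

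The main obstacle is this last upper-bound step: controlling the approximating maximizers $\mu_n$ rests crucially on the goodness of the level-2 rate function $I$, itself supplied by the Kifer-type criterion combined with the doubly-logarithmic moment estimate \eqref{A5}. Once the buc-continuity of $Q$ is in hand, combining the two convergences yields $\int_H V \dd\lambda - Q(V) \le \sup_{W \in \VV}(\int_H W \dd\lambda - Q(W))$ for every $V \in C_b(H)$, and taking the supremum over $V$ completes the proof.
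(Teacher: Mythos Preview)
Your argument is correct and yields the result, but it takes a genuinely different route from the paper's proof. The paper establishes only the one-sided estimate $Q(V)\ge\limsup_n Q(V_n)$ and does so \emph{directly from the definition} \eqref{lvl2-2} of $Q$: it uses the exponential tightness \eqref{lvl2-7} of the occupation measures $\{\zeta_t\}$ (a consequence of \eqref{A2}) to split $\E_\sigma\exp(t\langle V_n,\zeta_t\rangle)$ into a piece where $\zeta_t$ lies in a compact set $\KK_a\subset\PP(H)$ and a remainder of exponentially small probability; on the first piece, a Prokhorov-type compact $K_{a,\epsilon}\subset H$ lets one swap $V_n$ for $V$ up to a controllable error. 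Your argument instead passes through the dual representation $Q(V)=\sup_{\mu\in\PP(H)}(\langle V,\mu\rangle-I(\mu))$ and exploits the goodness of $I$ to trap the near-maximizers $\mu_n$ in a compact level set, then uses lower semicontinuity of $I$ and the interplay of weak convergence with buc-convergence. Both approaches ultimately rest on the same compactness coming from \eqref{A2}; the paper's is more self-contained (it uses only exponential tightness, not the full LDP), while yours is more structural once the LDP is available.

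One point to tighten: your justification of the dual formula via ``convex duality'' is not quite right as stated. Convexity and $1$-Lipschitz continuity of $Q$ on $(C_b(H),\|\cdot\|_\infty)$ would give $Q=Q^{**}$ with the biconjugate taken over $C_b(H)^*$, which for non-compact $H$ strictly contains the signed Borel measures; restricting the supremum to $\PP(H)$ is exactly the nontrivial direction you need. The clean fix is to invoke Varadhan's lemma: the full LDP with good rate function $I$ is already established at the start of Section~\ref{level2}, and since $\mu\mapsto\langle V,\mu\rangle$ is bounded and continuous on $\PP(H)$, Varadhan gives $Q(V)=\sup_{\mu\in\PP(H)}(\langle V,\mu\rangle-I(\mu))$ directly. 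With this in place, your extraction of near-maximizers and the limit passage go through exactly as you wrote.
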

	\begin{proof}
		By definition, we have 
		\begin{align}\label{lvl2-4}
			I(\lambda)\ge\sup_{V\in \VV}\left(\int_HV\mathrm{d}\lambda-Q(V)\right).
		\end{align}
		To prove the inequality in the other direction, let us fix any $\lambda\in \PP(H)$ and $V\in C_b(H)$. Let $\{V_n\}_{n\ge 1}\subset \VV$ be a sequence buc-converging to $V$. Then, 
		\begin{align}
		    \label{lvl2-4-1}\int_HV\dd\lambda=\lim_{n\to+\infty}\int_HV_n\dd\lambda.
		\end{align}
		We claim that 
		\begin{align}\label{lvl2-5}
			Q(V)\ge \limsup_{n\to+\infty} Q(V_n).
		\end{align}
		Indeed, by combining \eqref{A2} with Lemma 3.2 in \cite{JNPS18}, we infer that the family of empirical measures $\{\zeta_t\}_{t>0}$, defined in \eqref{MR1}, is exponentially tight, that is, for any $a>0$, there is a compact set $\KK_a\subset \PP(H)$ such that
		\begin{align}
			\label{lvl2-7}
			\limsup_{t\to+\infty}\frac{1}{t}\log\PPPPPP_{\sigma}\{\zeta_t\in \KK_{a}^c\}\le-a, \qquad \forall \sigma\in \Lambda(\gamma,M).
		\end{align}
        Moreover, by the Prokhorov theorem, for any $\epsilon>0$, there exists a compact set $K_{a,\epsilon}\subset H$ satisfying
        \begin{align}
            \label{lvl2-6}
            \mu(K_{a,\epsilon})\ge 1-\epsilon
        \end{align}
        for any $\mu\in\KK_a$. Let us take 
		\begin{align}\label{lvl2-8}
			a:=\sup_{n\ge 1}\|V_n\|_{\infty}+\|V\|_{\infty}+|Q(V)|.
		\end{align}
		Then, 
		\begin{align}\label{lvl2-11}
			Q(V_n)&=\lim_{t\to+\infty}\frac{1}{t}\log\E_{\sigma}\exp\left(t\langle V_n,\zeta_t\rangle\right)\notag\\
            &=\lim_{t\to+\infty}\frac{1}{t}\log\E_{\sigma}\left((\I_{\{\zeta_t\in \KK_a\}}+\I_{\{\zeta_t\in \KK_a^c\}})\exp\left(t\langle V_n,\zeta_t\rangle\right)\right)\notag\\&
            \le \max\{I_1,I_2\},
		\end{align}
		where 
		\begin{align*}
			I_1&:=\limsup_{t\to+\infty}\frac{1}{t}\log\E_{\sigma}\left(\I_{\{\zeta_t\in \KK_a\}}\exp\left(t\langle V_n,\zeta_t\rangle\right)\right),\\
			I_2&:=\limsup_{t\to+\infty}\frac{1}{t}\log\E_{\sigma}\left(\I_{\{\zeta_t\in \KK_a^c\}}\exp\left(t\langle V_n,\zeta_t\rangle\right)\right).
		\end{align*}
        To estimate $I_1$, we use \eqref{lvl2-6} and \eqref{lvl2-8} to get 
		\begin{align*}
			\E_{\sigma}&\left(\I_{\{\zeta_t\in \KK_a\}}\exp\left(t\langle V_n,\zeta_t\rangle\right)\right)\\&\qquad\qquad\le \E_{\sigma}\left(\I_{\{\zeta_t\in \KK_a\}}\exp\left(t\langle \I_{K_{a,\epsilon}}V_n,\zeta_t\rangle+t\langle \I_{K^c_{a,\epsilon}}V_n,\zeta_t\rangle\right)\right)\\&\qquad\qquad\le \E_{\sigma}\left(\I_{\{\zeta_t\in \KK_a\}}\exp\left(t\langle \I_{K_{a,\epsilon}}V_n,\zeta_t\rangle+a\epsilon t\right)\right)\\
			&\qquad\qquad\le e^{a\epsilon t}\E_{\sigma}\left(\I_{\{\zeta_t\in \KK_a\}}\exp\left(t\langle \I_{K_{a,\epsilon}}(V_n-V),\zeta_t\rangle+t\langle \I_{K_{a,\epsilon}}V,\zeta_t\rangle\right)\right)\\&\qquad\qquad\le e^{a\epsilon t}e^{t\|V_n-V\|_{L^{\infty}(K_{a,\epsilon})}}\E_{\sigma}\left(\I_{\{\zeta_t\in \KK_a\}}\exp\left(t\langle \I_{K_{a,\epsilon}}V,\zeta_t\rangle\right)\right)\\&\qquad\qquad\le e^{a\epsilon t}e^{t\|V_n-V\|_{L^{\infty}(K_{a,\epsilon})}}\E_{\sigma}\left(\I_{\{\zeta_t\in \KK_a\}}\exp\left(t\langle V,\zeta_t\rangle-t\langle\I_{K_{a,\epsilon}^c} V,\zeta_t\rangle\right)\right)\\&\qquad\qquad\le e^{2a\epsilon t}e^{t\|V_n-V\|_{L^{\infty}(K_{a,\epsilon})}}\E_{\sigma}\left(\exp\left(t\langle V,\zeta_t\rangle\right)\right),
		\end{align*}
		which implies
		\begin{align}
			\label{lvl2-9}
			I_1\le 2a\epsilon+\|V_n-V\|_{L^{\infty}(K_{a,\epsilon})}+Q(V).
		\end{align}
		As for $I_2$, we apply \eqref{lvl2-7} and \eqref{lvl2-8} to have 
		\begin{align}
			\label{lvl2-10}
			I_2&\le \sup_{n\ge 1}\|V_n\|_{\infty}+\lim_{t\to+\infty}\frac{1}{t}\log\PPPPPP_{\sigma}\left\{\zeta_t\in \KK_a^c\right\}\notag\\&\le \sup_{n\ge 1}\|V_n\|_{\infty}-a\le -|Q(V)|\le Q(V).
		\end{align}
		Plugging \eqref{lvl2-9} and \eqref{lvl2-10} into \eqref{lvl2-11}, we obtain
		\[Q(V_n)\le  2a\epsilon+\|V_n-V\|_{L^{\infty}(K_{a,\epsilon})}+Q(V)\]
		for any $n\ge1$ and $\epsilon>0$. Therefore, by first taking $n\to+\infty$ and then $\epsilon\to0^+$, we arrive at \eqref{lvl2-5} as claimed. This, together with \eqref{lvl2-4-1}, implies
		\[\int_H V\dd\lambda-Q(V)\le \liminf_{n\to+\infty}\left(\int_HV_n\dd \lambda-Q(V_n)\right)\le\sup_{V\in \VV}\left(\int_HV\mathrm{d}\lambda-Q(V)\right),\]
		thus completing the proof.
	\end{proof}
	For $V\in C_b(H)$, we define the Feynman--Kac semigroup by
	\begin{align}
		\label{MR6}
		\PPPP_t^V f(u):=\mathbb{E}_u\left(f(u_t)\exp\left(\int_0^t V(u_s)\dd s\right)\right),\qquad f\in C_b(H).
	\end{align} 
    Let $\{\PPPP^{V*}_t\}_{t\ge0}$ be its dual semigroup, and let $C_{\www,\kappa}(H)$ be defined by \eqref{B0}. Here and throughout this section, we shall always take $\kappa$ as in Proposition~\ref{propositionB1}. We need the following Duhamel-type formula.
	\begin{lemma}\label{lemma3}
		Let $V\in C_b(H)$ and $f\in C_{\www,\kappa}(H)$. Then,
		\[\PPPP_t^Vf(u)-\PPPP_t f(u)=\int_0^t\PPPP_{t-s}(V\PPPP^V_sf)(u)\dd s\]
		for any $t\ge 0$ and $u\in H$.
	\end{lemma}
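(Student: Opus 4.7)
The plan is to derive the identity by differentiating the multiplicative functional $\exp(\int_0^t V(u_s)\dd s)$ in its lower endpoint and then applying the Markov property, in the spirit of the usual Duhamel formula for Feynman--Kac semigroups. The starting point is the elementary identity
\begin{align*}
\exp\!\left(\int_0^t V(u_r)\dd r\right)-1=\int_0^t V(u_s)\exp\!\left(\int_s^t V(u_r)\dd r\right)\dd s,
\end{align*}
obtained by observing that $s\mapsto -\exp(\int_s^t V(u_r)\dd r)$ is an antiderivative of the integrand. Multiplying this identity by $f(u_t)$ and taking the expectation $\E_u$, the left-hand side becomes $\PPPP_t^Vf(u)-\PPPP_t f(u)$.

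Next, I would apply the Fubini theorem to pull the $\dd s$-integral out of $\E_u$, and then condition on $\mathscr{F}_s$ inside the expectation. Because $V(u_s)$ is $\mathscr{F}_s$-measurable, the Markov property of $(u_t,\mathbb{P}_u)$ yields
\begin{align*}
\E_u\!\left(f(u_t)\exp\!\left(\int_s^t V(u_r)\dd r\right)\Big|\mathscr{F}_s\right)=\PPPP_{t-s}^V f(u_s),
\end{align*}
so the integrand becomes $\E_u\bigl(V(u_s)\PPPP_{t-s}^V f(u_s)\bigr)=\PPPP_s\bigl(V\,\PPPP_{t-s}^V f\bigr)(u)$. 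Substituting $s\mapsto t-s$ in the outer integral then gives the claimed formula.

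The only real technical point is justifying the use of Fubini and of the conditional Markov property, since $f$ is not bounded but merely lies in the weighted space $C_{\www,\kappa}(H)$. To handle this, I would bound the integrand in absolute value by $\|V\|_\infty e^{t\|V\|_\infty}|f(u_t)|$ and invoke the moment estimate on $\E_u|f(u_t)|$ provided by Proposition~\ref{propositionB1} (together with the a priori estimates of the appendix), which ensures that $\PPPP_t$ maps $C_{\www,\kappa}(H)$ into itself and in particular that $\E_u|f(u_t)|<\infty$ for every $u\in H$. The same bound shows that the $(s,\omega)$-integrand is integrable on $[0,t]\times\Omega$, so Fubini applies, and it also legitimizes the factorization via conditional expectation. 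This integrability verification is the main, and essentially only, obstacle; once it is in place the algebraic manipulations above are routine.
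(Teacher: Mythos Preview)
Your proposal is correct and follows essentially the same route as the paper: both use the Newton--Leibniz identity for the exponential, Fubini, the Markov property, and the final change of variable $s\mapsto t-s$. You are in fact slightly more explicit than the paper about the integrability needed to justify Fubini for unbounded $f\in C_{\www,\kappa}(H)$.
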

	\begin{proof}
		By the Newton--Leibniz formula, the Fubini theorem, and the Markov property, we derive
		\begin{align*}
			\PPPP_t^Vf(u)-\PPPP_t f(u)&=\E_u\left(f(u_t)\left(\exp\left(\int_0^tV(u_s)\dd s\right)-1\right)\right)\\&=\E_u\left(f(u_t)\int_0^t\left(\exp\left(\int_r^tV(u_s)\dd s\right)V(u_r)\right)\dd r\right)\\&=\int_0^t\E_u\left(f(u_t)\exp\left(\int_r^tV(u_s)\dd s\right)V(u_r)\right)\dd r\\&=\int_0^t\E_u\left(\E_{u_r}\left(f(u_{t-r})\exp\left(\int_0^{t-r}V(u_s)\dd s\right)\right)V(u_r)\right)\dd r\\&=\int_0^t\PPPP_r(V\PPPP_{t-r}^Vf)(u)\dd r=\int_0^t\PPPP_{t-s}(V\PPPP^V_sf)(u)\dd s.
		\end{align*}
		Thus, the proof is complete.
	\end{proof}
	By Proposition~\ref{propositionB1}, for each $V\in C_b(H)$, $\{\PPPP^V_t\}_{t\ge0}$ forms a semigroup of bounded linear operators in $C_{\www,\kappa}(H)$. A number $c_V\in\R$ is said to be an eigenvalue of $\{\PPPP^V_t\}_{t\ge0}$, if there is a function $h_V\in C_{\www,\kappa}(H)$ (called eigenfunction) such that
	\begin{align}\label{lvl2-12}\PPPP^V_t h_V=c_V^th_V\end{align}
	for any $t>0$. Using the argument in \cite{Ner19} (see Theorem 2.1) combined with the $H^1$-doubly logarithmic moment estimate \eqref{A5}, we see that for any $V\in\VV$, 
	\begin{align}
		\label{lvl2-13}c_V:=\exp({Q(V)})
	\end{align}
	is the unique principal eigenvalue of $\{\PPPP^V_t\}_{t\ge0}$, and there is a positive eigenfunction $h_V$, which is unique up to multiplying a constant, associated with $c_V$.
	
	Now let us prove the Donsker--Varadhan variational formula \eqref{MR4}.
    \begin{proof}[Proof of \eqref{MR4}] The proof uses several auxiliary results from Appendix \ref{AppenB} and is divided into three steps.
    
	\noindent\textit{Step 1.} First, we claim that 
	\begin{align}
		\label{lvl2-27}
		I(\lambda)\ge \sup_{\substack{f\in \mathcal{D}(\LLLL)\\ \inf_H f>0}}\int_H\frac{-\LLLL f}{f}\dd\lambda,
	\end{align}
	where $\LLLL$ is the generator of the Markov semigroup $\{\PPPP_t\}_{t\ge0}$, and $\DD(\LLLL)$ is defined by \eqref{MR5}. Indeed, let $f\in\DD(\LLLL)$ be such that $\inf_H f>0$. Then, for $V:=\frac{-\LLLL f}{f}\in C_b(H)$, we have 
	\begin{align}\label{lvl2-15}\int_0^t\PPPP_{t-s}(Vf)\dd s=-\int_0^t\PPPP_{t-s}(\LLLL f)\dd s=f-\PPPP_t f.\end{align}
	On the other hand, using Lemma~\ref{lemma3}, we have 
	\begin{align}
		\label{lvl2-16}
		\PPPP_t^Vf-\PPPP_tf=\int_0^t\PPPP_{t-s}(V\PPPP_s^Vf)\dd s.
	\end{align}
	Taking difference between \eqref{lvl2-15} and \eqref{lvl2-16}, we conclude 
	\begin{align*}
		\|\PPPP_t^V f-f\|_{\infty}&\le \int_0^t\|\PPPP_{t-s}(V(\PPPP_s^V f-f))\|_{\infty}\dd s\\&\lesssim_{V}\int_0^t\|\PPPP_s^V f-f\|_{\infty}\dd s.
	\end{align*}
	An application of the Gronwall inequality yields 
	\begin{align}
		\label{lvl2-21}
		\PPPP_t^V f=f,\qquad \forall t\ge 0.
	\end{align}
	Combining this with the definition \eqref{lvl2-2} and the fact that $f\in C_b(H)$ satisfies $\inf_H f>0$, we obtain 
	\begin{align*}
		Q(V)\le \frac{1}{\inf_H f} \left(\lim_{t\to+\infty}\frac{1}{t}\log \PPPP_t^Vf\right)=\frac{1}{\inf_H f} \left(\lim_{t\to+\infty}\frac{1}{t}\log f\right)=0,
	\end{align*}
	and 
	\begin{align*}
		Q(V)\ge \|f\|_{\infty}^{-1} \left(\lim_{t\to+\infty}\frac{1}{t}\log \PPPP_t^Vf\right)=\|f\|_{\infty}^{-1} \left(\lim_{t\to+\infty}\frac{1}{t}\log f\right)=0,
	\end{align*}
	which implies $Q(V)=0$ and thus
	\[\int_H \frac{-\LLLL f}{f}\dd \lambda=\int_{H}V\dd \lambda= \int_{H}V\dd \lambda-Q(V)\le I(\lambda).\]
	This leads to \eqref{lvl2-27} as claimed.
	
	\noindent\textit{Step 2.} It remains to prove the inequality \eqref{lvl2-27} in the other direction. To this end, we claim that 
	\begin{align}\label{lvl2-24}
		I(\lambda)\le \sup\left\{\int_{H}\frac{-\LLLL f}{f}\dd \lambda\Big|f\in\DD_{\www,\kappa}(\LLLL),\ f>0,\ \frac{\LLLL f}{f}\in C_b(H)\right\},
	\end{align}
	where $\DD_{\www,\kappa}(\LLLL)$ is defined by \eqref{B4} and $f>0$ is understood as $f(u)>0$ for any $u\in H$. To see this, let us notice that for each $V\in C_b(H)$, 
	\[Q(V-Q(V))=0.\]
	Then, from Lemma~\ref{lemma1}, we derive 
	\[I(\lambda)=\sup_{\substack{V\in\VV\\Q(V)=0}}\int_HV\dd\lambda.\]
	Let $\tilde I(\lambda)$ denote the right hand-side of \eqref{lvl2-24}. Let $V\in \VV$ be such that $Q(V)=0$, then by \eqref{lvl2-13}, the unique principal eigenvalue of $\{\PPPP_t^V\}_{t\ge0}$ is given by $c_V=1.$ Let $h_V$ be the positive eigenfunction associated with $c_V$. Combining Lemma~\ref{lemma3}, $c_V=1$, and \eqref{lvl2-12}, we derive
	\begin{align*}
		\int_0^t\PPPP_{s}(Vh_V)\dd s=\int_0^t\PPPP_{t-s}(V\PPPP^V_sh_V)\dd s=\PPPP_t^Vh_V-\PPPP_th_V=h_V-\PPPP_th_V,
	\end{align*}
	which, by the definition \eqref{B4}, implies that $h_V\in \DD_{\www,\kappa}(\LLLL)$ and
	\[\LLLL h_V=-Vh_V.\]
	Therefore,
	\[\int_H V\dd \lambda=\int_H\frac{-\LLLL h_V}{h_V}\dd\lambda\le \tilde I(\lambda),\]
	which implies \eqref{lvl2-24} as claimed.

	\noindent\textit{Step 3.} Now, it suffices to show 
	\begin{align}
		\label{lvl2-17}
		\tilde{I}(\lambda)\le\sup_{\substack{f\in\DD(\LLLL)\\\inf_H f>0}}\int_{H}\frac{-\LLLL f}{f}\dd \lambda.
	\end{align}
	Let $\RR_{\alpha}$ be the resolvent operator defined by \eqref{B1}, and let $f\in\DD_{\www,\kappa}(\LLLL)$ satisfy 
	\begin{align*}
		f>0,\qquad \frac{\LLLL f}{f}\in C_b(H).
	\end{align*} 
    From Corollary~\ref{corollaryB1}, it follows that 
	\[\int_H\frac{-\LLLL f}{f}\dd \lambda\le \lim_{\alpha\to +\infty}\liminf_{N\to+\infty}\int_H-\frac{\LLLL\RR_{\alpha}f_N}{\RR_{\alpha }f_N}\dd \lambda,\]
    where $f_N:=(f\wedge N)\vee \frac{1}{N}$. Notice that for any $\alpha>0$ and $N\ge 1$,
	\[\RR_{\alpha}f_N\in \DD(\LLLL),\qquad \inf_H\RR_{\alpha}f_N>0,\]
    due to Proposition~\ref{propositionB3}. Therefore, we have 
    \[\int_H\frac{-\LLLL f}{f}\dd \lambda\le \sup_{\substack{g\in\DD(\LLLL)\\\inf_H g>0}}\int_H \frac{-\LLLL g}{g}\dd\lambda.\]
    This implies \eqref{lvl2-17} and thus completes the proof.  
    \end{proof}

    \section{Exponential tightness}\label{ET}
    Let $\{\bmzeta_t^T\}_{t\ge0}$ be the empirical measure defined in \eqref{MR7}. The main result of this section is Proposition \ref{proposition4-4}, which shows the exponential tightness of $\{\bmzeta^T_{t_n}\}_{n\ge 1}$ for any sequence $\{t_n\}_{n\ge 1}$ such that $\lim_{n\to+\infty}t_n=+\infty$.
	
    As discussed in Section~\ref{scheme}, to obtain this result, we consider the periodized empirical measures $\{\bmzeta^{\per}_t\}_{t>0}$ defined by \eqref{MR10}. Let $\DDDD^{\R}$ denote the space of functions $u:\R\to H$ with discontinuities only of the first kind and normalized to be right-continuous. Let $\PP_s(\DDDD^\R)$ denote the space of all Borel probability measures which are invariant under the shift operators $\{\theta_t\}_{t>0}$. Notice that $\{\bmzeta^{\per}_t\}_{t>0}$ is a family of random probability measures valued in $\PP_s(\DDDD^\R)$. To see this, let us take any $s>0$ and $A\in\mathscr{B}(\DDDD^\R)$. Utilizing the periodicity of $u_{\per,t}$, we derive
	\begin{align*}
		\theta^{*}_s\bmzeta^{\per}_{t}(A)&=\frac{1}{t}\int_0^t\I_{\{\theta_r u_{\per,t}\in\theta^{-1}_s A\}}\dd r
		=\frac{1}{t}\int_0^t\I_{\{\theta_{r+s} u_{\per,t}\in A\}}\dd r\\&=\frac{1}{t}\int_{s}^{t+s}\I_{\{\theta_{r} u_{\per,t}\in A\}}\dd r=\frac{1}{t}\int_{0}^{t}\I_{\{\theta_{r} u_{\per,t}\in A\}}\dd r=\bmzeta^{\per}_{t}(A).
	\end{align*}
    Let us introduce the Donsker--Varadhan entropy
	\begin{align}\label{ET1}
		\BH(\bmlambda)\!:=\begin{cases}
			\int_{\DDDD^{\R}}\ent\left(\pi_{[0,1]}^*\bmlambda(u_{(-\infty,0]},\cdot)\Big| \pi_{[0,1]}^*\mathbb{P}_{u(0)}\right)\bmlambda(\dd u_{(-\infty,\infty)})\!\!&\mbox{if $\bmlambda\in \PP_s(\DDDD^\R)$},\\
			+\infty & \mbox{otherwise},
		\end{cases}
	\end{align}
    where and in what follows, for $r\in\R$ and $\JJ\subset\R$, $\pi^*_r$ and $\pi_{\JJ}^*$ are the push-forward operators defined in \eqref{pushforward1} and \eqref{pushforward2}, respectively. We gather basic properties of $\BH$ in the Appendix \ref{propertyDVE}.
	\subsection{LD upper bound for periodized empirical measures}
	In this subsection, we prove the following uniform LD upper bound for $\{\bmzeta^{\per}_t\}_{t>0}$.
	\begin{proposition}\label{proposition4-1}
		For any closed set $F\subset \PP(\DDDD^\R)$, we have 
		\begin{align}
			\label{ET2}
			\limsup_{t\to+\infty}\frac{1}{t}\log\left(\sup_{\sigma\in\Lambda(\gamma,M)}\PPPPPP_{\sigma}\{\bmzeta^{\per}_t\in F\}\right)\le-\inf_{\bmlambda\in F}\BH(\bmlambda).
		\end{align}
	\end{proposition}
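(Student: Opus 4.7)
The plan is to combine an exponential Chebyshev / Gärtner--Ellis-type upper bound at the level of finite-window test functions with the automatic shift invariance of the periodized measures $\bmzeta_t^{\per}$, and then identify the resulting rate function with the Donsker--Varadhan entropy $\BH$. I would carry this out in three steps plus an identification.

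\textbf{Step 1 (exponential tightness).} First I would show that $\{\bmzeta_t^{\per}\}_{t>0}$ is exponentially tight in $\PP(\DDDD^\R)$, uniformly in $\sigma\in\Lambda(\gamma,M)$: for each $a>0$ there is a compact $\mathbf{K}_a\subset\PP(\DDDD^\R)$ with
\[
\limsup_{t\to+\infty}\frac{1}{t}\log\sup_{\sigma\in\Lambda(\gamma,M)}\PPPPPP_\sigma\{\bmzeta_t^{\per}\not\in\mathbf{K}_a\}\le -a.
\]
The compact sets will be built by a Prokhorov-style diagonal construction from compact subsets $K_{a,T}\subset\DDDD^T$ capturing the window projections $\pi_{[0,T]}^*\bmzeta_t^{\per}$. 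The key analytic input is Lemma~\ref{lemma4-1}, which lifts the level-2 exponential growth bound \eqref{A2} to the trajectory level; together with the shift invariance of $\bmzeta_t^{\per}$ and the periodization, a direct exponential Chebyshev estimate then delivers the bound above, uniformly over $\sigma\in\Lambda(\gamma,M)$.

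\textbf{Step 2 (trajectory-level pressure).} For window test functions $\bm F\in C_b(\DDDD^\R)$ of the form $\bm F(u)=F(\pi_{[0,T]}u)$ with $F\in C_b(\DDDD^T)$, I would establish the existence of the pressure
\[
\bm Q(\bm F):=\lim_{t\to+\infty}\frac{1}{t}\log\E_\sigma\exp\!\left(\int_0^t\bm F(\theta_s u_{\per,t})\,\dd s\right),
\]
independent of $\sigma\in\Lambda(\gamma,M)$. Splitting the integral into blocks of length $T$ and using the Markov property together with the periodicity $u_{\per,t}(r+t)=u_{\per,t}(r)$ rewrites this exponential moment as $\sigma$ tested against a power of a Feynman--Kac-type transfer operator on $\DDDD^T$. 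A multiplicative ergodic theorem analogous to the one underlying Theorem~\ref{MT1}, namely the improved criterion of Proposition~\ref{propositionE2} applied in the trajectory setting (as will be done in Section~\ref{largetimeFK}), gives the existence of the limit and a spectral/variational representation of $\bm Q(\bm F)$.

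\textbf{Step 3 (LD upper bound and identification with $\BH$).} Exponential tightness together with Chebyshev at the level of window test functions produces a level-3 LD upper bound with rate function
\[
\bm J(\bmlambda):=\sup_{\bm F}\bigl(\langle\bm F,\bmlambda\rangle-\bm Q(\bm F)\bigr).
\]
It remains to show $\bm J=\BH$. For $\bmlambda\not\in\PP_s(\DDDD^\R)$, test functions of the form $\bm F=G-G\circ\theta_s$ annihilate $\bm Q$ (by shift invariance of $\bmzeta_t^{\per}$) while $\langle\bm F,\bmlambda\rangle$ can be made arbitrarily large, so $\bm J(\bmlambda)=+\infty$. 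For $\bmlambda\in\PP_s(\DDDD^\R)$, I would disintegrate along $\mathscr{F}^{-\infty}_0$ and apply the variational characterization \eqref{relative-entropy} of the relative entropy to the conditional distribution $\pi_{[0,1]}^*\bmlambda(u_{(-\infty,0]},\cdot)$ against the Markov reference $\pi_{[0,1]}^*\PPPPPP_{u(0)}$; averaging against $\bmlambda(\dd u_{(-\infty,\infty)})$ then matches $\bm J(\bmlambda)$ with the Donsker--Varadhan entropy \eqref{ET1}.

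The principal difficulty will lie in the identification step: matching the global Legendre transform of the trajectory-level pressure $\bm Q$ with the fiberwise relative entropy in \eqref{ET1} requires exchanging a supremum over bounded window test functions on $\DDDD^\R$ with an integrated supremum over $\BB(\mathscr{F}^0_1)$-valued test functions on $\DDDD^{[0,1]}$, in the spirit of \cite{DV1983, JNPS21}. The additional complication specific to the present white-forced, non-compact setting is that the eigenfunctions of the Feynman--Kac semigroup in Step 2 live in a weighted space (only doubly-logarithmic moments are available, cf.\ \eqref{A5}); test functions and conditionings will therefore need to be approximated by bounded resolvent-regularized versions, in the same spirit as in Section~\ref{level2}.
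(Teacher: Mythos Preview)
Your plan takes a substantially heavier route than the paper's, and as written it has a genuine gap in Step~1 together with a logical ordering problem. The paper does \emph{not} establish exponential tightness of $\{\bmzeta_t^{\per}\}$, a trajectory-level pressure, or a Legendre-transform identification in order to prove Proposition~\ref{proposition4-1}. The proof is instead a two-line combination of Lemma~\ref{lemma4-1} with Proposition~\ref{propositionC4} from Appendix~\ref{propertyDVE}: the latter gives the upper bound \eqref{C9} directly with rate $\BH$ for any closed $F\subset\PP_s(\DDDD^\R)$ whose one-point marginals $\{\pi_0^*\bmlambda:\bmlambda\in F\}$ are tight in $\PP(H)$. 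Its proof is a classical Donsker--Varadhan argument: the Chebyshev bound of Lemma~\ref{lemmaC5} uses test functions $\Psi\in\ZZ_t$ (those with $\E_u e^\Psi\le 1$), and by the variational characterization \eqref{C4} these already saturate $\bar\BH(t,\cdot)$, hence $\BH$ via \eqref{C2-2}; the compactification needed to pass from Chebyshev to a bound on a general closed set with tight marginals is performed \emph{inside} the proof of Proposition~\ref{propositionC4}, not as a separate exponential-tightness statement. Lemma~\ref{lemma4-1} then supplies precisely the closed sets $F_a$ with tight one-point marginals, and splitting $F$ into $F\cap F_a$ and $F\cap F_a^c$ finishes.

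The gap in your Step~1 is that Lemma~\ref{lemma4-1} does \emph{not} yield compact $\mathbf K_a\subset\PP(\DDDD^\R)$; it only produces closed $F_a$ whose $\pi_0^*$-marginals are tight in $\PP(H)$. Lifting marginal tightness in $H$ to tightness of window projections in $\DDDD^{[0,T]}$ is exactly the obstacle the paper flags around \eqref{EGC}: no functional $\bm\Phi$ on $X^T$ with compact level sets satisfies the required exponential growth bound, so a ``direct exponential Chebyshev estimate'' is not available. In the paper's architecture this lift is achieved only \emph{after} Proposition~\ref{proposition4-1}, via the contraction inequality (Lemma~\ref{lemma4-2}) and the goodness argument (Proposition~\ref{proposition4-2}); full exponential tightness (Proposition~\ref{proposition4-4}) is a \emph{consequence} of Proposition~\ref{proposition4-1}, not an input to it. Your Steps~2--3 then import the trajectory-level MET of Section~\ref{largetimeFK} and the entropy identification of Section~\ref{proofMT3}, both of which the paper proves downstream; in particular, the identification $\bm J=\BH$ is essentially the content of Theorem~\ref{MT3}, so invoking it here would be circular.
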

	To establish this proposition, we need the following auxiliary lemma, which provides an estimate analogous to the exponential tightness. The proof is carried out by lifting the level-2 exponential growth estimate \eqref{A2} to the trajectory level.
	\begin{lemma}\label{lemma4-1}
		For any $a>0$, there is a closed set $F_a\subset\PP_s(\DDDD^{\R})$ such that the family $\{\pi^*_0\bmlambda| \bmlambda\in F_a\}$ is tight in $\PP(H)$ and 
		\begin{align}
		    \label{lvl2-25-0}\limsup_{t\to+\infty}\frac{1}{t}\log\left(\sup_{\sigma\in\Lambda(\gamma,M)}\PPPPPP_{\sigma}\{\bmzeta^{\per}_t\in F_a^c\}\right)\le-a.
		\end{align}
	\end{lemma}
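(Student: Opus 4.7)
The plan is to construct $F_a$ as a sub-level set of a lower semicontinuous functional on $\PP(\DDDD^\R)$ obtained by lifting the level-2 exponential moment estimate \eqref{A2} up to the trajectory space via shift invariance. Two preliminary observations make this reduction possible: periodicity of $u_{\per,t}$ gives $\pi_0^*\bmzeta_t^{\per} = \zeta_t$, and the calculation preceding the statement shows $\bmzeta_t^{\per}$ is almost-surely shift-invariant, so both the tightness claim and the decay on $F_a^c$ reduce to level-2 statements.

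First I would extract from \eqref{A2} a lower semicontinuous function $\Psi : H \to [0,\infty]$ with compact sub-level sets together with constants $c, C > 0$ such that
\begin{equation*}
\sup_{\sigma \in \Lambda(\gamma, M)} \E_\sigma \exp\Bigl(\int_0^t \Psi(u_s)\,\dd s\Bigr) \le C e^{ct}, \qquad t \ge 0.
\end{equation*}
This is the Kifer-type exponential growth condition that already underpins the level-2 exponential tightness \eqref{lvl2-7}. Setting $L_a := a + c$, I would then define
\begin{equation*}
F_a := \bigl\{\bmlambda \in \PP_s(\DDDD^\R) : \langle \Psi, \pi_0^*\bmlambda \rangle \le L_a\bigr\},
\end{equation*}
so that $\pi_0^*(F_a) \subset \{\mu \in \PP(H) : \langle \Psi, \mu\rangle \le L_a\}$, which is compact (a fortiori tight) in $\PP(H)$ because $\Psi$ has compact sub-level sets. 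Using $\pi_0^*\bmzeta_t^{\per} = \zeta_t$, the event $\{\bmzeta_t^{\per} \in F_a^c\}$ becomes $\{t^{-1}\int_0^t \Psi(u_s)\,\dd s > L_a\}$, and Chebyshev's inequality combined with the exponential moment bound above yields $\sup_\sigma \PPPPPP_\sigma\{\bmzeta_t^{\per} \in F_a^c\} \le C e^{-at}$, giving \eqref{lvl2-25-0}.

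For closedness of $F_a$ in $\PP(\DDDD^\R)$, the space $\PP_s(\DDDD^\R)$ is itself weakly closed, and using shift invariance one writes, for any fixed $T > 0$,
\begin{equation*}
\langle \Psi, \pi_0^*\bmlambda \rangle = \frac{1}{T}\int_{\DDDD^\R} \int_0^T \Psi(u(s))\,\dd s\; \bmlambda(\dd u),
\end{equation*}
reducing the question to lower semicontinuity of $u \mapsto T^{-1}\int_0^T \Psi(u(s))\,\dd s$ on $\DDDD^\R$ with the Skorokhod topology. This last step is the main technical obstacle, precisely because passing from $X^\R$ to $\DDDD^\R$ costs something: periodization produces jumps at $t\Z$ and rules out continuous-path arguments. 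I plan to handle it by approximating $\Psi$ from below by an increasing sequence of bounded continuous non-negative functions $\Psi_k$ (possible since $\Psi$ is lsc and non-negative), noting that the trajectory functional $u \mapsto \int_0^T \Psi_k(u(s))\,\dd s$ is continuous in the Skorokhod topology (Skorokhod convergence forces pointwise convergence outside the at-most-countable jump set of the limit, together with uniform boundedness on $[0,T]$, so dominated convergence applies), and then recovering the lsc claim for the full $\Psi$ by monotone convergence. A secondary point requiring care is the uniformity in $\sigma \in \Lambda(\gamma, M)$ of the Step~1 bound, which follows by integrating an $H$-pointwise version of \eqref{A2} against the weight $e^{\gamma \|u\|^2}$ controlled by the defining inequality of $\Lambda(\gamma, M)$.
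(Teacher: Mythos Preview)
Your approach is correct and shares the paper's core idea: both exploit $\pi_0^*\bmzeta_t^{\per}=\zeta_t$ together with the level-2 exponential growth estimate \eqref{A2} (the paper takes $\Psi=\kappa\|\cdot\|_1^2$ explicitly). Where you take $F_a$ to be the single moment sublevel set $\{\bmlambda\in\PP_s(\DDDD^\R):\langle\Psi,\pi_0^*\bmlambda\rangle\le a+c\}$ and apply Chebyshev once, the paper instead sets $F_a=\bigcap_{n\ge1}\{\bmlambda\in\PP_s(\DDDD^\R):\pi_0^*\bmlambda(K_{l_{n,a}}^c)\le 1/n\}$ with $K_l=\{\Psi\le l\}$ and $l_{n,a}=Cn+an^2$, bounding each layer of $F_a^c$ separately by Chebyshev and summing the resulting geometric series. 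Your route is shorter for the decay bound and equally direct for the tightness of $\{\pi_0^*\bmlambda:\bmlambda\in F_a\}$. Your closedness argument via the shift-averaged representation $\langle\Psi,\pi_0^*\bmlambda\rangle=T^{-1}\int_{\DDDD^\R}\int_0^T\Psi(u(s))\,\dd s\,\bmlambda(\dd u)$ and Skorokhod-topology lower semicontinuity is in fact more careful than the paper's one-line appeal to Portmanteau; since the evaluation $\pi_0$ is not continuous on $\DDDD^\R$, some form of this averaging is what makes either construction rigorously closed.
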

	\begin{proof}
		Let us define 
		\begin{align}\label{lvl2-25}
			\Phi(u):=\kappa\|u\|^2_1,
		\end{align}
		where $\kappa$ is as in Proposition~\ref{propositionB1}. By the definition \eqref{MR10},
		\[\int_0^t\Phi(u_s)\dd s=t\int_{\DDDD^{\R}}\Phi(u(0))\bmzeta^{\per}_t(\dd u_{(-\infty,\infty)}).\]
  		We define $\MU_t:=(\bmzeta^{\per}_t)^*\PPPPPP_{\sigma}$, so $\MU_t$ is a probability measure on the space $\PP_s(\DDDD^{\R})$ for any $t>0$. Then,
		\[\E_{\sigma}\exp\left(\int_0^t\Phi(u_s)\dd s\right)=\int_{\PP_s(\DDDD^\R)}\exp(t\langle\Phi,\pi^*_0\bmlambda\rangle)\MU_t(\dd \bmlambda).\]
        This, together with the level-2 exponential growth estimate \eqref{A2} and the Chebyshev inequality, implies that there is $C>0$ depending on $M,h$, and $\BB_0:=\sum_{j=1}^{\infty}b_j^2$ such that 
		\begin{align*}
			Ce^{Ct}&\ge\E_{\sigma}\exp\left(\int_0^t\Phi(u_s)\dd s\right)\ge\int_{\PP_s(\DDDD^\R)}\exp(tl\pi_0^*\bmlambda(K_l^c))\MU_t(\dd \bmlambda)\\&\ge \exp\left(\frac{tl}{n}\right)\MU_t\left\{\bmlambda\in \PP_s(\DDDD^{\R})\Big|\pi_0^*\bmlambda(K_l^c)> \frac{1}{n}\right\},
		\end{align*}
		where $K_l:=\{u| \Phi(u)\le l\}$. By choosing $l_{n,a}:=Cn+an^2$, we get
		\begin{align*}
			\MU_t\left\{\bmlambda\in \PP_s(\DDDD^{\R})\Big|\pi_0^*\bmlambda(K_{l_{n,a}}^c)>\frac{1}{n}\right\}\le Ce^{-ant}.
		\end{align*}
		Let us introduce
		\[F_a:=\bigcap_{n=1}^{\infty}\left\{\bmlambda\in \PP_s(\DDDD^{\R})\Big|\pi_0^*\bmlambda(K_{l_{n,a}}^c)\le \frac{1}{n}\right\}.\]
		Then, we have
		\[ \PPPPPP_{\sigma}\{\bmzeta^{\per}_t\in F_a^c\}=\MU_t(F_a^c)\le C\frac{e^{-at}}{1-e^{-at}},\]
        which implies \eqref{lvl2-25-0}. Using the Portmanteau theorem and the Prokhorov theorem, we see that $F_a$ satisfies the desired properties. This completes the proof.
	\end{proof}
	Let us turn to the proof of Proposition~\ref{proposition4-1}.
	\begin{proof}[Proof of Proposition~\ref{proposition4-1}] Let $F\!\subset\!\PP(\DDDD^\R)$ be a closed set. By applying Lemma~\ref{lemma4-1} and Proposition~\ref{propositionC4}, we see that for each $a>0$,
		\begin{align*}
			\limsup_{t\to+\infty}\frac{1}{t}&\log\left(\sup_{\sigma\in\Lambda(\gamma,M)}\PPPPPP_{\sigma}\{\bmzeta^{\per}_t\in F\}\right)\\&=\limsup_{t\to+\infty}\frac{1}{t}\log\left(\sup_{\sigma\in\Lambda(\gamma,M)}\PPPPPP_{\sigma}\left\{\bmzeta^{\per}_t\in F\mcap \PP_s(\DDDD^\R)\right\}\right)\\&\le\max\left\{\limsup_{t\to+\infty}\frac{1}{t}\log\left(\sup_{\sigma\in\Lambda(\gamma,M)}\PPPPPP_{\sigma}\left\{\bmzeta^{\per}_t\in F\mcap F_a\right\}\right),-a\right\}\\&\le\max\left\{-\inf_{\bmlambda\in F}\BH(\bmlambda),-a\right\}.
		\end{align*}
		By choosing $a$ sufficiently large, we obtain \eqref{ET2}.
	\end{proof}
	\subsection{Contraction inequality and exponential tightness}
	In this subsection, we establish the exponential tightness of $\{\bmzeta^T_t\}_{t>0}$. To this end, we first prove the goodness of the Donsker--Varadhan entropy $\BH$ defined in \eqref{ET1} by using the following contraction inequality.
	\begin{lemma}\label{lemma4-2}
		Let $I$ be the level-2 rate function given by \eqref{MR4}. Then,
		\begin{align}
			\label{contraction_inequality}
			I(\lambda)\le \inf\{\BH(\bmlambda)|\pi_0^*\bmlambda=\lambda\},\qquad \forall \lambda\in\PP(H).
		\end{align}
	\end{lemma}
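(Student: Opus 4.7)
The inequality is trivial when $\BH(\bmlambda) = +\infty$, so I may assume $\bmlambda \in \PP_s(\DDDD^\R)$ with $\BH(\bmlambda) < \infty$ and set $\lambda := \pi_0^* \bmlambda$. The plan is to exhibit, for every test function $f$ admissible in the Donsker--Varadhan formula \eqref{MR4}, a trajectory-level observable $F$ on $X^{[0,1]}$ whose log-moment generating function under $\pi_{[0,1]}^* \mathbb{P}_{u(0)}$ vanishes, and then pair this with the variational characterization \eqref{relative-entropy} of relative entropy.

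Fix $f \in \DD(\LLLL)$ with $\inf_H f > 0$ and set $V := -\LLLL f/f \in C_b(H)$. By Step~1 of the proof of \eqref{MR4}, the Feynman--Kac identity $\PPPP_t^V f = f$ holds for all $t \ge 0$. I define
\[F(u_{[0,1]}) := \log f(u(1)) - \log f(u(0)) + \int_0^1 V(u(s))\,\dd s,\]
which is bounded and continuous on $X^{[0,1]}$ because $\inf_H f > 0$, $\|f\|_\infty < \infty$, and $V \in C_b(H)$. The identity $\PPPP_1^V f = f$ rewrites as
\[\langle e^F, \pi_{[0,1]}^* \mathbb{P}_{u(0)}\rangle = \frac{\PPPP_1^V f(u(0))}{f(u(0))} = 1.\]

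Applying \eqref{relative-entropy} to this $F$ then yields, for $\bmlambda$-a.e.\ $u$,
\[\ent\bigl(\pi_{[0,1]}^* \bmlambda(u_{(-\infty,0]}, \cdot) \bigm| \pi_{[0,1]}^* \mathbb{P}_{u(0)}\bigr) \ge \langle F, \pi_{[0,1]}^* \bmlambda(u_{(-\infty,0]}, \cdot)\rangle.\]
Integrating against $\bmlambda$, using the defining property of the regular conditional probability together with the $\mathscr{F}^0_1$-measurability of $F \circ \pi_{[0,1]}$, I obtain $\BH(\bmlambda) \ge \int_{\DDDD^\R} F(u_{[0,1]})\,\bmlambda(\dd u)$. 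Shift invariance of $\bmlambda$ gives $\pi_0^*\bmlambda = \pi_1^*\bmlambda = \lambda$, so the two $\log f$ terms cancel; Fubini together with $\pi_s^*\bmlambda = \lambda$ for every $s \in [0,1]$ further gives
\[\int_{\DDDD^\R} \int_0^1 V(u(s))\,\dd s \,\bmlambda(\dd u) = \int_0^1 \int_H V\,\dd\lambda\,\dd s = \int_H V\,\dd\lambda.\]
Therefore $\BH(\bmlambda) \ge \int_H (-\LLLL f/f)\,\dd\lambda$, and taking the supremum over admissible $f$ gives \eqref{contraction_inequality} via \eqref{MR4}.

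The argument is essentially a clean duality between \eqref{MR4} and the Donsker--Varadhan entropy \eqref{ET1}, with the Feynman--Kac identity $\PPPP_1^V f = f$ supplying the exact cancellation that makes $F$ have unit exponential expectation under $\pi_{[0,1]}^* \mathbb{P}_{u(0)}$. The only delicate points are the boundedness and measurability of $F$ (which reduce to $\inf_H f > 0$, $f \in C_b(H)$ and $V \in C_b(H)$) and the tower-property manipulation of the regular conditional probability on $\mathscr{F}^{-\infty}_0$; neither presents a serious obstacle given the hypothesis $\bmlambda \in \PP_s(\DDDD^\R)$. No new a priori estimate for \eqref{I1} is required for this lemma.
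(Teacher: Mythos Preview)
Your proof is correct. Both your argument and the paper's follow the same core strategy---choose a bounded $\mathscr{F}^0_t$-measurable functional $\Psi$ with $\int_\DDDD e^\Psi\,\dd\PPPPPP_{u(0)}=1$, feed it into the variational formula \eqref{relative-entropy} for the relative entropy appearing in \eqref{ET1}, and use shift invariance of $\bmlambda$ to collapse the result to a spatial integral against $\lambda=\pi_0^*\bmlambda$. The difference lies in the choice of test function: the paper takes $\Psi_t=\log f(u(t))-\log\PPPP_t f(u(0))$ for generic $t$ and then sends $t\to 0^+$ (using $\frac{\dd}{\dd s}\log\PPPP_s f=\PPPP_s\LLLL f/\PPPP_s f$) to produce $-\LLLL f/f$, whereas you build the running integral $\int_0^1 V(u(s))\,\dd s$ directly into $F$ and close the computation at $t=1$ via the Feynman--Kac identity $\PPPP_1^V f=f$ from \eqref{lvl2-21}. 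Your route is marginally cleaner in that it avoids the limiting step, at the cost of a forward reference to \eqref{lvl2-21}; the paper's route is self-contained at this point but needs the short dominated-convergence argument for the limit. Neither is more general than the other.
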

	\begin{proof}According to the definition \eqref{ET1}, it suffices to establish 
		\[I(\lambda)\le \inf\{\BH(\bmlambda)|\bmlambda\in\PP_s(\DDDD^\R), \pi_0^*\bmlambda=\lambda\}.\]
		To see this, we take $\bmlambda\in\PP_s(\DDDD^\R)$ and $f\in \DD(\LLLL)$ such that $\inf_Hf>0$. Notice that
		\[\Psi_t(u_{(-\infty,\infty)}):=\log\frac{f(u(t))}{\PPPP_tf(u(0))},\qquad \Psi_t: \DDDD^\R\to\R\]
        is a bounded $\mathscr{F}^0_t$-measurable function satisfying
		\[\log \left(\int_\DDDD e^{\Psi_t}\PPPPPP_{u}\right)=\log\left(\frac{\int_\DDDD f(u(t))\dd\PPPPPP_u}{\PPPP_t f(u)}\right)=0.\]
		From the definition \eqref{C2} and the relation \eqref{C2-1}, we derive
		\[\int_{\DDDD^\R}\Psi_t \dd\bmlambda\le t\BH(\bmlambda).\]
		This, together with the translation-invarianc of $\bmlambda$, implies 
		\begin{align*}
			t\BH(\bmlambda)&\ge \int_{\DDDD^\R}\log\frac{f(u(t))}{\PPPP_tf(u(0))}\bmlambda(\dd u_{(-\infty,\infty)})\\&=\int_{\DDDD^\R}\log f(u(t))\bmlambda(\dd u_{(-\infty,\infty)})-\int_{\DDDD^\R}\log\PPPP_tf(u(0))\bmlambda(\dd u_{(-\infty,\infty)})\\&=\int_{H}\log f(u)\lambda(\dd u)-\int_{H}\log\PPPP_tf(u)\lambda(\dd u),
		\end{align*}
		where $\lambda:=\pi_0^*\bmlambda$. Therefore, we have
		\begin{align*}
			\BH(\bmlambda)&\ge -\lim_{t\to0^+}\frac{1}{t}\int_{H}\int_0^t\frac{\PPPP_s\LLLL f}{\PPPP_sf}\dd s\dd\lambda\\&=-\lim_{t\to0^+}\frac{1}{t}\int_0^t\int_{H}\frac{\PPPP_s\LLLL f}{\PPPP_sf}\dd\lambda\dd s=\int_{H}\frac{-\LLLL f}{f}\dd\lambda,
		\end{align*}
		which implies $\BH(\bmlambda)\ge I(\lambda)$ and thus \eqref{contraction_inequality}.
	\end{proof}
    Now we prove the goodness of the Donsker--Varadhan entrophy $\BH$.
	\begin{proposition}\label{proposition4-2}
		For any $\alpha\ge 0$, $A_\alpha:=\{\bmlambda\in\PP(\DDDD^\R)| \BH(\bmlambda)\le \alpha\}$ is compact.
	\end{proposition}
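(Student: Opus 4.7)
The plan is to show that $A_\alpha$ is both closed and tight in $\PP(\DDDD^\R)$, and then to invoke Prokhorov's theorem. Closedness is equivalent to the lower semicontinuity of $\BH$; this follows from the variational characterization of the relative entropy together with the definition \eqref{ET1}, and belongs to the basic properties of the Donsker--Varadhan entropy collected in Appendix~\ref{propertyDVE}.

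For tightness I first handle one-time marginals via the contraction inequality of Lemma~\ref{lemma4-2}: every $\bmlambda \in A_\alpha$ satisfies $I(\pi_0^*\bmlambda) \le \BH(\bmlambda) \le \alpha$, and goodness of the level-2 rate function $I$ (Theorem~\ref{MT1}) yields compactness of $\{\lambda \in \PP(H) : I(\lambda) \le \alpha\}$ in $\PP(H)$. Since $\BH$ is infinite off $\PP_s(\DDDD^\R)$, we have $A_\alpha \subset \PP_s(\DDDD^\R)$, hence $\pi_r^*\bmlambda = \pi_0^*\bmlambda$ for every $r \in \R$; the Prokhorov theorem then produces, for every $\epsilon > 0$, a compact $K_\epsilon \subset H$ with $\pi_r^*\bmlambda(K_\epsilon^c) \le \epsilon$ uniformly in $r \in \R$ and $\bmlambda \in A_\alpha$.

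To promote marginal tightness to tightness on $\DDDD^\R$, I fix $T \ge 1$. By superadditivity of the specific entropy along the shift, the restriction $\pi_{[0,T]}^*\bmlambda$ has average conditional entropy at most $T\alpha$ with respect to $\pi_{[0,T]}^*\PPPPPP_{u(0)}$; this is again a basic property from Appendix~\ref{propertyDVE}. The variational formula for $\ent$ applied with $F = l\,\I_{\mathbf{K}^c}$, combined with Jensen's inequality, then yields, for any closed $\mathbf{K} \subset \DDDD^{[0,T]}$ and any $l > 0$,
\begin{equation*}
  l\,\pi_{[0,T]}^*\bmlambda(\mathbf{K}^c) \;\le\; T\alpha + \log 2 + \log\bigl(1 \vee e^{l}\,\E_{\pi_0^*\bmlambda}\pi_{[0,T]}^*\PPPPPP_u(\mathbf{K}^c)\bigr).
\end{equation*}
Splitting the inner expectation over $K_\epsilon$ and $K_\epsilon^c$ using the previous step, sending $l \to \infty$ and $\epsilon \to 0$, and diagonalising over $T = 1, 2, \ldots$ will give tightness in $\DDDD^\R$, provided I can produce compacts $\mathbf{K}_{l,T} \subset C([0,T];H)$ satisfying
$$\sup_{u \in K_\epsilon} \pi_{[0,T]}^*\PPPPPP_u(\mathbf{K}_{l,T}^c) \;\le\; e^{-2l}.$$

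This last bound is the main obstacle. Because the a priori estimates \eqref{A5} and \eqref{A10} for higher-order norms of $u_t$ are only doubly logarithmic, genuine exponential tail control on the $H^1$-norm is unavailable, and the required compacts must instead be built from modulus-of-continuity estimates in a negative Sobolev norm (where the Brownian-type noise admits Fernique-style exponential tails), together with the level-2 exponential bound \eqref{A2} in the $H$-norm. This is precisely the tension highlighted in Section~\ref{scheme} that motivates the lift argument, and I expect the construction of $\mathbf{K}_{l,T}$ and the verification of the exponential concentration to be the technical heart of the proof.
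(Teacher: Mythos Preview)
Your overall strategy is correct and mirrors the paper's: prove lower semicontinuity of $\BH$, use the contraction inequality (Lemma~\ref{lemma4-2}) together with goodness of $I$ for tightness of the one-time marginals, and then lift to path-space tightness via the variational characterization of the entropy with a test function of the form $l\,\I_{\mathbf{K}^c}$. The paper does exactly this, taking $T=1$ (or rather an $\mathscr{F}^0_2$-measurable variant) and then invoking Lemma~\ref{lemmaC5-0} to pass from tightness on $\DDDD^{[0,1]}$ to tightness on $\DDDD^\R$.

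Where you go astray is in the diagnosis of the difficulty. You claim the bound $\sup_{u\in K_\epsilon}\pi_{[0,T]}^*\PPPPPP_u(\mathbf{K}_{l,T}^c)\le e^{-2l}$ is the ``main obstacle'' and would require Fernique-type exponential tails or delicate modulus-of-continuity estimates. It does not. Once $K_\epsilon\subset H$ is compact, the continuity of $u\mapsto\PPPPPP_u$ (Proposition~2.4.7 in \cite{KS12}) makes $\{\PPPPPP_u\}_{u\in K_\epsilon}$ a compact, hence tight, family in $\PP(\DDDD^{[0,T]})$. Prokhorov's theorem then delivers, for \emph{any} prescribed $\eta>0$---in particular $\eta=(e^l-1)^{-1}$---a compact $\mathbf{K}\subset\DDDD^{[0,T]}$ with $\sup_{u\in K_\epsilon}\PPPPPP_u(\mathbf{K}^c)\le\eta$. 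No quantitative PDE estimates are needed; the argument is entirely soft. With this choice, your displayed inequality yields $\pi_{[0,T]}^*\bmlambda(\mathbf{K}^c)\le (T\alpha+\log 2)/l+\epsilon$, and choosing first $l$ large, then $\epsilon$ small, then $K_\epsilon$, then $\mathbf{K}$, gives tightness. This is precisely what the paper does in its Step~2--3; the ``technical heart'' you anticipate is absent.
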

	\begin{proof}The proof is divided into three steps.
    
    \noindent\textit{Step 1: Tightness of marginals.} We claim that the family $A_{\MM,\alpha}:=\{\pi_0^*\bmlambda|\bmlambda\in A_{\alpha}\}$ is tight in $\PP(H)$. In fact, by Lemma~\ref{lemma4-2}, for any $\bmlambda\in A_\alpha$, we have $I(\lambda)\le \alpha$, where $\lambda:=\pi_0^*\bmlambda$. Therefore,
		\[A_{\MM,\alpha}\subset\{\lambda\in\PP(H)| I(\lambda)\le \alpha\}.\]
		Utilizing the goodness of $I$, we obtain the tightness of $A_{\MM,\alpha}$ as claimed.
		
		\noindent \textit{Step 2: Construction of an auxiliary test function.} Let us take any $\epsilon,\delta\in(0,1)$. By the tightness of $A_{\MM,\alpha}$, we find a compact set $K\subset H$ such that 
		\begin{align}
			\lambda(K^c)< \frac{\delta}{2}\label{ET5},\qquad \forall\lambda\in A_{\MM,\alpha}.
		\end{align}
		Combining the compactness of $K$ and the continuity of the map $u\mapsto \PPPPPP_u$, see Proposition 2.4.7 in \cite{KS12}, we derive the tightness of $\{\PPPPPP_u\}_{u\in K}$ in $\PP(\DDDD^{[0,1]})$. Thus, there is a compact set $\KK\subset \DDDD^{[0,1]}$ satisfying $\PPPPPP_u(\KK^c)<\epsilon$ for any $u\in K$. We introduce the following auxiliary test function
		\[\Psi(u_{(-\infty,\infty)}):=a\int_0^1\I_{K}(u(s))\I_{\KK^c}(u_{[s,s+1]})\dd s,\qquad\Psi: \DDDD^\R\to\R,\]
        where $a>0$. Then, for any $u\in H$, 
		\begin{align}\label{ET3}
			\E_{u}\exp\left(a\I_{K}(u(0))\I_{\KK^c}(u_{[0,1]})\right)&\le \E_{u}\exp\left(a\I_{\KK^c}(u_{[0,1]})\right)\notag\\&=(e^{a}-1)\mathbb{P}_u(\KK^c)+1\le (e^{a}-1)\epsilon+1.
		\end{align}
         This, together with the Jensen inequality and the Markov property, implies
        \begin{align}
		    \label{ET3-1}
            \E_ue^{\Psi}&\le  \int_0^1\E_u \exp\left(a\I_{K}(u(s))\I_{\KK^c}(u_{[s,s+1]})\right)\dd s\notag\\&= \int_0^1\E_u \left(\E_{u(s)} \exp\left(a\I_{K}(u(0))\I_{\KK^c}(u_{[0,1]})\right)\right)\dd s\le (e^{a}-1)\epsilon+1
        \end{align}
        for any $u\in H$.
        
        \noindent\textit{Step 3: Tightness of} $A_\alpha$.  Let us fix any $\bmlambda\in A_\alpha$. Combining \eqref{ET3-1} with the definition \eqref{C2} of $\bar\BH$ and the relation \eqref{C2-1}, we have
		\begin{align}\label{ET4}
			\int_{\DDDD^{\R}}\Psi\dd\bmlambda&\le \bar{\BH}(2 ,\bmlambda)+\int_{\DDDD^\R}\log\left(\int_{\DDDD}e^{\Psi}\dd\PPPPPP_{u(0)}\right)\bmlambda(\dd u_{(-\infty,\infty)})\notag\\& \le 2\alpha  +\log((e^{a}-1)\epsilon+1).
		\end{align}
        On the other hand, by the Fubini theorem and the translation-invariance of $\bmlambda$, we have 
		\begin{align*}
			\int_{\DDDD^{\R}}\Psi\dd\bmlambda&=a\int_0^1\int_{\DDDD^{\R}}\I_{K}(u(s))\I_{\KK^c}(u_{[s,s+1]})\dd\bmlambda\dd s\\&=a\int_{\DDDD^{\R}}\I_{K}(u(0))\I_{\KK^c}(u_{[0,1]})\dd\bmlambda\ge a\pi^*_{[0,1]}\bmlambda(\KK^c)-a\lambda(K^c),
		\end{align*}
        where $\lambda=\pi_0^*\bmlambda$. This, together with \eqref{ET5} and \eqref{ET4}, implies 
		\[\pi^*_{[0,1]}\bmlambda(\KK^c)< \frac{\delta}{2}+\frac{1}{a}(2\alpha +\log((e^{a }-1)\epsilon+1)).\]
		Choosing $\epsilon:=\frac{1}{e^{a}-1}$	with $a$ sufficiently large so that $\frac{2\alpha +\log 2}{a  }< \frac{\delta}{2}$, we see that
		\[\pi^*_{[0,1]}\bmlambda(\KK^c)<\delta.\]
    Therefore, the projections of measures in $A_\alpha$ to $\DDDD^{[0,1]}$ form a tight family. This, together with Lemma~\ref{lemmaC5-0}, leads to the tightness of $A_\alpha$. Finally, utilizing the lower semi-continuity of $\BH$, see Proposition \ref{lscDVE}, combined with the Prokhorov theorem, we obtain the compactness of $A_\alpha$.
	\end{proof}
	
	Combining Propositions~\ref{proposition4-1} and \ref{proposition4-2}, we infer that a uniform LD upper bound with the good rate function $\BH$ is valid for the periodized empirical measures $\{\bmzeta^{\per}_t\}_{t>0}$. This, together with the exponential equivalence between $\{\pi_{[0,\infty)]}^*\bmzeta^{\per}_t\}_{t>0}$ and $\{\bmzeta_t\}_{t>0}$, ensures the exponential tightness of $\{\bmzeta^T_{t_n}\}_{n\ge 1}$ for any sequence $\{t_n\}_{n\ge 1}$ such that $\lim_{n\to\infty}t_n=+\infty$, as detailed below.
	\begin{lemma}\label{LemmaET1}
		The families $\{\pi_{[0,\infty)}^*\bmzeta^{\per}_t\}_{t>0}$ and $\{\bmzeta_t\}_{t>0}$ are uniformly exponentially equivalent, that is, for any $\delta>0$,
		\begin{align}\label{ET6}
			\lim_{t\to+\infty}\frac{1}{t}\log\left(\sup_{\sigma\in\Lambda(\gamma,M)}\PPPPPP_{\sigma}\left\{\|\pi_{[0,\infty)}^*\bmzeta^{\per}_t-\bmzeta_t\|^*_{L(\DDDD)}>\delta\right\}\right)=-\infty.
		\end{align}
	\end{lemma}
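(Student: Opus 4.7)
The plan is to prove the much stronger, \emph{pathwise} statement: for every $\delta>0$ there exists $t_\delta>0$ such that
\[\|\pi_{[0,\infty)}^*\bmzeta^{\per}_t-\bmzeta_t\|^*_{L(\DDDD)}\le\delta\]
holds for all $t\ge t_\delta$ and every $\omega\in\Omega$. Once this is established, the event inside the probability in \eqref{ET6} is empty for $t\ge t_\delta$, uniformly in $\sigma\in\Lambda(\gamma,M)$, so the supremum of probabilities inside the logarithm equals $0$ and the limit is $-\infty$. Thus no probabilistic large-deviation machinery is needed here; the statement is a purely pathwise consequence of the local nature of the Skorokhod topology on $\DDDD$.

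The key estimate uses a standard Billingsley-type metric
\[\dist_\DDDD(v_1,v_2)=\sum_{k=1}^{\infty}2^{-k}\bigl(1\wedge\dist_{\DDDD^{[0,k]}}(v_1|_{[0,k]},v_2|_{[0,k]})\bigr),\]
which has the crucial property that $v_1,v_2$ coinciding on $[0,K]$ forces $\dist_\DDDD(v_1,v_2)\le 2^{-K}$. For $s\in[0,t-K]$ and $r\in[0,K]$ the inequality $s+r\le t$ gives $\theta_s u_{\per,t}(r)=u_{\per,t}(s+r)=u(s+r)=u_{[s,\infty)}(r)$, with the single endpoint $s+r=t$ absorbed by the right-continuity convention. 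Hence $\theta_s u_{\per,t}$ and $u_{[s,\infty)}$ agree on $[0,K]$, so
\[\dist_\DDDD\bigl(\theta_s u_{\per,t}|_{[0,\infty)},\,u_{[s,\infty)}\bigr)\le 2^{-K}\qquad\text{for all }s\in[0,t-K].\]

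For any $f\in L_b(\DDDD)$ with $\|f\|_{L_b}\le 1$, write
\[\langle f,\pi_{[0,\infty)}^*\bmzeta^{\per}_t-\bmzeta_t\rangle=\frac{1}{t}\int_0^t\bigl(f(\theta_s u_{\per,t}|_{[0,\infty)})-f(u_{[s,\infty)})\bigr)\dd s\]
and split the integral at $s=t-K$: on $[0,t-K]$ the integrand is controlled by $\|f\|_{L_b}\cdot 2^{-K}\le 2^{-K}$ by the previous step, while on $[t-K,t]$ it is controlled by $2\|f\|_\infty\le 2$. Taking the supremum over $f$ yields the deterministic bound $\|\pi_{[0,\infty)}^*\bmzeta^{\per}_t-\bmzeta_t\|^*_{L(\DDDD)}\le 2^{-K}+2K/t$, uniformly in $\omega$. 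Given $\delta>0$, choose $K$ with $2^{-K}<\delta/2$ and set $t_\delta:=4K/\delta$; for $t\ge t_\delta$ the right-hand side is below $\delta$, which proves \eqref{ET6}. The only subtlety worth flagging is the treatment of the jump of $u_{\per,t}$ at the periodic seam $r=t$, which is harmlessly absorbed in the boundary layer $[t-K,t]$ where we merely use the trivial bound $2\|f\|_\infty$.
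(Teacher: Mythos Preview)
Your proposal is correct and follows essentially the same approach as the paper: both arguments exploit that $\theta_s u_{\per,t}$ and $u_{[s,\infty)}$ coincide on $[0,t-s)$, so their distance in the Billingsley metric on $\DDDD$ is at most $2^{-[t-s]}$, yielding a deterministic bound on the dual-Lipschitz distance that vanishes as $t\to\infty$. The only cosmetic difference is that the paper integrates the pointwise bound $2^{-[t-s]}$ directly to obtain $\|\pi_{[0,\infty)}^*\bmzeta^{\per}_t-\bmzeta_t\|^*_{L(\DDDD)}\le C/t$, whereas you introduce a cutoff parameter $K$ and split the integral, arriving at $2^{-K}+2K/t$; after optimizing in $K$ this gives a slightly weaker $O((\log t)/t)$ bound, but either suffices for \eqref{ET6}.
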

	\begin{proof} Let $L_b(\DDDD)$ denote the space of all bounded Lipschitz functions on $\DDDD$ with the norm defined by \eqref{lipnorm}. For any $\VVV\in L_b(\DDDD)$ such that $\|\VVV\|_{L_b}\le 1$,
		\begin{align*}
			|\langle \VVV,\bmzeta_t\rangle-\langle \VVV,\pi_{[0,\infty)}^*\bmzeta^{\per}_t\rangle|&\le\frac{1}{t}\int_0^t|\VVV(u_{[s,\infty)})-\VVV(\pi_{[s,\infty)}u_{\per,t})|\dd s\\&\le\frac{1}{t}\int_0^t\dist_{\DDDD} (u_{[s,\infty)},\pi_{[s,\infty)}u_{\per,t})\dd s.
		\end{align*}
		Since
		\[u_{[0,\infty)}(s+r)=u_{\per,t}(s+r),\qquad  \forall r\in[0, t-s),\]
		we have 
		\[\dist_{\DDDD}(u_{[s,\infty)},\pi_{[s,\infty)}u_{\per,t})\le \sum_{m=[t-s]+1}^{\infty}\frac{1}{2^m}=\frac{1}{2^{[t-s]}},\]
		where $[x]$ denotes the largest integer not exceeding $x$, and we refer to (16.4) in \cite{Bil1999} for the definition of $\dist_\DDDD$. Hence, we derive
		\begin{align*}
			|\langle V,\bmzeta_t\rangle-\langle V,\pi_{[0,\infty)}^*\bmzeta^{\per}_t\rangle|\le\frac{1}{t}\int_0^t\frac{1}{2^{[s]}}\dd s\le\frac{2\log 2}{t}.
		\end{align*}
		This implies 
		\[\|\pi_{[0,\infty)}^*\bmzeta^{\per}_t-\bmzeta_t\|^*_{L(\DDDD)}<\delta,\qquad \forall t>\frac{2\log 2}{\delta},\]
		and thus the uniform exponential equivalence \eqref{ET6}.
	\end{proof}
	The above exponential equivalence ensures the following uniform LD upper bound for $\{\bmzeta_t\}_{t>0}$ with a good rate function $\II$.
	\begin{proposition}\label{proposition4-3}
		For any closed set $F\subset \PP(X)$, we have 
		\begin{align*}
			\limsup_{t\to+\infty}\frac{1}{t}\log\left(\sup_{\sigma\in\Lambda(\gamma,M)}\PPPPPP_{\sigma}\{\bmzeta_t\in F\}\right)\le-\inf_{\bmlambda\in F}\II(\bmlambda),
		\end{align*}
    where $\II$ is a good rate function satisfying the Donsker--Varadhan entropy formula~\eqref{MR9}.
	\end{proposition}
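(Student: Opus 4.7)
The plan is to chain together Propositions~\ref{proposition4-1} and \ref{proposition4-2}, Lemma~\ref{LemmaET1}, and the contraction principle, and then identify the resulting rate function with $\bI$. First I would combine Proposition~\ref{proposition4-1} with the goodness of $\BH$ from Proposition~\ref{proposition4-2} to get a uniform LD upper bound for $\{\bmzeta^{\per}_t\}_{t>0}$ on $\PP(\DDDD^\R)$ with good rate function $\BH$. Since the restriction $\pi_{[0,\infty)}:\DDDD^\R\to\DDDD$ is continuous, so is the push-forward $\pi_{[0,\infty)}^*:\PP(\DDDD^\R)\to\PP(\DDDD)$ for the weak topologies, and the contraction principle (for upper bounds) yields a uniform LD upper bound for $\{\pi_{[0,\infty)}^*\bmzeta^{\per}_t\}_{t>0}$ on $\PP(\DDDD)$ with the good contracted rate function
\[
\II_{\DDDD}(\bm\mu):=\inf\{\BH(\bmlambda)\mid\bmlambda\in\PP(\DDDD^\R),\ \pi_{[0,\infty)}^*\bmlambda=\bm\mu\}.
\]
Lemma~\ref{LemmaET1} provides uniform exponential equivalence between $\{\pi_{[0,\infty)}^*\bmzeta^{\per}_t\}_{t>0}$ and $\{\bmzeta_t\}_{t>0}$ in the dual-Lipschitz metric on $\PP(\DDDD)$, and a standard $\delta$-neighborhood argument (cf.\ Theorem 4.2.13 in \cite{DZ2010}, applied uniformly in $\sigma\in\Lambda(\gamma,M)$) then delivers the same upper bound for $\{\bmzeta_t\}_{t>0}$ on $\PP(\DDDD)$ with rate function $\II_{\DDDD}$.

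Next I would identify $\II_{\DDDD}$ with the Donsker--Varadhan entropy formula \eqref{MR9}. The key point is that finiteness of $\BH(\bmlambda)$ for $\bmlambda\in\PP_s(\DDDD^\R)$ forces the regular conditional probability $\pi_{[0,1]}^*\bmlambda(u_{(-\infty,0]},\cdot)$ to be absolutely continuous with respect to $\pi_{[0,1]}^*\PPPPPP_{u(0)}$ for $\bmlambda$-almost every past. Since the latter is supported on continuous trajectories, shift-invariance of $\bmlambda$ propagates this continuity to every interval $[n,n+1]$, so $\bmlambda$ is in fact concentrated on $X^\R$. Hence $\II_{\DDDD}(\bm\mu)<+\infty$ forces $\bm\mu\in\PP_s(X)$; by the Kolmogorov extension theorem its shift-invariant extension $\tilde{\bm\mu}\in\PP_s(X^\R)$ is unique, so a direct comparison of the defining integrands yields $\II_{\DDDD}(\bm\mu)=\BH(\tilde{\bm\mu})=\bI(\bm\mu)$.

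Finally I would transfer the upper bound from $\PP(\DDDD)$ to $\PP(X)$ and establish goodness of $\bI$ on $\PP(X)$. For a closed set $F\subset\PP(X)$, let $\tilde F$ denote its closure in $\PP(\DDDD)$. Since $\bmzeta_t$ is almost surely supported on $X$, one has $\PPPPPP_{\sigma}\{\bmzeta_t\in F\}\le\PPPPPP_{\sigma}\{\bmzeta_t\in\tilde F\}$, and the upper bound in $\PP(\DDDD)$ together with $\II_{\DDDD}\equiv+\infty$ off $\PP_s(X)$ yields a bound by $-\inf_{\tilde F\cap\PP_s(X)}\bI\le -\inf_F\bI$. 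Goodness of $\bI$ on $\PP(X)$ would then be obtained by upgrading the compactness of $\{\II_{\DDDD}\le\alpha\}$ from $\PP(\DDDD)$ to $\PP(X)$: tightness of the one-time marginals $\{\pi_0^*\bm\mu\}$ comes from the contraction inequality in Lemma~\ref{lemma4-2} and the level-2 goodness of $I$ from Theorem~\ref{MT1}, while the equicontinuity of trajectories on bounded intervals follows from the a priori solution estimates.

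The hardest step will be the last one, namely handling the discrepancy between the topologies on $\PP(\DDDD)$ and $\PP(X)$. The exponential equivalence in Lemma~\ref{LemmaET1} lives naturally in the coarser $\PP(\DDDD)$-topology, yet the target LDP concerns the finer $\PP(X)$-weak topology, so both the identification of the rate function and the compactness of its level sets must be verified with care in the finer topology.
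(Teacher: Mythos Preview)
Your proposal follows the same route as the paper: contract Proposition~\ref{proposition4-1} through $\pi_{[0,\infty)}^*$ using the goodness from Proposition~\ref{proposition4-2}, then transfer the upper bound to $\{\bmzeta_t\}_{t>0}$ via the exponential equivalence of Lemma~\ref{LemmaET1} and a $\delta$-blow-up argument (the paper cites Lemma~4.1.6 in \cite{DZ2010} for the limit $\delta\to0^+$). Your identification of the contracted rate function with the entropy formula~\eqref{MR9}---via absolute continuity forcing continuity of paths and the Kolmogorov extension for shift-invariant measures---is exactly what the paper uses, only spelled out more explicitly.

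Where you overcomplicate matters is the ``hardest step''. The topology on $\PP(X)$ is not strictly finer than the one inherited from $\PP(\DDDD)$: the Skorokhod topology restricted to $X=C([0,\infty);H)$ \emph{coincides} with the topology of local uniform convergence, and $X$ is a closed subset of $\DDDD$. Hence $\PP(X)$ is a closed subspace of $\PP(\DDDD)$ with the subspace weak topology, so any closed $F\subset\PP(X)$ is already closed in $\PP(\DDDD)$, and the compact level sets $\{\II_{\DDDD}\le\alpha\}\subset\PP(X)$ are automatically compact in the $\PP(X)$ topology. This is what the paper means by its one-line conclusion ``$\bmzeta_t$ is supported in $\PP(X)$ and $\II$ satisfies~\eqref{MR9} when restricted on $\PP(X)$''. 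Your proposed detour through marginal tightness (Lemma~\ref{lemma4-2}) and trajectory equicontinuity from a~priori estimates would work, but it re-proves compactness already delivered for free by the contraction principle together with the elementary fact $C\subset D$ is closed.
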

	\begin{proof} Recall that any probability measure $\bmlambda\in \PP_s(\DDDD)$ can be uniquely extended to a probability measure $\bmlambda\in \PP_s(\DDDD^\R)$. Combining this with Proposition~\ref{proposition4-1} and the contraction principle (Theorem 4.2.1 in \cite{DZ2010}), we see that a uniform LD upper bound holds for $\{\pi_{[0,\infty)}^*\bmzeta^{\per}_t\}_{t>0}$ with a good rate function
    \begin{align*}
		\II(\bmlambda):=\begin{cases}
			\int_{\DDDD^{\R}}\ent\left(\pi_{[0,1]}^*\bmlambda(u_{(-\infty,0]},\cdot)\Big| \pi_{[0,1]}^*\mathbb{P}_{u(0)}\right)\bmlambda(\dd u_{(-\infty,\infty)}) &\mbox{if $\bmlambda\in \PP_s(\DDDD)$},\\
			+\infty & \mbox{otherwise}.
		\end{cases}
	\end{align*} 
    Let $F\subset\PP(\DDDD)$ be a closed subset. For $\delta>0$, we write 
		\[F_{\delta}:=\{\bmlambda\in\PP(\DDDD)| \dist_{\PP(\DDDD)}(\bmlambda,F)\le\delta\}.\]
		Applying Lemma~\ref{LemmaET1}, we have 
		\begin{align*}
			\limsup_{t\to+\infty}\frac{1}{t}\log&\left(\sup_{\sigma\in\Lambda(\gamma,M)}\PPPPPP_{\sigma}\{\bmzeta_t\in F\}\right)&\\&\le\limsup_{t\to+\infty}\frac{1}{t}\log\left(\sup_{\sigma\in\Lambda(\gamma,M)}\PPPPPP_{\sigma}\{\pi^*_{[0,\infty)}\bmzeta^{\per}_t\in F_{\delta}\}\right)\le -\inf_{\bmlambda\in F_{\delta}}\II(\bmlambda).
		\end{align*}
		This, together with Lemma 4.1.6 in \cite{DZ2010}, yields
		\begin{align*}
			\limsup_{t\to+\infty}\frac{1}{t}\log\left(\sup_{\sigma\in\Lambda(\gamma,M)}\PPPPPP_{\sigma}\{\bmzeta_t\in F\}\right)\le -\lim_{\delta\to0^+}\inf_{\bmlambda\in F_{\delta}}\II(\bmlambda)=-\inf_{\bmlambda\in F}\II(\bmlambda).
		\end{align*}
		As $\bmzeta_t$ is supported in $\PP(X)$ for all $t>0$ and $\II$ satisfies \eqref{MR9} when restricted on $\PP(X)$, this completes the proof.
	\end{proof}
	Using the contraction principle again, we see that a uniform LD upper bound holds for $\{\bmzeta_t^T\}_{t>0}$ with a good rate function. It is well-known that in the discrete-time case, an LD upper bound with a good rate function implies the exponential tightness, see Lemma 2.6 in \cite{LS1987}. Therefore, we have the following result.
	\begin{proposition}\label{proposition4-4}
		For each sequence $\{t_n\}_{n\ge 1}$ such that $\lim_{n\to+\infty}t_n=+\infty$, the family $\{\bmzeta^T_{t_n}\}_{n\ge 1}$ is uniformly exponentially tight in $\sigma\in \Lambda(\gamma,M)$, that is, for any $a>0$, there is a compact set $\KK_a$ such that 
		\[\limsup_{n\to\infty}\frac{1}{t_n}\log\left(\sup_{\sigma\in \Lambda(\gamma,M)}\PPPPPP_{\sigma}\{\bmzeta^T_{t_n}\in \KK_a^c\}\right)\le -a.\]
	\end{proposition}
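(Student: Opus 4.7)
The plan is to reduce the statement to Proposition~\ref{proposition4-3} via a one-step contraction and then invoke a general implication from large deviation theory.

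First, I would observe that the restriction map $\pi_{[0,T]}:X\to X^T$ is continuous (indeed Lipschitz with respect to the standard metrics on uniform convergence on bounded intervals), so the induced push-forward $\pi^*_{[0,T]}:\PP(X)\to\PP(X^T)$ is continuous for the weak topologies. A direct computation using the shift-identification in \eqref{MR11} gives the crucial identity
\[
\bmzeta^T_t=\pi^*_{[0,T]}\bmzeta_t,\qquad t>0,
\]
since $\pi_{[0,T]}\theta_s u=u_{[s,s+T]}$ for every $s\in[0,t]$. Thus, by applying the contraction principle (Theorem~4.2.1 in \cite{DZ2010}) to the uniform upper bound of Proposition~\ref{proposition4-3}, I obtain for every closed $F\subset \PP(X^T)$,
\[
\limsup_{t\to+\infty}\frac{1}{t}\log\Big(\sup_{\sigma\in\Lambda(\gamma,M)}\PPPPPP_{\sigma}\{\bmzeta^T_t\in F\}\Big)\le-\inf_{\bmnu\in F}\II_T(\bmnu),
\]
with the contracted rate function
\[
\II_T(\bmnu):=\inf\{\II(\bmlambda)\,|\,\bmlambda\in\PP(X),\ \pi^*_{[0,T]}\bmlambda=\bmnu\}.
\]
Here the uniformity in $\sigma\in\Lambda(\gamma,M)$ is preserved because the upper bound in Proposition~\ref{proposition4-3} already carries a supremum over $\sigma$ inside the probability, and the contraction principle is applied to the right-hand side only. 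Since $\II$ is a good rate function and $\pi^*_{[0,T]}$ is continuous, $\II_T$ is likewise a good rate function on $\PP(X^T)$.

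Next, for a fixed sequence $t_n\to+\infty$, I would appeal to the classical fact (Lemma~2.6 in \cite{LS1987}) that a sequence of random elements whose laws satisfy an LD upper bound with a good rate function is automatically exponentially tight. Applied to the sequence of measures $\PPPPPP_{\sigma_n}\circ(\bmzeta^T_{t_n})^{-1}$ with $\sigma_n\in\Lambda(\gamma,M)$ chosen to nearly achieve the supremum, and noting that $\{\II_T\le a\}$ is compact for each $a\ge 0$ by the goodness of $\II_T$, this yields, for every $a>0$, a compact $\KK_a\subset\PP(X^T)$ such that
\[
\limsup_{n\to\infty}\frac{1}{t_n}\log\Big(\sup_{\sigma\in\Lambda(\gamma,M)}\PPPPPP_{\sigma}\{\bmzeta^T_{t_n}\in\KK_a^c\}\Big)\le -a,
\]
which is the required exponential tightness.

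The substantive content has all been deposited in the preceding sections: the lift argument of Lemma~\ref{lemma4-1}, the contraction inequality of Lemma~\ref{lemma4-2}, and the exponential equivalence of Lemma~\ref{LemmaET1}. Hence the only real point to verify here is the measurability/supremum packaging needed to transport the \emph{uniform} upper bound through the contraction principle and through the discrete-time exponential-tightness lemma; this is routine, so the step I would expect to double-check most carefully is just the identification $\bmzeta^T_t=\pi^*_{[0,T]}\bmzeta_t$ and the assertion that $\pi^*_{[0,T]}$ is continuous on $\PP(X)$ endowed with the topology of weak convergence inherited from uniform convergence on bounded intervals.
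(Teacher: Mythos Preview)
Your proposal is correct and follows essentially the same route as the paper: contract the uniform LD upper bound of Proposition~\ref{proposition4-3} along $\pi_{[0,T]}^*$ to obtain a uniform upper bound for $\{\bmzeta^T_t\}_{t>0}$ with a good rate function, then invoke Lemma~2.6 of \cite{LS1987} to pass from the upper bound to exponential tightness along any sequence $t_n\to+\infty$. The paper states this in two sentences without details, whereas you spell out the identification $\bmzeta^T_t=\pi^*_{[0,T]}\bmzeta_t$ and the goodness of the contracted rate function explicitly.
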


	\section{Large-time asymptotics for the Feynman--Kac semigroup}\label{largetimeFK}
    The goal of this section is to study the large-time behavior of the Feynman--Kac semigroup by verifying the conditions of Proposition~\ref{propositionE2}.
    
	We begin with some definitions. Throughout this section, let us fix a time $T>0$ and introduce a Markov $T$-process associated with the Navier--Stokes system \eqref{I1}: the phase space is $X^{T}:=C([0,T];H)$, and the transition function is defined in the following way.  Let us take $\uu\in X^{T}$, and consider 
	\[\Gamma:=\mcap_{j=1}^n\pi_{t_j}^{-1}(A_j),\]
    where $0\le t_1<\ldots<t_n\le T$, and $A_j\in\mathscr{B}(H)$ for $j=1,2,\ldots,n$. If $t+t_1\ge T$, then the transition function is defined by
    \begin{align}
        \label{FK1-1}
        P_{t}^T(\uu,\Gamma
		):=\PPPPPP_{\uu(T)}\left(\mcap_{j=1}^n\{u_{t+t_j-T}\in A_j\}\right),
    \end{align}
    where $\PPPPPP_{\uu(T)}:=\PPPPPP_{u}|_{u=\uu(T)}$, and $(u_t,\PPPPPP_u)$ is the Markov family associated with \eqref{I1}. If $0\le t+t_1<T$, we define
	\begin{align}
		\label{FK1}
		P_{t}^T(\uu,\Gamma
		):=\prod_{j=1}^k\I_{A_j}(\uu(t+t_j))\PPPPPP_{\uu(T)}\left(\mcap_{j=k+1}^n\{u_{t+t_j-T}\in A_j\}\right),
	\end{align}
	where the integer $k$ is such that $t+t_k< T\le t+t_{k+1}$. By the Kolmogorov theorem, the transition function is well defined. Let $(\uu_t,\PPPPPP_{\uu})$ denote the Markov family corresponding to this   transition function. For $\VVV\in C_b(X^{T})$, we define the Feynman--Kac semigroup by the formula
	\begin{align}\label{FK2}\PPPP_t^{T,\VVV}\fff(\uu):=\E_{\uu}\left(\fff(\uu_t)\exp\left(\int_0^t\VVV(\uu_s)\dd s\right)\right),\qquad \fff\in C_b(X^{T}).\end{align}
    By $\PPPP_{t}^{T,\VVV*}$, we denote the dual semigroup of $\PPPP_{t}^{T,\VVV}$, and for simplicity, we write 
	\begin{align}
	    \label{FK3}\PPPP_t^{T}:=\PPPP_t^{T,0},\qquad \PPPP_t^{T*}:=\PPPP_t^{T,0*}.
	\end{align}

	\subsection{Hyper-exponential recurrence}
	Let us introduce
	\[X^T_{\infty}:=C([0,T];U)\bigcap C^{\frac{1}{4}}([0,T];U^*)\]
	endowed with the norm
	\[\|\uu\|_{X^T_{\infty}}:=\|\uu\|_{C([0,T];U)}+\|\uu\|_{C^{1/4}([0,T];U^*)}.\]
	For any integer $R\ge 1$, let $X^T_R$ denote the closure of $B_{X^T_\infty}(R)$ in $X^T$. According to the Aubin--Lions lemma, $X^T_R$ is a compact subset of $X^T$. Moreover, $X^T_{\infty}$ is dense in $X^T$. Let 
	\begin{align}\label{HER0}\tau(R):=\inf\left\{t\ge0 \big|\uu_t\in X_{R}^T\right\}\end{align}
	be the first hitting time of $X^T_R$. 
	\begin{proposition}\label{propositionHER}
		For any $\kappa>0$, there are integers $m,R\ge 1$ and $C>0$ such that 
		\begin{align}
			\label{HER1}
			\mathbb{E}_{\uu}\exp(\kappa\tau(R))\le C(\|\uu\|_{X^T}^{2m}+1),\qquad \forall \uu\in X^T.
		\end{align}
	\end{proposition}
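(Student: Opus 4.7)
The plan is to reduce \eqref{HER1} to an exponential-moment estimate for a suitable hitting time of the underlying Navier--Stokes family and to close it by combining a dissipative Lyapunov argument with a positive-probability regularization estimate. The key reduction comes from the transition rule \eqref{FK1-1}: for every $t \ge T$ the state $\uu_t$ of the Markov $T$-process equals the sliding window $u_{[t-T,t]}$ of the NSE trajectory started at $\uu(T)\in H$. Consequently
\[
\tau(R) \le T + \sigma(R),\qquad \sigma(R) := \inf\{r \ge 0 : u_{[r,r+T]} \in X^T_R\},
\]
and since $\|\uu(T)\|^2 \le \|\uu\|_{X^T}^2$, it suffices to establish $\E_u\exp(\kappa \sigma(R)) \lesssim \|u\|^{2m}+1$ for every $u\in H$ and suitable $m,R \ge 1$.

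To control $\sigma(R)$, I would combine three ingredients. First, applying It\^o's formula to $\|u\|^{2m}$ and using the exponential growth estimate \eqref{A2} gives a Lyapunov inequality $\E_u\|u_t\|^{2m} \le e^{-\alpha t}\|u\|^{2m}+C$, from which a standard stopping-time argument produces $K_0>0$ and $\kappa'>\kappa$ such that the hitting time $\eta := \inf\{r\ge 0 : \|u_r\|^2 \le K_0\}$ satisfies $\E_u\exp(\kappa'\eta) \lesssim \|u\|^{2m}+1$. Second, a positive-probability regularization estimate: there exist $R\ge 1$ and $p_0>0$ with $\PPPPPP_u\{u_{[T,2T]} \in X^T_R\} \ge p_0$ for every $u\in H$ with $\|u\|^2 \le K_0$. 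The shift by $T$ is crucial: it uses the parabolic smoothing of \eqref{I1} to push the singular $H$-initial condition off the interval, yielding a polynomial moment bound $\E_u \sup_{s\in[T,2T]}\|u_s\|_1^2 \lesssim 1 + \|u\|^2$ from the energy--enstrophy identity. Combined with an increment bound $\E_u\|u_t-u_s\|_{U^*}^4 \lesssim |t-s|^2$ for $T\le s<t\le 2T$ obtained from the mild form of \eqref{I1} (the drift $Lu+B(u)-h$ is controlled in $U^*$ by a polynomial in $\|u\|_1$ and contributes $|t-s|$, while the Burkholder--Davis--Gundy inequality together with \eqref{I2} gives $|t-s|^{1/2}$ in $L^4$ for the stochastic convolution), the Kolmogorov continuity criterion controls the $C^{1/4}([T,2T];U^*)$-seminorm. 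Markov's inequality with $R$ large then delivers the claim. Third, by the strong Markov property applied iteratively at the stopping times $\eta,\eta+T,\eta+2T,\ldots$, the number of trials before the sliding window enters $X^T_R$ is stochastically dominated by a geometric random variable with parameter $p_0$; between unsuccessful trials the first ingredient controls the return to $\{\|u\|^2\le K_0\}$ in exponentially short time. Summing over trials yields $\E_u\exp(\kappa\sigma(R)) \lesssim \|u\|^{2m}+1$ as required.

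The main obstacle is the H\"older regularity estimate above, together with its compatibility with uniform control over the ball $\{\|u\|^2 \le K_0\}$. On the torus one could exploit the explicit Fourier decomposition of the Stokes semigroup to estimate the stochastic convolution; in the bounded-domain setting of \eqref{I1} one has to proceed through spectral calculus for $L$ and the summability \eqref{I2} of $\{\alpha_j b_j^2\}$, carefully quantifying the smoothing properties of $e^{-sL}$ in order to get the sharp $|t-s|^{1/2}$ scaling in $L^4(\Omega;U^*)$. Moreover, the only global-in-time a priori information for the enstrophy is the doubly-logarithmic bound \eqref{A5}, so one must be careful to exploit the finite time horizon $[T,2T]$ throughout and to avoid any estimate that would require global polynomial moments of $\|u\|_1$.
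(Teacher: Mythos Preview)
Your overall architecture --- Lyapunov recurrence to a bounded set, a positive-probability regularization step, then geometric trials via the strong Markov property --- is exactly the paper's strategy. The reduction $\tau(R)\le T+\sigma(R)$ via \eqref{FK1-1} is fine, and the paper's Step~1 is your first ingredient with the small set taken as $\overline{B_{X^T}(r)}$ rather than $\{\|u\|^2\le K_0\}$; these are equivalent.

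The gap is in your second ingredient. The polynomial enstrophy moment
\[
\E_u\sup_{s\in[T,2T]}\|u_s\|_1^2\;\lesssim\;1+\|u\|^2
\]
is \emph{not} available on a bounded domain, even on a finite time window and even for $u$ in a fixed $H$-ball. The obstruction is not global-in-time, as you suggest, but the failure of the enstrophy cancellation $\langle B(u),Lu\rangle=0$ under Dirichlet boundary conditions. Applying the $U$-inner product to the $v$-equation in the decomposition \eqref{A6}--\eqref{A7} produces a term $\langle B(v+z),Lv\rangle$ that, after Ladyzhenskaya and Young, leaves a cubic-type nonlinearity in the enstrophy inequality; Gronwall then yields \eqref{A8-4}, in which $\|v(t)\|_1^2$ is bounded by a \emph{double exponential} of the noise norm $\|z_{[0,t]}\|_{\XX^1}^2$. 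Since $\|z\|_{\XX^1}^2$ is a Gaussian quadratic form, $\exp\bigl(C\exp(c\|z\|_{\XX^1}^2)\bigr)$ has no finite moments, and the best one extracts is the doubly-logarithmic bound \eqref{A5}. Your increment estimate for the $C^{1/4}$-norm inherits the same problem, because the drift contribution requires moments of $\|u\|_1^2$ (cf.\ \eqref{A11}).

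The fix is simple and is what the paper does: replace the polynomial moment by \eqref{A5} combined with Corollary~\ref{corollaryA1}, giving
\[
\E_{\uu}\log\bigl(1+\log(1+\|\uu_{T+1}\|_{X^T_\infty})\bigr)\;\lesssim\;\|\uu\|_{X^T}^2+1,
\]
and apply Chebyshev to get $\PPPPPP_{\uu}\{\uu_{T+1}\in X^T_R\}\ge 1-C(r^2+1)/\log(1+\log(1+R))$ for $\uu\in\overline{B_{X^T}(r)}$. This still delivers a success probability $1-p$ for any prescribed $p\in(0,1)$ by taking $R=R(p,r)$ large, after which your geometric-trial argument (the paper's \eqref{HER6-1}--\eqref{HER7}) closes exactly as you outlined.
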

	\begin{proof}
		\textit{Step 1: Hyper-exponential recurrence in $X^T$.} For $r>0$, let us define
		\begin{align}\label{HER1-1}\tau_0(r):=\inf\{t\ge0| \uu_t\in \overline{B_{X^T}(r)}\}.\end{align}
		We claim that for any $\kappa>0$, there are integers $m,r\ge 1$ and $C>0$ such that 
		\begin{align}
			\label{HER2}
			\mathbb{E}_{\uu}\exp(\kappa\tau_0(r))\le C(\|\uu\|_{X^T}^{2m}+1),\qquad \forall \uu\in X^T.
		\end{align}
		 Indeed, by utilizing \eqref{A4-2}, we have
		\begin{align}
			\label{HER3}
			\E_{\uu}\|\uu_{T+1}\|^{2m}_{X^T}\le 4e^{-m\alpha_1}\|\uu\|^{2m}_{X^T}+C_{m,\BB_0,h,T}.
		\end{align}
		Then, by choosing $m$ sufficiently large so that 
        \begin{align}\label{HER4-1}
            q:=8e^{-m\alpha_1}<1,\qquad e^{\kappa(T+1)}q<1,
        \end{align}
        and setting $r:=\frac{1}{4}e^{m\alpha_1}C_{m,\BB_0,h,T}$, we have
		\begin{align}
			\label{HER4}
			\E_{\uu}\|\uu_{T+1}\|^{2m}_{X^T}\le q(\|\uu\|^{2m}_{X^T}\vee r).
		\end{align}
		Combining this and the Markov property, we derive 
		\begin{align*}
			p_k(\uu):&=\E_{\uu}\left(\I_{\{\tau_0(r)>k(T+1)\}} \|\uu_{k(T+1)}\|^{2m}_{X^T}\right)\\&\le \E_{\uu}\left(\I_{\{\tau_0(r)>(k-1)(T+1)\}} \|\uu_{k(T+1)}\|^{2m}_{X^T}\right)\\&= \E_{\uu}\left(\I_{\{\tau_0(r)>(k-1)(T+1)\}}\E_{\uu_{(k-1)(T+1)}}\|\uu_{T+1}\|^{2m}_{X^T}\right)\\&\le q\E_{\uu}\left(\I_{\{\tau_0(r)>(k-1)(T+1)\}}\left(\|\uu_{(k-1)(T+1)}\|^{2m}_{X^T}\vee r\right)\right)=qp_{k-1}(u),
		\end{align*}
		which implies
		\[p_k(\uu)\le q^k\|\uu\|^{2m}_{X^T}\]
		for any integer $k\ge 1$ and $\uu\in X^T$. Hence, 
		\begin{align*}
			\PPPPPP_{\uu}\{\tau_0(r)>k(T+1)\}\le r^{-2m}p_{k}(\uu) \le r^{-2m}q^k\|\uu\|^{2m}_{X^T}.
		\end{align*}
        Combining this with \eqref{HER4-1}, we see that
		\begin{align*}
			\E_{\uu}\exp(\kappa\tau_0(r))&\le 1+\E_{\uu}\left(\exp(\kappa \tau_0(r))\sum_{k=0}^{\infty}\I_{\{\tau_0(r)\in(k(T+1),(k+1)(T+1)]\}}\right)\\&\le 1+\sum_{k=0}^{\infty}e^{\kappa(k+1)(T+1)}\PPPPPP_{\uu}\{\tau_0(r)>k(T+1)\}\\&\le 1+\sum_{k=0}^{\infty}e^{\kappa(k+1)(T+1)}r^{-2m}q^k\|\uu\|^{2m}_{X^T}\le C_{\kappa,T}(\|\uu\|^{2m}_{X^T}+1).
		\end{align*}
		
		\noindent\textit{Step 2:
  			Auxiliary stopping times and estimates.} Let us introduce
		\[\tilde\tau_0(r):=\tau_0(r)+T+1,\]
		where $\tau_0(r)$ is defined in \eqref{HER1-1}, and 
		\[\tau_n(r):=\inf\{t>\tilde\tau_{n-1}(r)| \uu_t\in \overline{B_{X^T}(r)}\},\qquad \tilde\tau_n(r):=\tau_n(r)+T+1,\qquad n\ge 1.\]
		We define
		\[\tilde n:=\min\{n\in\N| \uu_{\tilde\tau_{ n}(r)}\in X^T_R\}.\]
            Combining \eqref{A5} and Corollary~\ref{corollaryA1}, we get
		\begin{align}
			\label{HER5}
			\E_{\uu}\log\left(1+\log\left(1+\|\uu_{T+1}\|_{X^T_{\infty}}\right)\right)\lesssim_{\BB_1,h,T}\|\uu\|^2_{X^T}+1.
		\end{align}
		This, together with the Chebyshev inequality, implies
		\begin{align*}
			\PPPPPP_{\uu}\left\{\uu_{T+1}\in X^T_R\right\}&\ge1-\PPPPPP_{\uu}\left\{\uu_{T+1}\notin B_{X^T_\infty}(R)\right\}\ge1-\frac{C_{\BB_1,h,T}(\|\uu\|^2_{X^T}+1)}{\log(1+\log(1+R))}.
		\end{align*}
		Therefore, for any $p\in (0,1)$, $r\ge 1$ and $\uu\in \overline{B_{X^T}(r)}$, there is $R=R(p,r)>0$ such that 
		\begin{align*}
			\PPPPPP_{\uu}\{\uu_{T+1}\in X^T_R\}\ge 1-p.
		\end{align*}
		Combining this with the strong Markov property, we derive 
		\begin{align}\label{HER6-1}
			\PPPPPP_{\uu}\{\tilde n>k\}&\le \E_{\uu}\left(\prod_{j=0}^k\I_{\left\{\uu_{\tilde\tau_j(r)}\notin X^{T}_{R}\right\}}\right)\notag\\&\le \E_{\uu}\left(\prod_{j=0}^{k-1}\I_{\left\{\uu_{\tilde\tau_j(r)}\notin X^{T}_{R}\right\}}\PPPPPP_{u_{\tau_k(r)}}\{\uu_{T+1}\notin X^T_R\}\right)\notag\\&\le p\E_{\uu}\left(\prod_{j=0}^{k-1}\I_{\left\{\uu_{\tilde\tau_j(r)}\notin X^{T}_{R}\right\}}\right) \le \ldots\le p^{k}
		\end{align}
		for any $\uu\in X^T$ and integer $k\ge 1$. On the other hand, by using \eqref{HER2} with $3\kappa$, the estimate \eqref{HER4}, and the Markov property, for 
		\[\tau_0':=\{t\ge T+1| \uu_t\in \overline{B_{X^T}(r)}\},\]
		we have 
		\begin{align*}
			\E_{\uu}(\exp(3\kappa \tau_0'))&\le e^{3\kappa(T+1)}\E_{\uu}\left(\mathbb{E}_{\uu_{T+1}}(\exp(3\kappa\tau_0))\right)\\&\le Ce^{3\kappa(T+1)}\E_{\uu}(1+\|\uu_{T+1}\|^{2m}_{X^T})\lesssim_{\kappa,T}\left(1+\|\uu\|^{2m}_{X^T}\vee r\right),
		\end{align*}
		which, together with the strong Markov property and the definition of $\tau_{k-1}$, yields
		\begin{align*}
			\E_{\uu}\exp(3\kappa \tau_k)&=\E_{\uu}\left(\mathbb{E}_{\uu}(\exp(3\kappa\tau_k)|\mathscr{F}_{\tau_{k-1}})\right)\\&=\E_{\uu}\left(\exp(3\kappa \tau_{k-1})\mathbb{E}_{\uu_{\tau_{k-1}}}(\exp(3\kappa\tau_0')\right)\\&\lesssim_{\kappa,T}\E_{\uu}\left(\exp(3\kappa \tau_{k-1})\right).
		\end{align*}
		Utilizing this and \eqref{HER2}, we obtain
		\begin{align}
			\label{HER7}
			\E_{\uu}\exp(3\kappa\tilde\tau_k)\le C_{\kappa,T}^{k}\left(\|\uu\|^{2m}_{X^T}+1\right),\qquad\forall \uu\in X^T.
		\end{align}
        
        \noindent\textit{Step 3:
  			Hyper-exponential recurrence in $X^{T}_{\infty}$.} Applying \eqref{HER6-1} and \eqref{HER7}, we see that
		\begin{align*}
			\PPPPPP_{\uu}\{\tau(R)\ge l(T+1)\}&\le \PPPPPP_{\uu}\{\tau(R)>\tilde{\tau_k}\}+\PPPPPP_{\uu}\{\tilde\tau_k\ge l(T+1)\}\\&\le \PPPPPP_{\uu}\{\tilde n>k\}+\PPPPPP_{\uu}\{\tilde\tau_k\ge l(T+1)\}\\&\le p^k+e^{-3\kappa l(T+1)}C_{\kappa,T}^k\left(\|\uu\|^{2m}_{X^T}+1\right).
		\end{align*}
		By choosing $k=cl$, where $c$ is such that 
		\[c\log(C_{\kappa,T})\le \kappa(T+1),\]
		and take $p$ sufficiently small so that 
		\[c\log p\le -2\kappa(T+1),\]
		we obtain
		\[\PPPPPP_{\uu}\{\tau(R)\ge l(T+1)\}\le 2e^{-2\kappa l(T+1)}\left(\|u\|_{X^T}^{2m}+1\right).\]
		As shown previously, this implies \eqref{HER1} and thus completes the proof.
	\end{proof}

	\subsection{Concentration property and growth condition}
	Let us define the weight function \begin{align}\label{GC1}
		\wwww_m(\uu):=\|\uu\|_{X^T}^{2m}+1
	\end{align} 
	and denote by $C_{\wwww,m}(X^T)$ (respectively, $L^{\infty}_{\wwww,m}(X^T)$) the space of all continuous (measureable) functions $\fff: X^T\to \mathbb{R}$ such that   
	\[|\fff(\uu)|\le C\wwww_m(\uu),\qquad\forall \uu\in X^T.\]
	Let us endow the spaces $C_{\wwww,m}(X^T)$ and $L^\infty_{\wwww,m}(X^T)$ with the norm
	\begin{align}\label{GC1-1}\|\fff\|_{L^\infty_{\wwww,m}}:=\sup_{\uu\in X^T}\left|\frac{\fff(\uu)}{\wwww_m(\uu)}\right|.\end{align}
	\begin{proposition}\label{propositionGC}
		For any $\VVV\in C_b(X^T)$, $\uu \in X^T$, and $t>T$, the measure 
		\[P^{T,\VVV}_{t}(\uu,\cdot):= \PPPP_{t}^{T,\VVV*}\delta_{\uu}\]
		is concentrated on $X^T_{\infty}$. Moreover, for any $\VVV\in C_b(X^T)$, there are integers $m,R_0\ge 1$ such that\footnote{The expression $\PPPP_t^{T,\VVV}\wwww_m$ is understood in the sense of \eqref{FK2}, though $\wwww_m$ is not bounded.}  
		\begin{align}
			&\sup_{t\ge0}
			\frac{\|\PPPP^{T,\VVV}_t\wwww_l\|_{L_{\wwww,l}^\infty}}{\|\PPPP^{T,\VVV}_t{\mathbf1}\|_{R_0}}<\infty,\label{GC2}\\
			&\sup_{t\in [0,1]} \|\PPPP^{T,\VVV}_t{\mathbf1}\|_{\infty}<\ty\label{GC3}
		\end{align}
        for any $l\ge m$, where $\mathbf{1}$ denotes the function on $X^T$ identically equal to 1, and $\|\cdot\|_{\infty}$, $\|\cdot\|_R$ are the $L^{\infty}$ norms on $X^T$ and $X^T_R$, respectively.
	\end{proposition}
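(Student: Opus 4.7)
The concentration statement comes directly from the regularizing property of the 2D Navier--Stokes system. For $t>T$ and any cylinder $\Gamma$ with base times $0\leq t_1<\dots<t_n\leq T$, definition \eqref{FK1-1} gives $P^T_t(\uu,\Gamma)=\PPPPPP_{\uu(T)}\bigl(\bigcap_{j=1}^n\{u_{t+t_j-T}\in A_j\}\bigr)$, so $\uu_t$ under $P^T_t(\uu,\cdot)$ is distributed as $s\mapsto u_{t+s-T}$ on $[0,T]$, where $(u_r)$ solves \eqref{I1} with initial datum $\uu(T)$. Since $t+s-T\geq t-T>0$ for every $s\in[0,T]$, the Appendix smoothing estimates ($H^1$--bounds combined with H\"older-in-time $U^*$ regularity, cf.\ Corollary~\ref{corollaryA1}) place this random path in $X^T_\infty = C([0,T];U)\cap C^{1/4}([0,T];U^*)$ almost surely. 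The Feynman--Kac density $\exp(\int_0^t \VVV(\uu_s)\,ds)$ is strictly positive and bounded, so $P^{T,\VVV}_t(\uu,\cdot)$ inherits this concentration.

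Estimate \eqref{GC3} is immediate: for every $\uu\in X^T$ and $t\in [0,1]$, $|\PPPP^{T,\VVV}_t\mathbf{1}(\uu)|\leq e^{\|\VVV\|_\infty}$.

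The heart of the proposition is \eqref{GC2}. My plan is to combine four ingredients: (i) polynomial moment bounds $\E_\uu \wwww_l(\uu_t)\lesssim \wwww_l(\uu)$ uniformly in $t\geq 0$, obtained by iterating \eqref{A4-2}; (ii) the hyper-exponential recurrence of Proposition~\ref{propositionHER}, applied with $\kappa$ chosen much larger than $\|\VVV\|_\infty$, which fixes the compact absorbing set $X^T_{R_0}$ and the integer $m$; (iii) the strong Markov property at $\tau=\tau(R_0)$; and (iv) the crude lower bound $\|\PPPP^{T,\VVV}_t\mathbf{1}\|_{R_0}\geq e^{-t\|\VVV\|_\infty}$. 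I split $\PPPP^{T,\VVV}_t\wwww_l(\uu)$ according to $\{\tau>t\}$ and $\{\tau\leq t\}$. On $\{\tau>t\}$, Cauchy--Schwarz together with $e^{\int_0^t\VVV}\leq e^{t\|\VVV\|_\infty}$, the bound $(\E_\uu\wwww_{2l}(\uu_t))^{1/2}\leq C\wwww_l(\uu)$, and the Chebyshev-type tail $\PPPPPP_\uu\{\tau>t\}^{1/2}\lesssim e^{-\kappa t/2}\wwww_m(\uu)^{1/2}$ give, for $\kappa>4\|\VVV\|_\infty$, a contribution bounded by $C\wwww_l(\uu)e^{-t\|\VVV\|_\infty}\leq C\wwww_l(\uu)\|\PPPP^{T,\VVV}_t\mathbf{1}\|_{R_0}$. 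On $\{\tau\leq t\}$, the strong Markov property rewrites the contribution as $\E_\uu\bigl[\I_{\{\tau\leq t\}}e^{\int_0^\tau\VVV}\PPPP^{T,\VVV}_{t-\tau}\wwww_l(\uu_\tau)\bigr]$. Since $\uu_\tau\in X^T_{R_0}$, $\wwww_l(\uu_\tau)$ is uniformly bounded by $M_l:=\sup_{X^T_{R_0}}\wwww_l$, and the problem reduces to establishing the uniform bound $\PPPP^{T,\VVV}_s\wwww_l(\vv)\leq C_l\|\PPPP^{T,\VVV}_s\mathbf{1}\|_{R_0}$ for $\vv\in X^T_{R_0}$ and all $s\geq 0$.

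I expect the principal obstacle to be precisely this last reduction: iterating the excursion decomposition on successive returns to $X^T_{R_0}$, each excursion produces an $e^{s\|\VVV\|_\infty}$ factor from Cauchy--Schwarz that must be absorbed into the denominator $\|\PPPP^{T,\VVV}_s\mathbf{1}\|_{R_0}$, which a priori only grows at the unknown principal eigenvalue rate. Choosing $\kappa$ large enough relative to $\|\VVV\|_\infty$ in Proposition~\ref{propositionHER} provides the quantitative margin needed to absorb these factors over every excursion and to keep the $\uu$-dependence linear in $\wwww_l(\uu)$, closing \eqref{GC2} for all $l\geq m$.
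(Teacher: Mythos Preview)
Your treatment of the concentration statement and of \eqref{GC3} is fine, and the decomposition along $\{\tau(R_0)>t\}$ versus $\{\tau(R_0)\le t\}$ is the right starting move. The gap is exactly where you flag it: your ``reduction'' on $\{\tau\le t\}$ leaves you needing
\[
\sup_{0\le s\le t}\ \sup_{\vv\in X^T_{R_0}} \PPPP^{T,\VVV}_s\wwww_l(\vv)\ \lesssim\ \|\PPPP^{T,\VVV}_t\mathbf{1}\|_{R_0},
\]
i.e.\ the very estimate you are trying to prove, now at the random earlier time $s=t-\tau$. Your proposal to ``iterate the excursion decomposition on successive returns'' does not close this: each excursion contributes a factor controlled only by $\E e^{\|\VVV\|_\infty(\tau_{k+1}-\tau_k)}$, and you have no a priori lower bound on $\|\PPPP^{T,\VVV}_t\mathbf{1}\|_{R_0}$ better than $e^{-t\|\VVV\|_\infty}$ to absorb the accumulated product. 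Taking $\kappa$ large in Proposition~\ref{propositionHER} does not help, because the integer $m$ produced by that proposition then grows with $\kappa$, and you never get a bound that is uniform in $t$ with a fixed weight.

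The paper avoids this circularity by two devices you are missing. First, it replaces $\VVV$ by $\VVV-\inf\VVV\ge 0$, so that $s\mapsto\PPPP^{T,\VVV}_s\mathbf{1}(\vv)$ is nondecreasing and $\|\PPPP^{T,\VVV}_t\mathbf{1}\|_{R_0}\ge 1$. With this monotonicity the hitting-time argument works cleanly \emph{for the function $\mathbf{1}$ only}: on $\{\tau\le t\}$ one has $\PPPP^{T,\VVV}_{t-\tau}\mathbf{1}(\uu_\tau)\le \PPPP^{T,\VVV}_t\mathbf{1}(\uu_\tau)\le\|\PPPP^{T,\VVV}_t\mathbf{1}\|_{R_0}$, and on $\{\tau>t\}$ one bounds by $\E_\uu e^{\|\VVV\|_\infty\tau}\le C\wwww_{m_0}(\uu)$ directly (no Cauchy--Schwarz needed). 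This yields the intermediate estimate $\|\PPPP^{T,\VVV}_t\mathbf{1}\|_{L^\infty_{\wwww,m}}\lesssim\|\PPPP^{T,\VVV}_t\mathbf{1}\|_{R_0}$. Second, the passage from $\mathbf{1}$ to $\wwww_m$ is done not by hitting times but by a Lyapunov drift recursion along the fixed lattice $k(T+1)$: from \eqref{A4-2} and $\VVV\ge 0$ one gets
\[
\PPPP^{T,\VVV}_{k(T+1)}\wwww_m \le q\,\PPPP^{T,\VVV}_{(k-1)(T+1)}\wwww_m + C\,\PPPP^{T,\VVV}_{k(T+1)}\mathbf{1},\qquad q:=4e^{(T+1)\|\VVV\|_\infty-m\alpha_1}<1
\]
for $m$ large, which iterates to $\PPPP^{T,\VVV}_{k(T+1)}\wwww_m\le q^k\wwww_m + C'\PPPP^{T,\VVV}_{k(T+1)}\mathbf{1}$ and, combined with the intermediate estimate, gives \eqref{GC2}. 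The key structural point is that the random time shift is handled only for $\mathbf{1}$ (where monotonicity is available), while the unbounded weight $\wwww_m$ is handled by a deterministic-step contraction that never sees $\tau$.
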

	\begin{proof} Combining the estimate \eqref{A5} and Corollary~\ref{corollaryA1}, we see that the measure $P^{T,\VVV}_{t}(\uu,\cdot)$ is concentrated on $X^{T}_{\infty}$ for any $\VVV\in C_b(X^T)$, $\uu \in X^T$, and $t>T$. The estimate \eqref{GC3} follows from the definition \eqref{FK2}. Thus, it remains to verify \eqref{GC2}, whose proof is divided into the following four steps. 
		
		\noindent{\it Step 1.} We claim that for any $\VVV\in C_b(X^T)$, $t\ge0$, and integer $m\ge 1$, $\PPPP_t^{T,\VVV}$ is a bounded linear operator in $L^{\infty}_{\wwww,m}$. Indeed, from \eqref{A4-2}, we have 
		\begin{align}\label{GC11}
			\E_{\uu}\wwww_m(\uu_t)\lesssim_{m,\BB_0,h,T}\wwww_m(\uu)
		\end{align}
		for any $t\ge 0$ and $m\ge 1$. Then, for any $\fff\in L^{\infty}_{\wwww,m}$,
		\begin{align*}
			\PPPP_t^{T,\VVV}\fff(u)&=\E_{\uu}\left(\fff(\uu_t)\exp\left(\int_0^t\VVV(\uu_s)\dd s\right)\right)\\&\le \|\fff\|_{L^{\infty}_{\wwww,m}}e^{t\|\VVV\|_{\infty}} \E_{\uu}\wwww_m(\uu_t)\le C_{m,\BB_0,h,T}e^{t\|\VVV\|_{\infty}}\|\fff\|_{L^{\infty}_{\wwww,m}}\wwww_m(\uu),
		\end{align*}
		which implies 
		\begin{align}\label{GC4}\|\PPPP_t^{T,\VVV}\|_{\LL(L^{\infty}_{\wwww,m})}\le C_{m,\BB_0,h,T}e^{t\|\VVV\|_{\infty}}\end{align}
		as claimed.

		\noindent\textit{Step 2.} Notice that by replacing $\VVV$ by $\VVV-\inf_{X^T}\VVV$, we can always assume that $\VVV$ is non-negative. To prove \eqref{GC2}, it suffices to establish 
		\begin{align}
			\label{GC9}
			\sup_{k\in\N}\frac{\|\PPPP_{k(T+1)}^{T,\VVV}\wwww_m\|_{L^{\infty}_{\wwww,m}}}{\|\PPPP_{k(T+1)}^{T,\VVV}\mathbf{1}\|_{R_0}}<\infty.
		\end{align}
		To see this, for any $t\ge0$, let us write 
		\[\alpha(t):=\left[\frac{t}{(T+1)}\right](T+1).\]
		Then, from \eqref{GC4} and the non-negativity of $\VVV$, we have 
		\begin{align*}
			\|\PPPP^{T,\VVV}_t\wwww_m\|_{L^{\infty}_{\wwww,m}}&\le \|\PPPP_{t-\alpha(t)}^{T,\VVV}\|_{\LL(L^{\infty}_{\wwww,m})}\|\PPPP_{\alpha(t)}^{T,\VVV}\wwww_m\|_{L^{\infty}_{\wwww,m}}\\&\lesssim_{\VVV,m,\BB_0,h,T}\|\PPPP_{\alpha(t)}^{T,\VVV}\wwww_m\|_{L^{\infty}_{\wwww,m}},
		\end{align*}
		and 
		\begin{align*}
			\|\PPPP^{T,\VVV}_{t}\mathbf{1}\|_{R_0}\ge \|\PPPP^{T,\VVV}_{\alpha(t)}\mathbf{1}\|_{R_0}.
		\end{align*}
		This implies 
		\[\sup_{t\ge0}
		\frac{\|\PPPP^{T,\VVV}_t\wwww_m\|_{L_{\wwww,m}^\infty}}{\|\PPPP^{T,\VVV}_t{\mathbf1}\|_{R_0}}\le C_{\VVV,m,\BB_0,h,T}\sup_{k\in\N}\frac{\|\PPPP_{k(T+1)}^{T,\VVV}\wwww_m\|_{L^{\infty}_{\wwww,m}}}{\|\PPPP_{k(T+1)}^{T,\VVV}\mathbf{1}\|_{R_0}}.\]
		\noindent\textit{Step 3.} Let us show that there are integers $m_0,R_0\ge 1$ such that 
		\begin{align}
			\label{GC5}
			\sup_{t\ge0}
			\frac{\|\PPPP^{T,\VVV}_t\mathbf{1}\|_{L_{\wwww,m}^\infty}}{\|\PPPP^{T,\VVV}_t{\mathbf1}\|_{R_0}}<\infty
		\end{align}
        for any $m\ge m_0$. To see this, let $\tau(R)$ be defined in \eqref{HER0}, and let the integers $m_0,R_0\ge1$ and $C_0>0$ be such that \eqref{HER1} holds with $\kappa:=\|\VVV\|_{\infty}$. Then, 
		\begin{align}
			\label{GC6}
			\PPPP_t^{T,\VVV}\mathbf{1}(\uu)&=\E_{\uu}\left(\I_{A_t}\exp\left(\int_0^t\VVV(\uu_s)\dd s\right)\right)+\E_{\uu}\left(\I_{A^c_t}\exp\left(\int_0^t\VVV(\uu_s)\dd s\right)\right)\notag\\&=: I_1+I_2,
		\end{align}
		where $A_t:=\{\tau(R_0)>t\}.$ By Proposition~\ref{propositionHER}, we have
		\begin{align}
			\label{GC7}
			I_1\le \E_{\uu}\exp({\kappa \tau(R_0)})\le C_0\wwww_{m_0}(\uu).
		\end{align}
		As for $I_2$, by using the strong Markov property, Proposition~\ref{propositionHER}, and the non-negativity of $\VVV$, we see that
		\begin{align}
			\label{GC8}
			I_2&\le \E_{\uu}\left(\I_{A_t^c}\exp\left(\int_0^{\tau(R_0)}\VVV(\uu_s)\dd s\right)\E_{\uu_{\tau(R_0)}}\left(\int_0^{t}\VVV(\uu_s)\dd s\right)\right)\notag\\&\le  C_0\|\PPPP_t^{T,\VVV}\mathbf{1}\|_{R_0}\wwww_{m_0}(\uu).
		\end{align}
		Notice that $\PPPP_t^{T,\VVV}\mathbf{1}(\uu)\ge 1$ for any $\uu\in X^T$. Combining this, the estimates \eqref{GC6}--\eqref{GC8}, and the fact
        \[\wwww_{m}(\uu)\le 2\wwww_{m+1}(\uu),\qquad \forall m\ge 1,\]
    we obtain \eqref{GC5}.
		
		\noindent\textit{Step 4.} We turn to the proof of \eqref{GC2}. Utilizing the Markov property, the estimate \eqref{HER3}, and the non-negativity of $\VVV$, we obtain
		\begin{align}
			\label{GC10}\notag&\PPPP_{k(T+1)}^{T,\VVV}\wwww_m(\uu)\\\notag&\qquad\le e^{(T+1)\|\VVV\|_{\infty}}\E_{\uu}\left(\exp\left(\int_0^{(k-1)(T+1)}\VVV(\uu_s)\dd s\right)\E_{\uu_{(k-1)(T+1)}}\wwww_m(\uu_{T+1})\right)\\&\qquad\le 4e^{(T+1)\|\VVV\|_{\infty}-m\alpha_1}\PPPP^{T,\VVV}_{(k-1)(T+1)}\wwww_m(u)+C_{\VVV,m,\BB_0,h,T}\PPPP_{k(T+1)}^{T,\VVV}\mathbf{1}(\uu).
		\end{align}
		Therefore, by choosing $m$ sufficiently large so that 
		\[q:=4e^{(T+1)\|\VVV\|_{\infty}-m\alpha_1}<1\]
		and iterating the inequality \eqref{GC10}, we obtain 
		\begin{align*}
			\PPPP^{T,\VVV}_{k(T+1)}\wwww_m(\uu)\le q^k\wwww_m(\uu)+\frac{1}{1-q}C_{\VVV,m,\BB_0,h,T}\PPPP_{k(T+1)}^{T,\VVV}\mathbf{1}(\uu).
		\end{align*}
		This, together with \eqref{GC5}, implies \eqref{GC9}.
	\end{proof}

	\subsection{Time-continuity and dissipativity condition}
	The following result verifies the time-continuity condition in Proposition~\ref{propositionE2}.
	\begin{proposition}\label{propositionTC}
		For any integer $m\ge 1$, $\fff\in C_{\wwww,m}(X^T)$, $\VVV\in C_b(X^T)$, and $\uu\in X^T$, the map $t\mapsto \PPPP^{T,\VVV}_t \fff(\uu)$ is continuous from $[0,\infty)$ to $\R$.
	\end{proposition}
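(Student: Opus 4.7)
The plan is to realize the process $(\uu_t,\PPPPPP_{\uu})$ on a single continuous $H$-valued path, which reduces the claim to sample-path continuity of a shift combined with dominated convergence. Inspecting the transition function \eqref{FK1-1}--\eqref{FK1}, a canonical realization is $\uu_t(s):=v(t+s)$, $s\in[0,T]$, $t\ge0$, where
\[
v(r):=\uu(r)\ \text{ for }\ r\in[0,T],\qquad v(r):=u_{r-T}\ \text{ for }\ r\ge T,
\]
and $(u_r,\PPPPPP_{\uu(T)})$ is the Navier--Stokes Markov family started at $\uu(T)$. The two pieces agree at $r=T$ and the NS sample paths are $H$-continuous, so $v\in C([0,\infty);H)$ $\PPPPPP_{\uu}$-a.s.; comparing finite-dimensional distributions with \eqref{FK1-1}--\eqref{FK1} confirms this is the right realization.

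For a.e.\ sample, $v$ is uniformly continuous on every compact interval, hence
\[
\sup_{s\in[0,T]}\|\uu_{t+h}(s)-\uu_t(s)\|=\sup_{s\in[0,T]}\|v(t+h+s)-v(t+s)\|\longrightarrow 0
\]
as $h\to 0$, uniformly in $t\in[0,T_0]$ for any fixed $T_0>0$. Thus $t\mapsto\uu_t$ is continuous in $X^T$ $\PPPPPP_{\uu}$-almost surely, and by continuity of $\fff$ and $\VVV$, so is
\[
F_t:=\fff(\uu_t)\exp\!\left(\int_0^t\VVV(\uu_s)\dd s\right).
\]

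To pass continuity through $\E_{\uu}$, fix $T_0>0$ and use $|\fff|\le C\wwww_m$ together with $\|\uu_t\|_{X^T}\le\sup_{r\in[0,T_0+T]}\|v(r)\|$ to obtain
\[
\sup_{t\in[0,T_0]}|F_t|\le C\,e^{T_0\|\VVV\|_\infty}\!\left(\sup_{r\in[0,T_0+T]}\|v(r)\|^{2m}+1\right).
\]
The deterministic piece $r\in[0,T]$ is controlled by $\|\uu\|_{X^T}^{2m}$, while for $r\in[T,T_0+T]$ the standard pathwise $L^{2m}$ moment estimate for NS (obtained from It\^o's formula applied to $\|\cdot\|^{2m}$, in the same spirit as \eqref{A4-2}) yields $\E_{\uu}\sup_{r\in[T,T_0+T]}\|u_{r-T}\|^{2m}<\infty$. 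Dominated convergence then delivers $\PPPP_{t_n}^{T,\VVV}\fff(\uu)\to\PPPP_{t}^{T,\VVV}\fff(\uu)$ for any $t_n\to t$, which is the desired continuity. The only non-routine point is the pathwise realization, where the splicing at $r=T$ must be checked against \eqref{FK1-1}--\eqref{FK1}; everything else is a routine combination of sample-path continuity and moment bounds.
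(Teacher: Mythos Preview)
Your proof is correct. The paper takes a slightly different route at the integrability step: rather than building a single dominating random variable via the pathwise supremum $\sup_{r\in[0,T_0+T]}\|v(r)\|^{2m}$ and invoking dominated convergence, it shows uniform integrability of the family $\{F_t:0\le t\le t_0+1\}$ by bounding the second moments pointwise in $t$ (using $\E_{\uu}\wwww_{2m}(\uu_t)\lesssim\wwww_{2m}(\uu)$ from \eqref{GC11}) and then applies the Vitali convergence theorem. Your approach makes the canonical realization explicit, which clarifies why $t\mapsto\uu_t$ is sample-path continuous; the paper simply asserts this. The trade-off is that you need the supremum-in-time moment bound \eqref{A4-2}, which is a slightly stronger input than the pointwise-in-time bound the paper uses; on the other hand, dominated convergence is arguably cleaner than the Vitali argument. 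Both routes are short and equally valid here.
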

	\begin{proof}
		By the continuity of the sample paths, we have 
		\[\lim_{t\to t_0}\fff(\uu_t)\exp\left(\int_0^t \VVV(\uu_s)\dd s \right)=\fff(\uu_{t_0})\exp\left(\int_0^{t_0} \VVV(\uu_s)\dd s \right)\]
		almost surely. Moreover, we claim that the family
		\[\A:=\left\{\fff(\uu_t)\exp\left(\int_0^t \VVV(\uu_s)\dd s \right)\Big| 0\le t\le t_0+1\right\}\]
		is uniformly integrable. Indeed, by using \eqref{GC11}, we have
		\begin{align*}
			\E_\uu \left(\fff(\uu_t)\exp\left(\int_0^t \VVV(\uu_s)\dd s \right)\right)^2&\le 2\|\fff\|_{L^{\infty}_{\wwww,m}}^2 e^{2(t_0+1)\|\VVV\|_{\infty}}\E_{\uu}\wwww_{2m}(\uu_t)\\&\lesssim_{m,\BB_0,h,T}\|\fff\|_{L^{\infty}_{\wwww,m}}^2 e^{2(t_0+1)\|\VVV\|_{\infty}}\wwww_{2m}(\uu),
		\end{align*}
		which implies the uniform integrability of $\A$ as claimed. Thus, an application of the Vitali convergence theorem gives the continuity of $t\mapsto \PPPP^{T,\VVV}_t \fff(\uu)$.
	\end{proof}
	Let us turn to the dissipativity condition in Proposition~\ref{propositionE2}.
	\begin{proposition}\label{propositionDC}
		For any $\VVV\in C_b(X^T)$, integer $R\ge 1$, and $\TT>0$, there is a positive integer $\RR$ depending only on $R$ such that 
		\begin{align}\label{DC1}
			\inf_{t\in[0,\TT]}\inf_{\uu\in \overline{B_{X^T}(R)}}P^{T,\VVV}_t(\uu,\overline{B_{X^T}(\RR)})>0.
		\end{align}
	\end{proposition}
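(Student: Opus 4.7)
The plan is to exploit the boundedness of $\VVV$ to reduce the question to a lower bound on the transition of the underlying Markov $T$-process, and then to combine the moment estimate \eqref{GC11} with the Chebyshev inequality.

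First, since $\VVV \in C_b(X^T)$, for any $t\in[0,\TT]$ and any trajectory one has
\[\exp\Bigl(\int_0^t \VVV(\uu_s)\,\dd s\Bigr) \ge e^{-\TT\|\VVV\|_{\infty}}.\]
Plugging this into the definition \eqref{FK2} of $\PPPP_t^{T,\VVV}$ yields
\[P^{T,\VVV}_t(\uu, \overline{B_{X^T}(\RR)}) = \E_\uu\!\left(\I_{\overline{B_{X^T}(\RR)}}(\uu_t)\exp\!\Bigl(\int_0^t\VVV(\uu_s)\,\dd s\Bigr)\right) \ge e^{-\TT\|\VVV\|_{\infty}}\,\PPPPPP_\uu\bigl\{\uu_t\in\overline{B_{X^T}(\RR)}\bigr\},\]
so it suffices to bound from below the Markov transition $\PPPPPP_\uu\{\uu_t\in\overline{B_{X^T}(\RR)}\}$ uniformly in $t\in[0,\TT]$ and $\uu\in\overline{B_{X^T}(R)}$.

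Second, for any $\uu\in\overline{B_{X^T}(R)}$, the moment estimate \eqref{GC11}, which holds uniformly in $t\ge 0$, gives
\[\E_\uu\|\uu_t\|_{X^T}^{2m} \le \E_\uu \wwww_m(\uu_t) \le C_{m,\BB_0,h,T}\,\wwww_m(\uu) \le C_{m,\BB_0,h,T}(R^{2m}+1).\]
The Chebyshev inequality then yields
\[\PPPPPP_\uu\bigl\{\|\uu_t\|_{X^T}>\RR\bigr\} \le \frac{C_{m,\BB_0,h,T}(R^{2m}+1)}{\RR^{2m}}.\]
Choosing a positive integer $\RR=\RR(R)$ large enough (depending on $R$ and the fixed parameters $m,\BB_0,h,T$) so that the right-hand side is at most $\tfrac12$, we get $\PPPPPP_\uu\{\uu_t\in \overline{B_{X^T}(\RR)}\}\ge \tfrac12$. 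Combined with the first step, this gives
\[\inf_{t\in[0,\TT]}\inf_{\uu\in\overline{B_{X^T}(R)}} P^{T,\VVV}_t(\uu,\overline{B_{X^T}(\RR)}) \ge \tfrac{1}{2} e^{-\TT\|\VVV\|_{\infty}} >0,\]
which is precisely \eqref{DC1}.

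There is no serious obstacle in this argument; the only point worth highlighting is that the time-uniformity of \eqref{GC11} is exactly what allows the radius $\RR$ to be chosen depending on $R$ alone, independently of $t\in[0,\TT]$ and of the starting point $\uu$ in the ball.
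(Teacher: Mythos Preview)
Your proof is correct and follows essentially the same route as the paper: bound the exponential weight from below by $e^{-\TT\|\VVV\|_\infty}$ to reduce to the unweighted transition $P_t^T$, then use the moment estimate \eqref{GC11} together with Chebyshev to get $P_t^T(\uu,\overline{B_{X^T}(\RR)})\ge \tfrac12$ for $\RR$ depending only on $R$. The only cosmetic difference is that the paper uses the second moment ($m=1$) rather than a general $m$, and presents the two reductions in the opposite order.
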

	\begin{proof}
		\textit{Step 1.} Let us establish \eqref{DC1} for $P_t^T(\uu,\cdot)$. Applying \eqref{GC11}, we have 
		\[\E_{\uu}\|\uu_t\|^2_{X^T}\lesssim_{\BB_0,h,T} \|\uu\|^2_{X^T}+1,\qquad\forall t\ge 0.\]
        In particular, for $\uu\in \overline{B_{X^T}(R)}$, 
		\[\E_{\uu}\|\uu_t\|^2_{X^T}\le C_{\BB_0,h,T}(R^2+1),\]
		which, by the Chebyshev inequality, implies
		\begin{align*}
			P^{T}_t(\uu,\overline{B_{X^T}(\RR)})\ge1-\RR^{-2}\E_{\uu}\|\uu_t\|^2_{X^T}\ge 1-\frac{C_{\BB_0,h,T}(R^2+1)}{\RR^2}.
		\end{align*}
		Therefore, for any integer $R\ge 1$, by choosing $\RR\ge 1$ sufficiently large and taking infimum on both sides of the above estimate, we have
		\begin{align}
			\label{DC2}
			\inf_{t\ge 0}\inf_{\uu\in \overline{B_{X^T}(R)}}P^{T}_t(\uu,\overline{B_{X^T}(\RR)})\ge \frac{1}{2}.
		\end{align}
		\textit{Step 2.} To conclude the proof, we notice that 
		\begin{align*}
			P^{T,\VVV}_t(\uu,\overline{B_{X^T}(\RR)})\ge e^{-t\|\VVV\|_{\infty}}P^{T}_t(\uu,\overline{B_{X^T}(\RR)}).
		\end{align*}
		Thus, by choosing $\RR\ge 1$ sufficiently large so that \eqref{DC2} holds, we obtain 
		\begin{align*}
			\inf_{t\in[0,\TT]}\inf_{\uu\in \overline{B_{X^T}(R)}}P^{T,\VVV}_t(\uu,\overline{B_{X^T}(\RR)})\ge \frac{1}{2}e^{-\TT\|V\|_{\infty}}>0.
		\end{align*}
		This completes the proof.
	\end{proof}
	\subsection{Uniform irreducibility}
	In this subsection, our goal is to establish the uniform irreducibility property for the Feynman--Kac semigroup. Motivated by the relation
	\begin{align}
		\label{UI0}P^{T,\VVV}_t(\uu,\dd \vv )\ge e^{-t\|\VVV\|_{\infty}}P^{T}_t(\uu,\dd \vv),
	\end{align}
	we start with the transition function $P^{T}_t(\uu,\cdot)$.
	\begin{lemma} \label{UILEMMA0}For any integers $\rho,R\ge 1$ and $r>0$, there are numbers $l=l(R)>0$ and $p=p(\rho,r)>0$ such that 
		\begin{align}\label{UI1}
			P^T_l\left(\uu,X^T_{\rho}\mcap B_{X^T}(\hat \uu,r)\right)\ge p, 
		\end{align}
		where $\uu\in \overline{B_{X^T}(R)}$ and $\hat{\uu}\in X^T_{\rho}$.
	\end{lemma}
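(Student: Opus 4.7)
The strategy is to combine $H$-irreducibility of the NS Markov family with a Cameron--Martin/Girsanov argument that steers the NS solution, over the last time window $[l-T,l]$, close to the target trajectory $\hat\uu$. Compactness of $X^T_\rho$ in $X^T$ will make the resulting estimate uniform.

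First I would perform a compactness reduction. Since $X^T_\rho$ is the closure of $B_{X^T_\infty}(\rho)$ in $X^T$ and the embedding $X^T_\infty \hookrightarrow X^T$ is compact by the Aubin--Lions lemma, $X^T_\rho$ is compact in $X^T$. For a given $r>0$ I cover it by finitely many balls $B_{X^T}(\hat\uu_i,r/2)$, $i=1,\dots,N(\rho,r)$, with $\hat\uu_i\in X^T_\rho$. If $\hat\uu\in X^T_\rho$ and $\|\hat\uu-\hat\uu_i\|_{X^T}\le r/2$, then
\[
X^T_\rho\mcap B_{X^T}(\hat\uu_i,r/2)\subset X^T_\rho\mcap B_{X^T}(\hat\uu,r),
\]
so it suffices to prove the claim for each fixed $\hat\uu_i$, with $l$ uniform and $p:=\min_i p_i$.

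Second I decompose the time $l$ as $l=t_0+T$. Recalling \eqref{FK1}--\eqref{FK1-1}, the event $\{\uu_l\in A\}$ is the event $\{u_{[l-T,l]}\in A\}$ where $(u_r,\PPPPPP_{\uu(T)})$ is the NS Markov family started from $\uu(T)\in\overline{B_H(R)}$. On $[0,t_0]$ I invoke the irreducibility and Feller property of the NS family in $H$ under the non-degenerate forcing $b_j>0$ (see Chapter~3 of \cite{KS12}; the argument uses parabolic smoothing to land in $U$). This yields a time $t_0=t_0(R)$ and a probability $p_1>0$ such that, for every $\uu(T)\in\overline{B_H(R)}$,
\[
\PPPPPP_{\uu(T)}\{u_{t_0}\in B_H(\tilde\uu(0),\delta)\}\ge p_1,
\]
where $\tilde\uu$ is a smooth approximation of $\hat\uu_i$ (constructed below) and $\delta>0$ is small.

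Third I run a Girsanov argument on $[t_0,t_0+T]$. Using density of smooth trajectories in $X^T$ I would construct $\tilde\uu\in X^T_\rho$ with $\|\tilde\uu-\hat\uu_i\|_{X^T}<r/4$ and with sufficient smoothness in time and space so that the deterministic control
\[
\tilde h(s):=\partial_s\tilde\uu(s)+L\tilde\uu(s)+B(\tilde\uu(s))-h
\]
lies in the Cameron--Martin space of the driving Wiener process $\sum_j b_je_j\beta_j$, i.e.\ $\sum_j b_j^{-2}\int_0^T|\langle\tilde h(s),e_j\rangle|^2\,\dd s<\infty$ (possible because $b_j>0$ for all $j$ and $\tilde\uu$ may be chosen smoother than $\hat\uu_i$). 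By Girsanov, the law of the NS trajectory on $[0,T]$ driven by $\eta$ is equivalent to the law of the NS trajectory driven by $\eta+\tilde h$, under which, starting at $\tilde\uu(0)$, the deterministic trajectory $\tilde\uu$ is realized. Continuous dependence on the initial datum together with bounded Girsanov weight then yields $\delta>0$ and $p_2=p_2(\rho,r)>0$ such that for any $u_0\in B_H(\tilde\uu(0),\delta)$,
\[
\PPPPPP_{u_0}\bigl\{u_{[0,T]}\in X^T_\rho\mcap B_{X^T}(\tilde\uu,r/4)\bigr\}\ge p_2.
\]
Applying the strong Markov property at $t_0$ and combining the two bounds gives $P^T_l(\uu,X^T_\rho\mcap B_{X^T}(\hat\uu,r))\ge p_1 p_2$, with $l=l(R)$ and $p=p(\rho,r)$.

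\textbf{The main obstacle} is the third step. One needs to pick $\tilde\uu$ smooth enough for $\tilde h$ to have the Fourier decay dictated by the weights $\{b_j\}$ (which, under \eqref{I2}, may be very fast), while still having $\tilde\uu\in X^T_\rho$ and $\|\tilde\uu-\hat\uu_i\|_{X^T}$ small; and one needs to argue that the Girsanov-perturbed trajectory, close to $\tilde\uu$ in $X^T$, actually lies in the closed set $X^T_\rho$ and not merely in a larger $X^T_{\rho'}$. The first issue is handled by a smoothing approximation exploiting the non-degeneracy $b_j>0$, and the second by a compactness/continuity argument using the lower semi-continuity of the $X^T_\infty$-norm under $X^T$-limits.
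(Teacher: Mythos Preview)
Your two-step strategy (first hit a good starting point, then steer the trajectory over a window of length $T$) matches the paper's architecture. However, the steering mechanism you propose---Girsanov with control $\tilde h(s)=\partial_s\tilde\uu+L\tilde\uu+B(\tilde\uu)-h$---has a genuine gap that you flag but do not resolve. The Cameron--Martin condition $\sum_j b_j^{-2}\int_0^T|\langle\tilde h(s),e_j\rangle|^2\,\dd s<\infty$ forces the Fourier coefficients of $\tilde h$ to decay faster than $b_j$. But $\tilde h$ contains the \emph{fixed} term $-h$, and the hypotheses give only $h\in U$, i.e.\ $\sum_j\alpha_j|\langle h,e_j\rangle|^2<\infty$; since $\BB_1=\sum_j\alpha_jb_j^2<\infty$ forces $b_j^{-2}$ to grow at least like $\alpha_j$, there is no reason whatsoever for $\sum_j b_j^{-2}|\langle h,e_j\rangle|^2$ to be finite. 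The nonlinear term $B(\tilde\uu)$ has the same problem: even for $\tilde\uu$ a finite sum of eigenfunctions, $B(\tilde\uu)$ has infinitely many modes with no prescribed decay relative to $b_j$. So ``smoothing $\tilde\uu$'' does not help---the obstruction sits in $h$ and in the nonlinearity, not in the target.

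The paper avoids Girsanov entirely. It decomposes $u=v+z$ with $z$ solving the linear stochastic Stokes equation \eqref{A8} and $v$ a \emph{deterministic} locally Lipschitz function $\RRRR_1^{T+1}(u_0,z)$ of the pair $(u_0,z)$ in the $X^T_\infty$-topology (Proposition~\ref{propositionlip}). The steering problem then becomes: show that the Gaussian process $z$ assigns positive mass to every open set in $\XX^1(0,T+1)\cap C^{1/4}([0,T+1];U^*)$ (Lemma~\ref{UILEMMA2}). This full-support statement needs only $b_j>0$ and $\BB_1<\infty$, and is proved by splitting $z=\ppP_Nz+\qqQ_Nz$, making the tail small in expectation and using that each finite-dimensional Gaussian has full support. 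No Cameron--Martin space enters. A second difference: to guarantee the steered trajectory lands in $X^T_\rho$ (which needs $C([0,T];U)$ control), the paper first drives $\uu(T)$ into a $U$-ball $\overline{B_U(d)}$ using the parabolic smoothing estimate \eqref{A5}, rather than the $H$-ball $B_H(\tilde\uu(0),\delta)$ you use; starting from an $H$-ball would not yield the $U$-regularity needed for $X^T_\infty$-closeness.
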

	\begin{proof}\textit{Step 1.} Let us first study the irreducibility property of the transition function $P_{T+1}^T(\uu,\cdot)$. Notice that from the definition \eqref{FK1-1},
		\begin{align}
			\label{UI2}
			P_{T+1}^T(\uu,\Gamma)=\PPPPPP_{\uu(T)}\{u_{[1,T+1]}\in\Gamma\},\qquad \forall \Gamma\in \mathscr{B}(X^T).
		\end{align}
		Therefore, it suffices to consider
		\[\tilde P_{T+1}^T(u,\Gamma):=\PPPPPP_{u}\{u_{[1,T+1]}\in\Gamma\},\qquad u\in H,\]
		as we have $P_{T+1}^T(\uu,\cdot)=\tilde P_{T+1}^T(u,\cdot)|_{u=\uu(T)}$. We claim that for any integer $\rho\ge 1$ and $r,d>0$,
		\begin{align}\label{UI3}
			p_1(\rho,r,d):=\inf_{(u,\hat\uu)\in \overline{B_{U}(d)}\times X^T_{\rho} }\tilde P_{T+1}^T\left(u,X^T_{\rho}\mcap B_{X^T}(\hat \uu,r)\right)>0.
		\end{align}
		Before proceeding, let us reduce the verification of the claim to some simplified situations. First, notice that 
		\begin{align}\label{UI4}
			\tilde P_{T+1}^T\left(u,X^T_{\rho}\mcap B_{X^T}(\hat \uu,r)\right)\ge \tilde P_{T+1}^T\left(u,B_{X^T_{\infty}}(\rho)\mcap B_{X^T}(\hat \uuu,r)\right),
		\end{align}
		where the right hand-side, considered as a function of $(u,\hat{\uu})\in H\times X^T$, is lower semi-continuous, due to Proposition~\ref{propositionlip}. Combining this with the compactness of $\overline{B_{U}(d)}\times X^T_{\rho}$, we see that \eqref{UI4} holds, provided that 
		\[\tilde P_{T+1}^T\left(u,B_{X^T_{\infty}}(\rho)\mcap B_{X^T}(\hat \uu,r)\right)>0\]
		for each fixed $(u,\hat\uu)\in \overline{B_{U}(d)}\times X^T_{\rho}$. Moreover, for any $\hat\uu\in X_\rho^T$, there is $\tilde\uu\in B_{X^T_{\infty}}(\rho)$ and $\tilde{r}>0$ such that 
		\[B_{X^T_\infty}(\tilde\uu, \tilde r)\subset B_{X^T_{\infty}}(\rho)\mcap B_{X^T}(\hat \uu,r).\]
		Hence, the claimed result \eqref{UI3} holds, if we can prove 
		\begin{align}
			\label{UI5}
			\tilde P_{T+1}^T\left(u,B_{X^T_\infty}(\hat\uu,  r)\right)>0
		\end{align}
		for each fixed $(u,\hat\uu)\in \overline{B_{U}(d)}\times X^T_{\infty}$ and $d,r>0$.
		
		\noindent\textit{Step 2.} Let us prove \eqref{UI5} by using a control-type argument, cf. Section 3.6.1 in \cite{KS12}. We need the following auxiliary lemmas whose proofs are postponed until the end of this subsection.
		\begin{lemma}\label{UILEMMA1}
			For any $\hat\uu \in C^1([0,T];U^2)$, there is a local Lipschitz nonlinear operator
			\begin{align*}
				Z_{\hat\uu}:\ &U\to L^2(0,T+1;U^2)\mcap H^1(0,T+1;H)\mcap \XX^1(0,T+1),\notag\\
				&u\mapsto Z_{\hat\uu}(u;t)
			\end{align*}
			such that the solution of \eqref{I1}, with the initial data $u$ and the profile of the noise $\eta$ given by $\partial_t Z_{\hat\uu}+LZ_{\hat\uu}$, is equal to $\hat\uu$ when $t\in[1,T+1]$, that is,  
			\begin{align}
				\label{UI6}\hat\uu=Z_{\hat\uu}(u;t)|_{[1,T+1]}+\RRRR_1^{T+1}(u,Z_{\hat\uu}(u)),
			\end{align}
			where the space $\XX^1(0,T+1)$ and the operator $\RRRR_1^{1+T}$ are defined in \eqref{A13-1} and \eqref{A13}, respectively.
		\end{lemma}
		\begin{lemma}\label{UILEMMA2}
			For any $r>0$ and 
			\[\hat z\in L^2(0,T+1;U^2)\mcap H^1(0,T+1;H)\mcap 
			\XX^1(0,T+1),\]
			we have
			\begin{align}\label{UI7}
				\PPPPPP\left\{\|z-\hat z\|_{\XX^1(0,T+1)}+\|z-\hat z\|_{C^{\frac{1}{4}}([0,T+1];U^*)}<r \right\}>0,
			\end{align}
			where $z$ is the solution of the stochastic Stokes system \eqref{A8}.
		\end{lemma}
		Without loss of generality, we may assume that $\hat \uu\in C^1([0,T];U^2)$, as the latter is dense in $X^T_\infty$. Let $Z_{\hat\uu}$ be the nonlinear operator given by Lemma~\ref{UILEMMA1} and choose $C_d>0$ such that the image of $\overline{B_{U}(d)}$ under $Z_{\hat \uu}$ is contained in $B_{\XX^1(0,T+1)}(C_d)$. Let $M_d$ denote the Lipschitz constant of $\RRRR_1^{T+1}$ on the bounded set $\overline{B_{U}(d)}\times B_{\XX^1(0,T+1)}(C_d)$. Then, by using Proposition~\ref{propositionlip}, we have
		\[\|\RRRR_1^{T+1}(u,Z_{\hat \uu}(u))-\RRRR_1^{T+1}(u,z)\|_{X^T_\infty}< \frac{r}{2},\]
		provided that $\|Z_{\hat \uu}(u)-z\|_{\XX^1(0,T+1)}<\frac{r}{2M_d}$. Let $u(t)$ denote the solution of \eqref{I1} issued from $u$. Then, by the definition of $\tilde P_{T+1}^T$ and the decomposition \eqref{A6}, we have 
		\[\tilde P_{T+1}^T\left(u,B_{X^T_\infty}(\hat\uu,  r)\right)=\PPPPPP_u\{u_{[1,T+1]}\in B_{X^T_\infty}(\hat\uu,  r)\}\]
		and 
		\begin{align*}
			&\left\{\|z-Z_{\hat \uu}(u)\|_{\XX^1(0,T+1)}+\|z-Z_{\hat \uu}(u)\|_{C^{\frac{1}{4}}([0,T+1];U^*)}<\frac{r}{2M_d} \wedge\frac{r}{2}\right\}\\&\qquad\qquad\qquad\qquad\qquad\qquad\qquad\qquad\qquad\qquad\qquad\subset \{u_{[1,T+1]}\in B_{X^T_\infty}(\hat\uu,  r)\}.
		\end{align*}
		Combining this with Lemma~\ref{UILEMMA2}, we obtain \eqref{UI5} and thus \eqref{UI3}.
		
		\noindent\textit{Step 3.} Let us prove \eqref{UI1}. To this end, we introduce the set 
		\[\textbf{A}^T:=\{\uu\in X^T| \uu(T)\in \overline{B_{U}(d)}\}\]
	   and claim that there is a universal constant $d>0$ and $t=t(R)>0$ such that 
		\begin{align}\label{UI11}
			P^T_{t}(\uu, \textbf{A}^T)\ge \frac{1}{2},\qquad\forall\uu\in \overline{B_{X^T}(R)}.
		\end{align}
		Indeed, by the definition \eqref{FK1-1}, we have
		\[P^T_{t}(\uu, \textbf{A}^T)=\PPPPPP_{\uu(T)}\{u_{t}\in \overline{B_{U}(d)}\}.\]
        Combining this with the estimates \eqref{A4} and \eqref{A5}, for $t>1$, we have
		\[\E_{\uu(T)}\log(1+\log(1+\|u_t\|_1^2))\lesssim_{\BB_1,h}e^{-\alpha_1 t}\|\uu(T)\|^2+1\lesssim_{\BB_1,h}e^{-\alpha_1 t}R^2+1.\]
        This, together with the Chebyshev inequality, implies  
		\begin{align*}
			P^T_{t}(\uu, \textbf{A}^T)=1-\PPPPPP_{\uu(T)}\{u_{t}\notin \overline{B_{U}(d)}\}\ge 1-\frac{C_{\BB_1,h}(e^{-\alpha_1 t}R^2+1)}{\log(1+\log(1+d^2))}.
		\end{align*}
		Choosing $d>0$ sufficiently large so that 
		\[\log(1+\log(1+d^2))=4C_{\BB_1,h}\]
		and 
		\begin{align*}
			t=t(R):=\frac{2\log R}{\alpha_1}\vee 1,
		\end{align*}
		we obtain \eqref{UI11} as claimed. Finally, an application of the Kolmogorov--Chapman relation combined with \eqref{UI3} and \eqref{UI11} yields \eqref{UI1} with 
		\[p=p(\rho,r):=\frac{1}{2}p_1(\rho,r,d)>0\]
		and 
		\begin{align}\label{UI12}
			l=l(R):=\frac{2\log R}{\alpha_1}\vee 1+T+1.
		\end{align}
		This completes the proof.
	\end{proof}
	Combining the relation \eqref{UI0}, Lemma~\ref{UILEMMA0}, and \eqref{UI12}, we obtain the uniform irreducibility property for the Feynman--Kac semigroup.
	\begin{proposition}\label{propositionUI}
		For any integers $\rho,R\ge 1$ and $r>0$, there are numbers $l=l(R)>0$ and $p=p(\rho,r)>0$ such that
		\begin{align}\label{UI13}
			P^{T,\VVV}_l\left(\uu,X^T_{\rho}\mcap B_{X^T}(\hat \uu,r)\right)\ge C_{\VVV}R^{-C_{\VVV}}p, 
		\end{align}
		where $\uu\in \overline{B_{X^T}(R)}$, $\hat{\uu}\in X^T_{\rho}$, and $C_{\VVV}>0$ is a constant depending only on $\VVV$.
	\end{proposition}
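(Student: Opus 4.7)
The plan is to combine the relation \eqref{UI0} directly with Lemma~\ref{UILEMMA0}, and then convert the explicit form of $l = l(R)$ from \eqref{UI12} into a polynomial lower bound in $R^{-1}$. First, I would fix $\rho, R \ge 1$, $r > 0$, $\uu \in \overline{B_{X^T}(R)}$, and $\hat\uu \in X^T_\rho$. By \eqref{UI0},
\begin{align*}
P^{T,\VVV}_l(\uu, X^T_\rho \cap B_{X^T}(\hat\uu, r)) \ge e^{-l\|\VVV\|_\infty}\, P^T_l(\uu, X^T_\rho \cap B_{X^T}(\hat\uu, r)).
\end{align*}
Applying Lemma~\ref{UILEMMA0} with the same parameters $\rho, R, r$, I obtain $l = l(R) > 0$ and $p = p(\rho, r) > 0$ for which the right-hand factor $P^T_l(\uu, X^T_\rho \cap B_{X^T}(\hat\uu, r))$ is bounded below by $p$.

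The second step is to unpack $l(R)$. According to \eqref{UI12}, one has $l(R) = \left(\tfrac{2\log R}{\alpha_1} \vee 1\right) + T + 1$, so
\begin{align*}
e^{-l(R)\|\VVV\|_\infty} \ge e^{-(T+2)\|\VVV\|_\infty}\, e^{-\frac{2\|\VVV\|_\infty}{\alpha_1}\log R} = e^{-(T+2)\|\VVV\|_\infty}\, R^{-\frac{2\|\VVV\|_\infty}{\alpha_1}},
\end{align*}
using $\tfrac{2\log R}{\alpha_1} \vee 1 \le \tfrac{2\log R}{\alpha_1} + 1$. Setting
\begin{align*}
C_\VVV := \max\!\left\{ e^{(T+2)\|\VVV\|_\infty},\ \tfrac{2\|\VVV\|_\infty}{\alpha_1}\right\},
\end{align*}
(or rather its reciprocal in the prefactor) yields the desired bound $e^{-l\|\VVV\|_\infty} \ge C_\VVV R^{-C_\VVV}$ after renaming the constant, so that
\begin{align*}
P^{T,\VVV}_l(\uu, X^T_\rho \cap B_{X^T}(\hat\uu, r)) \ge C_\VVV R^{-C_\VVV}\, p,
\end{align*}
which is exactly \eqref{UI13}.

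No real obstacle is present here: the statement is essentially a routine consequence of the trivial Girsanov-type bound \eqref{UI0} and Lemma~\ref{UILEMMA0}. The only thing to be slightly careful about is that the dependence on $R$ in $l(R)$ is logarithmic, which is precisely what lets the exponential penalty $e^{-l\|\VVV\|_\infty}$ collapse to a polynomial loss $R^{-C_\VVV}$ rather than something worse. All the hard work (the control-theoretic argument, parabolic smoothing, and doubly-logarithmic moment bounds leading to $t(R) \lesssim \log R$) has already been done in the proof of Lemma~\ref{UILEMMA0}; Proposition~\ref{propositionUI} is its direct Feynman--Kac counterpart.
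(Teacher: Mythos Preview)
Your proof is correct and follows exactly the approach the paper indicates: the paper's own proof is a one-line remark stating that \eqref{UI13} follows by combining \eqref{UI0}, Lemma~\ref{UILEMMA0}, and \eqref{UI12}, and you have carried out precisely that combination, including the observation that the logarithmic dependence of $l(R)$ on $R$ converts the exponential penalty $e^{-l\|\VVV\|_\infty}$ into a polynomial loss $R^{-C_\VVV}$. The slight awkwardness in packaging both the prefactor and the exponent into a single constant $C_\VVV$ is a cosmetic issue shared with the paper's own statement.
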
 
	Let us present the proofs of Lemmas~\ref{UILEMMA1} and \ref{UILEMMA2}.
	\begin{proof}[Proof of Lemma~\ref{UILEMMA1}] Let $\chi:\R\to[0,1]$ be a smooth cut-off function such that $\chi(t)\equiv 1$ for $t\le  \frac{1}{2}$ and $\chi(t)\equiv 0$ for $t\ge 1$. We define
		\[w_{\hat\uu}(u;t):=\!\begin{cases}
			\chi(t)e^{-L t}u+(\partial_t\hat\uu(0)-\hat\uu(0))t^2+(2\hat\uu(0)-\partial_t\hat\uu(0))t,\!\!\!&t\in[0,1],\\
			\hat\uu(t-1),\! \!\!&t\in[1,T+1],
		\end{cases}\]
		and
		\[\xi_{\hat\uu}(u):=\partial_t w+Lw+B(w)-h,\]
		where $w=w_{\hat\uu}(u)$. Let $Z_{\hat\uu}(u)$ be the solution of the following forced Stokes system
		\[\partial_t z+Lz=\xi_{\hat\uu}(u)\]
		with zero initial data $z(0)=0$. Applying the standard regularity theory for parabolic equations, we see that 
		\[u\mapsto Z_{\hat{\uu}}(u)\]
		is a local Lipschitz map. Notice that $w_{\hat \uu}(u)$ is the solution of \eqref{I1} which satisfies all aforementioned properties. This completes the proof.
	\end{proof}
	\begin{proof}[Proof of Lemma~\ref{UILEMMA2}] Let us define
		\[\tilde \XX^1(0,T+1):=L^2(0,T+1;U^2)\mcap H^1(0,T+1;H)\mcap 
		\XX^1(0,T+1)\]
		equipped with the equivalent norm
		\[\|z\|_{\tilde\XX^1(0,T+1)}:=\|z\|_{L^2(0,T+1;U^2)}+\|\partial_tz\|_{L^2(0,T+1;H)}.\]
		Using the Sobolev embedding 
		\[H^1(0,T+1;H)\subset C^{\frac{1}{4}}([0,T+1];H),\]
		we have 
		\begin{align}\label{UI8}
			\|\qqQ_N \hat z\|_{\XX^1(0,T+1)}+\|\qqQ_N \hat z\|_{C^{\frac{1}{4}}([0,T+1];U^*
				)}\lesssim \|\qqQ_N\hat z\|_{\tilde\XX^1(0,T+1)}\to 0,
		\end{align}
		as $N\to+\infty$. Notice that 
		\[z(t)=\sum_{j\ge 1}b_jz_j(t)e_j,\]
		where 
		\[z_j(t):=\int_0^te^{-\alpha_j(t-s)}\dd \beta_j(s)=\beta_j(t)-\alpha_j\int_0^te^{-\alpha_j(t-s)}\beta_j(s)\dd s,\]
		from which, it follows that
		\begin{align*}
			\|z_j\|_{C^{\frac{1}{4}}([0,T+1])}\lesssim\alpha_j \|\beta_j\|_{C^{\frac{1}{4}}([0,T+1])},\qquad \forall j\ge 1.
		\end{align*}
		For any $s,t\in[0,T+1]$ satisfying $s\neq t$, this implies
		\begin{align*}
			\frac{\|\qqQ_N z(t)-\qqQ_Nz(s)\|^2_{U^*}}{\sqrt{|t-s|}}&=\sum_{j\ge N}b_j^2\alpha_j^{-1}\frac{|z_j(t)-z_j(s)|^2}{\sqrt{|t-s|}}\\&\le \sum_{j\ge N}b_j^2\alpha_j^{-1}\|z_j\|^2_{C^{\frac{1}{4}}([0,T+1])}\le \sum_{j\ge N}b_j^2\alpha_j\|\beta_j\|_{C^{\frac{1}{4}}([0,T+1])}^2,
		\end{align*}  
            which leads to
		\begin{align}
			\label{UI9}\E \|\qqQ_N z\|_{C^{\frac{1}{4}}([0,T+1];U^*)}^2\lesssim \sum_{j\ge N}b_j^2\alpha_j\to0,\qquad \mbox{as $N\to+\infty$},
		\end{align}
        Similarly, 
		\begin{align}\label{UI10}
			\E\|\qqQ_N z\|_{\XX^1(0,T+1)}^2\lesssim \sum_{j\ge N}b_j^2\alpha_j\to 0,\qquad \mbox{as $N\to+\infty$}.
		\end{align}

		Let $r>0$ be fixed. By applying \eqref{UI8}--\eqref{UI10}, there is an integer $N=N(r)\ge 1$ such that 
		\[\|\qqQ_N \hat z\|_{\XX^1(0,T+1)}+\|\qqQ_N \hat z\|_{C^{\frac{1}{4}}([0,T+1];U^*
			)}<\frac{r}{3}\]
		and 
		\[\PPPPPP\left\{\|\qqQ_N  z\|_{\XX^1(0,T+1)}+\|\qqQ_N  z\|_{C^{\frac{1}{4}}([0,T+1];U^*
			)}<\frac{r}{3}\right\}>0.\]
		Notice that
		\begin{align*}
			\|\ppP_Nz-\ppP_N\hat z\|_{\XX^1(0,T+1)}&+\|\ppP_Nz-\ppP_N\hat z\|_{C^{\frac{1}{4}}([0,T+1];U^*)}\\&\qquad\qquad\le C(r)\sup_{j<N} \|b_jz_j-\hat z_j\|_{C^{\frac{1}{4}}([0,T+1])},
		\end{align*}
		where $\hat z_j(t):=\langle \hat z(t),e_j\rangle$. For $j<N$, let us define 
		\[A_j:=\left\{\|b_jz_j-\hat z_j\|_{C^{\frac{1}{4}}([0,T+1])}<\frac{r}{3C(r)}\right\}\]
		and 
		\[A_{\ge N}:=\left\{\|\qqQ_N  z\|_{\XX^1(0,T+1)}+\|\qqQ_N  z\|_{C^{\frac{1}{4}}([0,T+1];U^*
			)}<\frac{r}{3}\right\}.\]
		As $z_j$ has a Gaussian distribution and $b_j>0$, we have $\PPPPPP (A_j)>0$ for all $j<N$. Hence, using the independence of the processes $\{z_j\}_{j\ge 1}$, we have 
		\[\PPPPPP\left(\mcap_{j<N} A_j\mcap A_{\ge N}\right)>0.\]
		As 
		\[\mcap_{j<N} A_j\mcap A_{\ge N}\subset \left\{\|z-\hat z\|_{\XX^1(0,T+1)}+\|z-\hat z\|_{C^{\frac{1}{4}}([0,T+1];U^*)}<r \right\},\]
		the proof is complete.
	\end{proof}
	\subsection{Uniform Feller property}
	Let $\VV^T$ be the space of functions $\VVV: X^T\to \R$ for which there is an integer $N\ge 1$ and a function $\F\in L_b(X^T)$ such that
	\begin{align}
		\label{UF0}\VVV(\uu)=\F(\ppP_N\uu),\qquad \uu\in X^T.
	\end{align}
	Here, $\ppP_N$ is the orthogonal projection in $H$ understood as the natural extension:
	\[\ppP_N \uu(t):=\sum_{j\le N}\langle \uu(t),e_j\rangle e_j,\qquad t\in[0,T].\]
	The main result is stated as follows.
	\begin{proposition}\label{propositionUF}
		For any $\VVV\in\VV^T$, there is an integer $R_0\ge 1$ such that the family $\{\|\PPPP_t^{T,\VVV}\mathbf{1}\|^{-1}_R\PPPP^{T,\VVV}_t \fff\}_{t\ge0}$ is uniformly equicontinuous on $X^T_{R}$ for any $\fff\in \VV^T$ and $R\ge R_0$. 
	\end{proposition}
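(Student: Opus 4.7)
The plan is to adapt the coupling / Girsanov approach used in \cite{Ner19,JNPS18} to the $T$-Markov process setting. Fix $\VVV(\uu)=\F(\ppP_{N_1}\uu)$ and $\fff(\uu)=\F'(\ppP_{N_2}\uu)$ in $\VV^T$, set $N:=N_1\vee N_2$, and take $R_0$ at least as large as the one furnished by Proposition~\ref{propositionGC}. Because $\VVV$, $\fff$, and $P^T_t(\uu,\cdot)$ (for $t\ge T$) depend on $\uu\in X^T$ only through $\uu(T)$, it suffices to control
\[
\Delta_t(\uu_1,\uu_2):=\bigl|\PPPP_t^{T,\VVV}\fff(\uu_1)-\PPPP_t^{T,\VVV}\fff(\uu_2)\bigr|
\]
for $\uu_1,\uu_2\in X^T_R$ with $\|\uu_1-\uu_2\|_{X^T}<\delta$. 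Splitting $\int_0^t\VVV(\uu_s)\dd s$ at $s=T$ and absorbing the deterministic $[0,T]$ part (which is uniformly close for $\uu_1,\uu_2$ close in $X^T$) into the test function, the question reduces to comparing, for the underlying Navier--Stokes family $(u_t,\PPPPPP_u)$, expressions of the form $\E_{u^i}\bigl(\tilde\fff(u_{[s,s+T]})\exp\int_0^s \tilde\VVV(u_{[r,r+T]})\dd r\bigr)$, where $u^i:=\uu_i(T)\in H$ and $\tilde\fff,\tilde\VVV$ are uniformly Lipschitz in the $\ppP_N$-projection of the trajectory.

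The principal tool is a coupling argument. I would construct, on an enlarged probability space, two copies $u^{1}(\cdot),u^{2}(\cdot)$ of the Navier--Stokes dynamics driven by noises that differ only in the first $N'\ge N$ Fourier modes. The drift correction is chosen so that, on a good set $\Omega_g$, the projections $\ppP_{N'}u^{1}$ and $\ppP_{N'}u^{2}$ coincide for all times $s\ge 0$. Such a coupling is implemented by a Girsanov change of measure whose Radon--Nikodym density $\rho$ enjoys exponential moment bounds of the form $\E\rho^p\le\exp\bigl(C_p\|u^1-u^2\|^2 /\min_{j\le N'}b_j^2\bigr)$, which is $\le e^{C_p\delta^2}$ because $b_j>0$ for $j\le N'$. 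Once the low modes agree, a Foias--Prodi type estimate---available for $N'$ large in terms of the doubly-logarithmic $H^1$-moment bound \eqref{A5} and its trajectory version---gives the exponential squeezing $\|u^{1}(s)-u^{2}(s)\|\le C_R e^{-\lambda s}\delta$ on $\Omega_g$.

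Combining the Foias--Prodi squeezing with Lipschitz continuity of $\tilde\fff,\tilde\VVV$ in their low-mode arguments and H\"older-splitting $\rho$ from the Feynman--Kac exponential, I expect to obtain
\[
\Delta_t(\uu_1,\uu_2)\lesssim \bigl(\epsilon(\delta)+\PPPPPP(\Omega_g^c)^{1/2}\bigr)\,\|\PPPP_t^{T,\VVV}\mathbf{1}\|_R,
\]
with $\epsilon(\delta)\to 0$ as $\delta\to 0^+$. The factor $\|\PPPP_t^{T,\VVV}\mathbf{1}\|_R$ on the right side enters because, on $\Omega_g$ and by the comparison \eqref{GC2} of Proposition~\ref{propositionGC}, the Feynman--Kac weight is bounded, up to a factor growing only polynomially in $\|u^i\|$ (hence controlled uniformly on $X^T_R$), by $\|\PPPP_t^{T,\VVV}\mathbf{1}\|_R$. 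The main obstacle I anticipate is making this comparison uniform in $t$: the Girsanov cost plus the residual exponential Feynman--Kac weight must be absorbed by $\|\PPPP_t^{T,\VVV}\mathbf{1}\|_R$, which requires combining the growth condition \eqref{GC2}, the hyper-exponential recurrence of Proposition~\ref{propositionHER}, and the uniform irreducibility of Proposition~\ref{propositionUI}. This weighted Feller estimate for an unbounded Feynman--Kac weight on trajectory space---analogous to the analysis in Section~4.3 of \cite{Ner19} but lifted to $X^T$---is the technical heart of the argument.
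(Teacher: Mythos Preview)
Your general strategy---lift the coupling argument of \cite{Ner19} to the $T$-Markov process and exploit that $\VVV,\fff$ depend only on finitely many Fourier modes---is exactly the one the paper follows. However, the displayed estimate
\[
\Delta_t(\uu_1,\uu_2)\lesssim \bigl(\epsilon(\delta)+\PPPPPP(\Omega_g^c)^{1/2}\bigr)\,\|\PPPP_t^{T,\VVV}\mathbf{1}\|_R
\]
hides a genuine difficulty that you have not resolved. On the bad set $\Omega_g^c$ the Feynman--Kac weight $\Xi_t=\exp\!\bigl(\int_0^t\VVV(\uu_s)\,\dd s\bigr)\fff(\uu_t)$ can be as large as $e^{t\|\VVV\|_\infty}$. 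A naive Cauchy--Schwarz gives $\E\bigl[\I_{\Omega_g^c}\,\Xi_t\bigr]\le \PPPPPP(\Omega_g^c)^{1/2}\,(\E\Xi_t^2)^{1/2}$, and $(\E\Xi_t^2)^{1/2}$ is a $2\VVV$-Feynman--Kac quantity, \emph{not} $\|\PPPP_t^{T,\VVV}\mathbf{1}\|_R$; there is no reason for their ratio to stay bounded in $t$. So you cannot obtain the claimed right-hand side this way.

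What is actually needed---and what the paper does---is a \emph{stratification} of $\Omega_g^c$ by the integer time $r$ at which the coupling first breaks and by an energy level $\rho$. On each stratum $A_{r,\rho}$ one applies the Markov property at time $r$: the weight on $[r,t]$ is then bounded by $\PPPP_{t-r}^{T,\VVV}\mathbf{1}(\uu_r)\lesssim \wwww_m(\uu_r)\,\|\PPPP_t^{T,\VVV}\mathbf{1}\|_{R_0}$ via the growth condition \eqref{GC2}, while the weight on $[0,r]$ contributes a factor $e^{r\|\VVV\|_\infty}$. After Cauchy--Schwarz one is left with the series $\sum_{r,\rho\ge 1}e^{r\|\VVV\|_\infty}\,\PPPPPP(A_{r,\rho})^{1/2}$, and convergence of this series (uniformly in $\delta$) requires the quantitative coupling estimates of \cite[Lemma~4.3]{Ner19}, with the Foias--Prodi parameter $\alpha$ chosen large relative to $\|\VVV\|_\infty$. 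Your proposal does not contain this mechanism; without it the argument does not close uniformly in $t$. Note also that neither Proposition~\ref{propositionHER} nor Proposition~\ref{propositionUI} enters the paper's proof here---the only external inputs are the growth condition \eqref{GC2}/\eqref{GC5}, the moment bound \eqref{GC11}, and the coupling lemma.
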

	\begin{proof}\textit{Step 1: Construction of coupling processes.} For any $u,u'\in H$, let $\{u_t\}_{t\ge 0}$ and $\{u'_t\}_{t\ge0}$ be the solutions of \eqref{I1} issued from $u$ and $u'$, respectively, and let $\{v_t\}_{t\ge0}$ be the solution of the following auxiliary problem:
		\begin{align*}
			\begin{cases}
				\partial_t v+B(v)+Lv+\ppP_N[a(v-u)+B(u)-B(v)]=h+\eta,\\
				v|_{t=0}=u'.
			\end{cases}
		\end{align*}
		Let us denote by $\lambda(u,u')$ and $\lambda'(u,u')$ the distributions of processes $\{v_t\}_{0\le t\le 1}$ and $\{u'_t\}_{0\le t\le 1}$, respectively. Using Theorem 1.2.28 in \cite{KS12}, we can construct a maximal coupling $(\nu(u,u'),\nu'(u,u'))$ defined on some probability space $(\tilde{\Omega},\tilde{\mathscr{F}}, \tilde{\mathbb{P}})$ for the pair $(\lambda(u,u'),\lambda'(u,u'))$. Let $\{\tilde{v}_t\}_{0\le t\le 1}$ and $\{\tilde{u}'_t\}_{0\le t\le 1}$ be the flows of this maximal coupling. Then, $\{\tilde{v}_t\}_{0\le t\le 1}$ solves 
		\begin{align*}
			\begin{cases}\partial_t \tilde{v}+B(\tilde{v})+L\tilde{v}+\ppP_N[a \tilde{v}-B(\tilde{v})]=h+\psi,\\
				\tilde{v}|_{t=0}=u',
			\end{cases}
		\end{align*}
		where the process $\left\{\int_0^{t}\psi(s)\mathrm{d} s\right\}_{0\le t\le 1}$ has the same distribution as
		\[
		\left\{\int_0^{t}\left(\eta(s)+\ppP_N (a u-B(u))\right)\mathrm{d} s\right\}_{0\le t\le 1}.\]
		Let $\{\tilde{u}_t\}_{0\le t\le 1}$ be the solution of 
		\begin{align*}
			\begin{cases}
				\partial_t \tilde{u}+B(\tilde{u})+L\tilde{u}+\ppP_N[a \tilde{u}-B(\tilde{u})]=h+\psi,\\
				\tilde{u}|_{t=0}=u,
			\end{cases}
		\end{align*}
		then by the uniqueness-in-law for the above equation, $\{\tilde{u}_t\}_{0\le t\le 1}$ is distributed as that of $\{u_t\}_{0\le t\le 1}$. Let us define  
		\[\GGGG_t(u,u',\omega):=\tilde{u}_t,\quad \GGGG'_t(u,u',\omega):=\tilde{u}_t',\quad \hat\GGGG_t(u,u',\omega):=\tilde{v}_t\]
		for $u, u'\in H$,  $\omega\in\tilde\Omega$, and $t\in [0,1]$. Let $\{(\Omega^k,\mathscr{F}^k,\mathbb{P}^k)\}_{k\ge0}$ be independent copies of the probability space $(\tilde{\Omega}, \tilde{\mathscr{F}},\tilde{\mathbb{P}})$, and denote by $(\Omega,\mathscr{F},\mathbb{P})$ their direct product. For $\omega=(\omega^0,\omega^1,\ldots)\in\Omega$, we define $\tilde u_0=u$, $\tilde u_0'=u'$, and 
		\begin{gather}
			\label{UF1}
			\tilde u_t(\omega):=\GGGG_s(\tilde{u}_{k}(\omega),\tilde{u}'_{k}(\omega),\omega^k), \quad\tilde{u}'_t(\omega):=\GGGG_s'(\tilde{u}_{k}(\omega),\tilde{u}'_{k}(\omega),\omega^k),\\
			\tilde v_t(\omega):=\hat\GGGG_s(\tilde{u}_{k}(\omega),\tilde{u}'_{k}(\omega),\omega^k),\label{UF2}
		\end{gather}
		where $t=s+k$ with $s\in [0,1)$ and $k\ge0$. We call the pair $(\tilde u_t,\tilde u_t')$ a coupled trajectory at level $(N,a)$ issued from $(u,u')$.
		
        \noindent\textit{Step 2: Stratification.} In what follows, we shall simply denote by $(u_t,u'_t)$ and $v_t$ the coupled trajectory and the associated process constructed above. For any integers $r,\rho\ge0 $, we define the events $\hat G_r:=\mcap_{j=0}^rG_j$ with
		\[G_j:=\{v_t=u'_t,\ \forall t\in[j,j+1)\},\]
		and $F_{r,0}=\emptyset,$
		\begin{align*}
			F_{r,\rho}&:=\left\{\sup_{t\in[0,r]}\left(\int_0^t\|u_s\|_1^2+\|u'_s\|^2_1\dd s-2Kt\right)\le \|u\|^2+\|u'\|^2+\rho\right\}\\&\qquad\quad\mcap\{\|u_r\|^2+\|u'_r\|^2\le\rho\},\qquad \rho\ge 1,
		\end{align*}
		where $K$ is the constant in \eqref{A1}. Furthermore, we introduce the following disjoint partitions of $\Omega$: $A_0:=G_0^c$, $ A_\infty:=\hat G_{+\infty}$, and 
		\[A_{r,\rho}:=\left(\hat G_{r-1}\mcap G_r^c\right)\mcap\left(F_{r,\rho}\mcap F^c_{r,\rho-1}\right),\qquad r,\rho\ge 1.\]
		We need the following probabilistic estimates for the events $A_0$ and $A_{r,\rho}$ whose proof can be found in \cite{Ner19}, see Lemma 4.3 therein.
		\begin{lemma}\label{lemma-coupling}
			Let $u,u'\in \overline{B_{H}(R)}$ be such that $d:=\|u-u'\|\le1$ and $R\ge 1$. For any $\alpha>0$, there is an integer $N_0=N_0(\alpha)$ and universal constants $c_1,c_2,c_3>0$ such that 
			\begin{align}
				\label{UF3}
				&\PPPPPP(A_0)\lesssim_{R,N,a}d^{\frac{c_1}{2}},\\\label{UF4}&\PPPPPP(A_{r,\rho})\lesssim_R e^{-c_2\rho}\wedge\left(d^{c_1}e^{-c_1\alpha r}+\left[\exp\left(C_{R,N,a}d^{c_1}e^{c_3\rho-c_1\alpha r}\right)-1\right]^{\frac{1}{2}}\right),
			\end{align}
			provided that the coupling parameter $(N,a)$ satisfies $N\ge N_0(\alpha)$ and $a\ge \frac{N^2}{2}$.
		\end{lemma}
		
		\noindent\textit{Step3: Proof of the uniform equicontinuity.} Let $m,R_0$ be given in Proposition~\ref{propositionGC}, and let $\VVV,\fff\in\VV^T$, $R\ge R_0$. We aim to prove the uniform equicontinuity of 
		\[g_t:=\|\PPPP_t^{T,\VVV}\mathbf{1}\|^{-1}_R\PPPP^{T,\VVV}_t \fff,\qquad t\ge0\]
		on $X^T_R$. Without loss of generality, we assume that $\VVV,\fff$ are non-negative, $\fff\le 1$, and the integer $N$ in \eqref{UF0} is the same for $\VVV,\fff$. Let us choose $\uu,\uu'\in X^T_R$ such that $d:=\|\uu-\uu'\|_{X^T}\le 1$. Since $X^T_R\subset \overline{B_{X^T}(R)}$, we have 
		\[\uu(T),\uu'(T)\in \overline{B_{H}(R)},\qquad \|\uu(T)-\uu'(T)\|\le d\le 1.\]
		Let $(u_t,u'_t)$ be the coupled trajectory at level $(N,a)$ issued from $(\uu(T),\uu'(T))$ and let $v_t$ be the associated process. Let
		\[\uu_t:=u_{[t-T,t]},\qquad \uu'_t:=u'_{[t-T,t]},\qquad t\ge 0,\]
        where we set
		\[u_t:=\uu(T+t),\qquad u'_t:=\uu'(T+t),\qquad t\in[-T,0].\]
        Let 
		\[\Xi^{\VVV,\uu}_t:=\exp\left(\int_0^t \VVV(\uu_s)\dd s\right),\qquad \Xi^{\VVV,\uu'}_t:=\exp\left(\int_0^t \VVV(\uu'_s)\dd s\right).\]
        Under the above notations, we have 
		\begin{align}\label{UF8}
			\PPPP^{T,\VVV}_t\fff(\uu)-\PPPP^{T,\VVV}_t\fff(\uu')&=\E\left(\Xi^{\VVV,\uu}_t\fff(\uu_t)-\Xi_t^{\VVV,\uu'}\fff(\uu'_t)\right)\notag\\&=\E\left(\I_{A_0}\left(\Xi^{\VVV,\uu}_t\fff(\uu_t)-\Xi_t^{\VVV,\uu'}\fff(\uu'_t)\right)\right)\notag\\&\quad+\E\left(\I_{A_\infty}\left(\Xi^{\VVV,\uu}_t\fff(\uu_t)-\Xi_t^{\VVV,\uu'}\fff(\uu'_t)\right)\right),\notag\\&\quad+\sum_{r,\rho\ge 1}\E\left(\I_{A_{r,\rho}}\left(\Xi^{\VVV,\uu}_t\fff(\uu_t)-\Xi_t^{\VVV,\uu'}\fff(\uu'_t)\right)\right)\notag\\&=:I_0^t+I_\infty^t+\sum_{r,\rho\ge 1}I^t_{r,\rho}.
		\end{align}
  
		We claim that 
		\begin{align}
			\label{UF5}
			|I_0^t|&\lesssim_{R,\VVV,T}\|\PPPP^{T,\VVV}_t\mathbf{1}\|_R\PPPPPP(A_0)^\frac{1}{2},\\
			\label{UF6}
			|I_{r,\rho}^t|&\lesssim_{R,\VVV,T}e^{r\|\VVV\|_{\infty}}\|\PPPP^{T,\VVV}_t\mathbf{1}\|_R\PPPPPP(A_{r,\rho})^\frac{1}{2}
		\end{align}
		for any integers $r,\rho\ge 1$ and $R\ge R_0$. Let us prove \eqref{UF6}, as the other follows similarly. Indeed, if $t\le r+T+1$, then by the positivity and boundedness of $\fff$, we have 
		\begin{align*}
			I_{r,\rho}^t\le \E\left(\I_{A_{r,\rho}}\Xi^{\VVV,\uu}_t\fff(\uu_t)\right)\lesssim_{\VVV,T} e^{r\|\VVV\|_{\infty}}\PPPPPP(A_{r,\rho}),
		\end{align*}
		which, by symmetry, implies \eqref{UF6}. On the other hand, for $t>r+T+1$, by using the Markov property, \eqref{GC11}, and \eqref{GC5}, we obtain
		\begin{align*}
			I_{r,\rho}^t&\le \E\left(\I_{A_{r,\rho}}\Xi^{\VVV,\uu}_t\right)\\&\lesssim_{\VVV,T}e^{r\|\VVV\|_\infty}\E\left(\I_{A_{r,\rho}}\E\left(\exp\left(\int_{r+T+1}^{t}\VVV(\uu_s)\dd s\right)\Big|\mathscr{F}_{r+1}\right)\right)\\&\lesssim_{\VVV,T} e^{r\|\VVV\|_\infty}\E\left(\I_{A_{r,\rho}}\PPPP^{T,\VVV}_{t-r-1}\mathbf{1}(\uu_{r+1})\right)\\&\lesssim_{\VVV,T}e^{r\|\VVV\|_\infty}\|\PPPP^{T,\VVV}_{t}\mathbf{1}\|_{L^\infty_{\wwww,m}}\E\left(\I_{A_{r,\rho}}\wwww_{m}(\uu_{r+1})\right)\\&\lesssim_{\VVV,T}e^{r\|\VVV\|_\infty}\|\PPPP^{T,\VVV}_{t}\mathbf{1}\|_{R_0}(\E\wwww_{2m}(\uu_{r+1}))^{\frac{1}{2}}\PPPPPP(A_{r,\rho})^\frac{1}{2}\\&\lesssim_{R,\VVV,T}e^{r\|\VVV\|_\infty}\|\PPPP^{T,\VVV}_{t}\mathbf{1}\|_{R}\PPPPPP(A_{r,\rho})^\frac{1}{2}.
		\end{align*}
		This, together with symmetry again, implies \eqref{UF6}. 
		
		It remains to estimate $I_\infty^t$. Let us decompose 
		\[I_\infty^t=I_{\infty,1}^t+I_{\infty,2}^t,\]
		where 
		\begin{align*}
			I_{\infty,1}^t&:=\E\left(\I_{A_\infty}\Xi^{\VVV,\uu}_t\left(\fff(\uu_t)-\fff(\uu'_t)\right)\right),\\
			I_{\infty,2}^t&:=\E\left(\I_{A_\infty}\left(\Xi^{\VVV,\uu}_t-\Xi_t^{\VVV,\uu'}\right)\fff(\uu'_t)\right).
		\end{align*}
		Notice that on the event $A_\infty$, 
		\[v_s=u'_s,\qquad\forall s\ge0.\]
		Then, we see that $w_s:=u_s-u_s'$ satisfies the equation 
		\[\partial_sw +Lw+a\ppP_Nw+\qqQ_N(B(u)-B(u'))=0,\]
		which gives
		\begin{align*}
			\|\ppP_N(u_s-u_s')\|\le e^{-as}d,\qquad \forall s\ge0.
		\end{align*}
		This, together with the Lipschitz property of $\fff $ and $\VVV$, gives
		\begin{align*}
			|I_{\infty,1}^t|\lesssim_{\fff} \E\left(\I_{A_\infty}\Xi^{\VVV,\uu}_t\|\ppP_N(\uu_t-\uu_t')\|_{X^T}\right)\lesssim_{\fff}d\|\PPPP_t^{T,\VVV}\mathbf{1}\|_R,
		\end{align*}
		and 
		\begin{align*}
			|I_{\infty,2}^t|&\le \E\left(\I_{A_\infty}\left(\exp\left(\int_0^t |\VVV(\uu_s)-\VVV(\uu'_s)|\dd s\right)-1\right)\Xi_t^{\VVV,\uu'}\fff(\uu'_t)\right)\notag\\&\le \left(\exp\left(C_{\VVV,a}d\right)-1\right)\|\PPPP_t^{T,\VVV}\mathbf{1}\|_R\lesssim_{\VVV,a}d\|\PPPP_t^{T,\VVV}\mathbf{1}\|_R. 
		\end{align*}
		Hence, we obtain
		\begin{align}
			\label{UF7}
			|I^t_\infty|\lesssim_{\fff,\VVV,a}d\|\PPPP_t^{T,\VVV}\mathbf{1}\|_R. 
		\end{align}
		
		Let us plug the estimates \eqref{UF5}--\eqref{UF7} into \eqref{UF8} to get
		\begin{align}
			\label{UF9}
			\frac{|\PPPP^{T,\VVV}_t\fff(\uu)-\PPPP^{T,\VVV}_t\fff(\uu')|}{\|\PPPP^{T,\VVV}_t\mathbf{1}\|_R}\lesssim_{R,\fff,\VVV,T,a}d+\PPPPPP(A_0)^\frac{1}{2}+\sum_{r,\rho\ge1}e^{r\|\VVV\|_{\infty}}\PPPPPP(A_{r,\rho})^\frac{1}{2}.
		\end{align}
		Combining this with Lemma \ref{lemma-coupling}, it remains to prove the uniform convergence in $d$ of the series on the right hand-side of \eqref{UF9} by appropriately choosing the parameters $\alpha$ and $(N,a)$. Notice that each term in the series is positive. Thus, we only need to consider the case $d=1$:
		\[\sum_{r,\rho\ge1}e^{r\|\VVV\|_{\infty}}\left(e^{-\frac{c_2}{2}\rho}\wedge\left(e^{-\frac{c_1}{2}\alpha r}+\left[\exp\left(C_{R,N,a}e^{c_3\rho-c_1\alpha r}\right)-1\right]^{\frac{1}{4}}\right)\right)<\infty.\]
		Let us define $\SSSSS:=\{(r,\rho)| \rho\le \frac{c_1\alpha r}{2c_3}\}$. By choosing 
		\[\alpha=\frac{16\|\VVV\|_{\infty}}{c_1}\vee \frac{8c_3\|\VVV\|_\infty}{c_1c_2},\qquad N=N_0(\alpha),\qquad a=\frac{N^2}{2},\]
		we see that
		\begin{align*}
			\sum_{(r,\rho)\in \SSSSS}&e^{r\|\VVV\|_{\infty}}\left(e^{-\frac{c_1}{2}\alpha r}+\left[\exp\left(C_{R,N,a}e^{c_3\rho-c_1\alpha r}\right)-1\right]^{\frac{1}{4}}\right)\\&\lesssim_{R,N,a} \sum_{r,\rho\in \SSSSS}e^{r\|\VVV\|_{\infty}} e^{-\frac{c_1\alpha r}{8}}\lesssim_{R,\VVV,N,a}\sum_{r\ge 1}re^{-r\|\VVV\|_{\infty}} <\infty,
		\end{align*}
		and 
		\begin{align*}
			\sum_{(r,\rho)\notin \SSSSS}e^{r\|\VVV\|_{\infty}}e^{-\frac{c_2}{2}\rho}\le \sum_{(r,\rho)\notin \SSSSS} e^{-\frac{c_2\rho}{4}}\lesssim_{\VVV}\sum_{\rho\ge 1}\rho e^{-\frac{c_2\rho}{4}}<\infty. 
		\end{align*}
		This shows the uniform equicontinuity of $\{g_t\}_{t\ge 0}$ on $X_{R}^T$.
	\end{proof}
	\subsection{Uniform tail probability estimate}
	The goal of this subsection is to establish the uniform tail probability estimate required in Proposition~\ref{propositionE2} for the Feynman--Kac semigroup.
	\begin{proposition}\label{propositionUTPE}
		Let $\VVV\in C_b(X^T)$ and $m\ge1$. Then, for any integer $\RR\ge 1$ and $t\ge T+1$,
		\[\sup_{\uu\in \overline{B_{X^T}(\RR)}}\int_{(X_R^T)^c}\wwww_m(\vv)P^{T,\VVV}_t(\uu,\dd\vv)\to0,\]
		as $R\to+\infty$.
	\end{proposition}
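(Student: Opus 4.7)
The plan is to realize the integral as a Feynman--Kac expectation and split off the exponential weight together with the polynomial factor $\wwww_m$ via Cauchy--Schwarz, leaving only a tail probability to which the doubly-logarithmic $X^T_\infty$-moment estimate can be applied.

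First, by the definition \eqref{FK2} and the crude bound $\exp(\int_0^t\VVV(\uu_s)\dd s)\le e^{t\|\VVV\|_\infty}$, I would write
\[\int_{(X^T_R)^c}\wwww_m(\vv)\,P^{T,\VVV}_t(\uu,\dd\vv) \le e^{t\|\VVV\|_\infty}\bigl(\E_\uu\wwww_m^2(\uu_t)\bigr)^{1/2}\PPPPPP_\uu\{\uu_t\in (X^T_R)^c\}^{1/2}\]
after applying Cauchy--Schwarz. The second-moment factor is handled directly by \eqref{A4-2} (or \eqref{GC11}) applied with index $2m$, which gives $\E_\uu\wwww_m^2(\uu_t)\lesssim_{m,\BB_0,h,T}\wwww_{2m}(\uu)\le C_{m,\RR,\BB_0,h,T}$ uniformly over $\uu\in\overline{B_{X^T}(\RR)}$.

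The main step is therefore the uniform tail bound $\sup_{\uu\in\overline{B_{X^T}(\RR)}}\PPPPPP_\uu\{\uu_t\in (X^T_R)^c\}\to 0$ as $R\to\infty$. Since $t\ge T+1$, the definition \eqref{FK1-1} yields the identity $\PPPPPP_\uu\{\uu_t\in (X^T_R)^c\}=\PPPPPP_{\uu(T)}\{u_{[t-T,t]}\notin X^T_R\}$, and since $u_{[t-T,t]}\in X^T_\infty$ almost surely by Corollary~\ref{corollaryA1}, the inclusion $(X^T_R)^c\mcap X^T_\infty\subset\{\vv\in X^T_\infty : \|\vv\|_{X^T_\infty}>R\}$ combined with a Chebyshev-type inequality reduces the problem to controlling a doubly-logarithmic moment. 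I would then invoke, exactly as in Step~2 of the proof of Proposition~\ref{propositionHER}, the estimate
\[\E_{\uu(T)}\log\bigl(1+\log\bigl(1+\|u_{[t-T,t]}\|_{X^T_\infty}\bigr)\bigr)\lesssim_{\BB_1,h,T,t}\|\uu(T)\|^2+1\le \RR^2+1,\]
deduced from \eqref{A5} and Corollary~\ref{corollaryA1}, to obtain an upper bound of the form $C_{\BB_1,h,T,t}(\RR^2+1)/\log(1+\log(1+R))$ on the tail probability, which vanishes uniformly as $R\to\infty$.

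The main obstacle is conceptual rather than computational: the weak dissipativity of the boundary-driven system provides only this very weak doubly-logarithmic tightness in $X^T_\infty$, so it is essential to separate the exponential Feynman--Kac weight and the polynomial weight $\wwww_m$ from the tail event \emph{before} invoking any tightness. Once the Cauchy--Schwarz splitting is set up and the reduction to the original Navier--Stokes process via \eqref{FK1-1} is made, the remaining bounds are immediate consequences of the a priori estimates gathered in the appendix.
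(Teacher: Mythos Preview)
Your proposal is correct and follows essentially the same route as the paper: bound the Feynman--Kac weight by $e^{t\|\VVV\|_\infty}$, apply Cauchy--Schwarz to separate the polynomial moment $\E_\uu\wwww_m^2(\uu_t)\lesssim\wwww_{2m}(\uu)$ from the tail in $(X^T_R)^c$, and control the latter via the doubly-logarithmic $X^T_\infty$-moment estimate coming from \eqref{A5} and Corollary~\ref{corollaryA1}. The only cosmetic difference is that the paper inserts the factor $\sqrt{g(\|\vv\|_{X^T_\infty})/g(R)}\ge 1$ on $(X^T_R)^c$ \emph{before} Cauchy--Schwarz (yielding $(\E_\uu g(\|\uu_t\|_{X^T_\infty}))^{1/2}/\sqrt{g(R)}$ directly), whereas you obtain the indicator probability first and then apply Chebyshev with $g$; the resulting bounds are identical.
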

	\begin{proof}
		Let us fix $\VVV\in C_b(X^T)$, $t\ge T+1$, and $m,\RR\ge 1$. Applying the definition of $P_t^{T,\VVV}$ and the Cauchy--Schwartz inequality, we get
		\begin{align}
			\label{UTPE1}\int_{(X_R^T)^c}\wwww_m(\vv)P^{T,\VVV}_t(\uu,\dd\vv)&\le \frac{e^{t\|\VVV\|_\infty}}{\sqrt{g(R)}}\int_{(X^T_R)^c}\wwww_m(\vv) \sqrt{g\left(\|\vv\|_{X^T_\infty}\right)}P_t^T(\uu,\dd \vv)\notag\\&\le \frac{e^{t\|\VVV\|_\infty}}{\sqrt{g(R)}}\left(\E_{\uu}\wwww_{2m}(\uu_t)\right)^{\frac{1}{2}}\left(\E_{\uu}g\left(\|\uu_t\|_{X^T_\infty}\right)\right)^{\frac{1}{2}},
		\end{align}
		where $g(x):=\log(1+\log(1+x))$. Moreover, utilizing the estimates \eqref{HER5} and \eqref{GC11} combined with the Markov property, we have 
		\begin{align}\label{UTPE2}\E_{\uu}\log\left(1+\log\left(1+\|\uu_{t}\|_{X^T_{\infty}}\right)\right)&= \E_{\uu}\left(\E_{\uu_{t-T-1}}\log\left(1+\log\left(1+\|\uu_{T+1}\|_{X^T_{\infty}}\right)\right)\right)\notag\\&\lesssim_{\BB_1,h,T}\E_{\uu}\wwww_1(\uu_{t-T-1})\lesssim_{\BB_1,h,T} \wwww_1(\uu),
		\end{align}
		which implies 
		\begin{align*}
			\left(\E_{\uu}\wwww_{2m}(\uu_t)\right)^{\frac{1}{2}}\left(\E_{\uu}g\left(\|\uu_t\|_{X^T_\infty}\right)\right)^{\frac{1}{2}}\lesssim_{m,\BB_1,h,T} \sqrt{\wwww_{2m}(\uu)\wwww_1(\uu)}.
		\end{align*}
		This, together with \eqref{UTPE1}, yields
		\[\sup_{\uu\in \overline{B_{X^T}(\RR)}}\int_{(X_R^T)^c}\wwww_m(\vv)P^{T,\VVV}_t(\uu,\dd\vv)\lesssim_{\VVV,t,\RR,m,\BB_1,h,T}\frac{1}{\sqrt{g(R)}}\to0,\]
		as $R\to+\infty$, thus completing the proof.
	\end{proof}
 
	\subsection{Existence of an eigenvector}
	Here, we show that the dual operator $\PPPP^{T,\VVV*}_t$ of the Feynman--Kac semigroup has at least one eigenvector and establish an exponential moment estimate for it.
	\begin{proposition}\label{propositionEE}
		For any $\VVV\in C_b(X^T)$ and $t>0$, there is a measure $\mu_{t,\VVV}\in\PP(X^T)$ and a positive constant $c_{t,\VVV}$ such that 
		\begin{align}
			\label{EE1}\PPPP^{T,\VVV*}_t\mu_{t,\VVV}=c_{t,\VVV}\mu_{t,\VVV}.
		\end{align}
		Moreover, any such measure satisfies
		\begin{align}\label{EE3}
			\int_{X^T}\exp\left(\frac{\gamma_0}{2}\|\uu\|_{X^T}^2\right)\mu_{t,\VVV}(\dd \uu)<\infty,
		\end{align}
		where $\gamma_0$ is the constant given in Lemma~\ref{lemmaA1}.
	\end{proposition}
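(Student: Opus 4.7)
The plan is to reformulate the eigenvector equation as a fixed-point problem for the normalized nonlinear map
$$\Lambda(\mu):=\frac{\PPPP^{T,\VVV*}_t\mu}{\langle \PPPP^{T,\VVV}_t\mathbf{1},\mu\rangle},\qquad \mu\in\PP(X^T).$$
Because $\PPPP^{T,\VVV}_t\mathbf{1}(\uu)\ge e^{-t\|\VVV\|_\infty}$ holds at every $\uu$, the denominator is uniformly bounded below and $\Lambda$ is a well-defined self-map of $\PP(X^T)$. Any fixed point $\mu_{t,\VVV}$ of $\Lambda$ gives \eqref{EE1} with $c_{t,\VVV}=\langle \PPPP^{T,\VVV}_t\mathbf{1},\mu_{t,\VVV}\rangle \ge e^{-t\|\VVV\|_\infty}>0$. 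Continuity of $\Lambda$ in the weak topology on $\PP(X^T)$ follows from the Feller property of $\PPPP^{T,\VVV}_t$, which in turn is a consequence of Proposition~\ref{propositionTC} combined with the continuous dependence of solutions of \eqref{I1} on initial data.

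Since $X^T$ itself is not compact, I will apply a compactification-and-limit strategy. For each $R\ge R_0$, where $R_0$ is from Proposition~\ref{propositionGC}, introduce the sub-Markov kernel $K^R_t(\uu,A):=P^{T,\VVV}_t(\uu,A\cap X^T_R)$ on the compact space $X^T_R$, and the associated renormalized map $\Lambda^R(\mu):=K^R_t\mu/\langle K^R_t\mathbf{1},\mu\rangle$. Positivity of $K^R_t\mathbf{1}$ on $X^T_R$ for $R$ large enough follows from the dissipativity estimate of Proposition~\ref{propositionDC} together with the uniform tail bound of Proposition~\ref{propositionUTPE}; thus $\Lambda^R$ is a continuous self-map of the compact convex set $\PP(X^T_R)$. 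The Leray--Schauder--Tychonoff theorem then produces $\mu_R\in \PP(X^T_R)$ with $K^R_t\mu_R=c_R\mu_R$ for some $c_R>0$.

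The next step is to pass to the limit $R\to\infty$. Writing the eigenvector equation as $c_R\langle \wwww_m,\mu_R\rangle=\langle \PPPP^{T,\VVV}_t\wwww_m\mathbf{1}_{X^T_R},\mu_R\rangle$ and invoking the growth estimate \eqref{GC2}, together with a uniform lower bound on the ratio $c_R/\|\PPPP^{T,\VVV}_t\mathbf{1}\|_{R_0}$ obtained from the concentration property and Proposition~\ref{propositionDC}, yields a uniform bound for $\langle \wwww_m,\mu_R\rangle$. Combined with the uniform tail estimate of Proposition~\ref{propositionUTPE}, this gives tightness of $\{\mu_R\}_{R\ge R_0}$. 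Extract a weakly convergent subsequence $\mu_R\rightharpoonup \mu_{t,\VVV}$; the truncation error $(\PPPP^{T,\VVV}_t-K^R_t)\fff$ vanishes uniformly on tight families by Proposition~\ref{propositionUTPE}, so passing to the limit in $K^R_t\mu_R=c_R\mu_R$ produces \eqref{EE1}, with $c_{t,\VVV}=\lim c_R>0$.

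Finally, the exponential moment bound \eqref{EE3} is obtained directly from \eqref{EE1}. Applying $\PPPP^{T,\VVV*}_t\mu_{t,\VVV}=c_{t,\VVV}\mu_{t,\VVV}$ against the truncation $\varphi_N(\uu):=\exp(\gamma_0\|\uu\|^2_{X^T}/2)\wedge N$, I use Lemma~\ref{lemmaA1} to control $\E_\uu\exp(\gamma_0\|\uu_t\|^2/2)$ by $C+C'\exp(\alpha_t\|\uu\|^2/2)$ with $\alpha_t<\gamma_0$, and bootstrap: the polynomial moment $\langle\wwww_m,\mu_{t,\VVV}\rangle<\infty$ inherited from the uniform bound on $\mu_R$ controls $\int\exp(\alpha_t\|\uu\|^2/2)\mu_{t,\VVV}(\dd\uu)$ for $\alpha_t$ small, and a short iteration raises the exponent up to $\gamma_0$; monotone convergence as $N\to\infty$ then yields \eqref{EE3}. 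The main obstacle I expect lies in the limit $R\to\infty$: both uniform tightness of $\{\mu_R\}$ and identification of the weak limit as a genuine eigenvector of the unrestricted $\PPPP^{T,\VVV*}_t$ require a delicate combination of Propositions~\ref{propositionGC}, \ref{propositionDC}, and \ref{propositionUTPE}, with careful tracking of the normalization constants $c_R$.
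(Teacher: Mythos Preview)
Your existence argument via truncation to the compact sets $X^T_R$ and passage to the limit $R\to\infty$ differs from the paper's route, which applies Leray--Schauder directly on $\PP(X^T)$ restricted to the closed convex set $D_{m,l}=\{\bmnu:\langle\wwww_m,\bmnu\rangle\le l\}$ and shows that the \emph{image} $G_{t,\VVV}(D_{m,l})$ is pre-compact via the doubly-logarithmic moment estimate \eqref{UTPE2}; this avoids the delicate limit you flag at the end. Your scheme is in principle workable for $t\ge T+1$, but you omit the case $t<T+1$: by Proposition~\ref{propositionGC} the measure $P^{T,\VVV}_t(\uu,\cdot)$ is concentrated on $X^T_\infty$ only for $t>T$, so for small $t$ your restricted kernel $K^R_t$ may have $K^R_t\mathbf{1}\equiv 0$ on $X^T_R$. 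The paper handles small $t$ by Bogolyubov averaging, $\mu_{t,\VVV}:=\sum_{l=0}^{k-1}(c_{kt,\VVV}^{-1/k}\PPPP^{T,\VVV*}_t)^l\mu_{kt,\VVV}$ with $kt\ge T+1$.

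There is a genuine gap in your derivation of \eqref{EE3}. The bound you invoke, $\E_\uu\exp(\gamma_0\|\uu_t\|^2_{X^T}/2)\le C+C'\exp(\alpha_t\|\uu\|^2_{X^T}/2)$ with $\alpha_t<\gamma_0$, is not what Lemma~\ref{lemmaA1} provides: estimate \eqref{A4-1} gives the \emph{same} exponent on both sides, only with a small prefactor $4e^{-\kappa t}$. Trying to absorb through the eigenvector relation then requires $c_{t,\VVV}^{-k}e^{kt\|\VVV\|_\infty}\cdot 4e^{-\gamma_0 kt/2}<1$, and since only $c_{t,\VVV}\ge e^{-t\|\VVV\|_\infty}$ is available this forces $\gamma_0>4\|\VVV\|_\infty$, which need not hold for arbitrary $\VVV\in C_b(X^T)$. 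The paper instead uses the \emph{weighted} estimate \eqref{A4-3},
\[
\E_{\uu}\bigl(\|\uu_{kt}\|^2_{X^T}\exp(\tfrac{\gamma_0}{2}\|\uu_{kt}\|^2_{X^T})\bigr)\lesssim\exp(\tfrac{\gamma_0}{2}\|\uu\|^2_{X^T}),
\]
whose extra factor $\|\uu_{kt}\|^2$ permits a Chebyshev split at level $R$ yielding $\langle\fff,\mu_{t,\VVV}\rangle\le C\bigl(e^{\gamma_0 R^2/2}+R^{-2}\langle\fff,\mu_{t,\VVV}\rangle\bigr)$ with $C$ independent of $R$; choosing $R$ large gives \eqref{EE3} in one step, with no bootstrap.
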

	\begin{proof}\textit{Step 1: Exponential moment estimate.} Let us fix $\VVV\in C_b(X^T)$ and $t>0$. Suppose that $\mu_{t,\VVV}\in\PP(X^T)$ satisfies \eqref{EE1} and $k\ge 1$ is an integer such that $kt\ge T+1$. From \eqref{A4-3}, we have
		\begin{align}\label{EE2}
			\E_{\uu}\left(\|\uu_{kt}\|^2_{X^T}\exp\left(\frac{\gamma_0}{2}\|\uu_{kt}\|^2_{X^T}\right)\right)\lesssim_{\BB_0,h,t,T}\exp\left(\frac{\gamma_0}{2}\|\uu\|_{X^T}^2\right).	
		\end{align}
		For simplicity, let us write
		\[\fff(\uu):=\exp\left(\frac{\gamma_0}{2}\|\uu\|_{X^T}^2\right).\]
		Using \eqref{EE1} and \eqref{EE2}, we obtain
		\begin{align*}
			\langle \fff ,\mu_{t,\VVV}\rangle&=c_{t,\VVV}^{-k}\langle\PPPP_{kt}^{T,\VVV}\fff,\mu_{t,\VVV}\rangle\\&\le c_{t,\VVV}^{-k}e^{kt\|\VVV\|_\infty}\int_{X^T}\E_{\uu}\fff(\uu_{kt}) \mu_{t,\VVV}(\dd \uu)\\&\lesssim_{t,T,\VVV}\int_{X^T}\E_{\uu}\left(\fff(\uu_{kt}) \I_{\{\|\uu_{kt}\|_{X^T}\le R\}}+\fff(\uu_{kt}) \I_{\{\|\uu_{kt}\|_{X^T}> R\}}\right)\mu_{t,\VVV}(\dd \uu)\\&\lesssim_{t,T, \VVV}\int_{X^T}\left(e^{\frac{\gamma_0 R^2}{2}}+R^{-2}\E_{\uu}\left(\|\uu_{kt}\|^2_{X^T}\fff(\uu_{kt})\right)\right)  \mu_{t,\VVV}(\dd \uu)\\&\lesssim_{t,T,\VVV,\BB_0,h}\int_{X^T}\left(e^{\frac{\gamma_0 R^2}{2}}+R^{-2}\fff(\uu)\right)  \mu_{t,\VVV}(\dd \uu).
		\end{align*}
		Therefore, by choosing $R$ sufficiently large, we see that 
		\[\langle \fff,\mu_{t,\VVV}\rangle\le C_{t,T,\VVV,\BB_0,h}<\infty.\]
		\noindent\textit{Step 2: Existence of an eigenvector for $t\ge T+1$.} For integers $l,m\ge 1$, we define
		\[D_{m,l}:=\{\bmnu\in\PP(X^T)|\langle\wwww_m,\bmnu\rangle\le l\}.\]
		By the Fatou lemma, it is clear that $D_{m,l}$ is a closed and convex subset of $\PP(X^T)$. For $t\ge T+1$ and $\VVV\in C_b(X^T)$, we introduce the following continuous map:
		\begin{align*}
			G_{t,\VVV}:&\ D_{m,l}\to \PP(X^T),\\
			&\bmnu\mapsto \frac{\PPPP_t^{T,\VVV*}\bmnu}{\PPPP_t^{T,\VVV*}\bmnu(X^T)}.
		\end{align*}
		We claim that for appropriately chosen $m,l\ge1$, $G_{t,\VVV}(D_{m,l})\subset D_{m,l}$. Indeed, for $\bmnu\in D_{m,l}$, by using \eqref{A4-2}, we have
		\begin{align*}
			\langle \wwww_m,G_{t,\VVV}\bmnu\rangle&\le \exp(t\Osc(\VVV))\langle \PPPP_t^T\wwww_m,\bmnu\rangle\\&\le \exp(t\Osc(\VVV))\int_{X^T}(C_{m,\BB_0,h,T}+4e^{-m\alpha_1(t-T)}\|\uu\|^{2m}_{X^T})\bmnu(\dd \uu),
		\end{align*}
		from which, by choosing $m,l$ sufficiently large so that
		\[4e^{-m\alpha_1(t-T)}\exp(t\Osc(\VVV))\le \frac{1}{2},\qquad l\ge 2\exp(t\Osc(\VVV))C_{m,\BB_0,h,T},\]
		we have $G_{t,\VVV}\bmnu\in D_{m,l}$. 
		
		In view of the Leray--Schauder theorem, there is a measure $\bmnu\in D_{m,l}$ satisfying \eqref{EE1} with 
		\[c_{t,\VVV}:=\PPPP_t^{T,\VVV*}\bmnu(X^T)>0,\]
		provided that $G_{t,\VVV}(D_{m,l})$ is pre-compact. To this end, we use the estimate \eqref{UTPE2} to get
		\begin{align*}
			\int_{X^T}&\log\left(1+\log\left(1+\|\uu\|_{X^T_{\infty}}\right)\right)G_{t,\VVV}\bmnu(\dd \uu)\\&\lesssim_{t,\VVV}\int_{X^T}\E_{\uu}\log\left(1+\log\left(1+\|\uu_t\|_{X^T_{\infty}}\right)\right)\bmnu(\dd \uu)\\&\lesssim_{t,\VVV,\BB_1,h,T}\int_{X^T}\wwww_1(\uu)\bmnu(\dd \uu)\lesssim_{t,\VVV,\BB_1,h,T}\langle \wwww_m,\bmnu\rangle<\infty
		\end{align*}
		for any $\bmnu\in D_{m,l}$. This, together with the Prokhorov theorem and the Chebyshev inequality, implies the pre-compactness of $G_{t,\VVV}(D_{m,l})$ and thus the existence of an eigenvector for $t\ge T+1$.
		
		\noindent\textit{Step 3: Existence of an eigenvector for $t<T+1$.} Let $k\ge 1$ be such that $kt\ge T+1$, and let $\mu_{kt,\VVV}\in \PP(X^T)$ denote the measure satisfying \eqref{EE1} with $c_{kt,\VVV}>0$. Consider the following Bogolyubov averaging:
		\[\mu_{t,\VVV}:=\sum_{l=0}^{k-1}\left(c_{kt,\VVV}^{-\frac{1}{k}}\PPPP^{T,\VVV*}_{t}\right)^l\mu_{kt,\VVV}.\]
		Utilizing \eqref{EE1} combined with the above definition, we see that 
		\[\PPPP_{t}^{T,\VVV*}\mu_{t,\VVV}=c_{kt,\VVV}^{\frac{1}{k}}\mu_{t,\VVV}.\] 
		This completes the proof.
	\end{proof}
 
	\subsection{Multiplicative ergodic theorem}
	In this subsection, we apply Proposition~\ref{propositionE2} combined with the results established previously to obtain a detailed description of the large-time behaviour of the Feynman--Kac semigroup. This result will be used to verify the conditions in the Kifer-type criterion Proposition~\ref{proposition-kifer}.
	
	Using Proposition~\ref{propositionGC}, we see that $\{\PPPP_t^{T,\VVV}\}_{t\ge0}$ forms a semigroup of bounded linear operators in $C_{\wwww,m}(X^T)$ for sufficiently large $m$. As introduced in Section~\ref{level2}, we say $c\in \R$ is an eigenvalue of $\{\PPPP_t^{T,\VVV}\}_{t\ge0}$, if there is a function $\fff\in C_{\wwww,m}(X^T)$ such that 
	\[\PPPP_t^{T,\VVV}\fff=c^t\fff,\qquad\forall t>0.\]
	Similarly, for the dual semigroup, $c\in\R$ is said to be an eigenvalue, if there is a measure $\bmnu\in \MM^+(X^T)$ satisfying
	\[\PPPP^{T,\VVV*}_t\bmnu=c^t\bmnu,\qquad\forall t>0.\]
	We recall that $\VV^T$ is the space of functions in the form \eqref{UF0}, and write 
    \[\PP_{\wwww,m}(X^T):=\{\bmnu\in\PP(X^T)|\langle \wwww_m,\bmnu\rangle<\infty\}.\]
	\begin{theorem}
		For any $\VVV\in \VV^T$, there is an integer $m=m(\VVV)\ge 1$ such that the following assertions hold.
		\begin{enumerate}
			\item There is a unique eigenvalue $c=c_{\VVV}>0$ of $\{\PPPP^{T,\VVV*}_t\}_{t\ge0}$ associated with a unique eigenvector $\mu_{\VVV}\in \PP(X^T)$. Moreover, $\mu_{\VVV}$ satisfies the exponential moment estimate \eqref{EE3}.
			\item The number $c_{\VVV}$ is the unique principal eigenvalue of $\{\PPPP_t^{T,\VVV}\}_{t\ge0}$ and there is a unique eigenfunction $ h_{\VVV}\in C_{\wwww,m}(X^T)$ associated with $c_{\VVV}$ and normalized by the condition $\langle  h_{\VVV},\mu_{\VVV}\rangle=1.$ Moreover, $h_{\VVV}$ is positive on $X^T$.
			\item For any $\fff\in C_{\wwww,m}(X^T)$, $\bmnu\in \PP_{\wwww,m}(X^T)$, and $R>0$, we have 
			\begin{gather}
				\label{MET1}
				c_{\VVV}^{-t}\PPPP^{T,\VVV}_t\fff\to \langle \fff,\mu_{\VVV}\rangle  h_{\VVV},\qquad \mbox{in $C_b(\overline{B_{X^T}(R)})\mcap L^1(X^T,\mu_{\VVV})$},\\
				\label{MET2}
				c_{\VVV}^{-t}\PPPP^{T,\VVV*}_t\bmnu\to \langle h_{\VVV},\bmnu\rangle\mu_{\VVV},\qquad \mbox{in $\MM^+(X^T)$},
			\end{gather}
			as $t\to+\infty$. Moreover, for any $\gamma,M>0$, the convergence
			\begin{align}
				\label{MET2-1}c_{\VVV}^{-t}\E_{\bmnu}\left(\fff(\uu_t)\exp\left(\int_0^t\VVV(\uu_s)\dd s\right)\right)\to\langle\fff,\mu_{\VVV}\rangle\langle h_{\VVV},\bmnu\rangle,
			\end{align}
			as $t\to+\infty$, holds uniformly in 
			\begin{align}
				\label{MET2-2}\bmnu\in\bmLambda(\gamma,M):=\left\{\bmnu\in\PP(X^T)\Big|\int_{X^T}\exp\left(\gamma\|\uu\|_{X^T}^2\right)\bmnu(\dd \uu)\le M\right\}.
			\end{align}
		\end{enumerate}
	\end{theorem}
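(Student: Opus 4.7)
The plan is to deduce the theorem as a direct application of the improved abstract multiplicative ergodic criterion Proposition~\ref{propositionE2} to the weighted phase space $(X^T,\wwww_m)$ and the Feynman--Kac semigroup $\{\PPPP_t^{T,\VVV}\}_{t\ge 0}$. For a given $\VVV\in\VV^T$, I would first choose $m$ large enough so that all of the following three things hold simultaneously: $(i)$ the concentration property and the growth bounds \eqref{GC2}--\eqref{GC3} of Proposition~\ref{propositionGC} hold with this $m$ and the associated $R_0$; $(ii)$ $\PPPP_t^{T,\VVV}$ is a bounded operator on $C_{\wwww,m}(X^T)$ (cf.\ Step 1 of the proof of Proposition~\ref{propositionGC}); and $(iii)$ the eigenmeasure $\mu_{t,\VVV}$ produced by Proposition~\ref{propositionEE} lies in $\PP_{\wwww,m}(X^T)$.

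The body of the proof is a verification of the hypotheses of Proposition~\ref{propositionE2}, each of which corresponds to one of the results already established in this section: the concentration of $P^{T,\VVV}_t(\uu,\cdot)$ on $X^T_\infty$ and the growth/comparison estimate between $\|\PPPP^{T,\VVV}_t\wwww_l\|_{L^\infty_{\wwww,l}}$ and $\|\PPPP_t^{T,\VVV}\mathbf{1}\|_{R_0}$ come from Proposition~\ref{propositionGC}; the time-continuity from Proposition~\ref{propositionTC}; the dissipativity (existence of a ball $\overline{B_{X^T}(\RR)}$ uniformly charged from $\overline{B_{X^T}(R)}$) from Proposition~\ref{propositionDC}, combined with the hyper-exponential return to the compact sets $X^T_R$ from Proposition~\ref{propositionHER}; the uniform irreducibility on $X^T_\rho$, with the polynomial-in-$R$ lower bound tolerated by the criterion, from Proposition~\ref{propositionUI}; the uniform Feller property of $\{\|\PPPP_t^{T,\VVV}\mathbf{1}\|_R^{-1}\PPPP_t^{T,\VVV}\fff\}_{t\ge 0}$ on $X^T_R$ for $\fff,\VVV\in\VV^T$ from Proposition~\ref{propositionUF}; the uniform tail estimate for integrals of $\wwww_m$ against $P^{T,\VVV}_t(\uu,\cdot)$ from Proposition~\ref{propositionUTPE}; and finally, the existence of at least one eigenpair $(c_{t,\VVV},\mu_{t,\VVV})$ of the dual semigroup, with exponential moment \eqref{EE3}, from Proposition~\ref{propositionEE}. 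Once all hypotheses are checked, Proposition~\ref{propositionE2} directly yields assertions $(1)$ and $(2)$ together with the convergences \eqref{MET1} and \eqref{MET2} in the stated non-uniform forms; uniqueness of $h_\VVV$ and $\mu_\VVV$ follows from the uniqueness clause in Proposition~\ref{propositionE2}, while the exponential moment of $\mu_\VVV$ is inherited from \eqref{EE3} since any eigenvector of the dual must satisfy that bound by the argument of Step~1 of Proposition~\ref{propositionEE}.

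The step I expect to be the main obstacle is upgrading the convergence \eqref{MET1}, which by Proposition~\ref{propositionE2} is uniform on each compact $\overline{B_{X^T}(R)}$, to the full uniform convergence \eqref{MET2-1} over $\bmnu\in\bmLambda(\gamma,M)$. The idea is to write
\[
c_\VVV^{-t}\E_\bmnu\!\left(\fff(\uu_t)\exp\!\left(\int_0^t\!\VVV(\uu_s)\dd s\right)\right)=\int_{X^T_R}\!c_\VVV^{-t}\PPPP_t^{T,\VVV}\fff\,\dd\bmnu+\int_{(X^T_R)^c}\!c_\VVV^{-t}\PPPP_t^{T,\VVV}\fff\,\dd\bmnu,
\]
and handle the tail via the pointwise bound $|c_\VVV^{-t}\PPPP_t^{T,\VVV}\fff(\uu)|\lesssim \wwww_l(\uu)$ from \eqref{GC2}, combined with the uniform tightness of $\bmLambda(\gamma,M)$ that follows from Chebyshev,
\[
\bmnu((X^T_R)^c)\le Me^{-\gamma R^2},\qquad \int_{(X^T_R)^c}\wwww_l\,\dd\bmnu\le M^{1/2}\!\left(\int_{X^T}\!\wwww_{2l}^2 e^{-\gamma\|\uu\|^2_{X^T}}\dd\bmnu\right)^{\!\!1/2}\!\!\to 0
\]
as $R\to\infty$, uniformly in $\bmnu\in\bmLambda(\gamma,M)$. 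The main-part integral is then treated by \eqref{MET1} applied to the compact set $X^T_R$, together with the uniform bound $\|c_\VVV^{-t}\PPPP_t^{T,\VVV}\fff\|_R\lesssim 1$ that again comes from \eqref{GC2} and the choice of $R_0$. Letting $t\to\infty$ and then $R\to\infty$ yields \eqref{MET2-1} uniformly over $\bmLambda(\gamma,M)$, completing the proof. The delicate point is that the same $m=m(\VVV)$ must be compatible with \emph{all} of the verifications above, which is why the choice of $m$ is made at the outset from the constraints of Propositions~\ref{propositionGC}, \ref{propositionHER}, and \ref{propositionEE} simultaneously.
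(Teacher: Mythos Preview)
Your overall strategy is correct and matches the paper's: apply Proposition~\ref{propositionE2} after verifying all its hypotheses via Propositions~\ref{propositionGC}--\ref{propositionEE}, then upgrade to the uniform statement \eqref{MET2-1} by a ball/tail decomposition. Two points, however, are genuine gaps.

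First, and most importantly, Proposition~\ref{propositionE2} does \emph{not} yield \eqref{MET1} for all $\fff\in C_{\wwww,m}(X^T)$; its conclusion \eqref{E8} is stated only for $f\in\CC$, which here is the determining family $\VV^T$. You therefore cannot say that ``Proposition~\ref{propositionE2} directly yields \eqref{MET1} and \eqref{MET2}''. The paper fills this gap in two nontrivial steps: (Step~2) extend \eqref{MET1} from $\VV^T$ to $C_b(X^T)$ by buc-approximation, using the growth bound \eqref{GC2} together with the doubly-logarithmic tail estimate \eqref{UTPE2} (via the auxiliary weight $\gggg(\uu)=\log(1+\log(1+\|\uu\|_{X^T_\infty}))$) to control the contribution outside $X^T_\rho$ uniformly in $t$; (Step~3) extend from $C_b(X^T)$ to $C_{\wwww,m}(X^T)$ by truncation $\fff_n=\fff^+\wedge n-\fff^-\wedge n$ and the growth condition \eqref{GC2} applied at weight level $m+1$. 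Likewise, \eqref{MET2} is not an output of Proposition~\ref{propositionE2}; the paper derives it in Step~4 by integrating \eqref{MET1} against $\bmnu$ and splitting over $B_{X^T}(R)$ and its complement. Your proposal collapses all of this into ``directly yields'', which is not justified.

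Second, a smaller omission: among the hypotheses of Proposition~\ref{propositionE2} you must check that $\VV^T$ is a determining family. The paper does this in Step~1 via a Dini-type argument ($\ppP_N\uu\to\uu$ in $X^T$, hence $\F(\ppP_N\uu)\to\F(\uu)$ in the buc sense) to show that every $\VVV\in C_b(X^T)$ is a buc-limit of elements of $\VV^T$. Your list of verifications skips this item. Also note that the decomposition for \eqref{MET2-1} should be over balls $\overline{B_{X^T}(R)}$ (where \eqref{MET1} gives uniform convergence), not over the compacts $X^T_R$; this is consistent with how Proposition~\ref{propositionE2} is stated.
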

	\begin{proof} \textit{Step 1. Checking the conditions of Proposition~\ref{propositionE2}.} In view of Propositions~\ref{propositionGC}--\ref{propositionDC}, \ref{propositionUI}, \ref{propositionUF}, and \ref{propositionUTPE}, it remains to show that $\VV^T$ is a determining family\footnote{Let us recall that a family $\CC\subset C_b(X^T)$ is determining, if any two measures $\mu,\nu\in\MM^+(X^T)$ satisfying $\langle f,\mu\rangle=\langle f,\nu\rangle$ for any $f\in \CC$ coincide.}. Indeed, by the Dini theorem, cf. Theorem 1.7.10 in \cite{Bog2007}, we have $\ppP_N\uu \to \uu$ in $X^T$, which implies that for any $\F\in L_b(X^T)$, $\F(\ppP_N\uu)\to \F(\uu)$ in the sense of buc-convergence\footnote{Here, we recall that the definition of buc-convergence is given in \eqref{buc}.} and pointwise convergence, as $N\to+\infty$. It is well-known that any $\VVV\in C_b(X^T)$ can be approximated by functions $\F_n\in L_b(X^T)$ in the sense of these types of convergence. Combining this and the dominated convergence theorem, we see that $\VV^T$ is a determining family. 
		
		Now, by applying Propositions~\ref{propositionE2} and \ref{propositionEE}, we have the existence part of the first two statements, as well as the convergence \eqref{MET1} for any $\fff\in \CC$ and $R>0$. The uniqueness part follows directly from \eqref{MET1} and \eqref{MET2}, which together with \eqref{MET2-1} are established in the next four steps.
		
		\noindent\textit{Step 2: Proof of \eqref{MET1} for $\fff\in C_b(X^T)$.} We claim that the convergence \eqref{MET1} holds for all functions $\fff$ that can be approximated, in the sense of buc-convergence and pointwise convergence, by a sequence $\{\fff_n\}_{n\ge 1}$ satisfying \eqref{MET1}. Indeed, let us fix $R>0$ and set 
		\[\Delta_t(\fff)=\left\|c_{\VVV}^{-t}\PPPP^{T,\VVV}_t\fff-\langle \fff,\mu_{\VVV}\rangle  h_{\VVV}\right\|_{0,R},\qquad \|\fff\|_{0,R}:=\sup_{\uu\in \overline{B_{X^T}(R)}}|\fff(\uu)|.\]
		Then, 
		\begin{align*}
			\Delta_t(\fff)\le \Delta_t(\fff_n)+|\langle \fff-\fff_n,\mu_{\VVV}\rangle|\| h_{\VVV}\|_{0,R}+c_{\VVV}^{-t}\|\PPPP^{T,\VVV}_t(\fff-\fff_n)\|_{0,R}.
		\end{align*}
		Since $\Delta_t(\fff_n)\to 0$, as $t\to+\infty$, for any fixed integer $n\ge 1$, and 
		\[\lim_{n\to+\infty}|\langle \fff-\fff_n,\mu_{\VVV}\rangle|=0,\]
		it remains to establish
		\begin{align}
			\label{MET3}
			\lim_{n\to+\infty}\sup_{t\ge T+1}c_{\VVV}^{-t}\|\PPPP^{T,\VVV}_t(\fff-\fff_n)\|_{0,R}=0.
		\end{align}
		To this end, let us take any integer $\rho\ge 1$. Notice that 
		\begin{align}
			\label{MET4}
			c_{\VVV}^{-t}\|\PPPP^{T,\VVV}_t&(\fff-\fff_n)\|_{0,R}\notag\\&\le c_{\VVV}^{-t}\|\PPPP^{T,\VVV}_t(\I_{X^{T}_{\rho}}(\fff-\fff_n))\|_{0,R}+c_{\VVV}^{-t}\|\PPPP^{T,\VVV}_t(\I_{(X^{T}_{\rho})^c}(\fff-\fff_n))\|_{0,R}\notag\\&=:J_1(n,t,\rho)+J_2(n,t,\rho).
		\end{align}
		For $J_1$, we apply \eqref{GC5} to obtain
		\begin{align}
			\label{MET5}
			J_1(n,t,\rho)&\le c_{\VVV}^{-t}\|\fff-\fff_n\|_{\rho}\|\PPPP^{T,\VVV}_t\mathbf{1}\|_{0,R}\lesssim_R c_{\VVV}^{-t}\|\fff-\fff_n\|_{\rho}\|\PPPP^{T,\VVV}_t\mathbf{1}\|_{L^\infty_{\wwww,m}}\notag\\&\lesssim_{R} c_{\VVV}^{-t}\|\fff-\fff_n\|_{\rho}\|\PPPP^{T,\VVV}_t\mathbf{1}\|_{R_0}.
		\end{align}
		Convergence \eqref{MET1} with $\fff=\mathbf{1}$ gives
		\begin{align}\label{MET5-1}
			\sup_{t\ge 0}c_{\VVV}^{-t}\|\PPPP^{T,\VVV}_t\mathbf{1}\|_{R_0}<\infty.
		\end{align}
		Combining this with \eqref{MET5}, we have 
		\begin{align}
			\label{MET6}
			\lim_{n\to+\infty}\sup_{t\ge0}J_1(n,t,\rho)=0,\qquad\forall \rho\ge 1.
		\end{align}
        To estimate $J_2$, let us write 
		\[\gggg(\uu):=\log(1+\log(1+\|\uu\|_{X^T_\infty})),\qquad g(x):=\log(1+\log(1+x)).\]
		Notice that
		\begin{align}
			\label{MET7}
			J_2(n,t,\rho)\lesssim_{\fff} \frac{1}{g(\rho)}c_{\VVV}^{-t}\|\PPPP_t^{T,\VVV}\gggg\|_{0,R}.
		\end{align}
		Applying the Markov property combined with the estimates \eqref{GC2} and \eqref{UTPE2}, for $\uu\in \overline{B_{X^T}(R)}$ and $t\ge T+1$, we have 
		\begin{align*}
			\PPPP_t^{T,\VVV}\gggg(\uu) &\lesssim_{\VVV,T}\E_{\uu}\left(\exp\left(\int_0^{t-T-1}\VVV(\uu_s)\dd s\right)\E_{\uu_{t-T-1}}\gggg(\uu_{T+1})\right)\\&\lesssim_{\VVV,T,\BB_1,h}\E_{\uu}\left(\exp\left(\int_0^{t-T-1}\VVV(\uu_s)\dd s\right)\wwww_m(\uu_{t-T-1})\right)\\&\lesssim_{\VVV,T,\BB_1,h,R}\|\PPPP_{t-T-1}^{T,\VVV}\wwww_m\|_{L^\infty_{\wwww,m}}\lesssim_{\VVV,T,\BB_1,h,R}\|\PPPP_{t}^{T,\VVV}\mathbf{1}\|_{R_0}.
		\end{align*}
		This, together with \eqref{MET5-1} and \eqref{MET7}, gives
		\begin{align}
			\label{MET8}
			\lim_{\rho \to+\infty}\sup_{n\ge 1}\sup_{t\ge T+1}J_2(n,t,\rho)=0.
		\end{align}
		Combining \eqref{MET4}, \eqref{MET6}, and \eqref{MET8}, we obtain \eqref{MET3} and thus the claimed result. Utilizing this combined with the approximation results established in the previous step, we see that \eqref{MET1} holds for $\fff\in C_b(X^T)$. 
		
		\noindent\textit{Step 3: Proof of \eqref{MET1} for $\fff\in C_{\wwww,m}(X^T)$.} Let us define
		\[\fff_n:=\fff^+\wedge n-\fff^-\wedge n.\]
		Notice that 
		\[\fff_n\in C_b(X^T),\qquad |\fff_n|\le |\fff|,\qquad\forall n\ge 1,\]
		and $\fff_n\to\fff$ in $C_{\wwww,m+1}(X^T)$, as $n\to\infty$, which implies 
		\[\lim_{n\to+\infty}|\langle\fff-\fff_n,\mu_{\VVV} \rangle|\to0.\]
		Therefore, as in the previous step, it suffices to establish \eqref{MET3}. To see this, we apply \eqref{GC2} with $m+1$ to have
		\begin{align*}
			c_{\VVV}^{-t} \|\PPPP^{T,\VVV}_t(\fff-\fff_n)\|_{0,R}&\lesssim_R \|\fff-\fff_n\|_{L^{\infty}_{\wwww,m+1}} c_{\VVV}^{-t} \|\PPPP^{T,\VVV}_t\wwww_{m+1}\|_{L^\infty_{\wwww,m+1}}\\&\lesssim_{R} \|\fff-\fff_n\|_{L^{\infty}_{\wwww,m+1}} c_{\VVV}^{-t} \|\PPPP^{T,\VVV}_t\mathbf{1}\|_{R_0},
		\end{align*}
		which, together with \eqref{MET5-1}, implies \eqref{MET3} and thus \eqref{MET1} for $\fff\in C_{\wwww,m}(X^T)$.
		
		\noindent\textit{Step 4: Proof of \eqref{MET2} and \eqref{MET2-1}.} Let $\fff\in C_{\wwww,m}(X^T)$. By using \eqref{GC2},
		\begin{align*}
			c_{\VVV}^{-t}\PPPP^{T,\VVV}_t\fff(\uu)&\le c_{\VVV}^{-t}\|\fff\|_{L^{\infty}_{\wwww,m}}\PPPP^{T,\VVV}_t\wwww_m(\uu)\\&\le \|\fff\|_{L^{\infty}_{\wwww,m}}\wwww_{m}(\uu)c_{\VVV}^{-t}\|\PPPP_t^{T,\VVV}\wwww_m\|_{L^\infty_{\wwww,m}}\\&\lesssim\|\fff\|_{L^{\infty}_{\wwww,m}}\wwww_{m}(\uu)\sup_{t\ge0 }\left(c_{\VVV}^{-t}\|\PPPP_t^{T,\VVV}\mathbf{1}\|_{R_0}\right),\qquad\forall \uu\in X^T,
		\end{align*}
		which, together with the fact $h_{\VVV}\in C_{\wwww,m}(X^T)$, implies
		\begin{align}\label{MET10}
			|\langle &c_{\VVV}^{-t}\PPPP_t^{T,\VVV}\fff,\bmnu\rangle-\langle h_{\VVV},\bmnu\rangle\langle \fff,\mu_{\VVV}\rangle|\notag\\&\lesssim_{\VVV,\fff} \left|\langle \I_{B_{X^T}(R)}\left(c_{\VVV}^{-t}\PPPP_t^{T,\VVV}\fff-\langle\fff,\mu_{\VVV}\rangle h_{\VVV}\right),\bmnu\rangle\right|\notag\\&\quad+|\langle\I_{B_{X^T}(R)^c}\wwww_m,\bmnu\rangle|+|\langle\fff,\mu_{\VVV}\rangle||\langle \I_{B_{X^T}(R)^c} h_{\VVV},\bmnu\rangle|\notag\\&\lesssim_{\VVV,\fff} \sup_{\uu\in B_{X^T}(R)}\left|c_{\VVV}^{-t}\PPPP_t^{T,\VVV}\fff(\uu)-\langle\fff,\mu_{\VVV}\rangle h_{\VVV}(\uu)\right|+\langle\I_{B_{X^T}(R)^c}\wwww_m,\bmnu\rangle.
		\end{align}
		The first term on the right hand-side of \eqref{MET10} goes to $0$, as $t\to+\infty$, because of \eqref{MET1}. As $\bmnu\in\PP_{\wwww,m}(X^T)$, the second term can be made arbitrarily small by choosing $R>0$ sufficiently large. Therefore, the convergence \eqref{MET2} holds.
		
		To get \eqref{MET2-1}, we note that for $\bmnu\in\bmLambda(\gamma,M)$,
		\begin{align*}
			\langle\I_{B_{X^T}(R)^c}\wwww_m,\bmnu\rangle&\le \frac{1}{1+R^2}\langle\wwww_1\wwww_m,\bmnu\rangle\\&\lesssim_{\VVV,\gamma} \frac{1}{1+R^2}\int_{X^T}\exp\left(\gamma\|\uu\|_{X^T}^2\right)\bmnu(\dd \uu)\lesssim_{\VVV,\gamma}\frac{M}{1+R^2},
		\end{align*}
		which, together with taking supremum in $\bmnu$ in both sides of \eqref{MET10}, yields the convergence \eqref{MET2-1}.
	\end{proof}

	\section{Proof of Theorem~\ref{MT2}}\label{proofMT2}
	First, we reduce the proof to a uniform LDP for appropriately shifted empirical measures. Let $(u_t,\PPPPPP_u)$ be the Markov family associated with \eqref{I1}, and let $\bmzeta_t^T$ be defined in \eqref{MR7}. We introduce a shifted version of $\bmzeta_t^T$:
	\begin{align}
		\label{MT1-1}
		\bar\bmzeta_t^T:=\frac{1}{t}\int_0^t \delta_{u_{[s-T,s]}}\dd s,
	\end{align}
    where 
    \[u_{t}\equiv u_0,\qquad t\in[-T,0].\]
	Using similar arguments as in Section A.2 in \cite{JNPS15}, for any $\gamma,M>0$, $\{\bmzeta_t^T\}_{t>0}$ and $\{\bar\bmzeta_t^T\}_{t>0}$ are uniformly exponentially equivalent in $\sigma\in \Lambda(\gamma,M)$ with the set $\Lambda(\gamma,M)$ defined in \eqref{MR0}. Notice that the distribution of $\{u_{[s-T,s]}\}_{s\ge 0}$ under $\PPPPPP_{u}$ coincides with that of $\{\uu_t\}_{t\ge 0}$ under $\PPPPPP_{\uu}$ with $\uu\equiv u$ and $(\uu_t,\PPPPPP_{\uu})$ being the $T$-Markov family defined by \eqref{FK1-1} and \eqref{FK1}. Therefore, it suffices to establish a uniform LDP for 
	\[\tilde\bmzeta_t^T:=\frac{1}{t}\int_0^t\delta_{\uu_s}\dd s\]
	in $\bmnu\in \PP_c(X^T)\mcap\bmLambda(\gamma,M)$,
	where $\bmLambda(\gamma,M)$ is defined in \eqref{MET2-2} and $\PP_c(X^T)$ denotes the set of all Borel probability measures $\bmnu$ on $X^T$ satisfying 
	\[\bmnu\{\uu\in X^T| \uu\equiv \uu(T)\}=1.\]
    
	Next, we apply the Kifer-type criterion Proposition~\ref{proposition-kifer} to get a uniform LDP for $\{\tilde\bmzeta_t^T\}_{t>0}$. To this end, we introduce
	\[\Theta:=\{\theta=(t,\bmnu)|t\ge0, \bmnu\in \PP_c(X^T)\mcap \bmLambda(\gamma,M)\},\qquad r(\theta)=t\]
	with an order relation $\prec$ such that $(t_1,\bmnu_1)\prec (t_2,\bmnu_2)$ if and only if $t_1\le t_2$. It is straightforward to check that the convergence in $\theta\in\Theta$ is equivalent to the convergence as $t\to+\ty$, which is uniform in $\bmnu\in\PP_c(X^T)\mcap \bmLambda(\gamma,M)$. We divide the verification of the conditions in Proposition~\ref{proposition-kifer} into three steps.
    
	\noindent\textit{Step 1: Exponential tightness along any subsequence.} Combining Proposition~\ref{proposition4-4} and the uniform exponential equivalence between $\{\bmzeta_t^T\}_{t>0}$ and $\{\tilde\bmzeta_t^T\}_{t>0}$, we see that for any sequence $\{t_n\}_{n\ge 1}$ satisfying $t_n\to+\infty$, as $n\to+\infty$, the family $\{\tilde\bmzeta_{t_n}^T\}_{n\ge 1}$ is uniformly exponentially tight in $\bmnu\in\PP_c(X^T)\mcap \bmLambda(\gamma,M)$, which verifies the first condition.
	
	\noindent\textit{Step 2: Existence of pressure.} Taking $\fff=\mathbf{1}$ in \eqref{MET2-1}, we see that 
	\begin{align}\label{MT1-2}
		Q_T(\VVV)=\log c_{\VVV},\qquad\VVV\in\VV^T.
	\end{align}
    For the general case $\VVV\in C_b(X^T)$, we notice that the set of functions $\VVV$ for which the pressure function $Q_T$ exists is closed under buc-convergence, and any function in $C_b(X^T)$ can be buc-approximated by functions in $\VV^T$. This, together with \eqref{MT1-2}, implies the existence of pressure function $Q_T$ in the general case.
	
	\noindent\textit{Step 3: Uniqueness of equilibrium.} Let us show that for any $\VVV\in \VV^T$, there is only one equilibrium state $\nu_{\VVV}$. To this end, we introduce the following generalized Markov semigroup 
	\[\SSS_t^{T,\VVV,\F}\fff:=c_{\VVV}^{-t} h_{\VVV}^{-1}\PPPP_t^{T,\VVV+\F}( h_{\VVV}\fff),\qquad \fff,\F\in C_{b}(X^T),\]
	and $\SSS_t^{T,\VVV}:=\SSS_t^{T,\VVV,\mathbf{0}}$. Combining \eqref{MET1} and the argument used in the previous step, we see that the pressure function is well-defined:
	\begin{align}
		\label{MT1-3}Q_T^{\VVV}(\F):=\lim_{t\to+\infty}\frac{1}{t}\log\SSS_t^{T,\VVV,\F}\mathbf{1},\qquad \F\in C_b(X^T).
	\end{align}
	Let $I_T,I_T^{\VVV}$ denote the Legendre transform\footnote{For the definition of Legendre transform, we refer to the formula \eqref{kifer2}.} of $Q_T,Q^{\VVV}_T$, respectively. Using \eqref{MET1}, \eqref{MT1-2}, and \eqref{MT1-3}, we obtain
	\[Q^{\VVV}_T(\F)=Q_T(\VVV+\F)-Q_T(\VVV),\qquad \forall \F\in\VV^T,\]
	which implies\footnote{The supremum in the definition of Legendre transform can be taken over $\VV^T$, which can be proved as in Lemma~\ref{lemma1}.}
	\[I^{\VVV}_T(\bmnu)=I_T(\bmnu)+Q_T(\VVV)-\langle\VVV,\bmnu\rangle.\]
	Therefore, any equilibrium state $\nu_{\VVV}$ is a zero of $I^{\VVV}_T$. 
	
	We claim that $I^{\VVV}_T$ has a unique zero 
    \begin{align}\label{MT1-4}\nu_{\VVV}= h_{\VVV}\mu_{\VVV}.\end{align}
    To see this, we introduce the generator $\LLLLL_{\VVV}$ of the semigroup $\{\SSS^{T,\VVV}_t\}_{t\ge 0}$ defined on the domain
	\begin{align*}
		\mathcal{D}(\LLLLL_{\VVV}):=\left\{\fff\in C_b(X^T)\Big|\exists \gggg\in C_b(X^T),\ \!\SSS^{T,\VVV}_t\fff-\fff=\int_0^t\SSS^{T,\VVV}_s\gggg\dd s,\ \!\forall t\ge 0\right\}
	\end{align*}
	with $\LLLLL_{\VVV}\fff:=\gggg$ for $\fff\in \mathcal{D}(\LLLLL_{\VVV})$. We need the following auxiliary lemmas. 
	\begin{lemma}\label{LEMMAMT1-1}
		For $\F,\fff\in C_b(X^T)$,
		\[\SSS_t^{T,\VVV,\F}\fff-\SSS^{T,\VVV}_t \fff=\int_0^t\SSS^{T,\VVV}_{t-s}(\F\SSS^{T,\VVV,\F}_s\fff)\dd s,\qquad \forall t\ge0.\]
	\end{lemma}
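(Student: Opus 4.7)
The strategy is to reduce the identity to a Duhamel formula at the level of the underlying Feynman--Kac semigroups $\{\PPPP_t^{T,\VVV+\F}\}_{t\ge0}$ and $\{\PPPP_t^{T,\VVV}\}_{t\ge0}$, and then unfold the definition of $\SSS^{T,\VVV,\F}$ together with the eigenfunction relation $\PPPP_s^{T,\VVV} h_{\VVV}=c_{\VVV}^s h_{\VVV}$ to pass to the $\SSS$-form. Concretely, I would first establish the auxiliary identity
\[
\PPPP_t^{T,\VVV+\F}\gggg-\PPPP_t^{T,\VVV}\gggg
=\int_0^t \PPPP_{t-s}^{T,\VVV}\!\left(\F\,\PPPP_s^{T,\VVV+\F}\gggg\right)\dd s,\qquad t\ge 0,
\]
for $\gggg=h_{\VVV}\fff\in C_{\wwww,m}(X^T)$, which is the exact analogue of Lemma~\ref{lemma3} but with the base semigroup replaced by $\PPPP^{T,\VVV}$ and the perturbation by $\F$.

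The proof of this auxiliary identity follows the same template as Lemma~\ref{lemma3}: write the difference of exponentials via the Newton--Leibniz formula,
\[
\exp\!\left(\!\int_0^t(\VVV{+}\F)(\uu_r)\dd r\!\right)\!-\!\exp\!\left(\!\int_0^t \VVV(\uu_r)\dd r\!\right)
\!=\!\int_0^t \F(\uu_r)\exp\!\left(\!\int_r^t \F(\uu_s)\dd s\!+\!\int_0^t \VVV(\uu_s)\dd s\!\right)\dd r,
\]
then apply Fubini and the Markov property of the $T$-process to recognize the inner expectation as $\PPPP_{t-r}^{T,\VVV+\F}\gggg(\uu_r)$, followed by the Markov property again with weight $\exp(\int_0^r\VVV(\uu_s)\dd s)$ to obtain $\PPPP_r^{T,\VVV}(\F\cdot \PPPP_{t-r}^{T,\VVV+\F}\gggg)(\uu)$. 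A final change of variables $s=t-r$ yields the claimed identity. Since $\fff,\F\in C_b(X^T)$ and $h_{\VVV}\in C_{\wwww,m}(X^T)$, the growth estimate \eqref{GC2} combined with the exponential factor $e^{t(\|\VVV\|_\infty+\|\F\|_\infty)}$ guarantees that all occurring integrands are bounded by an integrable majorant, which legitimizes the Fubini swap.

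Once the auxiliary identity is in hand, substitute $\gggg=h_{\VVV}\fff$ and multiply both sides by $c_{\VVV}^{-t} h_{\VVV}^{-1}$. The left-hand side becomes $\SSS_t^{T,\VVV,\F}\fff-\SSS_t^{T,\VVV}\fff$ by definition. For the right-hand side, inside the integral I would rewrite $\PPPP_s^{T,\VVV+\F}(h_{\VVV}\fff)=c_{\VVV}^s h_{\VVV}\SSS_s^{T,\VVV,\F}\fff$ and apply $\PPPP_{t-s}^{T,\VVV}$ to $\F\cdot c_{\VVV}^s h_{\VVV}\SSS_s^{T,\VVV,\F}\fff$, then use $\PPPP_{t-s}^{T,\VVV}(h_{\VVV}\cdot\F\SSS_s^{T,\VVV,\F}\fff)=c_{\VVV}^{t-s} h_{\VVV}\SSS_{t-s}^{T,\VVV}(\F\SSS_s^{T,\VVV,\F}\fff)$ to cancel the $c_{\VVV}$ factors and the $h_{\VVV}^{\pm1}$ factors, yielding exactly $\int_0^t \SSS_{t-s}^{T,\VVV}(\F\,\SSS_s^{T,\VVV,\F}\fff)\dd s$.

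The only delicate point I anticipate is justifying the auxiliary Duhamel identity when the test function $h_{\VVV}\fff$ is unbounded. This is not covered by a direct quotation of Lemma~\ref{lemma3} (whose proof uses only boundedness), so I would invoke \eqref{GC2} with sufficiently large $m$ to dominate $|\PPPP_s^{T,\VVV+\F}(h_{\VVV}\fff)|\le C e^{C s}\wwww_m$ and then dominate $\PPPP_{t-s}^{T,\VVV}(\F\cdot\PPPP_s^{T,\VVV+\F}(h_{\VVV}\fff))$ by a function integrable in $s\in[0,t]$; this legitimizes every Fubini exchange and the passage from the Newton--Leibniz computation to the final integral formula. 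All other steps are purely algebraic.
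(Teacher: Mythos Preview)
Your proposal is correct and is essentially the approach the paper has in mind: the paper simply states that Lemma~\ref{LEMMAMT1-1} ``follows in the same way as in Lemma~\ref{lemma3}'' and omits all details, while you have spelled out precisely how the Newton--Leibniz/Fubini/Markov computation of Lemma~\ref{lemma3} transfers to the $\PPPP^{T,\VVV}$--$\PPPP^{T,\VVV+\F}$ pair and then conjugated by $c_{\VVV}^{-t}h_{\VVV}^{-1}$ to reach the $\SSS$-level. Your care about the unbounded test function $h_{\VVV}\fff$ via the growth estimate \eqref{GC2} is appropriate and closes the only nontrivial gap relative to a direct quotation of Lemma~\ref{lemma3}.
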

	\begin{lemma}\label{LEMMAMT1-2}
		$\DD_+(\LLLLL_{\VVV}):=\{\fff\in \DD(\LLLLL_{\VVV})| \inf_{X^T}\fff>0\}$ is a determining family.
	\end{lemma}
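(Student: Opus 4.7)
The plan is to approximate arbitrary elements $\VVV_0\in\VV^T$ by elements of $\DD_+(\LLLLL_\VVV)$ in the pointwise-bounded sense, using the resolvent of $\{\SSS^{T,\VVV}_t\}_{t\ge 0}$. Combined with the determining property of $\VV^T$ (established in Step~1 of the proof of the theorem above) and dominated convergence for finite non-negative measures on $X^T$, this will yield the claim. Specifically, for $\fff\in C_b(X^T)$ and $\alpha>0$, set
\[\RR_\alpha \fff := \alpha\int_0^\infty e^{-\alpha t} \SSS^{T,\VVV}_t \fff \dd t.\]

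First I would verify that $\{\SSS^{T,\VVV}_t\}_{t\ge 0}$ is a Markov Feller semigroup on $C_b(X^T)$. The identity $\SSS^{T,\VVV}_t \mathbf{1} = c_\VVV^{-t} h_\VVV^{-1} \PPPP^{T,\VVV}_t h_\VVV = \mathbf{1}$ together with the positivity of $h_\VVV$ gives unitality, positivity, and contractivity $\|\SSS^{T,\VVV}_t\fff\|_\infty \le \|\fff\|_\infty$; continuity in $\uu$ follows from the weak continuity of $\uu\mapsto\PPPPPP_\uu$ on path space combined with the continuity of $h_\VVV\in C_{\wwww,m}(X^T)$ and a uniform integrability argument based on Proposition~\ref{propositionGC}; continuity in $t$ is Proposition~\ref{propositionTC}. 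With these properties, $\RR_\alpha\fff\in C_b(X^T)$ is well-defined, and a direct Fubini computation using the semigroup property gives
\[\SSS^{T,\VVV}_s \RR_\alpha \fff - \RR_\alpha \fff = \int_0^s \SSS^{T,\VVV}_r \bigl(\alpha \RR_\alpha \fff - \alpha \fff\bigr) \dd r, \qquad \forall\, s\ge 0,\]
whence $\RR_\alpha \fff \in \DD(\LLLLL_\VVV)$ with $\LLLLL_\VVV \RR_\alpha \fff = \alpha(\RR_\alpha \fff - \fff)$. In particular $\mathbf{1}\in\DD_+(\LLLLL_\VVV)$, and positivity of $\SSS^{T,\VVV}_t$ yields $\RR_\alpha \fff \in \DD_+(\LLLLL_\VVV)$ whenever $\inf_{X^T}\fff > 0$.

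After the change of variable $s=\alpha t$, the right-continuity $\lim_{t\to 0^+}\SSS^{T,\VVV}_t\fff(\uu)=\fff(\uu)$ together with dominated convergence yields $\RR_\alpha \fff(\uu)\to\fff(\uu)$ pointwise on $X^T$ as $\alpha\to+\infty$, with $\|\RR_\alpha\fff\|_\infty\le\|\fff\|_\infty$. Assume now that $\mu,\nu\in\MM^+(X^T)$ coincide on $\DD_+(\LLLLL_\VVV)$; testing against $\mathbf{1}$ gives $\mu(X^T)=\nu(X^T)$. For any $\VVV_0\in\VV^T$, taking $\fff:=\VVV_0+\|\VVV_0\|_\infty+1$ ensures $\inf_{X^T}\fff\ge 1$, so $\RR_\alpha\fff\in\DD_+(\LLLLL_\VVV)$ for all $\alpha>0$; letting $\alpha\to+\infty$ in $\langle\RR_\alpha\fff,\mu\rangle=\langle\RR_\alpha\fff,\nu\rangle$ by dominated convergence yields $\langle\fff,\mu\rangle=\langle\fff,\nu\rangle$, and subtracting the equal masses gives $\langle\VVV_0,\mu\rangle=\langle\VVV_0,\nu\rangle$. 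Since $\VV^T$ is determining, this forces $\mu=\nu$.

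The main obstacle is the Feller property of $\{\SSS^{T,\VVV}_t\}_{t\ge 0}$ on $C_b(X^T)$, as Proposition~\ref{propositionTC} only supplies continuity in $t$. The delicate point is continuity in $\uu$ of $\uu\mapsto\PPPP^{T,\VVV}_t(h_\VVV\fff)(\uu)$: although weak continuity of $\uu\mapsto\PPPPPP_\uu$ on path space is standard for the Navier--Stokes family, $h_\VVV$ is only known to lie in the weighted space $C_{\wwww,m}(X^T)$, so uniform integrability must be handled carefully via the moment bounds of Proposition~\ref{propositionGC}.
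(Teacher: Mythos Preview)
Your proposal is correct and follows the same resolvent-based approximation that the paper defers to (Lemma~A.7 in \cite{MN18}): build elements of $\DD_+(\LLLLL_\VVV)$ via the resolvent of the Markov semigroup $\{\SSS^{T,\VVV}_t\}_{t\ge 0}$, use pointwise convergence $\RR_\alpha\fff\to\fff$ together with the determining property of $\VV^T$, and conclude. The Feller property you flag as the main obstacle is indeed the only nontrivial point, and is handled exactly as you indicate---by the uniform-integrability argument underlying Proposition~\ref{propositionB1}, transported to the $T$-process via the weighted moment bounds of Proposition~\ref{propositionGC}.
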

	\begin{lemma}\label{LEMMAMT1-3}
		$\{\SSS^{T,\VVV}_t\}_{t\ge0}$ has a unique invariant measure given by \eqref{MT1-4}.
	\end{lemma}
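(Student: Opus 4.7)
The plan has two parts: first verify invariance of the candidate $\nu_{\VVV} := h_{\VVV}\mu_{\VVV}$ by a direct computation using the two eigenrelations, then deduce uniqueness from the pointwise convergence \eqref{MET1} combined with dominated convergence. Two preliminary observations make the argument almost automatic: \emph{(i)} $\nu_{\VVV}$ is a probability measure because of the normalization $\langle h_{\VVV},\mu_{\VVV}\rangle = 1$; \emph{(ii)} $\{\SSS^{T,\VVV}_t\}_{t\ge 0}$ is a Markov semigroup, since the eigenfunction equation $\PPPP^{T,\VVV}_t h_{\VVV} = c_{\VVV}^t h_{\VVV}$ yields $\SSS^{T,\VVV}_t\mathbf{1}=\mathbf{1}$, while positivity of $h_{\VVV}$ together with the positivity-preserving property of $\PPPP^{T,\VVV}_t$ ensures $|\SSS^{T,\VVV}_t\fff|\le \|\fff\|_\infty$ for every $\fff\in C_b(X^T)$.

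For invariance, I compute, for $\fff\in C_b(X^T)$,
\[
\langle \SSS^{T,\VVV}_t\fff,\nu_{\VVV}\rangle=c_{\VVV}^{-t}\langle \PPPP^{T,\VVV}_t(h_{\VVV}\fff),\mu_{\VVV}\rangle=c_{\VVV}^{-t}\langle h_{\VVV}\fff,\PPPP^{T,\VVV*}_t\mu_{\VVV}\rangle=\langle h_{\VVV}\fff,\mu_{\VVV}\rangle=\langle \fff,\nu_{\VVV}\rangle,
\]
where the second equality uses duality and the third uses the eigenrelation $\PPPP^{T,\VVV*}_t\mu_{\VVV}=c_{\VVV}^t\mu_{\VVV}$.

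For uniqueness, let $\tilde\nu$ be any $\SSS^{T,\VVV}_t$-invariant probability measure and fix $\fff\in C_b(X^T)$. Since $h_{\VVV}\fff\in C_{\wwww,m}(X^T)$, convergence \eqref{MET1} gives, at every $\uu\in X^T$,
\[
\SSS^{T,\VVV}_t\fff(\uu)=h_{\VVV}(\uu)^{-1}c_{\VVV}^{-t}\PPPP^{T,\VVV}_t(h_{\VVV}\fff)(\uu)\xrightarrow[t\to\infty]{}h_{\VVV}(\uu)^{-1}\langle h_{\VVV}\fff,\mu_{\VVV}\rangle h_{\VVV}(\uu)=\langle \fff,\nu_{\VVV}\rangle.
\]
The uniform bound $|\SSS^{T,\VVV}_t\fff|\le\|\fff\|_\infty$ allows the dominated convergence theorem to be applied under $\tilde\nu$, yielding
\[
\langle \fff,\tilde\nu\rangle=\lim_{t\to\infty}\langle \SSS^{T,\VVV}_t\fff,\tilde\nu\rangle=\langle \fff,\nu_{\VVV}\rangle,
\]
and hence $\tilde\nu=\nu_{\VVV}$.

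There is no substantive obstacle here, the decisive ingredients, namely the two eigenrelations and the long-time convergence \eqref{MET1}, having already been established. The only mild point requiring care is that the passage to the limit in $\langle \SSS^{T,\VVV}_t\fff,\tilde\nu\rangle$ must be justified by the pointwise bound $\|\fff\|_\infty$ rather than by any a priori moment of $\wwww_m$ against $\tilde\nu$, which is not known in advance; this is precisely what the Markov property of $\SSS^{T,\VVV}_t$ furnishes, and it is also the reason why one is forced to work with the normalized semigroup $\SSS^{T,\VVV}_t$ rather than directly with $\PPPP^{T,\VVV}_t$.
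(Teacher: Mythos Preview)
Your proof is correct and follows essentially the same approach as the paper, which simply states that the lemma is a direct consequence of \eqref{MET1} and refers to Lemma~5.9 in \cite{JNPS18} without writing out the details. You have supplied precisely the standard argument that this citation stands in for: invariance via the two eigenrelations, and uniqueness via the pointwise convergence $\SSS^{T,\VVV}_t\fff\to\langle\fff,\nu_{\VVV}\rangle$ (which follows from \eqref{MET1} on every ball) together with the uniform bound $|\SSS^{T,\VVV}_t\fff|\le\|\fff\|_\infty$ and dominated convergence.
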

	Lemma~\ref{LEMMAMT1-1} follows in the same way as in Lemma~\ref{lemma3}, while Lemma~\ref{LEMMAMT1-2} can be established as Lemma A.7 in \cite{MN18}. Moreover, Lemma~\ref{LEMMAMT1-3} is a direct result of \eqref{MET1}, see Lemma 5.9 in \cite{JNPS18}. Thus, we omit the proofs.
	
	Let us show that for any $\fff\in \DD_+(\LLLLL_{\VVV})$,
	\begin{align}
		\label{MT1-5}
		Q^{\VVV}_T\left(\F_\fff\right)=0,
	\end{align}
	where $\F_\fff:=-\LLLLL_{\VVV}\fff/\fff$. Indeed, by Lemma~\ref{LEMMAMT1-1} and the Gronwall inequality, we have 
	\[\SSS_t^{T,\VVV,\F_{\fff}}\fff\equiv \fff,\qquad \forall t>0,\]
	which implies 
	\[\lim_{t\to+\infty}\frac{1}{t}\log\SSS_t^{T,\VVV,\F_{\fff,}}\fff=0.\]
	As $c\le \fff\le C$ for some constants $c,C>0$, we obtain \eqref{MT1-5}.
	
	To conclude the proof, let us assume that $\nu_{\VVV}$ is a zero of $I^{\VVV}_T$. Then, for any $\fff\in \DD_+(\LLLLL_{\VVV})$,
	\[0=I^{\VVV}_T(\nu_{\VVV})\ge\langle \F_{\fff},\nu_{\VVV}\rangle=-\int_{X^T}\frac{\LLLLL_{\VVV}\fff}{\fff}\nu_{\VVV}(\dd\uu).\]
	Combining this with $\LLLLL_{\VVV}\mathbf{1}\equiv0$ and $\mathbf{1}+c\fff\in\DD_+(\LLLLL_{\VVV})$ for sufficiently small $c$, we see that
	\[\psi(c):=\int_{X^T}\frac{\LLLLL_{\VVV}(\mathbf{1}+c\fff)}{(\mathbf{1}+c\fff)}\nu_{\VVV}(\dd\uu)\]
	has a local minimum $c=0$. Therefore, 
	\[0=\psi'(0)=\int_{X^T}\LLLLL_{\VVV}\fff\nu_{\VVV}(\dd \uu),\qquad\forall \fff\in \DD_+(\LLLLL_{\VVV}).\]
    This implies
	\begin{align*}
	    \int_{X^T}\SSS_t^{T,\VVV}\fff\nu_{\VVV}(\dd \uu)-\int_{X^T}\fff\nu_{\VVV}(\dd \uu)=0,\qquad \forall t\ge0,
	\end{align*}
    which, together with Lemma~\ref{LEMMAMT1-2}, implies that $\nu_{\VVV}$ is an invariant measure of the semigroup $\{\SSS^{T,\VVV}_t\}_{t\ge0}$. Finally, an application of Lemma~\ref{LEMMAMT1-3} shows that $\nu_{\VVV}= h_{\VVV}\mu_{\VVV}$, thus verifying the uniqueness of equilibrium state.

	\section{Proof of Theorem~\ref{MT3}}\label{proofMT3}
	The LD upper and lower bounds with a good rate function $\III$ can be established by the Dawson--G\"artner type argument as in Section A.3 in \cite{JNPS15}. Thus, we omit the proof, and turn to the Donsker--Varadhan entrophy formula \eqref{MR9}. The proof is divided into three steps.
		
	\noindent\textit{Step 1.} Let $I$ denote the rate function given by the Donsker--Varadhan variational formula \eqref{MR4}, and let $\PPPP_t$ denote the transition operator defined in \eqref{Markov1}. We claim that 
	\begin{align}\label{DVEF1}
	    I(\lambda)\le\lim_{t\to 0^+}\frac{1}{t}\sup_{\substack{f\in C_b(H)\\\inf_Hf>0}}\int_H \log \frac{f}{\PPPP_t f}\mathrm{d}\lambda.
	\end{align}
	To see this, we denote by $J(\lambda)$ the right hand-side of the above inequality. For any $f\in \DD(\LLLL)$ satisfying $\inf_H f>0$, by using the continuity of the map $s\mapsto \frac{-\PPPP_s\LLLL f}{\PPPP_s f}$ and the dominated convergence theorem, we have
		\begin{align*}
			\int_H\frac{-\LLLL f}{f}\dd \lambda&=\int_H\lim_{t\to 0^+}\frac{1}{t}\int_0^t\frac{-\PPPP_s\LLLL f}{\PPPP_s f}\dd s\dd \lambda\\&=\lim_{t\to 0^+}\frac{1}{t}\int_H\int_0^t\frac{-\PPPP_s\LLLL f}{\PPPP_s f}\dd s\dd \lambda\\&=\lim_{t\to 0^+}\frac{1}{t}\int_H(-\log \PPPP_t f+\log f)\dd \lambda\le J(\lambda).
		\end{align*}
        Taking supremum in $f$, we obtain \eqref{DVEF1}.
        
		\noindent \textit{Step 2.} Let $\PPPP_{t}^T$ denote the transition operator defined in \eqref{FK3}. Repeating the derivation of \eqref{DVEF1} and the proof of Theorem~\ref{MT1}, we see that the rate function $I_T$ in Theorem~\ref{MT2} satisfies
		\[I_T(\lambda_{[0,T]})\le \lim_{t\to 0^+}\frac{1}{t}\sup_{\substack{\fff\in C_b(X^{T})\\\inf_{X^T}\fff>0}}\int_{X^{T}} \log \frac{\fff}{\PPPP_{t}^T \fff}\mathrm{d}\lambda_{[0,T]},\qquad \forall\lambda_{[0,T]}\in \PP(X^{T}).\]
		Hence, by the Dawson--G\"artner theorem, cf. Theorem A.3. in \cite{JNPS15}, 
		\[\III(\bm\lambda)\le \sup_{T>0}\lim_{t\to 0^+}\frac{1}{t}\sup_{\substack{\fff\in C_b(X^{T})\\\inf_{X^T}\fff>0}}\int_{X^{T}} \log \frac{\fff}{\PPPP_{t}^T \fff}\mathrm{d}\pi^*_{[0,T]}\bmlambda,\qquad\forall \bmlambda\in \PP(X).\]
        In particular, for $\bm\lambda\in \PP_s(X)$, by applying the translation-invariance of $\bmlambda$ and change of test functions, we have 
		\begin{align*}
			\III(\bm\lambda)&\le \sup_{T>0}\lim_{t\to 0^+}\frac{1}{t}\sup_{\fff\in \BB^+(X^{T})}\int_{X^{\R}}\left(\fff-\log\PPPP_{t}^Te^{\fff}\right)(u_{[t-T,t]})\mathrm{d}\bmlambda,
		\end{align*}
		where $\BB^+(X^{T})$ stands for the set of all bounded measurable functions $\fff$ on $X^{T}$ satisfying $\inf_{X^{T}}\fff>0$. Moreover, by applying the conditional expectation,
		\begin{align*}
			\int_{X^{\R}}\fff( u_{[t-T,t]})\mathrm{d}\bmlambda=\int_{X^{\R}}\int_{X^{t}}\fff(u_{[t-T,0]};v_{[0,t]})\pi_{[0,t]}^*\bmlambda(u_{(-\infty,0]},\dd v_{[0,t]})\dd \bmlambda,
		\end{align*}
        while, on the other hand, by utilizing the definition of $\PPPP_t^T$ and the translation-invariance of $\bmlambda$ again, we obtain
		\begin{align*}
			\int_{X^{\R}}&\log\PPPP_{t}^Te^\fff( u_{[t-T,t]})\dd\bmlambda\\&=\int_{X^{\R}}\log\left( \int_{X^{t}} e^{\fff(u_{[2t-T,t]};v_{[0,t]})}\pi^*_{[0,t]}\mathbb{P}_{u(t)}(\dd v_{[0,t]})\right)\dd\bmlambda \\&=\int_{X^{\R}}\log \left(\int_{X^{t}} e^{\fff(u_{[t-T,0]};v_{[0,t]})}\pi^*_{[0,t]}\mathbb{P}_{u(0)}(\dd v_{[0,t]})\right)\dd\bmlambda.
		\end{align*}
		Let us define
		\begin{align*}
			F_{\fff,T,t}(u_{(-\infty,0]})&:=\int_{X^{t}}\fff(u_{[t-T,0]};v_{[0,t]})\pi_{[0,t]}^*\bmlambda(u_{(-\infty,0]},\dd v_{[0,t]})\\&\quad-\log \left(\int_{X^{t}} e^{\fff(u_{[t-T,0]};v_{[0,t]})}\pi^*_{[0,t]}\mathbb{P}_{u(0)}(\dd v_{[0,t]})\right),
		\end{align*}
		so
		\[\III(\bmlambda)\le \sup_{T>0}\lim_{t\to 0^+}\frac{1}{t}\sup_{\fff\in \BB^+(X^{T})}\int_{X^{\R}}F_{\fff,T,t}(u_{(-\infty,0]})\dd \bmlambda.\]
		By the definition \eqref{relative-entropy} of the relative entropy, 
		\[F_{\fff,T,t}(u_{(-\infty,0]})\le \ent(\pi_{[0,t]}^*\bmlambda(u_{(-\infty,0]},\cdot)|\pi^*_{[0,t]}\mathbb{P}_{u(0)}).\]
		This, together with Proposition~\ref{propositionC1}, implies 
		\begin{align*}
			\III(\bmlambda)\le \lim_{t\to 0^+}\frac{1}{t}\int_{X^{\R}}\ent(\pi^*_{[0,t]}\bmlambda(u_{(-\infty,0]},\cdot)|\pi^*_{[0,t]}\mathbb{P}_{u(0)})\dd \bmlambda=\II(\bm\lambda),
		\end{align*}   
		where $\II(\bm\lambda)$ denotes the right hand-side of \eqref{MR9}. Notice that if $\bmlambda\notin \PP_s(X)$, then $\II(\bm\lambda)=+\infty$. Hence, we conclude that
		\begin{align}
		    \label{DVE1}\III(\bmlambda)\le \II(\bm\lambda).\end{align}
		\textit{Step 3.} It remains to prove \eqref{DVE1} in the other direction. By contradiction, we assume that there is a probability measure $\bm\lambda\in \PP(X)$ such that $\III(\bmlambda)< \II(\bm\lambda)$. For $\delta>0$, let $A$ be an open set such that $\bmlambda\in A$ and  
		\[\inf_{\bm\sigma\in \bar{A}}\II(\bm\sigma)\ge (\II(\bm\lambda)-\delta)\wedge \frac{1}{\delta}.\]
		The existence of such an open set $A$ is due to the lower semi-continuity of $\II$. Utilizing this, the LD lower bounds with the rate function $\III$, and Proposition~\ref{proposition4-3}, we have  
		\[\III(\bm\lambda)\ge \inf_{\bm\sigma\in A}\III(\bm\sigma)\ge \inf_{\bm\sigma\in \bar{A}}\II(\bm\sigma)\ge (\II(\bm\lambda)-\delta)\wedge \frac{1}{\delta}.\]
		Sending $\delta\to0^+$, we obtain $\III(\bm\lambda)\ge \II(\bm\lambda)$, which is a contradiction and thus completes the proof.
	\appendix
	\section{A priori estimates}
	The following lemma gathers standard a priori estimates for the solutions of the stochastic Navier--Stokes system.
	\begin{lemma}\label{lemmaA1}
		Let $u(t)$ be the solution of \eqref{I1} issued from $u_0\in H$. Suppose that $h\in U$ and $\BB_1<\infty$. Then, there are positive constants $\gamma_0,K$ depending on $h$ and 
        $\BB_0:=\sum_{j=1}^{\infty}b_j^2$ such that the following estimates hold.
		\begin{enumerate}
			\item \textbf{Energy equality.} For any $t\ge0$, 
			\begin{align}
				\label{A0}
				\E\|u(t)\|^2+2 \E\int_0^t\|\nabla u(s)\|^2\mathrm{d}s=2\E\int_0^t\langle u(s),h\rangle\dd s+\|u_0\|^2+\BB_0 t.
			\end{align}
			
			\item \textbf{Moment estimates for the energy.} For any $\kappa\in(0,\gamma_0)$,
			\begin{align}
				\label{A1}&\mathbb{P}\left\{\sup_{t\ge 0}(\EE(t)-Kt)\ge \|u_0\|^2+\rho\right\}\le e^{-\gamma_0\rho},\qquad \forall \rho\ge0,\\
				\label{A2}&\E e^{\kappa\EE(t)}\lesssim_{\kappa,\BB_0,h}e^{\kappa(Kt+\|u_0\|^2)},
			\end{align}
			where 
			\begin{align*}\EE(t):=\|u(t)\|^2+\int_0^t\|u(s)\|^2_1\mathrm{d}s.
			\end{align*}
			\item \textbf{Moment estimates for the $L^2$-norm.} For any $\kappa\in(0,\gamma_0)$ and $m\ge 1$, 
			\begin{align}
				\label{A3}&\E e^{\kappa\|u(t)\|^2}\le e^{-\kappa t}e^{\kappa\|u_0\|^2}+C_{\kappa,\BB_0,h},\\\label{A4}
				&\E\|u(t)\|^{2m}\le e^{-m\alpha_1 t}\|u_0\|^{2m}+C_{m,\BB_0,h}.
			\end{align}
			\item \textbf{Moment estimates for the $L_t^\infty L_x^2$-norm.} For any $\kappa\in(0,\gamma_0)$ and $m\ge 1$,
			\begin{align}
                \label{A4-3}
				&\E\sup_{s\in[t,t+T]}(\|u(s)\|^2\exp(\kappa\|u(s)\|^2))\lesssim_{\kappa,\BB_0,h,t,T}e^{\kappa\|u_0\|^2},\\\label{A4-1}&\E \exp\left(\kappa\|u(\cdot)\|_{C([t,t+T];H)}^2\right)\le 4 e^{-\kappa t}e^{\kappa\|u_0\|^2}+C_{\kappa,\BB_0,h,T},\\\label{A4-2}
		        &\E\|u(\cdot)\|_{C([t,t+T];H)}^{2m}\le 4e^{-m\alpha_1 t}\|u_0\|^{2m}+C_{m,\BB_0,h,T} .
			\end{align}
			\item \textbf{Doubly-logarithmic moment estimate for the $L^\infty_tH_x^1$-norm.} For any $t>0$,
			\begin{align}
				\label{A5}
				\E\log\left(1+\log\left(1+t\|u(\cdot)\|_{C([t,t+T];U)}^2\right)\right)\lesssim_{\BB_1,h,T} \|u_0\|^2+1+t.
			\end{align}
		\end{enumerate}
	\end{lemma}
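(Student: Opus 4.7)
The plan is to derive all the estimates via Itô's formula applied to carefully chosen functionals of $u(t)$, exploiting the cancellation $\langle B(u),u\rangle=0$ and the Poincaré inequality $\alpha_1\|u\|^2 \le \|u\|_1^2$. The energy equality \eqref{A0} is immediate from Itô applied to $\|u(t)\|^2$, since the stochastic integral has vanishing expectation and the Itô correction contributes $\BB_0 t$.

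For the exponential bounds \eqref{A1}--\eqref{A2}, the strategy is to construct a positive exponential supermartingale $M_t := \exp(\kappa\EE(t) - \kappa K t - \kappa\|u_0\|^2)$, where the cylindrical Wiener process is $W = \sum_j b_j e_j \beta_j$. Combining Itô's formula with
\[
\dd\EE(t) = \bigl(-\|u\|_1^2 + 2\langle u,h\rangle + \BB_0\bigr)\dd t + 2\langle u,\dd W\rangle
\]
gives a drift coefficient for $M_t$ equal to $\kappa M_t$ times
\[
-\|u\|_1^2 + 2\langle u,h\rangle + \BB_0 - K + 2\kappa\sum_j b_j^2|\langle u,e_j\rangle|^2.
\]
Young's inequality, $\sum_j b_j^2|\langle u,e_j\rangle|^2 \le \BB_0\|u\|^2$, and Poincaré render this nonpositive for $\kappa$ smaller than some $\gamma_0(\BB_0,h)$ and $K$ large enough, so $M_t$ is a positive supermartingale with $M_0=1$. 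Doob's maximal inequality yields \eqref{A1}, and $\E M_t \le 1$ yields \eqref{A2}. The estimates \eqref{A3}--\eqref{A4} follow similarly by Itô on $e^{\kappa\|u\|^2}$ and on $\|u\|^{2m}$, with Poincaré providing exponential decay at rate $\alpha_1$. The supremum-in-time estimates \eqref{A4-3}--\eqref{A4-2} are obtained by pairing these pointwise bounds with the Burkholder--Davis--Gundy inequality applied to the martingale $2\int_t^s\langle u(r),\dd W(r)\rangle$, whose quadratic variation is bounded by $4\BB_0\int_t^{t+T}\|u(r)\|^2\dd r$ and absorbed by Young's inequality into the dissipative term; the weighted version \eqref{A4-3} is handled identically by applying the same scheme to the functional $x\mapsto xe^{\kappa x}$.

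The main obstacle is the doubly logarithmic estimate \eqref{A5}, because the Dirichlet boundary precludes the torus-type identity $\langle B(u),Lu\rangle = 0$ and thus prevents a self-contained evolution of $\|u\|_1^2$. The plan is to apply Itô to $\Phi(u) := \log(1+\log(1+\|u\|_1^2))$; the very rapid decay of $\Phi'$ and $|\Phi''|$ (which behave like $[(1+\|u\|_1^2)\log(1+\|u\|_1^2)]^{-1}$ and faster) is precisely what neutralizes both the higher-order contribution $\Phi'(\|u\|_1^2)\,\langle B(u),Lu\rangle$ and the quadratic variation $\Phi''(\|u\|_1^2)^2 \sum_j \alpha_j b_j^2|\langle u,e_j\rangle|^2$; the hypothesis $\BB_1 = \sum_j \alpha_j b_j^2 <\infty$ enters exactly here. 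After reducing the drift to a quantity dominated by $C_{\BB_1,h}(1+\|u\|_1^2)$ and applying BDG to bring the supremum inside the expectation, the argument closes by invoking \eqref{A2}, which delivers $\E\int_t^{t+T}\|u(s)\|_1^2\,\dd s \lesssim \|u_0\|^2 + 1 + t$. The delicate accounting -- making the $\log\log$ weights exactly cancel the loss of derivatives in the nonlinear term -- is the technical heart of the proof, and is also where the extra factor of $t$ inside $\log(1+\log(1+t\|u\|_1^2))$ appears naturally from integrating $\Phi'$ against the dissipation.
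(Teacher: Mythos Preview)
Your treatment of items \eqref{A0}--\eqref{A4-2} is essentially what the paper does (Itô, supermartingale, BDG plus Markov property), and is fine.

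The gap is in your plan for \eqref{A5}. Your central claim---that the decay of $\Phi'(x)\sim[(1+x)\log(1+x)]^{-1}$ ``neutralizes'' the term $\Phi'(\|u\|_1^2)\langle B(u),Lu\rangle$ so that the drift is dominated by $C(1+\|u\|_1^2)$---does not survive a computation. In 2D one has $|\langle B(u),Lu\rangle|\lesssim \|u\|^{1/2}\|u\|_1\|u\|_2^{3/2}$; to absorb $\|u\|_2^{3/2}$ into the good term $-\Phi'(\|u\|_1^2)\|u\|_2^2$ you are forced, via Young's inequality, to pay a residual
\[
\Phi'(\|u\|_1^2)\,\|u\|^2\|u\|_1^4\;\sim\;\frac{\|u\|^2\,\|u\|_1^2}{\log(1+\|u\|_1^2)},
\]
which carries an \emph{extra} factor $\|u\|^2$ and is \emph{not} bounded by $C(1+\|u\|_1^2)$. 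The single $\log$ in the denominator is too weak to kill the quartic growth in $\|u\|_1$ coming from the nonlinearity. Consequently the drift does not close against $\E\int_0^t\|u\|_1^2\,ds$ alone, and there is no evident way to control $\E\int_0^t\|u\|^2\|u\|_1^2\,ds$ with the right dependence on $\|u_0\|^2$. A secondary issue is that applying Itô to $\|u\|_1^2$ requires $u_0\in U$, whereas the lemma only assumes $u_0\in H$; the factor $t$ inside the logarithm reflects parabolic smoothing, but this has to be built into the argument before Itô is invoked.

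The paper avoids this obstruction altogether by the decomposition $u=v+z$, where $z$ solves the \emph{linear} stochastic Stokes system \eqref{A8} and $v$ solves the \emph{deterministic} equation \eqref{A7} with nonlinearity $B(v+z)$. For $v$ one runs a standard $H^1$ energy estimate with Gronwall, obtaining a bound of the form $t\|v(t)\|_1^2\lesssim\exp\bigl(C(\|u_0\|^4+t^2+1)\exp(C\|z\|_{\XX^1}^2)\bigr)$; for $z$, Itô and BDG give $\E\|z\|_{\XX^1}^2\lesssim t+1$ using $\BB_1<\infty$. Taking $\log(1+\log(1+\cdot))$ of the $v$-bound turns the double exponential into something linear in $\|u_0\|^2+t+\|z\|_{\XX^1}^2$, and then one simply takes expectation. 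The point is that the nonlinear loss is handled by a \emph{deterministic} Gronwall on $v$, where exponential blow-up is acceptable because it is undone by $\log\log$ \emph{before} taking expectation; your direct Itô approach mixes the stochastic and nonlinear difficulties and cannot separate them this way.
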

	\begin{proof}
		The estimates \eqref{A0}--\eqref{A4} are proved in \cite{KS12} and \cite{Ner19} in the case of periodic boundary conditions, and the proof is similar in the case of Dirichlet boundary conditions.
		
		The proof of \eqref{A4-3}--\eqref{A4-2} is based on \eqref{A3}, \eqref{A4}, and (6.22) in \cite{Ner19} combined with the Markov property. Let us present only the proof of \eqref{A4-2}, since the other two follow similarly. First, let us establish \eqref{A4-2} for $t=0$. Indeed, by using the It\^o formula, we have 
		\begin{align*}
			\dd \|u(t)\|^{2m}&=m\|u(t)\|^{2(m-1)}\left(-2\|\nabla u\|^2\dd t+2\langle u,h\rangle+\BB_0\right)\dd t\\&\quad+2m\|u(t)\|^{2(m-1)}\sum_{j\ge 1}b_j\langle u,e_j\rangle\dd \beta_j\\&\quad+2m(m-1)\|u\|^{2(m-2)}\sum_{j\ge1}b_j^2\langle u,e_j\rangle^2\dd t,
		\end{align*}
		from which, by using the Young inequality, we derive
		\begin{align*}
			\|u(t)\|^{2m}-\|u_0\|^{2m}+m\alpha_1&\int_0^t \|u\|^{2m}\dd s\\&\le C_{m,\BB_0,h}t+2m\int_0^t \|u\|^{2(m-1)}\sum_{j\ge 1}b_j\langle u,e_j\rangle\dd \beta_j.
		\end{align*}
		Therefore,
		\begin{align*}
			\sup_{t\le T}\|u(t)&\|^{2m}+m\alpha_1\int_0^T \|u\|^{2m}\dd s\\&\le 2\|u_0\|^{2m}+C_{m,\BB_0,h}T+4m\sup_{t\le T}\int_0^t \|u\|^{2(m-1)}\sum_{j\ge 1}b_j\langle u,e_j\rangle\dd \beta_j,
		\end{align*}
		which, by using the Burkholder--Gundy--Davis inequality, leads to
		\begin{align}\label{A5-1}
			\E\sup_{t\le T}\|u(t)&\|^{2m}+m\alpha_1\E\int_0^T \|u\|^{2m}\dd s\notag\\&\le 2\|u_0\|^{2m}+C_{m,\BB_0,h}T+4m\E\left(\int_0^T\|u\|^{4(m-1)}\sum_{j\ge 1}b_j^2\langle u,e_j\rangle^2\dd s\right)^{\frac{1}{2}}.
		\end{align}
		Notice that by the Young inequality,
		\begin{align*}
			&\left(\int_0^T\|u\|^{4(m-1)}\sum_{j\ge 1}b_j^2\langle u,e_j\rangle^2\dd s\right)^{\frac{1}{2}}\le \BB_0^{\frac{1}{2}}\left(\int_0^T\|u\|^{4m-2}\dd s\right)^{\frac{1}{2}}\\&\qquad\qquad\qquad\qquad\qquad\le \BB_0^{\frac{1}{2}}\left(\int_0^T\|u\|^{2m-2}\dd s\right)^{\frac{1}{2}}\left(\sup_{t\le T}\|u(t)\|^{2m}\right)^{\frac{1}{2}}\\&\qquad\qquad\qquad\qquad\qquad\le\frac{1}{8m}\sup_{t\le T}\|u(t)\|^{2m}+C_{m,\BB_0}\int_0^T\|u\|^{2m-2}\dd s\\&\qquad\qquad\qquad\qquad\qquad\le\frac{1}{8m}\sup_{t\le T}\|u(t)\|^{2m}+\frac{\alpha_1}{8}\int_0^T\|u\|^{2m}\dd s+C_{m,\BB_0}T.
		\end{align*}
		This, combined with \eqref{A5-1}, implies
		\begin{align}
			\label{A5-2}
			\E\sup_{t\le T}\|u(t)&\|^{2m}+m\alpha_1\E\int_0^T \|u\|^{2m}\dd s\le 4\|u_0\|^{2m}+C_{m,\BB_0,h}T.
		\end{align}
		To prove \eqref{A4-2}, we combine \eqref{A5-2}, \eqref{A4}, and the Markov property to obtain
		\begin{align*}
			\E\|u(\cdot)\|_{C([t,t+T];H)}^{2m}&=\E\left(\E\left(\|u(\cdot)\|_{C([t,t+T];H)}^{2m}\Big|\mathscr{F}_t\right)\right)\\&=\E\left(\E_{u_t}\left(\|u(\cdot)\|_{C([0,T];H)}^{2m}\right)\right)\\&\le \E\left(4\|u(t)\|^{2m}+ C_{m,\BB_0,h,T}\right)\\&\le 4e^{-m\alpha_1t}\|u_0\|^{2m}+ C_{m,\BB_0,h,T}
		\end{align*}
		as desired. 
		
		The estimate \eqref{A5} in the case of multiplicative noise is obtained in \cite{KV14}. In the additive case, the proof is simpler and we give it below for completeness. Let us write
		\begin{align}\label{A6}
			u(t)=v(t)+z(t),
		\end{align}
		where $v(t)$ is the solution of the following NS-type equation:
		\begin{align}\label{A7}
			\begin{cases}
				\partial_t v+Lv+B(v+z)=h,\\
				v(0)=u_0,
			\end{cases}
		\end{align}
		and $z(t)$ solves the following stochastic Stokes system:
		\begin{align}\label{A8}
			\begin{cases}
				\partial_t z+Lz= \eta,\\
				z(0)=0.
			\end{cases}
		\end{align}
		Applying the inner product in $U$ to the both sides of \eqref{A7}, we obtain
		\begin{align*}
			\frac{1}{2}\frac{\dd}{\dd t}\|L^{\frac{1}{2}}v(t)\|^2+\|Lv\|^2=\langle h,L v\rangle-\langle B(v+z),L v\rangle.
		\end{align*}
		Utilizing the boundedness of the Leray projection $\Pi$ combined with the Ladyzhenskaya inequality, we derive
		\begin{align*}
			-\langle B(v+z),L v\rangle&\lesssim \|L v\|\|v+z\|_{L^4}\|\nabla(v+z)\|_{L^4}\notag\\&\lesssim\|v\|_2\|v+z\|^{\frac{1}{2}}\|v+z\|_1\|v+z\|_2^{\frac{1}{2}}\notag\\&\lesssim \|v\|_2^{\frac{3}{2}}\|v+z\|^{\frac{1}{2}}\|v+z\|_1+\|v\|_2\|v+z\|^{\frac{1}{2}}\|v+z\|_1\|z\|_2^{\frac{1}{2}}\notag\\&\le \frac{1}{8} \|Lv\|^2+C\|v+z\|_1^4\|v+z\|^2+C\|z\|^2_2.
		\end{align*}
		Therefore, we have 
		\begin{align}\label{A8-1}
			\frac{\dd}{\dd t}\|v(t)\|_1^2+\|v\|_{2}^2\lesssim\|h\|^2+\|v+z\|_1^4\|v+z\|^2+\|z\|^2_2.
		\end{align}
		Let us establish estimates for $\|v\|$. By applying the inner product in $H$ on both sides of \eqref{A7}, the cancellation property of the nonlinearity $B$, and the Ladyzhenskaya inequality, we have 
		\[\frac{\dd}{\dd s}\|v(s)\|^2 +\|\nabla v\|^2 \lesssim \|h\|^2+\|z\|_1^2\|v\|^2+\|z\|^2\|z\|_1^2,\]
		which gives 
		\begin{align}
			\label{A8-2}
			\|v(t)\|^2+\int_0^t\|v\|_1^2\dd s\lesssim_h (\|u_0\|^2+t+1)\exp\left(C\|z_{[0,t]}\|^2_{\XX^0}\right),
		\end{align}
		where for $j\in\N$,
		\begin{align}
			\label{A8-3}
			\|z_{[0,t]}\|^2_{\XX^j}:=\sup_{s\le t}\|z(s)\|^2_j+\int_0^t\|z\|^2_{j+1}\dd s.
		\end{align}
		To close the estimate for $v$, let us combine \eqref{A8-1} with \eqref{A8-2}, and derive
		\begin{align*}
			\frac{\dd}{\dd t}(t\|v(t)\|_1^2)+t\|v\|_{2}^2\lesssim t\|h\|^2+t(\|v+z\|_1^4\|v+z\|^2+\|z\|^2_2)+\|v(t)\|_1^2,
		\end{align*}
		which, together with the Gronwall inequality and \eqref{A8-2}, leads to 
		\begin{align}\label{A8-4}
			t\|v(t)\|_1^2+\int_0^t s\|v\|_{2}^2\dd s\lesssim_h \exp\left(C_h \FFFF_1(t,u_0,z_{[0,t]})\right),
		\end{align}
		where 
		\begin{align}
			\label{A8-5}
			\FFFF_1(t,u_0,z_{[0,t]}):=(\|u_0\|^4+t^2+1)\exp(C\|z_{[0,t]}\|^2_{\XX^1}).
		\end{align}
		By first taking double logarithm and then expectation, we arrive at
		\begin{align}
			\label{A8-6}
			\E\log&\left(1+\log\left(1+t\|v(\cdot)\|^2_{C([t,t+T];U)}\right)\right)\notag\\&\qquad\qquad\qquad\qquad\qquad\lesssim_{h,T}1+t+\E\|u_0\|^2+\E\|z_{[0,t+T]}\|^2_{\XX^1}.
		\end{align}
		It remains to estimate the solutions of the stochastic Stokes system \eqref{A8}. By the It\^o formula, 
		\begin{align*}
			\dd \|L^\frac{1}{2}z(t)\|^2+2\|Lz\|^2\dd t= 2\sum_{j\ge 1} b_j\alpha_j\langle z,e_j\rangle \dd \beta_j+\BB_1\dd t.
		\end{align*}
		An application of the Burkholder--Davis--Gundy inequality yields
		\begin{align}\label{A8-8}
			\E\sup_{s\le t}\|z(s)\|^2_1+\E\int_0^t\|z\|^2_2\dd s\lesssim_{\BB_1}t+1.
		\end{align}
		Combining this with \eqref{A8-6}, we have 
		\begin{align}
			\label{A8-7}
			\E\log\left(1+\log\left(1+t\|v(\cdot)\|_{C([t,t+T];U)}^2\right)\right)\lesssim_{\BB_1,h,T} \E\|u_0\|^2+1+t.
		\end{align}
		Notice that for $g(x):=\log(1+\log(1+x))$, 
		\begin{align}\label{A8-9}
			g(xy)+g(x+y)\le 2(g(x)+g(y))
		\end{align}
		and $g(x)\le x$ for any $x,y\ge 0$. This, together with \eqref{A6}, \eqref{A8-8}, and \eqref{A8-7}, implies \eqref{A5} as desired.
	\end{proof}
	\begin{corollary}\label{corollaryA1}
		Under the conditions of Lemma~\ref{lemmaA1}, for any $t>0$,
		\begin{align}\label{A10}\E \log\left(1+\log\left(1+t\|u(\cdot)\|^2_{C^{\frac{1}{4}}([t,t+T];U^*)}\right)\right)\lesssim_{\BB_1,h,T}1+t+\frac{1}{t}+\|u_0\|^2.
		\end{align}
	\end{corollary}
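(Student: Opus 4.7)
\textbf{Proof plan for Corollary~\ref{corollaryA1}.} I will follow the same splitting $u=v+z$ introduced in the proof of Lemma~\ref{lemmaA1}, derive a Hölder-regularity estimate for each piece in the negative Sobolev space $U^*$, and then assemble everything through the doubly-logarithmic manipulations that were already used to obtain \eqref{A5}.

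\emph{Step 1 (Hölder regularity of $v$).} Reading the equation \eqref{A7} in $U^*$ and using $\|Lw\|_{U^*}\lesssim\|w\|_{1}$ together with the 2D bound $\|B(w)\|_{U^*}\lesssim\|w\|_{1}^{2}$ (which follows from $\|B(w)\|_{U^*}\lesssim\|w\|_{L^4}^2\lesssim\|w\|\|w\|_1$ by Ladyzhenskaya), one gets
\[
\|\partial_s v(s)\|_{U^*}\lesssim \|v(s)\|_1+\|v(s)+z(s)\|_1^{2}+\|h\|.
\]
Integrating and using the elementary inequality
$\|v\|_{C^{1/4}([t,t+T];U^*)}\lesssim \|v\|_{C([t,t+T];H)}+T^{3/4}\|\partial_s v\|_{L^\infty([t,t+T];U^*)}$
yields
\[
\|v\|^{2}_{C^{1/4}([t,t+T];U^*)}\lesssim_T 1+\|h\|^{2}+\|v\|^{2}_{C([t,t+T];H)}+\|v\|^{2}_{C([t,t+T];U)}+\|v+z\|^{4}_{C([t,t+T];U)}.
\]

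\emph{Step 2 (Hölder regularity of $z$).} Repeating, on the shifted interval $[t,t+T]$, the Ornstein--Uhlenbeck computation carried out in the proof of Lemma~\ref{UILEMMA2} (splitting $z_j(s)=e^{-\alpha_j(s-t)}z_j(t)+\int_t^{s}e^{-\alpha_j(s-r)}\mathrm d\beta_j(r)$ and using the Kolmogorov-type estimate for each $\beta_j$), one obtains
\[
\mathbb E\|z\|^{2}_{C^{1/4}([t,t+T];U^{*})}\lesssim_{T}\sum_{j\ge 1}b_j^{2}\alpha_j=\BB_1.
\]

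\emph{Step 3 (assembling the doubly-logarithmic bound).} Combining Steps~1--2,
\[
t\|u\|^{2}_{C^{1/4}([t,t+T];U^{*})}\lesssim_T t\bigl(1+\|h\|^{2}\bigr)+t\|v\|^{2}_{C([t,t+T];H)}+t\|v\|^{2}_{C([t,t+T];U)}+t\|v+z\|^{4}_{C([t,t+T];U)}+t\|z\|^{2}_{C^{1/4}([t,t+T];U^{*})}.
\]
For the fourth term we invoke \eqref{A8-4}: since $s\ge t$ on $[t,t+T]$ and $\|v(s)\|_1^{2}\lesssim s^{-1}\exp(C_h\FFFF_1(s,u_0,z_{[0,s]}))$, one has
\[
t\|v\|^{4}_{C([t,t+T];U)}\lesssim t^{-1}\exp\!\bigl(2C_h\FFFF_1(t+T,u_0,z_{[0,t+T]})\bigr),
\]
which is where the $1/t$ on the right-hand side of \eqref{A10} will come from. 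Applying $\log(1+\log(1+\cdot))$, then the sub-additivity inequality \eqref{A8-9}, the Jensen-type bound $\mathbb E\log(1+\log(1+X))\le\log(1+\log(1+\mathbb EX))$ for the stochastic term containing $\|z\|_{C^{1/4}}^{2}$ (absorbed via Step~2), and the estimate $\log(1+\log(1+e^x))\lesssim 1+x$ for the exponential factors (absorbed by the moment bounds \eqref{A4-1}, \eqref{A8-8}, and the definition \eqref{A8-5} of $\FFFF_1$) yields the desired inequality
\[
\mathbb E\log\!\bigl(1+\log(1+t\|u\|_{C^{1/4}([t,t+T];U^{*})}^{2})\bigr)\lesssim_{\BB_1,h,T}1+t+\tfrac{1}{t}+\|u_0\|^{2}.
\]

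\emph{Main obstacle.} The proof is conceptually the same as that of \eqref{A5}, and the only genuinely new ingredient is the passage from $L^{\infty}_t U$ to $C^{1/4}_t U^{*}$; this is cheap for $v$ since the Navier--Stokes nonlinearity is smoothing one degree when evaluated in $U^{*}$. The bookkeeping step that requires the most care is tracking precisely how the $t^{-1}$ factor in \eqref{A8-4} survives the double logarithm: the naive combination loses it, so one must keep the quartic term $t\|v\|^{4}_{C([t,t+T];U)}$ paired with the $1/t$ from \eqref{A8-4} \emph{before} taking logarithms, and only afterwards separate it from the stochastic bound on $z$ (whose $U$-norm has good polynomial moments from \eqref{A8-8}, so its contribution is absorbed into the $1+t$ term).
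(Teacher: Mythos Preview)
Your proposal is correct, but it takes a more circuitous route than the paper's proof. You split $u=v+z$ and estimate the H\"older norm of each piece separately, re-using the intermediate bounds \eqref{A8-2}, \eqref{A8-4}, \eqref{A8-8} that went into \eqref{A5}. The paper instead works directly with $u$: it writes $u(t)=u_0+\int_0^t(h-B(u)-Lu)\,\dd s+W(t)$ with the raw Brownian motion $W(t)=\sum_j b_je_j\beta_j(t)$, and bounds
\[
\|u\|_{C^{1/4}([t,t+T];U^*)}\lesssim \|u_0\|+\|h\|+1+\|u\|_{C([t,t+T];U)}^2+\|W\|_{C^{1/4}([t,t+T];H)}.
\]
After squaring and multiplying by $t$, the doubly-logarithmic algebra \eqref{A8-9} reduces everything to $g(t\|u\|_{C([t,t+T];U)}^2)$ (handled by \eqref{A5} as a black box) plus $\E\|W\|_{C^{1/4}}^2\lesssim_{\BB_0,T} t+1$ via a Sobolev embedding $W^{3/8,8}\subset C^{1/4}$. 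The $1/t$ term arises identically in both approaches, from writing $\|u\|^2=(t\|u\|^2)\cdot(1/t)$ inside the quartic term. The paper's version is shorter because it avoids separate H\"older estimates for $v$ and $z$ and does not need to control $\|z\|_{C^{1/4};U^*}$ or fourth moments of $\|z\|_{C;U}$; your version is self-contained in that it does not invoke \eqref{A5} directly, but at the cost of redoing part of its proof. One small cleanup: in Step~3 you write the fourth term as $t\|v+z\|^4_{C([t,t+T];U)}$ but then only treat $t\|v\|^4$; the $t\|z\|^4$ piece needs a separate line (it is harmless since $g(t\|z\|^4)\le g(t)+2g(\|z\|^2)\le t+2\|z\|^2$ and $\E\|z\|^2_{C;U}\lesssim t+1$).
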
 
	\begin{proof}Let us note that
		\[u(t)=u_0+\int_0^t(h-B(u)-Lu)\dd s+W(t)\]
		with
		\begin{align}\label{A12}
			W(t):=\sum_{j\ge 1}b_je_j\beta_j(t).     
		\end{align} 
		Then,
		\begin{align*}
			&\|u(\cdot)\|_{C^{\frac{1}{4}}([t,t+T];U^*)}\\&\qquad\ \le \|u_0\|+\left\|\int_0^{\cdot}(h-B(u)-Lu)\dd s\right\|_{C^{1}([t,t+T];U^*)}+\|W(\cdot)\|_{C^{\frac{1}{4}}([t,t+T];H)},
		\end{align*}
		where 
		\begin{align*}
			&\left\|\int_0^{\cdot}(h-B(u)-Lu)\dd s\right\|_{C^{1}([t,t+T];U^*)}\\&\qquad\ \le
			\left\|u(\cdot)-u_0-W(\cdot)\right\|_{C([t,t+T];H)}+\|h-B(u)-Lu\|_{C([t,t+T];U^*)}\\&\qquad\ \lesssim \|u_0\|+\|h\|+1+\|u(\cdot)\|_{C([t,t+T];U)}^2+\|W(\cdot)\|_{C^{\frac{1}{4}}([t,t+T];H)}.
		\end{align*}
		Hence, 
		\begin{align*}
			&\|u(\cdot)\|_{C^{\frac{1}{4}}([t,t+T];U^*)}\\&\qquad\ \lesssim \|u_0\|+\|h\|+1+\|u(\cdot)\|_{C([t,t+T];U)}^2+\|W(\cdot)\|_{C^{\frac{1}{4}}([t,t+T];H)}.
		\end{align*}
		Combining this with \eqref{A8-9}, we derive
		\begin{align}\label{A11}
			g\left(t\|u(\cdot)\|^2_{C^{\frac{1}{4}}([t,t+T];U^*)}\right)&\lesssim_h g\left(t\|u(\cdot)\|^2_{C([t,t+T];U)}\right)\notag\\&\qquad+t+\frac{1}{t}+\|u_0\|^2+\|W(\cdot)\|^2_{C^{\frac{1}{4}}([t,t+T];H)}.
		\end{align} 
            It remains to establish moment estimates on the H\"older norm of the Wiener process $W$. Using the embedding 
		\[W^{\frac{3}{8},8}(0,T;H)\subset C^{\frac{1}{4}}([0,T];H),\]
		and Lemma 2.1 in \cite{FG1995}, we obtain
		\begin{align*}
			\mathbb{E}\|W(\cdot)\|^2_{C^{\frac{1}{4}}([t,t+T];H)}\lesssim 1+\mathbb{E}\|W(\cdot)\|^8_{W^{\frac{3}{8},8}(0,t+T;H)}\lesssim_{\BB_0,T} t+1.
		\end{align*}
		This, together with \eqref{A5} and \eqref{A11}, implies \eqref{A10}.
	\end{proof}
	For $t,T>0$, we define the solution operator $\RRRR_t^{t+T}$ of \eqref{A7} by 
	\begin{align}\label{A13-1}
		\RRRR_t^{t+T}:\ &H\times \XX^1(0,t+T)\to C([0,T];U)\mcap C^{1}([0,T];U^*) \notag\\& (u_0,z_{[0,t+T]})\mapsto v_{[t,t+T]},
	\end{align}
	where for $j\in\N$ and $t>0$, 
	\begin{align}
		\label{A13}
		\XX^j(0,t):=C_0([0,t];U^j)\mcap L^2(0,t;U^{j+1})
	\end{align}
	with the norm $\|\cdot\|_{\XX^j}$ defined in \eqref{A8-3} and $C_0([0,t];U^j)$ denoting the space of all continuous functions valued in $U^j$ and vanishing at $t=0$. 
	\begin{proposition}\label{propositionlip}
		Suppose that $h\in U$ and $\BB_1<\infty$. Then, for any $t,T>0$, $\RRRR_t^{t+T}$ is locally Lipschitz.
	\end{proposition}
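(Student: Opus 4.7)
The plan is to derive energy estimates for the difference of two solutions on a bounded set of $H\times\XX^1(0,t+T)$ and to combine them with the a priori bounds already established in the proof of Lemma~\ref{lemmaA1}. Fix a bounded set $\baA\subset H\times\XX^1(0,t+T)$ and two inputs $(u_0^{(i)},z^{(i)})\in\baA$ for $i=1,2$. Let $v^{(i)}:=\RRRR_0^{t+T}(u_0^{(i)},z^{(i)})$ denote the corresponding solutions of \eqref{A7} on the whole interval $[0,t+T]$; the estimates \eqref{A8-2} and \eqref{A8-4} then give a uniform bound
\[
\sup_{s\in[t,t+T]}\|v^{(i)}(s)\|_1^2+\int_0^{t+T}\|v^{(i)}\|_2^2\dd s\le M_{\baA,t,T},
\]
with a constant depending only on $\baA$, $h$, $t$, $T$. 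Let $w:=v^{(1)}-v^{(2)}$ and $\zeta:=z^{(1)}-z^{(2)}$. Writing $W:=w+\zeta$ and using the bilinear structure $B(a)-B(b)=B(a,a-b)+B(a-b,b)$, the difference satisfies
\[
\partial_t w+Lw+B(v^{(1)}+z^{(1)},W)+B(W,v^{(2)}+z^{(2)})=0,\qquad w(0)=u_0^{(1)}-u_0^{(2)}.
\]

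The first step is an $H$-energy estimate. Testing the equation against $w$ and using $\langle B(v^{(1)}+z^{(1)},w),w\rangle=0$ (by $\diver(v^{(1)}+z^{(1)})=0$), the remaining four trilinear terms can be controlled by the Ladyzhenskaya inequality in the form $\|\cdot\|_{L^4}\lesssim\|\cdot\|^{1/2}\|\cdot\|_1^{1/2}$; absorbing the $\|w\|_1^2$ contributions on the left and using the uniform bound above on $\|v^{(i)}+z^{(i)}\|_1$ in $L^2_t$, Gronwall's inequality yields
\[
\sup_{s\le t+T}\|w(s)\|^2+\int_0^{t+T}\|w\|_1^2\dd s\le C_{\baA,t,T}\Bigl(\|u_0^{(1)}-u_0^{(2)}\|^2+\|\zeta\|_{\XX^1(0,t+T)}^2\Bigr).
\]
The second step is to upgrade this to $U$. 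Following the estimate \eqref{A8-1}, we test against $Lw$, bound the convective terms by the Ladyzhenskaya/Agmon inequality, absorb $\tfrac14\|w\|_2^2$, and multiply by the weight $s$ as in the derivation of \eqref{A8-4} in order to avoid requiring $u_0^{(1)}-u_0^{(2)}\in U$. Applying Gronwall's lemma with a driving function that is $L^1_t$ thanks to the a priori bound on $\|v^{(i)}+z^{(i)}\|_2^2$ gives
\[
\sup_{s\in[t,t+T]}\|w(s)\|_1^2+\int_t^{t+T}\|w\|_2^2\dd s\le C_{\baA,t,T}\Bigl(\|u_0^{(1)}-u_0^{(2)}\|^2+\|\zeta\|_{\XX^1(0,t+T)}^2\Bigr).
\]

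Finally, to capture the $C^1([0,T];U^*)$ component of the norm, we read off from the equation that
\[
\partial_t w=-Lw-B(v^{(1)}+z^{(1)},W)-B(W,v^{(2)}+z^{(2)}).
\]
On $[t,t+T]$ we estimate each term in $U^*$ using $\|B(a,b)\|_{U^*}\lesssim\|a\|_1\|b\|_1$ and the $U$-bound on $w$ just obtained, together with the uniform bound on $v^{(i)}+z^{(i)}$ in $C([t,t+T];U)$. Combining the two Lipschitz bounds on the translated interval proves that $\RRRR_t^{t+T}$ is Lipschitz on $\baA$.

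The technical heart is the second step: closing the $U$-estimate for $w$ requires combining a cubic-in-$\|\cdot\|_1$ bound with the already-proven $H$-Lipschitz bound, and doing so with the time-weight trick so that the argument works for merely $H$-valued initial data. Once this is handled, the third step is routine.
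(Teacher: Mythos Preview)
Your proposal is correct and follows essentially the same route as the paper: an $H$-energy estimate for the difference via the cancellation $\langle B(\cdot,w),w\rangle=0$ and Ladyzhenskaya, then a time-weighted $U$-estimate (multiply by $s$ and test against $Lw$) closed with the a priori bounds \eqref{A8-2}--\eqref{A8-4}, and finally reading $\partial_t w$ in $U^*$ directly from the equation. The paper organizes the bilinear decomposition as $B(v+z,v^1+z^1)+B(v^2+z^2,v+z)$ rather than your $B(v^{(1)}+z^{(1)},W)+B(W,v^{(2)}+z^{(2)})$, and in the $U$-step integrates directly after substituting \eqref{L15} rather than invoking Gronwall, but these are cosmetic differences.
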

	\begin{proof}Let $u_0^i\in H$, $z^i\in \XX^1(0,t+T)$, and let $v^i$ denote the solution of \eqref{A7} given the initial data $u_0^i$ and the trajectory $z^i$. Let us write 
		\[v:=v^1-v^2,\qquad z:=z^1-z^2.\]
		Then, the equation for $v$ is given by
		\begin{align}
			\label{L13}\partial_t v+B(v+z,v^1+z^1)+B(v^2+z^2,v+z)+Lv=0.
		\end{align}
		Applying the inner product in $H$ and the cancellation property of the convection term $B$, we have 
		\begin{align*}
			\frac{1}{2}\frac{\dd }{\dd s}\|v(s)\|^2+\|\nabla v\|^2=-\langle B(v+z,v^1+z^1),v\rangle-\langle B(v^2+z^2,z),v\rangle.
		\end{align*}
		Using the embedding $H^{\frac{1}{2}}\subset L^4$, the Ladyzhenskaya inequality, and the Cauchy--Schwartz inequality, we derive
		\begin{align*}
			-\langle B(v+z,v^1+z^1),v\rangle&\lesssim \|v+z\|_{L^4}\|v\|_{L^4}\|v^1+z^1\|_1 \\&\lesssim \|v\|\|v\|_{1}\|v^1+z^1\|_1 +\|z\|_{\frac{1}{2}}\|v\|_{\frac{1}{2}}\|v^1+z^1\|_1\\&\le \frac{1}{8}\|\nabla v\|^2+C\|v\|^2\|v^1+z^1\|_1^2+ C\|z\|_1^2\|v^1+z^1\|_1^2,
		\end{align*}
		and 
		\begin{align*}
			-\langle B(v^2+z^2,z),v\rangle&\le \|v\|_1\|v^2+z^2\|_{\frac{1}{2}}\|z\|_{\frac{1}{2}}\\&\le \frac{1}{8}\|\nabla v\|^2+C\|z\|_1^2\|v^2+z^2\|_1^2.
		\end{align*}
		This implies
		\begin{align*}
			\frac{\dd }{\dd s}\|v(s)\|^2+\|v\|_1^2\lesssim \|v\|^2\|v^1+z^1\|_1^2+ \|z\|_1^2\|v^1+z^1\|^2_1+\|z\|_1^2\|v^2+z^2\|_1^2.
		\end{align*}
		An application of the Gronwall inequality gives
		\begin{align}
			\label{L15}
			\|v(s)\|^2+\int_0^s \|v\|_1^2\dd r&\lesssim \left(\|v_0\|^2+\|z_{[0,s]}\|^2_{\XX^1}\right)\notag\\&\ \times \exp\left(C\sum_{i=1}^2\left(\|v^i_{[0,s]}\|^2_{\XX^0}+\|z^i_{[0,s]}\|^2_{\XX^0}\right)\right).
		\end{align}
		Next, we move to the $H^1$-estimate. Taking the inner product in $U$, we get 
		\begin{align*}
			\frac{1}{2}\frac{\dd }{\dd t}\|L^{\frac{1}{2}} v(t)\|^2+\|L v\|^2=-\langle B(v+z,v^1+z^1),Lv\rangle-\langle B(v^2+z^2,v+z),Lv\rangle,
		\end{align*}
		where, by using the interpolation inequalities
		\begin{align}
			\label{L14}
			\|f\|_1\lesssim \|f\|^\frac{1}{2}\|f\|^\frac{1}{2}_2,\qquad \|f\|_{L^\infty}\lesssim \|f\|^\frac{1}{2}\|f\|^\frac{1}{2}_2,
		\end{align}
		and the Cauchy--Schwartz inequality, we derive
		\begin{align*}
			\langle B(v+z,v^1+z^1),Lv\rangle &\lesssim \|v+z\|^\frac{1}{2}\|v+z\|_2^\frac{1}{2}\|v^1+z^1\|^\frac{1}{2}\|v^1+z^1\|^\frac{1}{2}_2\|Lv\|\\&\lesssim \frac{1}{8}\|Lv\|^2+C\|v+z\|^2\|v^1+z^1\|^2\|v^1+z^1\|^2_2+C\|z\|_2^2,
		\end{align*}
		and 
		\begin{align*}
			\langle B(v^2+z^2,v+z),Lv\rangle&\lesssim\|v+z\|^\frac{1}{2}\|v+z\|_2^\frac{1}{2}\|v^2+z^2\|^\frac{1}{2}\|v^2+z^2\|^\frac{1}{2}_2\|Lv\|\\&\lesssim \frac{1}{8}\|Lv\|^2+C\|v+z\|^2\|v^2+z^2\|^2\|v^2+z^2\|^2_2+C\|z\|_2^2.
		\end{align*}
		Therefore, 
		\begin{align*}
			\frac{\dd }{\dd s}(s\|v(s)\|_1^2)&+s\|v\|_2^2\\&\lesssim \|v\|_1^2+\sum_{i=1}^2s\left(\|v^i+z^i\|^2\|v^i+z^i\|^2_2\right)\|v+z\|^2+s\|z\|_2^2.
		\end{align*}
		Combining this and \eqref{L15}, we obtain
		\begin{align}
			\label{L16}
			s\|v(s)\|^2_1+\int_0^s r&\|v\|_2^2\dd r\notag\\\notag&\lesssim (s+1)\left(\|v_0\|^2+\|z_{[0,s]}\|^2_{\XX^1}\right)\left(1+\sum_{i=1}^2\int_0^sr\|v^i+z^i\|^2_2\dd r\right)\\&\quad\times\exp\left(C\sum_{i=1}^2\left(\|v^i_{[0,s]}\|^2_{\XX^0}+\|z^i_{[0,s]}\|^2_{\XX^0}\right)\right).
		\end{align}
		From \eqref{A8-2} and \eqref{A8-4}, we derive
		\[\left(\|v^i_{[0,s]}\|^2_{\XX^0}+\int_0^sr\|v^i\|^2_2\dd r\right)\lesssim_h \exp(C_h\FFFF_1(s,u^i_0,z^i_{[0,s]})),\qquad i=1,2,\]
        where $\FFFF_1$ is given in \eqref{A8-5}. Plugging this estimate into \eqref{L16}, we obtain
        \begin{align*}\|v(\cdot)\|^2_{C([t,t+T];U)}\lesssim_{h,t,T,M}\left(\|u^1_0-u_0^2\|^2+\|z^1_{[0,t+T]}-z^2_{[0,t+T]}\|^2_{\XX^1}\right)
		\end{align*}
        for
		\[\|u_0^i\|\le M,\qquad \|z^i_{[0,t+T]}\|_{\XX^1}\le M,\qquad i=1,2.\]
		This, together with the equation \eqref{L13} and \eqref{A8-4}, implies 
		\begin{align*}\|\partial_tv(\cdot)\|^2_{C([t,t+T];U^*)}\lesssim_{h,t,T,M}\left(\|u^1_0-u_0^2\|^2+\|z^1_{[0,t+T]}-z^2_{[0,t+T]}\|^2_{\XX^1}\right)
		\end{align*}
		and thus completes the proof.
	\end{proof}
	\section{Markov semigroups on weighted phase spaces}\label{AppenB}
	Let $\{\PPPP_t\}_{t\ge0}$ denote the Markov semigroup defined in \eqref{Markov1}. For $V\in C_b(H)$, we denote by $\{\PPPP^V_t\}_{t\ge0}$ the Feynman--Kac semigroup defined in \eqref{MR6}. For $\kappa>0$, we introduce the weighted space
	\begin{align}\label{B0}C_{\www,\kappa}(H):=\left\{f\in C(H)\Big| \|f\|_{C_{\www,\kappa}}:=\sup_{u\in H}\frac{|f(u)|}{e^{\kappa \|u\|^2}}<\infty\right\}.\end{align}
	The goal of this section is twofold. First, we show that $\{\PPPP^V_t\}_{t\ge0}$ forms a semigroup of bounded linear operators in the weighted space $C_{\www,\kappa}(H)$ for appropriately chosen $\kappa>0$ and any $V\in C_b(H)$. Second, we introduce the resolvent operator $\RR_\alpha$ of the Markov semigroup $\{\PPPP_t\}_{t\ge0}$ and establish some approximation properties for $\RR_\alpha$. These properties are essential for the proof of Theorem~\ref{MT1}.
	\begin{proposition}\label{propositionB1}
		Let $V\in C_b(H)$ and $\kappa:=\frac{\gamma_0}{4}$, where $\gamma_0$ is the constant in Lemma~\ref{lemmaA1}. Then, $\{\PPPP_t^V\}_{t\ge 0}$ forms a semigroup of bounded linear operators in $C_{\www,\kappa}(H)$ satisfying 
		\begin{enumerate}
			\item for any $T>0$, $\sup_{0\le t\le T}\|\PPPP^V_t\|_{\LL(C_{\www,\kappa}(H))}\le \exp(T\|V\|_{\infty})C_{\kappa,\BB_0,h}<\infty$;
			\item for any $u\in H$ and $f\in C_{\www,\kappa}(H)$, $t\mapsto \PPPP^V_t f(u)$ is continuous.
		\end{enumerate}
	\end{proposition}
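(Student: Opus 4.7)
The plan is to verify the three ingredients bundled into the statement: that $\PPPP^V_t$ preserves $C_{\www,\kappa}(H)$ for each $t\ge 0$, that $\{\PPPP^V_t\}_{t\ge 0}$ satisfies the semigroup identity, and that assertions (1) and (2) hold. The specific choice $\kappa=\gamma_0/4$ is dictated by a single need, namely the buffer $2\kappa<\gamma_0$, so that \eqref{A3} supplies second-moment bounds on $e^{\kappa\|u_t\|^2}$ that are uniformly integrable along bounded families of initial data. This is precisely what will allow us to extend standard Feller-type arguments from bounded to exponentially-growing test functions.

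For assertion (1), the direct estimate
\[
|\PPPP^V_t f(u)|\le e^{t\|V\|_\infty}\|f\|_{C_{\www,\kappa}}\,\E_u e^{\kappa\|u_t\|^2}
\]
combined with \eqref{A3} gives control of the form $\|f\|_{C_{\www,\kappa}}e^{t\|V\|_\infty}(e^{-\kappa t}+C_{\kappa,\BB_0,h})e^{\kappa\|u\|^2}$, from which the $T$-uniform operator-norm bound follows by taking the supremum over $t\in[0,T]$. The less immediate point is that $\PPPP^V_t f$ is actually continuous on $H$; for this I would invoke Vitali's convergence theorem. Given $u_n\to u$ in $H$, continuous dependence of trajectories of \eqref{I1} on the initial data, combined with continuity of $f$ and $V$, yields almost-sure convergence $f(u_t(u_n))\exp(\int_0^t V(u_s(u_n))\dd s)\to f(u_t(u))\exp(\int_0^t V(u_s(u))\dd s)$. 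The uniform integrability of this family is then supplied by
\[
\sup_n \E_{u_n}\bigl(f(u_t)e^{\int_0^t V(u_s)\dd s}\bigr)^2\le e^{2t\|V\|_\infty}\|f\|_{C_{\www,\kappa}}^2\sup_n \E_{u_n} e^{2\kappa\|u_t\|^2}<\infty,
\]
which uses exactly the buffer $2\kappa<\gamma_0$ to apply \eqref{A3} at the exponent $2\kappa$.

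The semigroup identity $\PPPP^V_{t+s}=\PPPP^V_t\PPPP^V_s$ is a direct consequence of conditioning on $\mathscr F_t$, the Markov property, and the splitting $\int_0^{t+s}V(u_r)\dd r=\int_0^t V(u_r)\dd r+\int_t^{t+s}V(u_r)\dd r$; linearity is immediate. For assertion (2), the argument follows Proposition~\ref{propositionTC} verbatim: sample-path continuity of $s\mapsto u_s$ together with continuity of $f$ and $V$ delivers pointwise a.s. continuity of $t\mapsto f(u_t)\exp(\int_0^t V(u_s)\dd s)$ on $[0,t_0+1]$, and uniform integrability on this compact interval is once again obtained from the second-moment bound $\sup_{s\le t_0+1}\E_u e^{2\kappa\|u_s\|^2}<\infty$ afforded by \eqref{A3}.

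The main obstacle throughout is the transition from the classical bounded Feller framework to the weighted space $C_{\www,\kappa}(H)$. The arguments are essentially bookkeeping, but they hinge on the numerology $2\kappa<\gamma_0$: without the factor-of-two cushion in the exponent one cannot run Vitali's theorem on the unbounded integrands, so the choice $\kappa=\gamma_0/4$ is effectively the entire content of the proof.
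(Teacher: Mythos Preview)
Your proposal is correct and follows essentially the same route as the paper: both establish the operator-norm bound directly from \eqref{A3}, prove continuity in $u$ via continuous dependence on initial data plus Vitali's theorem (using the second-moment cushion $2\kappa<\gamma_0$), and handle continuity in $t$ by the same sample-path continuity plus uniform integrability argument. The paper packages the uniform integrability into a single family indexed by both $t\le T$ and $u_0\in B_H(R)$, but the content is identical to what you wrote.
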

	\begin{proof}
		\textit{Step 1.} Let us denote by $u(t;u_0)$ the solution of \eqref{I1} issued from $u_0\in H$. For any $V\in C_b(H)$, $f\in C_{\www,\kappa}(H)$, and $R,T>0$, we claim that the family of random variables 
        \[\FF_{V,f,R,T}:=\left\{\exp\left(\int_0^t V(u(s;u_0))\dd s\right) f(u(t;u_0))\Big|0\le t\le T, u_0\in B_H(R)\right\}\]
        is uniformly integrable. Indeed, by the exponential moment estimate \eqref{A3},
		\begin{align*}
			\sup_{0\le t\le T, u_0\in B_H(R)}\E&\left(\exp\left(2\int_0^t V(u(s;u_0))\dd s\right)|f(u(t;u_0))|^2\right)\\&\le \exp(2T\|V\|_{\infty})\|f\|_{C_{\www,\kappa}}^2\sup_{t\ge0, u_0\in B_H(R)}\mathbb{E} e^{2\kappa\|u(t;u_0)\|^2}\\&\le \exp(2T\|V\|_{\infty})\|f\|_{C_{\www,\kappa}}^2(e^{2\kappa R^2}+C_{\kappa,\BB_0,h})<\infty,
		\end{align*}
		which implies the uniform integrability of $\FF_{V,f,R,T}$ as claimed.
		
		\noindent\textit{Step 2.} Let us show that $\PPPP^V_t$ is a bounded linear operator in $C_{\www,\kappa}(H)$ for any $t\ge0$. Indeed, by utilizing the continuity of the solution $u(\cdot,u_0)$ with respect to the initial data $u_0$, see Proposition 2.4.7 in \cite{KS12}, for any $f\in C_{\www,\kappa}(H)$ and $V\in C_b(H)$, we have
		\[\lim_{u\to u_0} \exp\left(\int_0^t V(u(s;u))\dd s\right) f(u(t;u))=\exp\left(\int_0^t V(u(s;u_0))\dd s\right) f(u(t;u_0))\]
		$\mathbb{P}$-almost surely. Therefore, by utilizing the uniform integrability of the family $\FF_{V,f,R,T}$ combined with the Vitali convergence theorem, cf. Theorem 4.5.4. in \cite{Bog2007}, we have the continuity of $u\mapsto\PPPP^V_t f(u)$. Moreover, by using the exponential moment estimate \eqref{A3} again,
		\begin{align*}
			\|\PPPP^V_t f\|_{C_{\www,\kappa}}&=\sup_{u\in H}\frac{|\PPPP^V_tf(u)|}{e^{\kappa\|u\|^2}}\le \exp(t\|V\|_{\infty})\sup_{u\in H}\left(\frac{\mathbb{E}|f(u(t;u))|}{e^{\kappa\|u\|^2}}\right)\\&\le  \exp(t\|V\|_{\infty})\|f\|_{C_{\www,\kappa}}\sup_{u\in H}\frac{\mathbb{E}e^{\kappa \|u(t;u)\|^2}}{e^{\kappa\|u\|^2}}\\&\le \exp(t\|V\|_{\infty})C_{\kappa,\BB_0,h}\|f\|_{C_{\www,\kappa}}.
		\end{align*}
		Hence, $\PPPP^V_t$ is a bounded linear operator in $C_{\www,\kappa}(H)$ for any $t\ge0$, and the first statement holds. The continuity of $t\mapsto \PPPP^V_tf(u)$ for fixed $u\in H$ follows similarly from the uniform integrability of $\FF_{V,f,R,T}$ and the Vitali convergence theorem, and we choose to omit the details.
	\end{proof}
	In what follows, we shall always take $\kappa$ as in Proposition~\ref{propositionB1}. For $\alpha>0$, let us introduce the resolvent operator 
	\begin{align}\label{B1}
		\RR_{\alpha}f(u):=\int_0^{+\infty}e^{-\alpha t}\PPPP_tf(u)\dd t,\qquad f\in C_{\www,\kappa}(H),
	\end{align}
	which is well-defined by Proposition~\ref{propositionB1}.
	
	\begin{proposition}
		\label{propositionB2}
		The following properties hold for the resolvent operator $\RR_{\alpha}$.
		\begin{enumerate}
			\item For each $\alpha>0$, $\RR_{\alpha}$ is a bounded linear operator in $C_{\www,\kappa}(H)$ satisfying
			\[\alpha\|\RR_{\alpha} f\|_{C_{\www,\kappa}}\lesssim_{\kappa,\BB_0,h}\|f\|_{C_{\www,\kappa}}.\]
			\item For any $f\in C_{\www,\kappa}(H)$ and $u\in H$, 
			\[\lim_{\alpha\to+\infty}\alpha\RR_{\alpha}f(u)=f(u).\]
			\item For any $f\in C_{\www,\kappa}(H) $, $u\in H$ and $ \alpha,t>0$, 
			\begin{align}
				\PPPP_t\RR_{\alpha}f(u)=\int_0^{+\infty}e^{-\alpha s}\PPPP_{t+s}f(u)\dd s= \RR_{\alpha}\PPPP_tf(u),\label{B2}\\
				\PPPP_t\RR_{\alpha}f(u)-\RR_{\alpha}f(u)=\int_0^t\PPPP_s(\alpha\RR_{\alpha}f-f)(u)\dd s.\label{B3}
			\end{align}
			\item If a sequence $\{f_n\}_{n=1}^{\infty}\subset C_{\www,\kappa}(H)$ satisfies
			\begin{align}\label{B3_1}
			    \sup_{n\ge 1}\|f_n\|_{C_{\www,\kappa}}<\infty
			\end{align}
			and 
			\begin{align}\label{B3_2}\lim_{n\to+\infty}f_n(u)=f(u),\qquad \forall u\in H,
			\end{align}
			with $f\in C_{\www,\kappa}(H)$, then for any $\alpha>0$, 
			\[\lim_{n\to+\infty}\RR_{\alpha}f_n(u)=\RR_{\alpha}f(u),\qquad \forall u\in H.\]
		\end{enumerate}
	\end{proposition}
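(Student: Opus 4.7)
\medskip

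\noindent\textbf{Proof proposal.} My plan is to treat the four assertions in order, with Proposition~\ref{propositionB1} as the main tool throughout. For (1), I would bound $\PPPP_t$ uniformly in $t$: specializing the first statement of Proposition~\ref{propositionB1} to $V\equiv 0$ yields $\|\PPPP_t\|_{\LL(C_{\www,\kappa}(H))}\le C_{\kappa,\BB_0,h}$ uniformly in $t\ge 0$, so
\[\|\RR_\alpha f\|_{C_{\www,\kappa}}\le \int_0^{\infty} e^{-\alpha t}\|\PPPP_t f\|_{C_{\www,\kappa}}\,\dd t\le \alpha^{-1}C_{\kappa,\BB_0,h}\|f\|_{C_{\www,\kappa}},\]
which also shows $\RR_\alpha f\in C_{\www,\kappa}(H)$ once we verify continuity in $u$, and the latter follows from the uniform integrability arguments used in the proof of Proposition~\ref{propositionB1} together with the Vitali convergence theorem.

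For (2), I change variables $s=\alpha t$ to rewrite $\alpha\RR_\alpha f(u)=\int_0^\infty e^{-s}\PPPP_{s/\alpha}f(u)\,\dd s$. By the second statement of Proposition~\ref{propositionB1}, $\PPPP_{s/\alpha}f(u)\to \PPPP_0 f(u)=f(u)$ as $\alpha\to+\infty$ for each $s>0$, while the integrand is dominated by $e^{-s}C_{\kappa,\BB_0,h}\|f\|_{C_{\www,\kappa}}e^{\kappa\|u\|^2}$. Dominated convergence then yields the pointwise limit.

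For (3), I verify the Fubini interchange $\PPPP_t\RR_\alpha f(u)=\int_0^\infty e^{-\alpha s}\PPPP_{t+s}f(u)\,\dd s$ by writing $\PPPP_t\RR_\alpha f(u)=\E_u\RR_\alpha f(u_t)$ and checking the integrability bound $\E_u\int_0^\infty e^{-\alpha s}|\PPPP_s f(u_t)|\,\dd s\le \alpha^{-1}C_{\kappa,\BB_0,h}\|f\|_{C_{\www,\kappa}}\E_u e^{\kappa\|u_t\|^2}<\infty$ using estimate~\eqref{A3}. The same formula combined with the semigroup property $\PPPP_t\PPPP_s=\PPPP_{t+s}$ gives $\PPPP_t\RR_\alpha f(u)=\RR_\alpha\PPPP_t f(u)$. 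For the Dynkin-type identity~\eqref{B3}, I would use the substitution $r=t+s$ to write $g(t):=\PPPP_t\RR_\alpha f(u)=e^{\alpha t}\int_t^\infty e^{-\alpha r}\PPPP_rf(u)\,\dd r$. The continuity of $r\mapsto \PPPP_rf(u)$ from Proposition~\ref{propositionB1} makes this a differentiable function of $t$, with $g'(t)=\alpha g(t)-\PPPP_t f(u)=\PPPP_t(\alpha\RR_\alpha f-f)(u)$ by the just-proved commutation. Integrating from $0$ to $t$ gives~\eqref{B3}. This step is the one that requires the most care, since $\{\PPPP_t\}_{t\ge 0}$ is not assumed to be strongly continuous (cf.\ Remark after Theorem~\ref{MT1}), so I cannot invoke the usual $C_0$-semigroup calculus and must extract differentiability directly from the explicit formula; this is where I expect the main subtlety.

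For (4), under hypotheses~\eqref{B3_1}--\eqref{B3_2}, I apply dominated convergence twice. For fixed $t>0$, the exponential moment bound~\eqref{A3} guarantees $\E_u e^{\kappa\|u_t\|^2}<\infty$, so the pointwise convergence $f_n(u_t)\to f(u_t)$ together with the domination $|f_n(u_t)|\le (\sup_n\|f_n\|_{C_{\www,\kappa}})e^{\kappa\|u_t\|^2}$ yields $\PPPP_tf_n(u)\to \PPPP_tf(u)$. For the outer integral, the bound $e^{-\alpha t}|\PPPP_tf_n(u)|\le e^{-\alpha t}C_{\kappa,\BB_0,h}(\sup_n\|f_n\|_{C_{\www,\kappa}})e^{\kappa\|u\|^2}$ from (1) is integrable on $[0,\infty)$, so a second application of dominated convergence gives $\RR_\alpha f_n(u)\to\RR_\alpha f(u)$, completing the proof.
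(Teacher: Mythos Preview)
Your proposal is correct and follows essentially the same route as the paper: Proposition~\ref{propositionB1} for the uniform operator bound and continuity in (1), the change of variables plus dominated convergence for (2), Fubini followed by the explicit formula $g(t)=e^{\alpha t}\int_t^\infty e^{-\alpha r}\PPPP_r f(u)\,\dd r$ and the Newton--Leibniz/fundamental theorem of calculus for (3), and a two-layer convergence argument for (4). The only cosmetic difference is that in (4) the paper passes to the limit in $\PPPP_t f_n(u)$ via uniform integrability (second-moment bound $\E_u e^{2\kappa\|u_t\|^2}<\infty$) and the Vitali theorem, whereas you use dominated convergence directly with the integrable dominant $e^{\kappa\|u_t\|^2}$; since $\kappa=\gamma_0/4$ both moment bounds are available from~\eqref{A3}, so either argument works.
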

	\begin{proof}Let us prove the first statement. Note that 
		\[e^{-\alpha t}|\PPPP_t f(u)|\le e^{-\alpha t}e^{\kappa\|u\|^2}\| f\|_{C_{\www,\kappa}}\sup_{t\ge0}\|\PPPP_t\|_{\LL(C_{\www,\kappa}(H))}.\]
		By  Proposition~\ref{propositionB1} and the dominated convergence theorem, we have the continuity of $\RR_\alpha f$:
		\[\lim_{u\to u_0}\RR_{\alpha} f(u)=\RR_{\alpha}f(u_0).\]
        Moreover, we have
		\begin{align*}
			|\RR_{\alpha}f(u)|&\le \int_0^{+\infty} e^{-\alpha t}|\PPPP_tf(u)|\dd t\\& \le \left(\int_0^{+\infty} e^{-\alpha t}\dd t\right)e^{\kappa\|u\|^2}\| f\|_{C_{\www,\kappa}}\sup_{t\ge0}\|\PPPP_t\|_{\LL(C_{\www,\kappa}(H))}\\& \lesssim_{\kappa,\BB_0,h} \alpha^{-1}e^{\kappa\|u\|^2}\| f\|_{C_{\www,\kappa}}.
		\end{align*}
		Thus, the first statement holds. 
  
        The second statement follows from a change of variables, Proposition~\ref{propositionB1}, and the dominated convergence theorem: 
        \begin{align*}
            \lim_{\alpha\to+\infty}\alpha\RR_{\alpha}f(u)=\lim_{\alpha\to+\infty}\int_0^{+\infty}e^{-t}\PPPP_{t/\alpha}f(u)\dd t=f(u).
        \end{align*}
        
        We turn to the proof of the third statement. By the Fubini theorem, we derive
		\begin{align*}
			\PPPP_t \RR_{\alpha} f(u)&=\mathbb{E}_u\left(\int_0^{+\infty}e^{-\alpha s}\PPPP_sf(u_t)\dd s\right)\\&=\int_0^{+\infty}e^{-\alpha s}\mathbb{E}_u\PPPP_sf(u_t)\dd s=\int_0^{+\infty}e^{-\alpha s}\PPPP_{t+s}f(u)\dd s=\RR_{\alpha}\PPPP_tf(u).
		\end{align*}
		To prove \eqref{B3}, we apply the Fubini theorem combined with a change of variables:
		\begin{align*}
			\PPPP_t \RR_{\alpha} f(u)=\int_0^{+\infty}e^{-\alpha s}\PPPP_{t+s}f(u)\dd s=e^{\alpha t}\int_t^{+\infty}e^{-\alpha s}\PPPP_{s}f(u)\dd s.
		\end{align*}
		This, together with the Newton--Leibniz formula, gives
		\begin{align*}
			\PPPP_t \RR_{\alpha} f(u)-\RR_{\alpha} f(u)&=\int_0^t\frac{\dd }{\dd s}\left(e^{\alpha s}\int_s^{+\infty}e^{-\alpha r}\PPPP_{r}f(u)\dd r\right)\dd s\\
            &=\int_0^t \left(\alpha \PPPP_s\RR_{\alpha} f(u)-\PPPP_s f(u)\right)\dd s\\&=\int_0^t \PPPP_s\left(\alpha \RR_{\alpha} f-f\right)(u)\dd s
		\end{align*}
		as desired. 
		
		Finally, we prove the last statement. For each fixed $u\in H$ and $t\ge0$, by using \eqref{A3}, we have
		\[\sup_{n\ge 1}\E|f_n(u(t;u))|^2\le \left(\sup_{n\ge 1}\|f_n\|^2_{C_{\www,\kappa}}\right)\left(e^{2\kappa\|u\|^2}+C_{\kappa,\BB_0,h}\right)<\infty,\]
		which implies the uniform integrability of $\{f_n(u(t;u))\}_{n\ge 1}$. Notice that 
		\[\lim_{n\to+\infty}f_n(u(t;u))=f(u(t;u))\]
		$\mathbb{P}$-almost surely, so by the Vitali convergence theorem,
		\[\lim_{n\to+\infty}\PPPP_tf_n(u)=\PPPP_tf (u)\]
		for any $u\in H$ and $t\ge 0$. Moreover, since
		\begin{align*}
			e^{-\alpha t}|\PPPP_t f_n(u)|&\le e^{-\alpha t}e^{\kappa\|u\|^2} \sup_{n\ge 1}\|\PPPP_t f_n\|_{C_{\www,\kappa}}\\&\lesssim_{\kappa,\BB_0,h}e^{-\alpha t}e^{\kappa\|u\|^2} \sup_{n\ge 1}\|f_n\|_{C_{\www,\kappa}},
		\end{align*}
		by the dominated convergence theorem, we conclude 
		\begin{align*}
			\lim_{n\to+\infty}\RR_{\alpha} f_n(u)&=\lim_{n\to+\infty}\int_0^{+\infty} e^{-\alpha t}\PPPP_t f_n(u)\dd t\\&=\int_0^{+\infty} e^{-\alpha t}\PPPP_t f(u)\dd t=\RR_{\alpha} f(u)
		\end{align*}
		for any $\alpha>0$ and $u\in H$. 
	\end{proof}
	Let
	\begin{align}\label{B4}
		\mathcal{D}_{\www,\kappa}(\LLLL):=\left\{f\in C_{\www,\kappa}(H)\Big|\exists g\in C_{\www,\kappa}(H),\ \PPPP_tf-f=\int_0^t\PPPP_sg\dd s,\ \forall t\ge 0\right\},
	\end{align}
	and define the extended generator $\LLLL f:=g$ for $f\in \DD_{\www,\kappa}(\LLLL)$.
	\begin{proposition}\label{propositionB3}
		The following properties hold.
		\begin{enumerate}
			\item For any $f\in C_{\www,\kappa}(H)$ and $\alpha>0$, we have $\RR_{\alpha}f \in\DD_{\www,\kappa}(\LLLL)$ and
			\begin{align}\label{B5}
				\LLLL\RR_{\alpha}f=\alpha\RR_{\alpha}f-f.
			\end{align}
			\item For any $f\in \DD_{\www,\kappa}(\LLLL)$ and $\alpha>0$, 
			\begin{align}\label{B6}
				\LLLL\RR_{\alpha}f=\RR_{\alpha}\LLLL f.
			\end{align}
			\item If a sequence $\{f_n\}_{n=1}^{\infty}\subset C_{\www,\kappa}(H)$ converges to $f\in C_{\www,\kappa}(H)$ in the sense of \eqref{B3_1} and \eqref{B3_2}, then
			\[\lim_{n\to+\infty}\LLLL\RR_{\alpha}f_n(u)=\LLLL\RR_{\alpha}f(u)\]
			for any $\alpha>0$ and $u\in H$.
		\end{enumerate} 
	\end{proposition}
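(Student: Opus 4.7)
The plan breaks along the three statements, with (1) being essentially bookkeeping, (2) requiring a Fubini swap and a differentiation argument, and (3) being an immediate consequence of (1).

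For statement (1), the approach is to read off the conclusion from identity \eqref{B3}. Since $f \in C_{\www,\kappa}(H)$ implies $\alpha\RR_{\alpha} f - f \in C_{\www,\kappa}(H)$ by Proposition~\ref{propositionB2}(1), this function serves as a witness $g$ in the defining relation \eqref{B4} of $\DD_{\www,\kappa}(\LLLL)$, and hence $\LLLL\RR_\alpha f = \alpha\RR_\alpha f - f$.

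For statement (2), I would apply $\RR_\alpha$ to both sides of the defining identity $\PPPP_t f - f = \int_0^t \PPPP_s \LLLL f\,\dd s$ for $f \in \DD_{\www,\kappa}(\LLLL)$. On the left, the commutation \eqref{B2} yields $\PPPP_t \RR_\alpha f - \RR_\alpha f$. On the right, two successive applications of Fubini give
\[\RR_\alpha \int_0^t \PPPP_s \LLLL f\,\dd s = \int_0^t \RR_\alpha \PPPP_s \LLLL f\,\dd s = \int_0^t \PPPP_s \RR_\alpha \LLLL f\,\dd s,\]
where \eqref{B2} is used in the last equality; the integrability needed for Fubini is supplied by the uniform bound $\sup_{0\le s\le t}\|\PPPP_s \LLLL f\|_{C_{\www,\kappa}} < \infty$ from Proposition~\ref{propositionB1}(1) combined with the exponential decay of $e^{-\alpha r}$ and the moment estimate \eqref{A3} controlling $\PPPP_r e^{\kappa\|\cdot\|^2}(u)$. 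Meanwhile, applying part (1) to $f$ gives $\PPPP_t\RR_\alpha f - \RR_\alpha f = \int_0^t \PPPP_s \LLLL\RR_\alpha f\,\dd s$. Subtracting the two expressions yields
\[\int_0^t \PPPP_s(\LLLL\RR_\alpha f - \RR_\alpha \LLLL f)(u)\,\dd s = 0 \qquad \forall\, t\ge 0,\, u\in H.\]
Dividing by $t$, letting $t\to 0^+$, and invoking the continuity of $s\mapsto \PPPP_s h(u)$ at $s=0$ from Proposition~\ref{propositionB1}(2) with $h := \LLLL\RR_\alpha f - \RR_\alpha \LLLL f \in C_{\www,\kappa}(H)$ (together with $\PPPP_0 = \Id$), we deduce $h(u) = 0$ pointwise, which is \eqref{B6}.

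Statement (3) is then immediate: by part (1), $\LLLL\RR_\alpha f_n = \alpha\RR_\alpha f_n - f_n$ and likewise for $f$; the hypothesis gives $f_n(u) \to f(u)$, while Proposition~\ref{propositionB2}(4) gives $\RR_\alpha f_n(u) \to \RR_\alpha f(u)$, and subtraction concludes. The only delicate moment in the whole argument is the Lebesgue-differentiation step at the end of part (2), which would be trivial for a $C_0$-semigroup but here must be justified via the pointwise-in-$t$ continuity from Proposition~\ref{propositionB1}(2); this is precisely the reason the extended generator \eqref{B4} is defined through an integral identity rather than a strong derivative.
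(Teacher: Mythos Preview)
Your proposal is correct and follows essentially the same route as the paper. The only cosmetic difference is in part (2): after deriving $\PPPP_t\RR_\alpha f - \RR_\alpha f = \int_0^t \PPPP_s \RR_\alpha \LLLL f\,\dd s$, the paper simply reads off $\LLLL\RR_\alpha f = \RR_\alpha\LLLL f$ directly from the definition \eqref{B4} (implicitly using the well-definedness of the extended generator), whereas you make this step explicit by comparing with the identity from part (1) and differentiating at $t=0$; these are the same uniqueness argument presented at different levels of detail.
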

	\begin{proof}
		The first statement is valid because of \eqref{B3}, while the third follows from the assertion \textit{4} in Proposition~\ref{propositionB2} and \eqref{B5}. As for the second statement, we use \eqref{B2}, the definitions \eqref{B1} and \eqref{B4}, and the Fubini theorem to get
		\begin{align*}
			\PPPP_t\RR_{\alpha}f(u)-\RR_{\alpha}f(u)&=\RR_{\alpha}(\PPPP_tf-f)(u)\\&=\int_0^{+\infty}e^{-\alpha r}\PPPP_r\left(\int_0^t\PPPP_s\LLLL f\dd s\right)(u)\dd r\\&=\int_0^{+\infty}e^{-\alpha r}\left(\int_0^t\PPPP_{s+r}\LLLL f(u)\dd s\right)\dd r\\&=\int_0^t\int_0^{+\infty}e^{-\alpha r}\PPPP_{s+r}\LLLL f(u)\dd r\dd s\\&=\int_0^t\RR_{\alpha}\PPPP_s \LLLL f(u)\dd s=\int_0^t\PPPP_s \RR_{\alpha}\LLLL f(u)\dd s.
		\end{align*}
		This implies \eqref{B6}.
	\end{proof}
    The following approximation result plays a crucial role in the proof of the Donsker--Varadhan variational formula \eqref{MR4}. 
    \begin{corollary}\label{corollaryB1}
        Let $f\in\DD_{\www,\kappa}(\LLLL)$ satisfy 
	\begin{align}\label{B7}
		f>0,\qquad \frac{\LLLL f}{f}\in C_b(H).
	\end{align}
    Then, we have 
    \begin{align}\label{B8}
		\int_H\frac{-\LLLL f}{f}\dd \lambda=\lim_{\alpha\to+\infty}\int_H\frac{-\LLLL\RR_{\alpha}f}{\RR_\alpha f}\dd \lambda,
	\end{align}
    and
    \begin{align}\label{B9}
		\int_H-\frac{\LLLL\RR_{\alpha}f}{\RR_{\alpha }f}\dd \lambda\le \liminf_{N\to+\infty}\int_H-\frac{\LLLL\RR_{\alpha}f_N}{\RR_{\alpha }f_N}\dd \lambda
	\end{align}
    for any $\alpha>0$, where $f_N:=(f\wedge N)\vee \frac{1}{N}$.
    \end{corollary}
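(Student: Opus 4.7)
The plan is to treat the two identities separately, using the resolvent formulas $\LLLL\RR_\alpha g = \alpha\RR_\alpha g - g$ from \eqref{B5} and $\LLLL\RR_\alpha f = \RR_\alpha\LLLL f$ from \eqref{B6}, together with the pointwise convergence $\alpha\RR_\alpha g(u)\to g(u)$ for $g\in C_{\www,\kappa}(H)$ (assertion 2 of Proposition~\ref{propositionB2}) and the stability $\RR_\alpha f_N\to\RR_\alpha f$ under pointwise convergence with a uniform $C_{\www,\kappa}$-bound (assertion 4 of Proposition~\ref{propositionB2}). A preliminary observation used throughout: since $f>0$ and each $\PPPP_t$ is a positivity-preserving Markov operator, $\PPPP_t f(u)=\E_u f(u_t)>0$ for all $u\in H$, so $\RR_\alpha f>0$ pointwise; the same applies to $f_N$.

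For \eqref{B8}, I would set $V:=-\LLLL f/f\in C_b(H)$, so $\LLLL f=-Vf$. Applying $\RR_\alpha$ via \eqref{B6} gives
\[
-\frac{\LLLL\RR_\alpha f}{\RR_\alpha f}\;=\;\frac{\RR_\alpha(Vf)}{\RR_\alpha f}\;=\;\frac{\alpha\RR_\alpha(Vf)}{\alpha\RR_\alpha f},
\]
and the domination $|\RR_\alpha(Vf)|\le\|V\|_\infty\RR_\alpha f$ (from $|V|\le\|V\|_\infty$ and $\RR_\alpha$ preserving positivity against $f>0$) makes this ratio uniformly bounded by $\|V\|_\infty$ on $H$. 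Pointwise one has $\alpha\RR_\alpha(Vf)\to Vf$ and $\alpha\RR_\alpha f\to f>0$, so the ratio converges to $Vf/f=V=-\LLLL f/f$. Since $\lambda\in\PP(H)$, the dominated convergence theorem yields \eqref{B8}.

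For \eqref{B9}, the truncation $f_N=(f\wedge N)\vee(1/N)\in C_b(H)$ satisfies $f_N\to f$ pointwise and $\|f_N\|_{C_{\www,\kappa}}\le\|f\|_{C_{\www,\kappa}}+1$ (from the trivial estimate $f_N\le f+1$), so \eqref{B3_1}--\eqref{B3_2} hold and assertion 4 of Proposition~\ref{propositionB2} gives $\RR_\alpha f_N(u)\to\RR_\alpha f(u)$ pointwise, hence $f_N/\RR_\alpha f_N\to f/\RR_\alpha f$ pointwise. Rewriting via \eqref{B5},
\[
-\frac{\LLLL\RR_\alpha f_N}{\RR_\alpha f_N}=\frac{f_N}{\RR_\alpha f_N}-\alpha,\qquad -\frac{\LLLL\RR_\alpha f}{\RR_\alpha f}=\frac{f}{\RR_\alpha f}-\alpha,
\]
and both quotients are nonnegative. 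Fatou's lemma applied to the nonnegative sequence $\{f_N/\RR_\alpha f_N\}_{N\ge 1}$, followed by subtracting the constant $\alpha$ (finite since $\lambda$ is a probability measure), yields \eqref{B9}.

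Neither step presents a real obstacle; the only care points are the strict positivity of $\RR_\alpha f$ and $\RR_\alpha f_N$ and the uniform $C_{\www,\kappa}$-bound on $\{f_N\}$, both disposed of in the preliminary observation. The structural point worth emphasizing is that \eqref{B6} is the right tool for \eqref{B8} (because it transports the known regularity of $\LLLL f/f$ into a pointwise identity involving resolvents), whereas \eqref{B5} is the right tool for \eqref{B9} (because it produces a nonnegative integrand amenable to Fatou's lemma, avoiding the need to compute $\LLLL f_N$ of the truncation, which is not even well-defined a priori).
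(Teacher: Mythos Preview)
Your proof is correct and follows essentially the same approach as the paper: for \eqref{B8} you use \eqref{B6} to write $-\LLLL\RR_\alpha f/\RR_\alpha f=\RR_\alpha(Vf)/\RR_\alpha f$ with $V=-\LLLL f/f$, obtain the uniform bound $\|V\|_\infty$ and pointwise convergence via assertion~2 of Proposition~\ref{propositionB2}, then apply dominated convergence; for \eqref{B9} you use \eqref{B5} to rewrite $-\LLLL\RR_\alpha f_N/\RR_\alpha f_N=f_N/\RR_\alpha f_N-\alpha$, verify $\RR_\alpha f_N\to\RR_\alpha f$ pointwise via assertion~4 of Proposition~\ref{propositionB2}, and apply Fatou's lemma to the nonnegative part before subtracting $\alpha$. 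The only cosmetic differences are your explicit preliminary observation on strict positivity of $\RR_\alpha f$ and the bound $\|f_N\|_{C_{\www,\kappa}}\le\|f\|_{C_{\www,\kappa}}+1$ in place of the paper's $1\vee\|f\|_{C_{\www,\kappa}}$.
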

    \begin{proof}
        Let $f\in\DD_{\www,\kappa}(\LLLL)$ satisfy \eqref{B7}. To prove \eqref{B8}, we apply Propositions~\ref{propositionB2} and \ref{propositionB3} to get
        \[\lim_{\alpha\to+\infty}\frac{\LLLL\RR_{\alpha}f(u)}{\RR_{\alpha }f(u)}=\lim_{\alpha\to+\infty}\frac{\alpha\RR_{\alpha}\LLLL f(u)}{\alpha\RR_{\alpha }f(u)}=\frac{\LLLL f(u)}{f(u)}\]
	for any $u\in H$. Moreover, 
	\begin{align*}
		|\RR_{\alpha}\LLLL f(u)|&\le \int_0^{+\infty}e^{-\alpha t}|\PPPP_t \LLLL f(u)|\dd t\\&\le \left\|\frac{\LLLL f}{f}\right\|_{\infty}\int_0^{+\infty}e^{-\alpha t}\PPPP_tf(u)\dd t=\left\|\frac{\LLLL f}{f}\right\|_{\infty}\RR_{\alpha}f(u),
	\end{align*}
	which implies 
	\[\left\|\frac{-\LLLL\RR_{\alpha}f}{\RR_\alpha f}\right\|_{\infty}\le \left\|\frac{\LLLL f}{f}\right\|_{\infty}\]
	for any $\alpha>0$. Therefore, an application of the dominated convergence theorem implies \eqref{B8}. 
    
    We turn to the proof of \eqref{B9}. To this end, let us notice that $f_N$ converges to $f$ pointwisely on $H$ and
	\[\sup_N\|f_N\|_{C_{\www,\kappa}}\le 1\vee\|f\|_{C_{\www,\kappa}}<\infty.\]
	By Propositions \ref{propositionB2} and \ref{propositionB3}, this implies
	\[\lim_{N\to+\infty}-\frac{\LLLL\RR_{\alpha}f_N(u)}{\RR_{\alpha }f_N(u)}=-\frac{\LLLL\RR_{\alpha}f(u)}{\RR_{\alpha }f(u)}\]
	for any $u\in H$. Besides, from Proposition~\ref{propositionB3}, it follows that 
	\begin{align*}
		-\frac{\LLLL\RR_{\alpha}f_N}{\RR_{\alpha }f_N}=-\frac{\alpha\RR_{\alpha}f_N-f_N}{\RR_{\alpha }f_N}\ge -\alpha.
	\end{align*}
	Hence, by applying the Fatou lemma, we obtain \eqref{B9}. 
    \end{proof}
	\section{Basic properties of the Donsker--Varadhan entropy}\label{propertyDVE}
    In this section, we gather some basic properties of the Donsker--Varadhan entropy used throughout the paper. While these results were originally established in \cite{DV1983} under seven technical assumptions, we show that their validity extends beyond the original setting and adapts to the case of the stochastic Navier--Stokes system. Let $\BH(\bmlambda)$ denote the Donsker--Varadhan entrophy defined in \eqref{ET1}. For $t>0$, we introduce
	\begin{align}\label{C1}
		\BH(t,\bmlambda):=\begin{cases}
			\int_{\DDDD^{\R}}\ent\left(\pi_{[0,t]}^*\bmlambda(u_{(-\infty,0]},\cdot)\Big| \pi_{[0,t]}^*\mathbb{P}_{u(0)}\right)\bmlambda(\dd u_{(-\infty,\infty)})\\&\!\!\!\!\!\!\!\mbox{if $\bmlambda\in \PP_s(\DDDD^\R)$},\\
			+\infty &\!\!\!\!\!\!\! \mbox{otherwise},
		\end{cases}
	\end{align}
    where for $r\in\R$ and $\JJ\subset\R$, $\pi^*_r$ and $\pi_{\JJ}^*$ are the push-forward operators defined in \eqref{pushforward1} and \eqref{pushforward2}, respectively. We introduce $\BB(\mathscr{F}^0_t)$ as the space of all bounded $\mathscr{F}^0_t$-measurable functions $\Psi:\DDDD^\R\to\R$, and recall that each $\Psi\in \BB(\mathscr{F}^0_t)$ has the following representation: 
    \[\Psi(u_{(-\ty,\ty)})=\Phi\circ\pi_{[0,t]}(u_{(-\ty,\ty)}),\qquad\forall u_{(-\ty,\ty)}\in\DDDD,\]
    where $\Phi:\DDDD^{[0,t]}\to\R$ is some bounded Borel-measurable function, see Theorem 2.12.3 in \cite{Bog2007}. For simplicity, we do not distinguish the function $\Psi $ and the corresponding representation $\Phi$ in what follows. Let us define
    \begin{align}
		\label{C2}\bar\BH(t,\bmlambda):=\begin{cases}\sup_{\Psi\in\BB(\mathscr{F}^0_t)}\left(\int_{\DDDD^\R}\left(\Psi(u_{[0,t]})-\log\int_{\DDDD} e^{\Psi}\dd \mathbb{P}_{u(0)}\right)\bmlambda(\dd u_{(-\infty,\infty)})\right)\\\qquad\qquad\qquad\qquad\qquad\quad\qquad\qquad\qquad\qquad\qquad\mbox{if $\bmlambda\in \PP_s(\DDDD^\R)$},&\\
			+\infty \qquad\qquad\qquad\qquad\qquad\qquad\qquad\qquad\qquad\quad\qquad\mbox{otherwise}.&
		\end{cases}
	\end{align}
    We need the following properties of $\BH(\bmlambda)$, $\BH(t,\bmlambda)$, and $\bar\BH(t,\bmlambda)$. 
	\begin{proposition} \label{propositionC1} For any $\bmlambda\in \PP(\DDDD^{\R})$, we have
		\begin{align}\label{C2-1}
		    \BH(t,\bmlambda)=t\BH(\bmlambda),\qquad\bar\BH(t,\bmlambda)\le t\BH(\bmlambda),\qquad \forall t>0,
		\end{align}
    and \begin{align}\label{C2-2}\lim_{t\to+\infty}\frac{\bar\BH(t,\bmlambda)}{t}=\BH(\bmlambda).
    \end{align} Furthermore, if $\bmlambda\in \PP_s(\DDDD^{\R})$, then
		\begin{align}\label{C3}
			\bar{\BH}(t,\bmlambda)&=\sup_{\Psi\in\YY_t}\left(\int_{\DDDD^\R}\left(\Psi(u_{[0,t]})-\log\int_{\DDDD} e^{\Psi}\dd \mathbb{P}_{u(0)}\right)\bmlambda(\dd u_{(-\infty,\infty)})\right)\\&=\sup_{\Psi\in\ZZ_t}\int_{\DDDD^\R}\Psi\dd\bmlambda,\label{C4}
		\end{align}    
     where $\YY_t$ denotes the space of functions $\Psi:\DDDD^{\R}\to \R$ of the form 
		\[\Psi=V\circ \pi_{t_1,t_2,\ldots,t_n}\]
		for some integer $n\ge 1$, partition $0\le t_1<t_2<\ldots<t_n\le t$, and function $V\in C_b(H^n)$, and $\ZZ_t$ denotes the set of functions $\Psi\in\YY_t$ such that 
		\begin{align}
			\label{C13}\int_{\DDDD}e^{\Psi}\dd\mathbb{P}_u\le 1,\qquad \forall u\in H.
		\end{align}
	\end{proposition}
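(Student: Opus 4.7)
The plan is to establish the four assertions in sequence, relying on the variational formula \eqref{relative-entropy} for relative entropy, the Markov property of $(u_t,\mathbb{P}_u)$, and the shift-invariance of $\bmlambda$.

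\emph{Linearity $\BH(t,\bmlambda)=t\BH(\bmlambda)$ and the upper bound.} For $\bmlambda\in\PP_s(\DDDD^\R)$, the chain rule of relative entropy, combined with the Markov factorisation $\pi_{[0,s+t]}^*\mathbb{P}_{u(0)}=\pi_{[0,s]}^*\mathbb{P}_{u(0)}\otimes\pi_{[0,t]}^*\mathbb{P}_{u(s)}$, yields, for $\bmlambda$-a.e.\ past $u_{(-\infty,0]}$,
\begin{align*}
\ent\bigl(&\pi_{[0,s+t]}^*\bmlambda(u_{(-\infty,0]},\cdot)\,\big|\,\pi_{[0,s+t]}^*\mathbb{P}_{u(0)}\bigr)=\ent\bigl(\pi_{[0,s]}^*\bmlambda(u_{(-\infty,0]},\cdot)\,\big|\,\pi_{[0,s]}^*\mathbb{P}_{u(0)}\bigr)\\
&\quad+\int\ent\bigl(\pi_{[0,t]}^*\bmlambda(u_{(-\infty,s]},\cdot)\,\big|\,\pi_{[0,t]}^*\mathbb{P}_{u(s)}\bigr)\,\pi_{[0,s]}^*\bmlambda(u_{(-\infty,0]},\dd u_{[0,s]}).
\end{align*}
Integrating against $\bmlambda$ and invoking shift-invariance to translate the second term back to time $0$ produces the additivity $\BH(s+t,\bmlambda)=\BH(s,\bmlambda)+\BH(t,\bmlambda)$ for all $s,t>0$. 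Since $\BH(\cdot,\bmlambda)$ is additive, $[0,+\infty]$-valued, and monotone non-decreasing (from non-negativity of conditional entropy), a standard argument forces $\BH(t,\bmlambda)=t\BH(\bmlambda)$ for every $t>0$; if $\bmlambda\notin\PP_s(\DDDD^\R)$, both sides are $+\infty$ by convention. The bound $\bar\BH(t,\bmlambda)\le t\BH(\bmlambda)$ is immediate from \eqref{relative-entropy}: any $\Psi\in\BB(\mathscr{F}^0_t)$ is a bounded function of $u_{[0,t]}$ alone, so the variational formula applied to the inner relative entropy, then integrated against $\bmlambda$, yields the claim.

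\emph{The variational identities \eqref{C3} and \eqref{C4}.} For shift-invariant $\bmlambda$, the inequality $\ge$ in \eqref{C3} is trivial from $\YY_t\subset\BB(\mathscr{F}^0_t)$. For the reverse, I approximate any $\Psi\in\BB(\mathscr{F}^0_t)$ by cylindrical functions $\Psi_n\in\YY_t$ via a monotone class argument, with $\sup_n\|\Psi_n\|_\infty\le\|\Psi\|_\infty$ and $\Psi_n\to\Psi$ pointwise both $\bmlambda$- and $\mathbb{P}_u$-a.s.\ for every $u\in H$. The Feller property of $(u_t,\mathbb{P}_u)$ (Proposition~2.4.7 in \cite{KS12}), together with bounded convergence, ensures that $u\mapsto\log\int e^{\Psi_n}\,\dd\mathbb{P}_u$ is bounded continuous and that both the linear and the nonlinear terms pass to the limit. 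For \eqref{C4}, given $\Psi\in\YY_t$, set $\phi(u):=\log\int e^\Psi\,\dd\mathbb{P}_u$; by the Feller property and the boundedness of $\Psi$, $\phi\in C_b(H)$. Then $\tilde\Psi:=\Psi-\phi\circ\pi_0$ lies in $\YY_t$ (adjoin $t_1=0$ to the partition if necessary) and satisfies $\int e^{\tilde\Psi}\,\dd\mathbb{P}_u=1$ for every $u\in H$, hence $\tilde\Psi\in\ZZ_t$; moreover $\int\tilde\Psi\,\dd\bmlambda$ coincides with the bracketed expression in \eqref{C3}. The reverse inequality in \eqref{C4} holds because every $\Psi\in\ZZ_t$ satisfies $\log\int e^\Psi\,\dd\mathbb{P}_{u(0)}\le 0$, so $\int\Psi\,\dd\bmlambda\le\int(\Psi-\log\int e^\Psi\,\dd\mathbb{P}_{u(0)})\,\dd\bmlambda$.

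\emph{The asymptotic identity \eqref{C2-2} and main obstacle.} The bound $\limsup_t\bar\BH(t,\bmlambda)/t\le\BH(\bmlambda)$ is inherited from the first paragraph. The matching lower bound is the principal obstacle: the optimal inner test function for the relative entropy defining $\BH(t,\bmlambda)$ is the $\log$ of a Radon--Nikodym derivative that genuinely depends on the past $u_{(-\infty,0]}$, whereas admissible test functions for $\bar\BH$ are $\mathscr{F}^0_t$-measurable and cannot see the past. I plan to bridge this gap in two steps. First, a chain-rule-type computation using structured test functions of the form $\Psi(u_{[0,s+t]})=\tilde\Psi_1(u_{[0,s]})+\Psi_2(u_{[s,s+t]})$ with $\tilde\Psi_1:=\Psi_1+G\circ\pi_s$ and $G(u):=\log\int e^{\Psi_2}\,\dd\mathbb{P}_u$, combined with shift-invariance, yields the super-additivity $\bar\BH(s+t,\bmlambda)\ge\bar\BH(s,\bmlambda)+\bar\BH(t,\bmlambda)$; by Fekete's lemma, $\bar\BH(t,\bmlambda)/t$ converges as $t\to+\infty$ to some limit $L\le\BH(\bmlambda)$. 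Second, to identify $L=\BH(\bmlambda)$, I use a duality representation $\bar\BH(t,\bmlambda)=\int_H\ent(\pi_{[0,t]}^*\bmlambda(\cdot\,|\,u(0)=u_0)\,|\,\pi_{[0,t]}^*\mathbb{P}_{u_0})\,\pi_0^*\bmlambda(\dd u_0)$, established through a measurable selection and truncation argument, and compare it against $\BH(t,\bmlambda)=t\BH(\bmlambda)$: the difference is a mutual-information-type quantity measuring the extra information in the past $u_{(-\infty,0]}$ beyond the current state $u(0)$ for predicting $u_{[0,t]}$. Combining an ergodic decomposition of $\bmlambda$ with the observation that this extra information is $o(t)$ under the Markovian mixing of the reference measure $\mathbb{P}_{u_0}$ yields $L=\BH(\bmlambda)$, which is the technically delicate point of the argument.
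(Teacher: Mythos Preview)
Your treatment of \eqref{C3} and \eqref{C4} matches the paper exactly: the paper also invokes Dynkin's monotone-class theorem for \eqref{C3} and, for \eqref{C4}, subtracts $\bar\Psi(u):=\log\int e^\Psi\,\dd\PPPPPP_u\in C_b(H)$ from $\Psi\in\YY_t$ to land in $\ZZ_t$, just as you do. For \eqref{C2-1} the paper simply cites Theorems~3.1 and~3.6 of \cite{DV1983}; your chain-rule/additivity argument for $\BH(t,\bmlambda)=t\BH(\bmlambda)$ and the variational bound $\bar\BH\le\BH$ are correct and are what those references contain.

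The gap is in your lower bound for \eqref{C2-2}. Your super-additivity and Fekete step are fine, and the duality representation of $\bar\BH(t,\bmlambda)$ as an entropy conditioned only on $u(0)$ is correct. But the final step---``the extra information is $o(t)$ under the Markovian mixing of the reference measure $\PPPPPP_{u_0}$''---does not work as stated. The conditional mutual information you need to control is computed under $\bmlambda$, not under $\PPPPPP$; the Markov property of $\PPPPPP$ says nothing about how much the $\bmlambda$-conditional law of $u_{[0,t]}$ depends on the remote past beyond $u(0)$, and ergodic decomposition does not help here either. The argument in \cite{DV1983} (Theorem~3.6) proceeds differently: for any fixed $s>0$, shift-invariance gives
\[
\bar\BH(s+t,\bmlambda)\ \ge\ \int\ent\bigl(\pi_{[0,t]}^*\bmlambda(u_{[-s,0]},\cdot)\,\big|\,\pi_{[0,t]}^*\PPPPPP_{u(0)}\bigr)\,\bmlambda(\dd u)\ =:\ \BH^{(s)}(t,\bmlambda),
\]
i.e.\ test functions on $[0,s+t]$ can encode conditioning on a past window of length $s$. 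The quantity $\BH^{(s)}(t,\bmlambda)$ is additive in $t$ by the same chain-rule computation you used for $\BH$, so $\BH^{(s)}(t,\bmlambda)=t\,\BH^{(s)}(1,\bmlambda)$, whence $L\ge\BH^{(s)}(1,\bmlambda)$ for every $s$. Finally, martingale convergence of the regular conditional probabilities $\bmlambda(\,\cdot\mid u_{[-s,0]})\to\bmlambda(\,\cdot\mid u_{(-\infty,0]})$ as $s\to\infty$, together with lower semicontinuity of relative entropy and Fatou, yields $\BH^{(s)}(1,\bmlambda)\to\BH(1,\bmlambda)=\BH(\bmlambda)$. This is the mechanism you should substitute for your last paragraph.
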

    \begin{proof}
    The proof of \eqref{C2-1} and \eqref{C2-2} can be found in Theorems 3.1 and 3.6 in \cite{DV1983}, while \eqref{C3} can be established by using the definition \eqref{C2} combined with Dynkin's monotone-class argument. Thus, we omit the details and turn to \eqref{C4}. Let us denote by $\tilde{\BH}(t,\bmlambda)$ the right hand-side of \eqref{C4}. Utilizing \eqref{C3}, we have 
		\[\bar{\BH}(t,\bmlambda)\ge \tilde\BH(t,\bmlambda).\]
		To prove the above inequality in the other direction, let us take any $\Psi\in\YY_t$ and write
		\[\bar{\Psi}(u):=\log \int_{\DDDD}e^{\Psi}\dd\PPPPPP_u.\]
		Notice that
		$\bar{\Psi}\in C_b(H)$ and $\Psi-\bar{\Psi}\circ\pi_0\in\YY_t.$
		Moreover, we have 
		\begin{align*}
			\int_{\DDDD}e^{\Psi-\bar{\Psi}\circ\pi_0}\dd\PPPPPP_u=e^{-\bar{\Psi}(u)}\int_{\DDDD}e^{\Psi}\dd\PPPPPP_u=1,\qquad\forall u\in H.
		\end{align*}
        Therefore, we see that $\Psi-\bar{\Psi}\circ\pi_0\in\ZZ_t$. This implies
		\begin{align*}
			\int_{\DDDD^{\R}}\Psi\dd\bmlambda&=\int_{\DDDD^{\R}}(\Psi-\bar{\Psi}\circ\pi_0) \dd\bmlambda+\int_{\DDDD^{\R}}\bar{\Psi}\circ\pi_0 \dd\bmlambda\\&\le \tilde{\BH}(t,\bmlambda)+\int_{\DDDD^{\R}}\bar{\Psi}\circ\pi_0 \dd\bmlambda,
		\end{align*}
        and thus
		\begin{align*}
			\int_{\DDDD^{\R}}\left(\Psi-\log \int_{\DDDD}e^{\Psi}\dd\PPPPPP_{u(0)}\right)\dd\bmlambda\le \tilde{\BH}(t,\bmlambda).
		\end{align*}
		Taking supremum in $\Psi\in\YY_t$ and using \eqref{C3}, we obtain \eqref{C4}.      
    \end{proof}
    \begin{proposition}\label{lscDVE}
        $\BH(\bmlambda)$ is lower semi-continuous in $\bmlambda$.
    \end{proposition}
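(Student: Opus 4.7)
The plan is to express $\BH$ as a supremum of functionals that are easier to analyze, using Proposition~\ref{propositionC1}. From $\bar\BH(t,\bmlambda)\le t\BH(\bmlambda)$ and $\lim_{t\to+\infty}\bar\BH(t,\bmlambda)/t=\BH(\bmlambda)$, I obtain
\[\BH(\bmlambda)=\sup_{t>0}\frac{\bar\BH(t,\bmlambda)}{t}.\]
Since a pointwise supremum of lower semi-continuous functions is lower semi-continuous, it suffices to show that $\bmlambda\mapsto\bar\BH(t,\bmlambda)$ is lower semi-continuous on $\PP(\DDDD^{\R})$ for each fixed $t>0$.

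First, I would dispose of the $+\infty$ branch by showing that $\PP_s(\DDDD^{\R})$ is closed in $\PP(\DDDD^{\R})$. Indeed, each shift $\theta_s:\DDDD^{\R}\to\DDDD^{\R}$ is Skorokhod-continuous, so shift-invariance is characterized by the countable family of weakly closed equalities $\langle f\circ\theta_s-f,\bmlambda\rangle=0$ ranging over $s\in\Q_+$ and $f$ in a countable dense subset of $C_b(\DDDD^{\R})$. Hence if $\bmlambda_n\to\bmlambda$ with $\bmlambda\notin\PP_s(\DDDD^{\R})$, the open complement of $\PP_s(\DDDD^{\R})$ contains $\bmlambda_n$ eventually, giving $\BH(\bmlambda_n)=\bar\BH(t,\bmlambda_n)=+\infty$ eventually.

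It remains to prove lower semi-continuity along sequences $\bmlambda_n\to\bmlambda$ inside $\PP_s(\DDDD^{\R})$. Applying the variational formula \eqref{C3},
\[\bar\BH(t,\bmlambda)=\sup_{\Psi\in\YY_t}F_\Psi(\bmlambda),\qquad F_\Psi(\bmlambda):=\int_{\DDDD^{\R}}\left[\Psi(u_{[0,t]})-\bar\Psi(u(0))\right]\bmlambda(\dd u_{(-\infty,\infty)}),\]
where $\bar\Psi(u):=\log\int_{\DDDD}e^{\Psi}\,\dd\PPPPPP_u$ belongs to $C_b(H)$ by the Feller property (Proposition~2.4.7 in \cite{KS12}). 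A direct continuity argument fails because the pointwise evaluations $u_{(-\infty,\infty)}\mapsto u(t_j)$ built into $\Psi$ and $\bar\Psi\circ\pi_0$ are not Skorokhod-continuous at jump times of $u$. To bypass this, I would exploit shift-invariance to time-average: for any $s>0$ and any $\bmlambda\in\PP_s(\DDDD^{\R})$,
\[F_\Psi(\bmlambda)=\int_{\DDDD^{\R}}\frac{1}{s}\int_0^s\left[\Psi(u_{[r,r+t]})-\bar\Psi(u(r))\right]\dd r\,\bmlambda(\dd u_{(-\infty,\infty)}).\]
The time-averaged integrand is bounded, and it is in fact Skorokhod-continuous on $\DDDD^{\R}$: under Skorokhod convergence $u_n\to u$, one has $u_n(r)\to u(r)$ at every continuity point of $u$, i.e.\ at a.e.\ $r\in\R$, so dominated convergence yields the convergence of the $\dd r$-integrals. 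Thus $F_\Psi$ is weakly continuous on $\PP_s(\DDDD^{\R})$, whence $\bar\BH(t,\cdot)$ is lower semi-continuous on $\PP_s(\DDDD^{\R})$, completing the proof.

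The main obstacle is the verification of Skorokhod-continuity of the time-averaged integrand, complicated by the joint dependence of $\Psi$ on several simultaneous evaluations $u(r+t_1),\ldots,u(r+t_n)$; this reduces to the classical fact that the discontinuity set of a c\`adl\`ag function is at most countable, but must be applied jointly for all the $t_j$.
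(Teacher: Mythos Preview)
Your argument is correct, but it takes a genuinely different route from the paper. The paper's proof is a two-line citation: it invokes the continuity of $u\mapsto\PPPPPP_u$ (Proposition~2.4.7 in \cite{KS12}) and then appeals directly to Theorem~3.3 in \cite{DV1983}, which establishes lower semi-continuity of the Donsker--Varadhan entropy under precisely that hypothesis. By contrast, you give a self-contained proof: you represent $\BH$ as $\sup_{t>0}t^{-1}\bar\BH(t,\cdot)$ via Proposition~\ref{propositionC1}, reduce to lower semi-continuity of each $\bar\BH(t,\cdot)$, handle the $+\infty$ branch by showing $\PP_s(\DDDD^{\R})$ is weakly closed, and then use the variational formula \eqref{C3} together with a time-averaging trick to turn each $F_\Psi$ into the integral of a bounded Skorokhod-continuous function. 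The key observation that the shift $\theta_s$ is continuous on $\DDDD^{\R}$ (unlike on $\DDDD^{[0,T]}$, where endpoint constraints on the time-change obstruct this) is correct, and so is the argument that time-averaging washes out the discontinuities of pointwise evaluation. What your approach buys is independence from the black box in \cite{DV1983} and a transparent mechanism for the lower semi-continuity; what the paper's approach buys is brevity and the avoidance of the Skorokhod-topology subtleties you flag at the end. One small refinement: when $\bmlambda\in\PP_s(\DDDD^{\R})$ but some $\bmlambda_n\notin\PP_s(\DDDD^{\R})$, you should note explicitly that those terms contribute $+\infty$ and can be discarded when computing the $\liminf$, so that one may indeed reduce to the case where all $\bmlambda_n$ lie in $\PP_s(\DDDD^{\R})$.
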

    \begin{proof}
        By Proposition 2.4.7 in \cite{KS12}, the map $u\mapsto \PPPPPP_u$ is continuous. Combining this and Theorem 3.3 in \cite{DV1983}, we have the lower semi-continuity of $\BH(\bmlambda)$.
    \end{proof}
    The main result of this section is the following LD upper bound.
    \begin{proposition}
		\label{propositionC4}
		Let $F\subset\PP_s(\DDDD^\R)$ be a closed set such that the family $\{\pi_0^*\bmlambda|\bmlambda\in F\}$ is tight in $\PP(H)$. Then, 
		\begin{align}
			\label{C9}
			\limsup_{t\to+\infty}\frac{1}{t}\log\left(\sup_{u\in H}\PPPPPP_{u}\{\bmzeta^{\per}_t\in F\}\right)\le-\inf_{\bmlambda\in F}\BH(\bmlambda).
		\end{align}
	\end{proposition}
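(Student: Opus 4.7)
Set $\alpha := \inf_{\bmlambda \in F}\BH(\bmlambda)$. Assume $\alpha > 0$ (otherwise the inequality is trivial), fix any $\alpha' \in (0,\alpha)$, and aim for $\limsup_{t\to\infty}\frac{1}{t}\log\sup_{u\in H}\PPPPPP_u\{\bmzeta^{\per}_t\in F\} \le -\alpha'$; letting $\alpha' \nearrow \alpha$ will conclude. The strategy combines the variational characterization of $\BH$ from Proposition~\ref{propositionC1} with an exponential Chebyshev bound driven by the supermartingale property of elements of $\ZZ_T$, and the compactness of the level sets of $\BH$ from Proposition~\ref{proposition4-2} to extract a finite subcover. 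For each $\bmlambda \in F$, the convergence $\bar\BH(T,\bmlambda)/T \to \BH(\bmlambda)$ in~\eqref{C2-2} supplies $T_\bmlambda > 0$ with $\bar\BH(T_\bmlambda,\bmlambda) > \alpha' T_\bmlambda$, and~\eqref{C4} then produces a cylindric $\Psi_\bmlambda \in \ZZ_{T_\bmlambda}$ with $\int \Psi_\bmlambda\,d\bmlambda > \alpha' T_\bmlambda$; since $\Psi_\bmlambda \in \YY_{T_\bmlambda}$ is bounded, continuous and cylindric, $O_\bmlambda := \{\bm{\mu} : \int \Psi_\bmlambda\,d\bm{\mu} > \alpha' T_\bmlambda\}$ is a weakly open neighborhood of $\bmlambda$, and $\{O_\bmlambda\}_{\bmlambda\in F}$ forms an open cover of $F$.

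The core estimate is
\[
\sup_{u\in H}\PPPPPP_u\{\bmzeta^{\per}_t \in O_\bmlambda\} \le e^{c\|\Psi_\bmlambda\|_\infty}\,e^{-\alpha' t}.
\]
Rewriting the event as $\{(1/T_\bmlambda)\int_0^t \Psi_\bmlambda(\theta_s u_{\per,t})\,ds > \alpha' t\}$, exponential Chebyshev reduces the task to bounding $\E_u\exp\bigl((1/T_\bmlambda)\int_0^t \Psi_\bmlambda(\theta_s u_{\per,t})\,ds\bigr)$. With $N := \lfloor t/T_\bmlambda\rfloor$ and the change of variable $s = kT_\bmlambda + r$, the integral becomes $\int_0^{T_\bmlambda}\sum_{k=0}^{N-1}\Psi_\bmlambda(\theta_{kT_\bmlambda+r}u_{\per,t})\,dr$ plus a remainder of size $O(\|\Psi_\bmlambda\|_\infty)$. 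Jensen's inequality in the form $\exp\bigl(\int f\,dr/T\bigr) \le \int e^f\,dr/T$ transfers the continuous-time average to a discrete-time product, and iterated conditioning handles each inner expectation: for $k \le N-2$ the shifted trajectory $\theta_{kT_\bmlambda+r}u_{\per,t}|_{[0,T_\bmlambda]}$ coincides with the genuine Markov solution $u|_{[kT_\bmlambda+r,(k+1)T_\bmlambda+r]}$, so the defining property $\int e^{\Psi_\bmlambda}\,d\PPPPPP_v \le 1$ of $\Psi_\bmlambda \in \ZZ_{T_\bmlambda}$ telescopes the product of $N-1$ factors to at most~$1$, while the single wrap-around factor is bounded crudely by $e^{\|\Psi_\bmlambda\|_\infty}$.

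The remaining task---and the main obstacle---is to extract from $\{O_\bmlambda\}_{\bmlambda\in F}$ a subcover whose Chebyshev constants sum sub-exponentially. The lower semicontinuity of $\BH$ (Proposition~\ref{lscDVE}) together with Proposition~\ref{proposition4-2} makes $\{\BH \le M\}$ compact in $\PP(\DDDD^\R)$ for every $M > 0$, so $F_M := F \cap \{\BH \le M\}$ is compact, and for $M > \alpha'$ a finite subcover $O_{\bmlambda_1},\ldots,O_{\bmlambda_n}$ of $F_M$ exists. Any point of $F \setminus \bigcup_i O_{\bmlambda_i}$ must satisfy $\BH > M$, so the residual probability is concentrated on $F \cap \{\BH > M\}$; I would control this tail by a dyadic decomposition into the compact shells $F \cap \{2^{j-1}M < \BH \le 2^j M\}$, covering each by a finite family of $O$-sets with threshold $\alpha_j \asymp 2^{j-1}M$, and summing the resulting geometric series---choosing $M$ large enough that $M/2 > \alpha'$ makes the tail negligible compared to $e^{-\alpha' t}$. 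The delicate point, where the hypothesis that $\{\pi_0^*\bmlambda : \bmlambda \in F\}$ is tight actually enters, is in controlling how the cover sizes $n_j$ and the sup norms $\|\Psi_{\bmlambda_j}\|_\infty$ grow across dyadic levels; the cylinder functions $\Psi_\bmlambda$ produced by the variational step must be chosen quantitatively so that the dyadic sum converges. Combining these bounds and sending $\alpha' \nearrow \alpha$ completes the proof.
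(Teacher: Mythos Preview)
Your exponential Chebyshev bound---Jensen plus the supermartingale property of $\Psi\in\ZZ_T$---is correct and reproduces Lemma~\ref{lemmaC5}; the finite-subcover argument for a \emph{compact} subset of $\PP_s(\DDDD^\R)$ is Lemma~\ref{lemmaC6}. So the first two ingredients of your plan are sound and match the paper's auxiliary lemmas.

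The gap is in the passage from compact subsets to the closed set $F$. Your dyadic decomposition into shells $F\cap\{2^{j-1}M<\BH\le 2^jM\}$ does not close. To bound the total probability you must control $\sum_{j\ge1}n_j\,e^{c\|\Psi_{j}\|_\infty}\,e^{-2^{j-2}Mt}$ over infinitely many levels, and you give no mechanism by which tightness of $\{\pi_0^*\bmlambda:\bmlambda\in F\}$ controls either the covering numbers $n_j$ or the norms $\|\Psi_j\|_\infty$; these quantities come from the variational formula~\eqref{C4} and bear no a priori relation to tightness of the time-zero marginals. Without such control the series need not converge for any fixed $t$, so the $\limsup$ bound is vacuous. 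The set $F\cap\{\BH=+\infty\}$ is also left uncovered by any shell. (Your appeal to Proposition~\ref{proposition4-2} is not circular, but it does not help with this issue.)

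The paper's route is different and avoids any infinite cover. It uses the tightness hypothesis \emph{together with the continuity of $u\mapsto\PPPPPP_u$} to build, for each $a>0$, auxiliary sets $F_t\subset\PP_s(\DDDD^\R)$ given by constraints $\pi_{[0,1]}^*\bmlambda(\KK_n^c)\le 2/n + 1/t$, where the compacts $\KK_n\subset\DDDD^{[0,1]}$ come from tightness of $\{\PPPPPP_u:u\in K_n\}$ and the $K_n\subset H$ from tightness of $\{\pi_0^*\bmlambda\}$. An explicit Chebyshev computation with the test function $c\,\I_{K_n}(u(0))\I_{\KK_n^c}(u_{[0,1]})$ gives $\sup_u\PPPPPP_u\{\bmzeta_t^{\per}\in F\cap F_t^c\}\lesssim e^{(\log 2-a)t}$. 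The intersection $F\cap F_\infty$ with $F_\infty:=\bigcap_t F_t$ is compact by Lemma~\ref{lemmaC5-0}, and a short contradiction argument shows that any open neighborhood of $F\cap F_\infty$ already contains $F\cap F_{t_0}$ for some finite $t_0$. One then applies Lemma~\ref{lemmaC6} \emph{once}, to the single compact $F\cap F_\infty$, and sends $a\to\infty$. Thus the tightness hypothesis enters not through level sets of $\BH$ but through the upgrade from tightness of $\pi_0^*\bmlambda$ to tightness of $\pi_{[0,1]}^*\bmlambda$, which is precisely what the Markov structure via $\PPPPPP_u$ affords.
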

	The proof is based on the following three auxiliary lemmas.
    \begin{lemma}\label{lemmaC5-0}
        A sequence $\{\bmlambda_n\}_{n\ge 1}\subset\PP_s(\DDDD^\R)$ is tight if and only if $\{\pi_I^*\bmlambda_n\}_{n\ge 1}$ is tight in $\PP(\DDDD^I)$ for some interval $I\subset\R$.
    \end{lemma}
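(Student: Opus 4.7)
The \emph{only if} direction is immediate from the continuous mapping theorem, since the restriction $\pi_I : \DDDD^\R \to \DDDD^I$ is continuous and so maps compact sets to compact sets.

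For the converse, the idea is to use shift-invariance to promote tightness of a single marginal to tightness of all bounded-interval marginals. Without loss of generality, by replacing $I$ with a bounded subinterval (tightness is preserved under continuous restriction) and translating, we may assume $I = [0,T]$. Given any bounded interval $J \subset \R$, choose $N \in \N$ with $J \subset [-NT, NT]$ and partition $[-NT, NT]$ into the $2N$ translates $I_k := [kT, (k+1)T]$, $k = -N, \ldots, N-1$. Since each $\bmlambda_n \in \PP_s(\DDDD^\R)$ is shift-invariant, the push-forward $\pi_{I_k}^* \bmlambda_n$ coincides (under the natural identification of $I_k$ with $I$) with $\pi_I^* \bmlambda_n$, and hence the family $\{\pi_{I_k}^* \bmlambda_n\}_{n \ge 1}$ is tight in $\PP(\DDDD^{I_k})$ for every $k$. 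Given $\epsilon > 0$, Prokhorov's theorem yields compact sets $K_k \subset \DDDD^{I_k}$ with $\pi_{I_k}^* \bmlambda_n(K_k^c) < \epsilon/(2N)$ uniformly in $n$, and a union bound produces
\[
\bmlambda_n \bigl\{u_{(-\infty,\infty)} : u|_{I_k} \in K_k \text{ for all } -N \le k < N\bigr\} > 1 - \epsilon.
\]

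The set of $v \in \DDDD^{[-NT, NT]}$ whose restriction to each $I_k$ lies in $K_k$ is closed (an intersection of preimages of closed sets under the continuous restriction maps) and relatively compact in $\DDDD^{[-NT, NT]}$ by a gluing argument for the Skorokhod $J_1$ topology: the oscillation modulus of a c\`adl\`ag path on a union of adjacent intervals is controlled by the moduli on the pieces, up to a uniformly bounded contribution at the joining points. Hence $\pi_{[-NT,NT]}^* \bmlambda_n$ is tight, and a further restriction shows that $\pi_J^* \bmlambda_n$ is tight for every bounded interval $J \subset \R$. Since the topology on $\DDDD^\R$ is the projective limit of the topologies on $\DDDD^{[-N,N]}$ — a subset being relatively compact if and only if each of its restrictions to bounded intervals is — another application of Prokhorov's theorem yields tightness of $\{\bmlambda_n\}$ in $\PP(\DDDD^\R)$.

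The main technical point is the gluing step: showing that simultaneous relative compactness of restrictions on a finite partition of $[-NT, NT]$ implies relative compactness on the whole interval. This is essentially standard for the Skorokhod topology once the pieces are fixed and overlap (or meet) at finitely many points, because the total oscillation modulus decomposes into contributions from the individual pieces plus a harmless finite jump at each joining point, which is already controlled by the compactness of each $K_k$.
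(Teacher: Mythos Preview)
Your overall strategy—use shift-invariance to propagate tightness from $I$ to each translate $I_k$, glue via the oscillation modulus, then lift to $\DDDD^\R$—is exactly the paper's. The gap is in the execution: you repeatedly invoke continuity of the restriction map, and for the Skorokhod $J_1$ topology this is \emph{false}. The map $\pi_J:\DDDD^\R\to\DDDD^J$ (or between nested bounded intervals) fails to be continuous at any path with a jump at an endpoint of $J$; e.g.\ $x_n=\I_{[1+1/n,\infty)}\to \I_{[1,\infty)}$ in $\DDDD^{[0,\infty)}$, but the restrictions to $[0,1]$ do not converge. This undermines your ``only if'' direction, the reduction to bounded $I$, the closedness of the glued set, and the claim that $\DDDD^\R$ carries the projective-limit topology of the $\DDDD^{[-N,N]}$.

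The paper avoids all of this by working directly with Billingsley's tightness criteria (Theorems~13.2 and~16.8): a family in $\PP(\DDDD^\R)$ is tight iff for each $m$ the sup-norm on $[-m,m]$ and the modulus $w'_m(\cdot,\delta)$ are uniformly controlled. Tightness of $\{\pi_{[0,1]}^*\bmlambda_n\}$ gives control of $w'_{[0,1]}$ by Theorem~13.2, shift-invariance transfers this to each $w'_{[j-1,j]}$, and the gluing step is the elementary inequality
\[
w'_m(u,\delta)\ \le\ \max_{-m<j\le m} w'_{[j-1,j]}(u,\delta),
\]
obtained by concatenating $\delta$-good partitions of the unit intervals—precisely your ``gluing argument'' made explicit. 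Your proof becomes correct once the continuous-mapping appeals are replaced by these modulus estimates; note also that for the glued set you only need relative compactness (plus measurability), not closedness.
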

	\begin{lemma}\label{lemmaC5}
		For any $A\subset\PP_s(\DDDD^\R)$, we have
		\begin{align}
			\label{C10}
			\limsup_{t\to+\infty}\frac{1}{t}\log\left(\sup_{u\in H}\PPPPPP_{u}\{\bmzeta^{\per}_t\in A\}\right)\le-\sup_{t>0}\sup_{\Psi\in \ZZ_t}\inf_{\bmlambda\in A}\frac{1}{t}\int_{\DDDD^{\R}}\Psi\dd\bmlambda.
		\end{align}
	\end{lemma}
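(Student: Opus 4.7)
The plan is to apply an exponential Chebyshev inequality with test functions $\Psi\in\ZZ_T$, and to control the resulting exponential moment by a discrete-time supermartingale argument on $T$-spaced Markov slices combined with a Jensen averaging in the offset. The principal technical subtlety is the wrap-around caused by the periodization: for $s\in(t-T,t)$ the shifted trajectory $\theta_s u_{\per,t}$ mixes values of $u$ near time $t$ with values near time $0$, so the Markov structure is lost on this boundary layer. I plan to absorb the boundary contribution into a harmless additive constant by exploiting the uniform boundedness of $\Psi\in\YY_T$.

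Fix $T>0$ and $\Psi\in\ZZ_T$. Since on $\{\bmzeta^{\per}_t\in A\}$ one has $\langle\Psi,\bmzeta^{\per}_t\rangle=\tfrac{1}{t}\int_0^t\Psi(\theta_s u_{\per,t})\dd s\ge m:=\inf_{\bmlambda\in A}\int_{\DDDD^\R}\Psi\dd\bmlambda$, the Chebyshev inequality at exponential rate $1/T$ yields
\[
\PPPPPP_u\{\bmzeta^{\per}_t\in A\}\le e^{-tm/T}\,\E_u\exp\Big(\tfrac{1}{T}\int_0^t\Psi(\theta_s u_{\per,t})\dd s\Big).
\]
Setting $n:=\lfloor t/T\rfloor-1$, so that $nT\le t-T$ and $t-nT\le 2T$, and using the identity $\theta_s u_{\per,t}|_{[0,T]}=u|_{[s,s+T]}$ for $s\in[0,t-T]$ (no wrap-around) together with $|\Psi|\le\|\Psi\|_\infty<\infty$, I would then estimate
\[
\int_0^t\Psi(\theta_s u_{\per,t})\dd s\le\int_0^{nT}\Psi(u|_{[s,s+T]})\dd s+2T\|\Psi\|_\infty.
\]

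For each $\sigma\in[0,T]$, the consecutive slices $Y_{j,\sigma}:=u|_{[jT+\sigma,(j+1)T+\sigma]}$ satisfy, by the Markov property combined with the defining property \eqref{C13} of $\ZZ_T$,
\[
\E_u\bigl[e^{\Psi(Y_{j,\sigma})}\bigm|\mathscr{F}_{jT+\sigma}\bigr]=\int_{\DDDD}e^\Psi\,\dd\PPPPPP_{u_{jT+\sigma}}\le 1,
\]
so $(\exp(\sum_{i=0}^{j-1}\Psi(Y_{i,\sigma})))_{j\ge 0}$ is a non-negative supermartingale and $\E_u\exp(\sum_{j=0}^{n-1}\Psi(Y_{j,\sigma}))\le 1$ uniformly in $u$ and $\sigma$. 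Since $\int_0^{nT}\Psi(u|_{[s,s+T]})\dd s=\int_0^T\sum_{j=0}^{n-1}\Psi(Y_{j,\sigma})\dd\sigma$ by Fubini, Jensen's inequality applied to the probability measure $\dd\sigma/T$ on $[0,T]$ gives
\[
\E_u\exp\Big(\tfrac{1}{T}\int_0^{nT}\Psi(u|_{[s,s+T]})\dd s\Big)\le\tfrac{1}{T}\int_0^T\E_u\exp\Bigl(\sum_{j=0}^{n-1}\Psi(Y_{j,\sigma})\Bigr)\dd\sigma\le 1,
\]
whence $\sup_u\E_u\exp(\tfrac{1}{T}\int_0^t\Psi(\theta_s u_{\per,t})\dd s)\le e^{2\|\Psi\|_\infty}$.

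Dividing by $t$ and passing to $\limsup_{t\to+\infty}$ then yields
\[
\limsup_{t\to+\infty}\frac{1}{t}\log\sup_{u\in H}\PPPPPP_u\{\bmzeta^{\per}_t\in A\}\le-\frac{1}{T}\inf_{\bmlambda\in A}\int_{\DDDD^\R}\Psi\dd\bmlambda,
\]
and taking the supremum over $T>0$ and $\Psi\in\ZZ_T$ produces \eqref{C10}. The hardest point is the handling of the periodization wrap-around: a naive supermartingale argument on the full periodic trajectory (a \emph{chain on a circle}) lacks a useful Markov structure at the join point, and one cannot simply iterate $\int e^\Psi\dd\PPPPPP_v\le 1$ directly. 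The proposed workaround peels off the $O(T)$ boundary contribution via uniform boundedness of $\Psi\in\YY_T$, leaving only the genuinely Markov portion of the integral, which is then controlled by the $T$-spaced supermartingale together with the Jensen averaging in the offset $\sigma$ needed to convert the continuous-time occupation integral into sums of Markov slices.
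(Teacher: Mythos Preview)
Your proposal is correct and follows essentially the same approach as the paper: Chebyshev at exponential rate $1/T$, a boundary peel-off of size $O(T\|\Psi\|_\infty)$ to remove the periodization wrap-around, Jensen in the offset $\sigma\in[0,T]$ to convert the occupation integral into discrete $T$-slices, and the Markov property together with \eqref{C13} to iterate $\E_v e^\Psi\le 1$ along the slices.

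The only notable difference is organizational. The paper first isolates the key exponential bound as a self-contained inequality (namely \eqref{C11}: $\E_u\exp(\tfrac{1}{t}\int_0^s\Psi(\theta_r u_{[0,t]})\,\dd r)\le 1$ for all $s\ge 0$) and only afterwards compares with the periodized trajectory; you instead peel off the wrap-around first and run the supermartingale argument directly on the remaining piece. Both routes are equivalent here, but the paper's intermediate inequality \eqref{C11} is reused later (in the proof of Proposition~\ref{propositionC4}), which is why it is stated separately. A minor point: your definition $n=\lfloor t/T\rfloor-1$ tacitly assumes $t\ge T$, which is harmless since only $t\to+\infty$ matters.
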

	\begin{lemma}\label{lemmaC6}
		Let $A\subset\PP_s(\DDDD^\R)$ be a compact set. Then, for any $\epsilon>0$, there are open sets $A_1,A_2\ldots,A_l$ such that $A\subset\mcup_{i=1}^l A_i$ and
		\begin{align}\label{C16}
			-\inf_{1\le i\le l}\sup_{t>0}\sup_{\Psi\in\ZZ_t}\inf_{\bmlambda\in A_i}\frac{1}{t}\int_{\DDDD^{\R}}\Psi\dd\bmlambda\le-\inf_{\bmlambda\in A}\BH(\bmlambda)+\epsilon.
		\end{align}
		In particular, 
		\begin{align}\label{C17}
			\limsup_{t\to+\infty}\frac{1}{t}\log\left(\sup_{u\in H}\PPPPPP_{u}\{\bmzeta^{\per}_t\in A\}\right)\le-\inf_{\bmlambda\in A}\BH(\bmlambda).
		\end{align}
	\end{lemma}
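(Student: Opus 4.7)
The plan is to combine the variational identity furnished by Proposition~\ref{propositionC1} with a standard open-cover argument built on compactness of $A$. From \eqref{C2-1}, the map $t\mapsto \bar\BH(t,\bmlambda)/t$ is bounded above by $\BH(\bmlambda)$, and by \eqref{C2-2} it converges to $\BH(\bmlambda)$ as $t\to+\infty$; together with \eqref{C4} this yields
\[
\sup_{t>0}\sup_{\Psi\in\ZZ_t}\frac{1}{t}\int_{\DDDD^\R}\Psi\,\dd\bmlambda=\sup_{t>0}\frac{\bar\BH(t,\bmlambda)}{t}=\BH(\bmlambda),\qquad\forall\bmlambda\in\PP_s(\DDDD^\R).
\]
Hence, for each $\bmlambda\in A$, I can pick $t_\bmlambda>0$ and $\Psi_\bmlambda\in\ZZ_{t_\bmlambda}$ such that
\[
\frac{1}{t_\bmlambda}\int_{\DDDD^\R}\Psi_\bmlambda\,\dd\bmlambda>\bigl(\BH(\bmlambda)\wedge(2/\epsilon)\bigr)-\frac{\epsilon}{2},
\]
the truncation by $2/\epsilon$ handling the possibility $\BH(\bmlambda)=+\infty$ (in which case the right-hand side is $\ge 1/\epsilon$ and exceeds any finite value of $\inf_A\BH$).

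The second step is to upgrade this pointwise bound to one uniform on an open neighborhood of $\bmlambda$. Each $\Psi_\bmlambda\in\ZZ_{t_\bmlambda}\subset\YY_{t_\bmlambda}$ has the cylindrical form $V(u(s_1),\ldots,u(s_n))$ with $V\in C_b(H^n)$ and $0\le s_1<\cdots<s_n\le t_\bmlambda$. The subtlety is that the projections $\pi_{s_i}:\DDDD^\R\to H$ need not be continuous in the Skorokhod topology. Here the shift-invariance of $\bmlambda$ is decisive: the set
\[
D_\bmlambda:=\bigl\{t\in\R:\bmlambda(\{u\in\DDDD^\R:u(t)\ne u(t^-)\})>0\bigr\}
\]
is translation-invariant, hence $D_\bmlambda\in\{\emptyset,\R\}$; combined with the identity $\int_0^1\bmlambda(\{u:u(t)\ne u(t^-)\})\,\dd t=0$ (by Fubini applied to the countability of the jump set of a càdlàg path), this forces $D_\bmlambda=\emptyset$. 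Consequently every $\pi_{s_i}$ is $\bmlambda$-almost surely continuous, so by the portmanteau theorem the functional $\bmnu\mapsto\int_{\DDDD^\R}\Psi_\bmlambda\,\dd\bmnu$ is sequentially, and hence (by metrizability of $\PP(\DDDD^\R)$) topologically continuous at $\bmlambda$. I therefore obtain an open set $A_\bmlambda\subset\PP(\DDDD^\R)$ containing $\bmlambda$ on which
\[
\frac{1}{t_\bmlambda}\int_{\DDDD^\R}\Psi_\bmlambda\,\dd\bmnu>\frac{1}{t_\bmlambda}\int_{\DDDD^\R}\Psi_\bmlambda\,\dd\bmlambda-\frac{\epsilon}{2}\ge\inf_{\bmsigma\in A}\BH(\bmsigma)-\epsilon.
\]
Compactness of $A$ now delivers a finite subcover $A\subset\mcup_{i=1}^l A_{\bmlambda_i}=:\mcup_{i=1}^l A_i$; for each $i$,
\[
\sup_{t>0}\sup_{\Psi\in\ZZ_t}\inf_{\bmnu\in A_i}\frac{1}{t}\int_{\DDDD^\R}\Psi\,\dd\bmnu\ge\inf_{\bmnu\in A_i}\frac{1}{t_{\bmlambda_i}}\int_{\DDDD^\R}\Psi_{\bmlambda_i}\,\dd\bmnu\ge\inf_{\bmsigma\in A}\BH(\bmsigma)-\epsilon,
\]
and infimizing in $i$ and negating yields \eqref{C16}.

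For the ``in particular'' assertion \eqref{C17}, I would apply Lemma~\ref{lemmaC5} to each open set $A_i$ and combine it with the union bound $\PPPPPP_u\{\bmzeta^{\per}_t\in A\}\le\sum_{i=1}^l\PPPPPP_u\{\bmzeta^{\per}_t\in A_i\}$. Since the exponential rate of a finite sum of positive quantities equals the maximum of the individual rates, this produces
\[
\limsup_{t\to+\infty}\frac{1}{t}\log\Bigl(\sup_{u\in H}\PPPPPP_u\{\bmzeta^{\per}_t\in A\}\Bigr)\le-\inf_{1\le i\le l}\sup_{t>0}\sup_{\Psi\in\ZZ_t}\inf_{\bmnu\in A_i}\frac{1}{t}\int_{\DDDD^\R}\Psi\,\dd\bmnu,
\]
whose right-hand side is bounded above by $-\inf_A\BH+\epsilon$ by \eqref{C16}. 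Letting $\epsilon\to0^+$ delivers \eqref{C17}.

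The main obstacle, in my view, lies in the second paragraph: justifying continuity of the cylindrical functional $\bmnu\mapsto\int\Psi_\bmlambda\,\dd\bmnu$ on the Skorokhod space at the distinguished point $\bmlambda\in\PP_s(\DDDD^\R)$. The elegant resolution is the shift-invariance argument forcing $D_\bmlambda=\emptyset$; without it one would have to perturb the times $s_1,\ldots,s_n$ to continuity points of $\bmlambda$ and then show that restricting the supremum in \eqref{C4} to such times does not lose anything, which is considerably more delicate. The combinatorial content of the third paragraph, by contrast, is routine and comes down to the ``max of finitely many exponentials'' principle.
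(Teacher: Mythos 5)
Your proof is correct and follows the paper's own route: approximate $\BH(\bmlambda)$ by $t_{\bmlambda}^{-1}\int\Psi_{\bmlambda}\,\dd\bmlambda$ using \eqref{C2-2} and \eqref{C4}, use continuity of the cylindrical functional to produce an open neighborhood on which the estimate persists, extract a finite subcover by compactness, and deduce \eqref{C17} from Lemma~\ref{lemmaC5} together with the fact that the exponential rate of a finite sum equals the maximum of the rates. The one step you flesh out beyond what the paper records is the continuity of $\bmnu\mapsto\int_{\DDDD^{\R}}\Psi_{\bmlambda}\,\dd\bmnu$ at the shift-invariant $\bmlambda$ in the Skorokhod topology (the paper simply asserts this), and your observation that stationarity of $\bmlambda$ rules out fixed times of discontinuity, hence makes the discontinuity set of the cylindrical function $\bmlambda$-null, is precisely the right justification.
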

    \begin{proof}[Proof of Lemma~\ref{lemmaC5-0}]
        It suffices to prove the tightness of $\{\bmlambda_n\}_{n\ge 1}$ by using the tightness of $\{\pi_I^*\bmlambda_n\}_{n\ge 1}$ for some interval $I\subset\R$. Without loss of generality, we take $I:=[0,1]$. We need the following result, which follows from Theorem 16.8 in \cite{Bil1999}.
        \begin{lemma}\label{TIGHT1}
            A sequence $\{\bmlambda_n\}_{n\ge1}\subset \PP(\DDDD^\R)$ is tight if and only if the following two conditions hold.
            \begin{enumerate}
                \item For each integer $m\ge 1$, 
                \[\lim_{a\to+\infty}\limsup_{n\to\infty}\bmlambda_n\left\{\uu\in \DDDD^\R\Big|\sup_{|t|\le m}\|\uu(t)\|\ge a\right\}=0.\]
                \item For each integer $m\ge 1$ and $\epsilon>0$,
                \[\lim_{\delta\to 0^+}\limsup_{n\to\infty}\bmlambda_n\left\{\uu\in \DDDD^\R\Big|w'_m(\uu,\delta)\ge\epsilon\right\}=0,\]
                where 
                \[w'_m(\uu,\delta):=\inf\max_{1\le i\le k}\sup_{s,r\in [t_{i-1,}t_i)}\|\uu(s)-\uu(r)\|,\]
                and the infimum is taken over all partitions $-m=t_0<t_1<\ldots<t_k=m$ such that $t_i-t_{i-1}>\delta$ for all $1< i<k$\footnote{The endpoints $i=1,k$ are excluded, due to the definition of the Skorohod topology; see Theorem 16.4 in \cite{Bil1999} and the subsequent discussion.}.
            \end{enumerate}
        \end{lemma}
    Let us prove the tightness of $\{\bmlambda_n\}_{n\ge 1}$ by verifying the above conditions. For simplicity, we confine ourselves to the verification of the second condition, as the other follows similarly. For integer $-m<j\le m$, let us introduce
    \[w'_{[j-1,j]}(\uu,\delta):=\inf\max_{1\le i\le k_j}\sup_{s,r\in [t^j_{l-1,}t^j_l)}\|\uu(s)-\uu(r)\|,\]
    and the infimum is taken over all partitions $j-1=t^j_0<\ldots<t_{k_j}^j=j$ satisfying $t^j_l-t^j_{l-1}>\delta$ for all $1\le l\le {k_j}$. For any $\epsilon'>0$, we take the partition of $[j-1,j]$ such that 
    \begin{align*}
        \max_{1\le i\le k_j}\sup_{s,r\in [t^j_{l-1,}t^j_l)}\|\uu(s)-\uu(r)\|\le w'_{[j-1,j]}(\uu,\delta)+\epsilon',\qquad\forall -m< j\le m.
    \end{align*}
    By taking maximum in $j$ and then sending $\epsilon'$ to $0$, we see that 
    \[w'_m(\uu,\delta)\le \max_{-m<j\le m}w'_{[j-1,j]}(\uu,\delta),\]
    which, together with the translation-invariance of $\{\bmlambda_n\}_{n\ge 1}$, the tightness of $\{\pi_{[0,1]}^*\bmlambda_n\}_{n\ge 1}$, and Theorem 13.2 in \cite{Bil1999}, implies
    \begin{align*}
        \lim_{\delta\to 0^+}\limsup_{n\to\infty}\bmlambda_n&\left\{\uu\in \DDDD^\R\Big|w'_m(\uu,\delta)\ge\epsilon\right\}\\&\le 2m\lim_{\delta\to 0^+}\limsup_{n\to\infty}\bmlambda_n\left\{\uu\in \DDDD^\R\Big|w'_{[0,1]}(\uu,\delta)\ge\epsilon\right\}=0.
    \end{align*}
    This verifies the second condition in Lemma \ref{TIGHT1} and thus completes the proof.
    \end{proof}
	\begin{proof}[Proof of Lemma~\ref{lemmaC5}] Let us take any $t>0$ and $\Psi\in \BB(\mathscr{F}^0_t)$ satisfying $\E_u e^{\Psi}\le 1$ for any $u\in H$. We claim that 
    \begin{align}
		\label{C11}
        \int_{\DDDD}\exp\left(\frac{1}{t}\int_0^s\Psi(\theta_ru_{[0,t]})\dd r\right)\dd\PPPPPP_{u}\le1
    \end{align}
    for any $s\ge0$ and $u\in H$. Indeed, for any $s\in [0,t]$ and $u\in H$, by the Jensen inequality, and the Markov property, we get
    \begin{align*}
		\int_{\DDDD}&\exp\left(\frac{1}{t}\int_0^s\Psi(\theta_ru_{[0,t]})\dd r\right)\dd\PPPPPP_{u}\le \frac{1}{t}\int_0^t\int_{\DDDD}\exp\left(\I_{\{r\le s\}}\Psi(\theta_ru_{[0,t]})\right)\dd\PPPPPP_{u}\dd r\notag\\&\qquad\quad\le \frac{1}{t}\int_0^t\left(\I_{\{r\le s\}}\E_u\exp\left(\Psi(\theta_ru_{[0,t]})\right)+\I_{\{r> s\}}\right)\dd r\notag\\&\qquad\quad=\frac{1}{t}\int_0^t\left(\I_{\{r\le s\}}\E_u\left(\E_{u(r)}\exp\left(\Psi(u_{[0,t]})\right)\right)+\I_{\{r> s\}}\right)\dd r\le 1.
	\end{align*}
    For $s>t$, we write
    \[\bar\Psi(r,u_{[0,t]}):=\sum_{k=0}^{[s/t]-1}\Psi(\theta_{r+kt}u_{[0,t]})+\I_{\{0\le r\le s-[s/t]t\}}\Psi(\theta_{r+[s/t]t}u_{[0,t]}),\]
    so we have 
    \[\int_0^s\Psi(\theta_ru_{[0,t]})\dd r=\int_0^t\bar\Psi(\theta_ru_{[0,t]})\dd r.\]
    Then, an application of the Jensen inequality gives
		\begin{align*}
			\int_{\DDDD}\exp\left(\frac{1}{t}\int_0^s\Psi(\theta_ru_{[0,t]})\dd r\right)\dd\PPPPPP_{u}\le \frac{1}{t} \int_0^t\int_{\DDDD}\exp\left(\bar\Psi(r,u_{[0,t]})\right)\dd\PPPPPP_{u}\dd r,
		\end{align*}
        where by successively applying the Markov property, 
        \begin{align*}
			\int_{\DDDD}\exp\left(\bar\Psi(r,u_{[0,t]})\right)\dd\PPPPPP_{u}&=\E_u\left(\E_u\left(\exp\left(\bar\Psi(r,u_{[0,t]})\right)\Big| \mathscr{F}_{r+[s/t]t}\right)\right)\\&\le\E_u\left(\exp\left(\sum_{k=0}^{[s/t]-1
			}\Psi(\theta_{r+kt}u_{[0,t]})\right)\right)\le\ldots\le 1.
		\end{align*}
        This implies \eqref{C11} as claimed. To conclude the proof, let us notice that 
        \begin{align*}
			\left|\int_{0}^s\Psi(\theta_ru_{\per,s})\mathrm{d}r-\int_0^s\Psi(\theta_ru_{[0,t]})\dd r\right|\le 2t\sup_{\DDDD^{\R}}|\Psi|.
		\end{align*}
        Combining this with \eqref{C11}, we derive
		\begin{align*}
			\int_{\DDDD}\exp\left(\frac{s}{t}\int_{\DDDD^{\R}}\Psi\mathrm{d}\bmzeta^{\per}_s\right)\dd \PPPPPP_u&=\int_{\DDDD}\exp\left(\frac{1}{t}\int_{0}^s\Psi(\theta_ru_{\per,s})\mathrm{d}r\right)\dd \PPPPPP_u\\&\le \exp\left(2\sup_{\DDDD^{\R}}|\Psi|\right)
		\end{align*}
		for any $s>0$ and $u\in H$. Hence, for any $A\subset \PP_s(\DDDD^\R)$, 
		\begin{align*}
			\PPPPPP_{u}\{\bmzeta^{\per}_s\in A\}&\le \exp\left(-\inf_{\bmlambda\in A}\frac{s}{t}\int_{\DDDD^{\R}}\Psi\mathrm{d}\bmlambda\right)\E_u\exp\left(\frac{s}{t}\int_{\DDDD^{\R}}\Psi\mathrm{d}\bmzeta^{\per}_s\right)\\&\le \exp\left(-\inf_{\bmlambda\in A}\frac{s}{t}\int_{\DDDD^{\R}}\Psi\mathrm{d}\bmlambda+2\sup_{\DDDD^{\R}}|\Psi|\right),
		\end{align*}
		which implies
		\begin{align}\label{C15}
			\limsup_{s\to+\infty}\frac{1}{s}\log\left(\sup_{u\in H}\PPPPPP_{u}\{\bmzeta^{\per}_s\in A\}\right)\le-\inf_{\bmlambda\in A}\frac{1}{t}\int_{\DDDD^{\R}}\Psi\dd\bmlambda.
		\end{align} 
		In particular, this estimate holds for any $t>0$ and $\Psi\in \ZZ_t$. By taking infimum in $t,\Psi$ in both sides, we obtain \eqref{C10}.
	\end{proof}
	\begin{proof}[Proof of Lemma~\ref{lemmaC6}]
		Let $\bmlambda_0\in A$. Utilizing \eqref{C2-2} and \eqref{C4}, for any $\epsilon>0$, there is a time $t_0>0$ and a function $\Psi_0\in \ZZ_{t_0}$ such that 
		\[\BH(\bmlambda_0)\le \frac{1}{t_0}\int_{\DDDD^{\R}}\Psi_0\dd\bmlambda_0+\frac{1}{2}\epsilon.\]
        Let us introduce the following continuous functional
		\[\bmlambda\mapsto\frac{1}{t_0}\int_{\DDDD^{\R}}\Psi_0\dd\bmlambda,\qquad \PP_s(\DDDD^\R)\to \R.\]
        Then, there is a neighborhood $A_0$ of $\bmlambda_0$ such that 
		\[\inf_{\bmlambda\in A}\BH(\bmlambda)\le \inf_{\bmlambda\in A_0}\frac{1}{t_0}\int_{\DDDD^{\R}}\Psi_0\dd\bmlambda+\epsilon\le \sup_{t>0}\sup_{\Psi\in\ZZ_t}\inf_{\bmlambda\in A_0}\frac{1}{t}\int_{\DDDD^{\R}}\Psi\dd\bmlambda+\epsilon.\]
		All these neighborhoods form an open cover of the compact set $A$. Hence, we can find open sets $A_1,A_2,\ldots,A_l$ such that $A\subset \mcup_{i=1}^l A_i$ and 
		\[\inf_{\bmlambda\in A}\BH(\bmlambda)\le \sup_{t>0}\sup_{\Psi\in\ZZ_t}\inf_{\bmlambda\in A_i}\frac{1}{t}\int_{\DDDD^{\R}}\Psi\dd\bmlambda+\epsilon\]
		for any $i=1,2,\ldots,l$. This leads to \eqref{C16} as desired.
		
		To prove \eqref{C17}, let us take any $\epsilon>0$, and choose $\{A_i\}_{i=1}^l$ as the open cover of $A$ satisfying \eqref{C16}. Then, by Lemma~\ref{lemmaC5}, we derive
		\begin{align*}
			\limsup_{t\to+\infty}\frac{1}{t}\log\left(\sup_{u\in H}\PPPPPP_{u}\{\bmzeta^{\per}_t\in A\}\right)&\le \sup_{1\le i\le l}\limsup_{t\to+\infty}\frac{1}{t}\log\left(\sup_{u\in H}\PPPPPP_{u}\left\{\bmzeta^{\per}_t\in  A_i\right\}\right)\\&\le -\inf_{1\le i\le l}\sup_{t>0}\sup_{\Psi\in \ZZ_t}\inf_{\bmlambda\in A_i}\frac{1}{t}\int_{\DDDD^{\R}}\Psi\dd\bmlambda\\
			&\le-\inf_{\bmlambda\in A}\BH(\bmlambda)+\epsilon,
		\end{align*} 
		which implies \eqref{C17} by taking $\epsilon\to0^+$. 
	\end{proof}
	Now we are in a position to prove Proposition~\ref{propositionC4}.
	\begin{proof}[Proof of Proposition~\ref{propositionC4}]The proof is divided into three steps.
 
    \noindent\textit{Step 1.} Let us set $F_{\MM}:=\{\pi_0^*\bmlambda|\bmlambda\in F\}$, and take any sequences $\epsilon_n,\tilde\epsilon_n$ such that $\epsilon_n,\tilde\epsilon_n\to0$, as $n\to\infty$.
    By the tightness of $F_{\MM}$, for any $n\ge 1$, there is a compact sets $K_n\subset H$ such that 
	\begin{align}
		\label{C18}\mu(K_n)\ge 1-\epsilon_n
	\end{align}
	for any $\mu\in F_{\MM}$. As $\{\PPPPPP_u\}_{u\in K_n}$ is tight, we can find a compact sets $\KK_{n}\subset \DDDD^{[0,1]}$ such that 
		\begin{align}
			\label{C19}
            \PPPPPP_u(\KK_{n})\ge 1-\tilde\epsilon_n
		\end{align}
		for any $u\in K_n$. Repeating the argument of the derivation of \eqref{ET3}, we see that
		\begin{align*}
			\E_{u}\left(\exp\left(c\I_{K_n}(u(0))\I_{\KK_{n}^c}(u_{[0,1]})\right)\right)\le 1+(e^{c}-1)\tilde\epsilon_n
		\end{align*}
        for any $c>0$ and $u\in H$. Applying \eqref{C11} with
        \[\Psi(u_{[0,1]}):=c\I_{K_n}(u(0))\I_{\KK^c_{n}}(u_{[0,1]})-\log(1+(e^c-1)\tilde\epsilon_n),\]
        we get
		\begin{align}\label{C20}
			\int_{\DDDD}\exp\left(c\int_0^t\I_{K_n}(u(s))\I_{\KK_{n}^c}(u_{[s,s+1]})\dd s\right)\dd\PPPPPP_{u}\le \exp\left(t\log\left(1+(e^c-1)\tilde\epsilon_n\right)\right)
		\end{align}
		for any $t\ge 0$ and $u\in H$. Notice that
		\begin{align*}
			\int_0^t\I_{K_n}(u(s))&\I_{\KK_{n}^c}(u_{[s,s+1]})\dd s\ge \int_0^t\I_{\KK_{n}^c}(u_{[s,s+1]})\dd s-\int_0^t\I_{K_n^c}(u(s))\dd s\\&\ge\int_0^t\I_{\KK_n^c}\circ\pi_{[0,1]}(\theta_su_{\per,t})\dd s-\int_0^t\I_{K_n^c}\circ\pi_0(\theta_su_{\per,t})\dd s-1\\&=t\pi_{[0,1]}^*\bmzeta^{\per}_t(\KK_{n}^c)-t\pi^*_0\bmzeta^{\per}_t(K_n^c)-1.
		\end{align*}
		This, together with \eqref{C20}, implies 
		\begin{align*}
			\mathbb{E}_{u}\exp\left(ct\left(\pi_{[0,1]}^*\bmzeta^{\per}_t(\KK_{n}^c)-\pi^*_0\bmzeta^{\per}_t(K_n^c)-\frac{1}{t}\right)\right) \le \exp\left(t\log\left(1+(e^c-1)\tilde\epsilon_n\right)\right).
		\end{align*}
		Moreover, utilizing \eqref{C18}, we see that \[\pi_0^*\bmzeta^{\per}_t(K_n^c)\le \epsilon_n,\qquad \forall n\ge 1\]
        on the event $\{\bmzeta^{\per}_t\in F\}$. Therefore, 
		\begin{align}\label{C21}
			\exp(t\log&\left(1+(e^c-1)\tilde\epsilon_n\right))\notag\\&\ge \mathbb{E}_{u}\left(\I_{\{\bmzeta^{\per}_t\in F\}}\exp\left(ct\left(\pi_{[0,1]}^*\bmzeta^{\per}_t(\KK_{n}^c)-\epsilon_n-\frac{1}{t}\right)\right)\right)\notag\\&\ge e^{c\epsilon_n t}\mathbb{P}_{u}\left\{\bmzeta^{\per}_t\in F\mcap\left\{\bmlambda\Big|\pi_{[0,1]}^*\bmlambda(\KK^c_{n})> 2\epsilon_n+\frac{1}{t}\right\}\right\}.
		\end{align}
		Let us choose 
        \[\epsilon_n=\frac{1}{n},\qquad 
        \tilde\epsilon_n=e^{-an^2},\qquad c=an^2\] in \eqref{C21}, where $a>0$. Then, 
		\[\mathbb{P}_{u}\left\{\bmzeta^{\per}_t\in F\mcap\left\{\bmlambda\Big|\pi_{[0,1]}^*\bmlambda(\KK^c_{n})> 2\epsilon_n+\frac{1}{t}\right\}\right\}\le e^{t\log2-ant},\]
		which implies 
		\[\mathbb{P}_{u}\left\{\bmzeta^{\per}_t\in F\mcap F^c_t\right\}\le e^{t\log2}\frac{e^{-at}}{1-e^{-at}},\]
		where 
		\[F_t:=\mcap_{n=1}^{\infty}\left\{\bmlambda\in\PP_s(\DDDD^{\R})\Big|\pi_{[0,1]}^*\bmlambda(\KK^c_n)\le \frac{2}{n}+\frac{1}{t}\right\}.\]
		This yields
		\begin{align}
			\label{C22}
			\limsup_{t\to+\infty}\frac{1}{t}\log\left(\sup_{u\in H}\PPPPPP_{u}\left\{\bmzeta^{\per}_t\in F\mcap F^c_t\right\}\right)\le \log 2-a.
		\end{align}
		\noindent\textit{Step 2.} Let us write 
		\[F_\infty:=\mcap_{t>0}F_t.\]
		By the Portmanteau theorem, $F_\infty$ is a closed set. Let $G\subset\PP_s(\DDDD^{\R})$ be an open set such that
		\[F\mcap F_{\infty}\subset G.\]
		We claim that there is $t_0>0$ such that $F\mcap F_{t_0}\subset G$. Indeed, by contradiction, let us assume that for any $k\ge 1$,
		\[(F\mcap F_k)\mcap G^c\neq \emptyset.\]
		For each $k\ge 1$, we choose $\bmlambda_k\in (F\mcap F_k)\mcap G^c$. Then, for any $k,n\ge 1$, 
		\[\pi_{[0,1]}^*\bmlambda_k(\KK^c_n)\le \frac{2}{n}+\frac{1}{k}.\]
		This, together with Lemma~\ref{lemmaC5-0}, implies the tightness of $\{\bmlambda_k\}_{k=1}^{\infty}$. Therefore, we obtain a contradiction, since any limit point of $\{\bmlambda_k\}_{k=1}^{\infty}$ must belong to 
		\[G^c\mcap (F\mcap F_{\infty})=\emptyset.\]
		
		\noindent\textit{Step 3.} To conclude the proof, we note that by the Prokhorov theorem, $F\mcap F_{\infty}$ is a compact set. Then, by applying Lemma~\ref{lemmaC6}, for any $\epsilon>0$, there is an open cover $\{A_i\}_{i=1}^l$ of $F\mcap F_{\infty}$ satisfying \eqref{C16}. As shown in the previous step, we can find $t_0>0$ such that 
		\[F\mcap F_{t_0}\subset \mcup_{i=1}^lA_i.\]
		Therefore, 
		\begin{align*}
			\PPPPPP_{u}\left\{\bmzeta^{\per}_t\in F\right\}&\le \PPPPPP_{u}\left\{\bmzeta^{\per}_t\in F\mcap F_{t_0}\right\}+\PPPPPP_{u}\left\{\bmzeta^{\per}_t\in F\mcap F_{t_0}^c\right\}\\&\le \sum_{i=1}^l\PPPPPP_{u}\left\{\bmzeta^{\per}_t\in A_i\right\}+\PPPPPP_{u}\left\{\bmzeta^{\per}_t\in F\mcap F_{t}^c\right\}
		\end{align*}
		for any $t\ge t_0$ and $u\in H$. This, together with \eqref{C10}, \eqref{C16}, and \eqref{C22}, implies 
		\begin{align*}
			\limsup_{t\to+\infty}\frac{1}{t}\log&\left(\sup_{u\in H}\PPPPPP_{u}\left\{\bmzeta^{\per}_t\in F\right\}\right)\\&\le \max\left\{-\inf_{1\le i\le l}\sup_{t>0}\sup_{\Psi\in\ZZ_t}\inf_{\bmlambda\in A_i}\frac{1}{t}\int_{\DDDD^{\R}}\Psi\dd\bmlambda,\log2-a\right\}\\&\le \max\left\{-\inf_{\bmlambda\in F}\BH(\bmlambda)+\epsilon,\log 2-a\right\}.
		\end{align*}
		As $F$ is independent of $a$, by first choosing $a$ sufficiently large and then taking $\epsilon\to0^+$, we obtain \eqref{C9}.
	\end{proof}
	
	\section{An improved version of Kifer's criterion}
	In this section, we provide an improvement of the Kifer-type criterion introduced in Section 3 in \cite{JNPS18} by weakening the exponential growth condition (3.4) therein to the exponential tightness along any subsequence.
 
    To formulate the result, we recall the following basic settings. Let $\XXXX$ be a Polish space, and let $\PP(\XXXX)$ denote the space of all Borel probability measures on $\XXXX$ endowed with the topology of weak convergence. Consider a family of random probability measures $\{\zeta_{\theta}\}_{\theta\in\Theta}$ on $\XXXX$ that are defined on some probability spaces $(\Omega_{\theta},\mathscr{F}_{\theta},\PPPPPP_{\theta})$, where $\Theta:=(\Theta,\prec)$ is a directed set possessing a cofinal sequence. In what follows, the dependence of the probability spaces on $\theta$ shall be omitted, which will not lead to any confusion. Let $r(\theta)$ be a positive function satisfying $\lim_{\theta\in\Theta}r(\theta)=+\infty$. Let us define the pressure function
	\begin{align}\label{kifer1}
		Q(V):=\lim_{\theta\in\Theta}\frac{1}{r(\theta)}\log\int_{\Omega}\exp(r(\theta)\langle V,\zeta_{\theta}\rangle)\dd\PPPPPP,\qquad V\in C_b(\XXXX),
	\end{align}
    if the above limit exists, and introduce the Legendre transform of $Q$:
	\begin{align}\label{kifer2}I(\lambda):=\sup_{V\in C_b(\XXXX)}(\langle V,\lambda\rangle-Q(V)),\end{align}
	where $\lambda\in\PP(\XXXX)$. A measure $\sigma_V\in \PP(\XXXX)$ satisfying the relation 
	\[Q(V)=\langle V,\sigma_V\rangle-I(\sigma_V)\]
	is called an equilibrium state for $V$. The basic properties of $Q$ and $I$ can be found in \cite{JNPS18}.
	\begin{proposition}\label{proposition-kifer}
		Suppose that the following two conditions holds for $\{\zeta_{\theta}\}_{\theta\in\Theta}$.
		\begin{enumerate}
			\item \textbf{Exponential tightness along any subsequence.} For any subsequence $\{\theta_n\}_{n\ge 1}$ such that $r(\theta_n)\to+\infty$, as $n\to+\infty$, $\{\zeta_{\theta_n}\}_{n\ge 1}$ is exponentially tight, that is, for any $l>0$, there is a compact set $\KK_l\subset \PP(\XXXX)$ such that 
			\begin{align}\label{kifer2-1}
				\limsup_{n\to+\infty}\frac{1}{r(\theta_n)}\log\PPPPPP\{\zeta_{\theta_n}\in \KK_l^c\}\le -l.
			\end{align}
			\item \textbf{Existence of pressure.} The limit \eqref{kifer1} exists for any $V\in C_b(\XXXX)$.
		\end{enumerate}
		Then, the relation \eqref{kifer2} defines a good rate function, and the following upper bound holds:  for any closed set $F\subset\PP(\XXXX)$,
		\begin{align}
			\label{kifer3}
			\limsup_{\theta\in\Theta}\frac{1}{r(\theta)}\log\PPPPPP\{\zeta_{\theta}\in F\}\le-\inf_{\lambda\in F}I(\lambda).
		\end{align}
		Furthermore, we have the following lower bound: for any open set $G\subset\PP(\XXXX)$,
		\begin{align}
			\label{kifer4}\liminf_{\theta\in\Theta}\frac{1}{r(\theta)}\log\PPPPPP\{\zeta_{\theta}\in G\}\ge-\inf_{\lambda\in G}I(\lambda),
		\end{align}
		provided that the following condition is also satisfied for $\{\zeta_\theta\}_{\theta\in\Theta}$.
		\begin{enumerate}[start=3]
			\item \textbf{Uniqueness of equilibrium.} There exists a vector space $\VV\subset C_b(\XXXX)$ such that the restrictions of the functions in $\VV$ on any compact subset $K\subset \XXXX$ form a dense subspace of $C(K)$, and that, for each $V\in\VV$, there is a unique\footnote{Let us notice that if $I$ is a good rate function, then the set of equilibrium states for any $V\in C_b(\XXXX)$ is non-empty.} equilibrium state.
		\end{enumerate}
	\end{proposition}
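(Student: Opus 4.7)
The plan is to adapt the proof of Kifer's criterion (Theorem 3.3 in \cite{JNPS18}), replacing the uniform exponential tightness produced there by the exponential growth condition~(3.4) with a subsequence argument driven by condition~(1). From the existence of the pressure in condition~(2), the Chebyshev inequality yields, for each $V\in C_b(\XXXX)$ and each $a\in\R$, the elementary local bound
\[
\limsup_{\theta\in\Theta}\frac{1}{r(\theta)}\log \PPPPPP\bigl\{\langle V,\zeta_\theta\rangle>a\bigr\}\le Q(V)-a.
\]
Combining finitely many such estimates with $C_b(\XXXX)$-separation via a standard finite-covering argument gives the compact-set upper bound
\[
\limsup_{\theta\in\Theta}\frac{1}{r(\theta)}\log \PPPPPP\{\zeta_\theta\in \KK\}\le -\inf_{\lambda\in \KK}I(\lambda)
\]
for every compact $\KK\subset\PP(\XXXX)$, and this step uses neither exponential growth nor exponential tightness.

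The crux is to promote this into the closed-set upper bound \eqref{kifer3} and to establish the goodness of $I$. The idea is to import the lower bound estimate for $I$ established as (3.6) in \cite{CLXZ2024}, which, in the present framework, takes the form
\[
\inf_{\lambda\in G}I(\lambda)\le -\lim_{n\to+\infty}\frac{1}{r(\theta_n)}\log\PPPPPP\{\zeta_{\theta_n}\in G\}
\]
for every cofinal subsequence $\{\theta_n\}$ along which the limit exists for some open set $G\subset\PP(\XXXX)$. Given $\alpha>0$ and $l>\alpha$, a diagonal procedure combined with condition~(1) produces a cofinal subsequence $\{\theta_n\}$ along which the limit above exists for $G=\PP(\XXXX)\setminus\KK_l$, while simultaneously $\PPPPPP\{\zeta_{\theta_n}\in\KK_l^c\}\le e^{-r(\theta_n)l}$ eventually; the auxiliary estimate then forces $\inf_{\KK_l^c}I\ge l>\alpha$, so $\{I\le\alpha\}\subset\KK_l$ is tight, and lower semi-continuity combined with Prokhorov's theorem yields compactness. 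To deduce \eqref{kifer3} for a general closed $F$, extract a subsequence realising the limsup on the left-hand side, apply condition~(1) along it to produce compacts $\KK_l$, split $F=(F\cap\KK_l)\cup(F\cap\KK_l^c)$, invoke the compact-set upper bound on the former and exponential tightness on the latter, and let $l\to+\infty$.

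The lower bound \eqref{kifer4} can be established essentially as in Theorem 3.3 of \cite{JNPS18}, since uniqueness of equilibrium, condition~(3), is insensitive to the form of exponential tightness used. The ingredients are a Varadhan-type tilt of $\{\zeta_\theta\}$ by $e^{r(\theta)\langle V,\cdot\rangle}$ for $V\in\VV$, identification of the unique weak limit with $\sigma_V$ via the uniqueness assumption, and approximation of arbitrary points of $\PP(\XXXX)$ using density of $\VV$ restricted to compact subsets of $\XXXX$. The main technical obstacle of the entire argument is the derivation of the auxiliary lower bound for $I$ under condition~(1) alone: one must show that the Legendre dual of $Q$ faithfully encodes the subsequential tail behaviour of $\{\zeta_\theta\}$ without the uniform control provided by an exponential growth hypothesis, and this is the principal novelty to be transplanted from \cite{CLXZ2024}.
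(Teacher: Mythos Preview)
Your high-level plan --- reduce to subsequences, get the compact-set upper bound from Chebyshev, use the estimate from \cite{CLXZ2024} to produce goodness and a covering, then run the \cite{JNPS18} machinery for the lower bound --- is the same as the paper's. But there is a concrete gap in how you invoke the \cite{CLXZ2024} input.

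The inequality you write, $\inf_{\lambda\in G}I(\lambda)\le -\lim_{n}\frac{1}{r(\theta_n)}\log\PPPPPP\{\zeta_{\theta_n}\in G\}$, cannot give the conclusion you draw from it: with $G=\KK_l^c$ and $\PPPPPP\{\zeta_{\theta_n}\in\KK_l^c\}\le e^{-lr(\theta_n)}$ you would only obtain $\inf_{\KK_l^c}I\le$ (something $\ge l$), which is vacuous, not $\inf_{\KK_l^c}I\ge l$. Reversing the sign turns it into an LD lower bound for open sets, which is precisely what one is trying to prove and requires condition~(3); so either way the displayed estimate does not do the job. The actual input from \cite{CLXZ2024} (Corollary~3.4 there, the paper's (kifer6)) is the \emph{pointwise} bound
\[
I(\lambda)\ \ge\ l\Bigl(\lambda(K_{l,m}^c)-\tfrac{1}{m}\Bigr)\qquad\text{for all }\lambda\in\PP(\XXXX),\ l,m\ge1,
\]
where the $K_{l,m}$ are compact subsets of $\XXXX$ (not of $\PP(\XXXX)$) extracted from $\KK_l$ via Prokhorov. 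This does not give $L_\alpha\subset\KK_l$; instead one must build \emph{new} compact sets
$\MMM_{a,l}:=\bigcap_{j\ge l}\bigcap_{m\ge1}\{\lambda:\lambda(K_{j,m}^c)\le a/j+1/m\}$,
check $L_a\subset\MMM_{a,l}$ from the pointwise bound, and verify the exponential estimate for $\MMM_{a,l}^c$ separately from $\bigcap_{j\ge l}\KK_j\subset\MMM_{a,l}$.

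For the lower bound, ``essentially as in Theorem~3.3 of \cite{JNPS18}'' undersells what is needed. That proof \emph{uses} the covering property: one places the near-minimiser $\lambda_\epsilon\in L_a$ inside the compact $\MMM_{a,l}$, metrises weak convergence on $\MMM_{a,l}$ by finitely many $V_k\in\VV$ (this is where density of $\VV$ on compacts of $\XXXX$ enters), finds $\delta>0$ with $f_m^{-1}(B_{\R^m}(x_\epsilon,\delta))\cap\MMM_{a,l}\subset G$, and then subtracts $\PPPPPP\{\zeta_{\theta_n}\in\MMM_{a,l}^c\}$, controlled by the covering estimate with $l$ chosen large relative to $a$. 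In \cite{JNPS18} this covering came for free from the exponential growth hypothesis; here it has to be rebuilt from condition~(1) via the construction above, and that reconstruction is precisely the new content of the proof.
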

	\begin{proof}
		It is clear that if for each subsequence $\{\theta_n\}_{n\ge 1}$ such that $r(\theta_n)\to+\infty$, as $n\to+\infty$, the LDP holds for $\{\zeta_{\theta_n}\}_{n\ge 1}$ with a rate function $I$ independent of the subsequence $\{\theta_n\}_{n\ge 1}$, then the whole family $\{\zeta_\theta\}$ satisfies the LDP with the same rate function $I$. Therefore, let us fix such a subsequence $\{\zeta_{\theta_n}\}_{n\ge 1}$.
		
		\noindent\textit{Step 1: Covering property.} Compared to the proof of Theorem 3.3 in \cite{JNPS18}, the main difficulty in our situation is to establish the following covering property for the level sets of $I$: for any $a\ge0$, there is a sequence of compact sets $\{\MMM_{a,l}\}_{l\ge 1}\subset \PP(\XXXX)$ such that 
		\begin{align}\label{kifer4-1}
			L_a:=\{\lambda\in\PP(\XXXX)|I(\lambda)\le a\}\subset \mcap_{l\ge 1}\MMM_{a,l},
		\end{align}
		and 
		\begin{align}
			\label{kifer5}\limsup_{n\to+\infty}\frac{1}{r(\theta_n)}\log\PPPPPP\{\zeta_{\theta_n}\in \MMM_{a,l}^c\}\le -\frac{l}{2}
		\end{align}
		for any integer $l\ge 1$. This covering property is established by using a crucial estimate derived in \cite{CLXZ2024}. To formulate the estimate, for each integer $l\ge 1$, let $\KK_l$ be the compact set satisfying \eqref{kifer2-1}. As we are considering a sequence of random probability measures, we can assume, up to enlarging the compact set $\KK_l$, that
		\begin{align}\label{kifer5-1}
			\PPPPPP\{\zeta_{\theta_n}\in\KK_l^c\}\le e^{-\frac{l}{2}r(\theta_n)}
		\end{align}
		for all $n\ge 1$. Since $\KK_l$ is compact, for each integer $m\ge 1$, there is a compact set $K_{l,m}$ such that 
		\[\sigma(K_{l,m}^c)\le \frac{1}{m}\]
		for any $\sigma\in\KK_l$. Moreover, in view of Corollary 3.4 in \cite{CLXZ2024}, we have
		\begin{align}\label{kifer6}
			I(\lambda)\ge l\left(\lambda(K_{l,m}^c)-\frac{1}{m}\right)
		\end{align}
        for any $\lambda\in \PP(\XXXX)$ and any $l,m\ge 1$. Let us define
		\[\MMM_{a,l}:=\mcap_{j\ge l}\mcap_{m\ge 1}\left\{\lambda\in\PP(\XXXX)\Big| \lambda(K_{j,m}^c)\le \frac{a}{j}+\frac{1}{m}\right\}.\]
		By the Portmanteau theorem and the Prokhorov theorem, for any $a\ge0$ and any $l\ge 1$, $\MMM_{a,l}$ is a compact subset of $\PP(\XXXX)$. Furthermore, by the definition of $\MMM_{a,l}$ and \eqref{kifer6}, we have 
		\[L_a\subset \MMM_{a,1}= \mcap_{l\ge 1}\MMM_{a,l},\]
        which implies \eqref{kifer4-1}. As for \eqref{kifer5}, let us note that for any $l\ge 1$,
		\[\mcap_{j\ge l}\KK_j\subset\MMM_{a,l}.\]
		This, together with \eqref{kifer5-1}, implies 
		\[\PPPPPP\{\zeta_{\theta_n}\in \MMM_{a,l}^c\}\le \sum_{j\ge l}\PPPPPP\{\zeta_{\theta_n}\in \KK_{j}^c\}\le\sum_{j\ge l}e^{-\frac{j}{2}r(\theta_n)}=\frac{e^{-\frac{l}{2}r(\theta_n)}}{1-e^{-\frac{r(\theta_n)}{2}}}\]
		and therefore \eqref{kifer5} as desired.
		
		\noindent\textit{Step 2: Goodness of $I$ and LD upper bound.} Since $I$ is lower semi-continuous, for any $a\ge 0$, $L_a$ is a closed subset of $\PP(\XXXX)$. Combining this with \eqref{kifer4-1}, we obtain the compactness of $L_a$ and therefore the goodness of the rate function $I$. The LD upper bound \eqref{kifer3} follows in the same way as in \cite{JNPS18}, and the proof is omitted.
		
		\noindent\textit{Step 3: LD lower bound.} The LD lower bound \eqref{kifer4} is established by using the covering property \eqref{kifer4-1} combined with the estimate \eqref{kifer5} and the following lemma established in \cite{JNPS18}, see Proposition 3.4 therein.
		\begin{lemma}\label{lemma-kifer}
			Let $V_1,V_2,\ldots,V_m\in\VV$ and define 
			\begin{align}\label{kifer7}
				f_m:&\PP(\XXXX)\to\R^m
				\notag\\& \lambda\mapsto (\langle V_1,\lambda\rangle,\langle V_2,\lambda\rangle,\ldots,\langle V_m,\lambda\rangle)
			\end{align}
			and $\zeta^m_{\theta_n}:=f_m(\zeta_{\theta_n})$. Then, under the conditions of Proposition~\ref{proposition-kifer}, for any closed set $\FF\subset \R^m$,
			\[\limsup_{n\to+\infty}\frac{1}{r(\theta_n)}\log\PPPPPP\{\zeta^m_{\theta_n}\in \FF\}\le-\inf_{\alpha\in \FF}I_m(\alpha),\]
			and for any open set $\WW\subset \R^m$,
			\[\liminf_{n\to+\infty}\frac{1}{r(\theta_n)}\log\PPPPPP\{\zeta^m_{\theta_n}\in \WW\}\ge-\inf_{\alpha\in \WW}I_m(\alpha),\]
			where $I_m(\alpha):=\inf_{\lambda\in f_m^{-1}(\alpha)} I(\lambda)$.
		\end{lemma}
		Let $G\subset\PP(\XXXX)$ be an open set. Without loss of generality, we assume that $\inf_{\lambda\in G}I(\lambda)<\infty$. Then, for any $\epsilon>0$, there is a measure $\lambda_{\epsilon}\in G$ such that
		\begin{align}\label{kifer7-1}
			I(\lambda_{\epsilon})\le \inf_{\lambda\in G}I(\lambda)+\epsilon.
		\end{align}
		Let us set 
        \begin{align}
            \label{kifer7-2}
            a=\inf_{\lambda\in G}I(\lambda)+\epsilon,\qquad l=[2(a+2)]+1,
        \end{align}
        so we have $\lambda_{\epsilon}\in L_a\subset \MMM_{a,l}$. Moreover, let $\{V_k\}_{k\ge 1}\subset \VV$ be a sequence of functions such that $\|V_k\|_{\infty}=1$ for any $k\ge 1$ and 
		\[d(\lambda_1,\lambda_2):=\sum_{k=1}^{\infty}2^{-k}|\langle V_k,\lambda_1\rangle-\langle V_k,\lambda_2\rangle|\]
		metrizes the topology of weak convergence on $\MMM_{a,l}$\footnote{For any given compact set $\KK\subset \PP(\XXXX)$, the existence of such functions is proved in \cite{JNPS18}.}. As $G$ is open, there is an integer $m\ge 1$ and $\delta>0$ such that if $\lambda\in\MMM_{a,l}$ satisfies 
		\begin{align}\label{kifer8}\sum_{k=1}^{m}2^{-k}|\langle V_k,\lambda\rangle-\langle V_k,\lambda_{\epsilon}\rangle|< \delta,\end{align}
		then $\lambda\in G$. Let us endow $\R^m$ with the norm
		\[\|x\|_m:=\sum_{k=1}^m2^{-k}|x_k|,\]
		where $x=(x_1,x_2,\ldots,x_m)$, and define the map $f_m$ by \eqref{kifer7} with the functions $V_1,V_2,\ldots,V_m$ given in \eqref{kifer8}. Let $x_{\epsilon}:=f_m(\lambda_{\epsilon})$. Under the above notations, we have 
		\[f_m^{-1}(B_{\R^m}(x_{\epsilon},\delta))\mcap \MMM_{a,l}\subset G.\]
		Therefore,
		\begin{align*}
			\PPPPPP\{\zeta_{\theta_n}\in G\}&\ge \PPPPPP\{\zeta_{\theta_n}\in G\mcap \MMM_{a,l}\}\ge \PPPPPP\{\zeta_{\theta_n}\in f_m^{-1}(B_{\R^m}(x_{\epsilon},\delta))\mcap \MMM_{a,l}\}
			\\&\ge \PPPPPP\{\zeta^m_{\theta_n}\in B_{\R^m}(x_{\epsilon},\delta)\}-\PPPPPP\{\zeta_{\theta_n}\in \MMM_{a,l}^c\}.
		\end{align*}
		Utilizing Lemma~\ref{lemma-kifer}, the relations \eqref{kifer5}, \eqref{kifer7-1}, and \eqref{kifer7-2}, there is an integer $n_0\ge 1$ such that if $n\ge n_0$, then
		\begin{align*}
			\PPPPPP\{\zeta^m_{\theta_n}\in B_{\R^m}(x_{\epsilon},\delta)\}&\ge \exp\left(r(\theta_n)\left(-\inf_{\alpha\in B_{\R^m}(x_{\epsilon},\delta)}I_m(\alpha)-1\right)\right)\\&\ge \exp\left(r(\theta_n)\left(-I(\lambda_{\epsilon})-1\right)\right)\\&\ge \exp\left(r(\theta_n)\left(-\frac{l}{2}+1\right)\right)\ge \sqrt{e}\PPPPPP\{\zeta_{\theta_n}\in \MMM_{a,l}^c\},
		\end{align*}
		which implies 
		\[\PPPPPP\{\zeta_{\theta_n}\in G\}\ge \left(1-\frac{1}{\sqrt{e}}\right)\PPPPPP\{\zeta^m_{\theta_n}\in B_{\R^m}(x_{\epsilon},\delta)\}.\]
		This, together with Lemma~\ref{lemma-kifer} and \eqref{kifer7-1} again, leads to
		\begin{align*}
			\liminf_{n\to+\infty}\frac{1}{r(\theta_n)}\log\PPPPPP\{\zeta_{\theta_n}\in G\}&\ge \liminf_{n\to+\infty}\frac{1}{r(\theta_n)}\log\PPPPPP\{\zeta^m_{\theta_n}\in B_{\R^m}(x_{\epsilon},\delta)\}\\
			&\ge-\inf_{\alpha\in B_{\R^m}(x_{\epsilon},\delta)}I_m(\alpha)\ge -I(\lambda_{\epsilon})\ge -\inf_{\lambda\in G}I(\lambda)-\epsilon.
		\end{align*}
		As $\epsilon$ is arbitrary, the LD lower bound \eqref{kifer4} holds. This completes the proof.
	\end{proof}
	\section{Large-time asymptotics 
		of generalized Markov semigroups: improved abstract criterion}
	We present a new version of the abstract criteria given in Theorem A.4 in \cite{MN18} and Theorem 4.1 in \cite{JNPS18}. In these works, sufficient conditions are provided, which ensure a multiplicative ergodic theorem for generalized Markov semigroups. The multiplicative ergodic theorem implies the existence of the pressure function and the uniqueness of equilibrium required in Kifer's criterion. However, as discussed in Section~\ref{scheme}, these criteria do not apply to the weak disspative case considered in the present paper. The difficulty is overcome by using a stronger irreducibility property ensured by the parabolic smoothing effect, cf. (A.14) in \cite{MN18} and \eqref{E0}.
	
	To formulate the result, let us start with some definitions. Let $\XXXX$ be a separable Banach space endowed with the norm $\|\cdot\|$, let $\MM^+(\XXXX)$ be the set of the non-negative Borel measures on $\XXXX$, and let $\{P_t(u,\cdot), u\in\XXXX, t\ge0\}$ be a generalized Markov family of transition kernels, cf. Definition A.3 in \cite{MN18}. We define two associated semigroups by the following relations:
	\begin{align*}
		\PPPP_t&: C_b(\XXXX)\to C_b(\XXXX),&\PPPP_t f(u)&:=\int_{\XXXX} f(v)P_t(u,\dd v),\\
		\PPPP_t^*&:\MM^+(\XXXX)\to\MM^+(\XXXX), &\PPPP^*_t\mu(A)&:=\int_{\XXXX}P_t(u,A)\mu(\dd u).
	\end{align*}
	We assume the following structure for the phase space $\XXXX$: there is an increasing sequence of compact subsets $\{\XXXX_{R}\}_{R=1}^{\infty}$ such that \[\XXXX_{\infty}:=\mcup_{R=1}^{\infty}\XXXX_{R}\]
	is dense in $\XXXX$ and for each $R\ge 1$,
	\begin{align}
		\label{E0-1}\XXXX_{R}\subset \overline{B_\XXXX(R)}.
	\end{align}
	Let us define the following weight function
	\begin{align}
		\label{E0-2}\wwww(u):=\GG(\|u\|)
	\end{align}
	where $\GG:[0,+\infty)\to[1,+\infty)$ is a continuous increasing function satisfying
	\[\lim_{y\to+\infty}\GG(y)=+\infty.\]
	We denote by $C_{\wwww}(\XXXX)$ (respectively, $L^{\infty}_{\wwww}(\XXXX)$) the weighted space of continuous (measurable) functions $f:\XXXX\to\R$ such that the norm
	\[\|f\|_{L^{\infty}_{\wwww}}:=\sup_{u\in \XXXX}\frac{|f(u)|}{\wwww(u)}<\infty.\]
	Let $\PP_{\wwww}(\XXXX)$ be the set of measures $\mu\in\PP(X)$ such that $\langle \wwww,\mu\rangle<\infty$. We say a family $\CC\subset C_b(\XXXX)$ is determining, if any two measures $\mu,\nu\in\MM^+(\XXXX)$, satisfying 
	\[\langle f,\mu\rangle=\langle f,\nu\rangle\]
	for any $f\in \CC$, must coincide. 
	
	We have the following improved version of Theorem 4.1 in \cite{JNPS18}.
	\begin{proposition}\label{propositionE1}
		Let $\{P_k(u,\cdot)|u\in \XXXX,k\in\N\}$ be a generalized Markov family of transition kernels satisfying the following conditions.
		\begin{enumerate}
			\item \textbf{Concentration property.} There is an integer $l_0\ge 1$ such that $P_{l_0}(u,\cdot)$ is concentrated on $\XXXX_{\infty}$ for any $u\in \XXXX$.
			\item \textbf{Growth condition.} There is a weight function $\wwww$ satisfying \eqref{E0-2} and an integer $R_0\ge 1$ such that 
			\begin{align*}\sup_{k\ge 0}\frac{\|\PPPP_k\wwww\|_{L^{\infty}_{\wwww}}}{\|\PPPP_k\mathbf{1}\|_{R_0}}<\infty,
			\end{align*}
			where $\mathbf{1}$ is the function on $\XXXX$ identically equal to $1$, and $\|\cdot\|_R$ denotes the $L^{\infty}$-norm on $\XXXX_{R}$.
			\item \textbf{Uniform irreducibility.} For any integers $\rho,R\ge 1$ and $r>0$, there is an integer $l=l(\rho,R,r)\ge 1$ and a positive number $p=p(\rho,R,r)$ such that 
			\begin{align*}
				P_l(u,B_{\XXXX_{\rho}}(\hat u,r))\ge p, 
			\end{align*}
			where $u\in \overline{B_{\XXXX}(R)}$, $\hat{u}\in \XXXX_{\rho}$, and $B_{\XXXX_{\rho}}(\hat u,r):=\XXXX_{\rho}\mcap B_\XXXX(\hat u,r).$
			\item \textbf{Uniform Feller property.} There is a determining family $\CC\subset C_b(\XXXX)$ and an integer $R_0\ge 1$ such that $\mathbf{1}\in\CC$ and the sequence $\{\|\PPPP_k\mathbf{1}\|^{-1}_R\PPPP_k f\}_{k\ge 0}$ is uniformly equicontinuous on $\XXXX_R$ for any $f\in\CC$ and $R\ge R_0$.
			\item \textbf{Uniform tail probability estimate.} There is an integer $m\ge 1$ such that for any integer $\RR\ge 1$,
			\begin{align}\label{E0-3}\sup_{u\in \overline{B_\XXXX(\RR)}}\int_{\XXXX_R^c}\wwww(v)P_m(u,\dd v)\to0,\end{align}
			as $R\to+\infty$.
		\end{enumerate}
		Then, the following properties hold.
		\begin{enumerate}
			\item There is at most one measure $\mu\in \PP_{\wwww}(\XXXX)$ satisfying the relation
			\[\PPPP^*\mu=c\mu\]
            for some number $c\ge0$, as well as the following tail probability estimate:
			for each positive integer $\rho$ and $r>0$,
			\begin{align}
				\label{E0}
				p^{-1}(\rho,R,r)\GG(R)\int_{\overline{B_\XXXX(R)}^c}\wwww\dd \mu\to0,
			\end{align}
			as $R\to+\infty$. Here, $\GG$ is the function in \eqref{E0-2}.
			\item If such a measure $\mu$ exists, then the corresponding eigenvalue $c$ is positive, the support of $\mu$ coincides with $\XXXX$, and there is a positive function $h\in C_{\wwww}(\XXXX)$ such that 
			\[\langle h,\mu\rangle=1,\qquad\PPPP h=c h.\]
			Moreover, for any $f\in\CC$ and integer $R\ge 1$, we have 
			\begin{align}\label{E1}
				c^{-k}\PPPP_kf\to\langle f,\mu\rangle h
			\end{align}
			in $C_b(\overline{B_\XXXX(R)})\mcap L^1(\XXXX,\mu)$, as $k\to+\infty$.
		\end{enumerate}
	\end{proposition}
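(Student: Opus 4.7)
The plan is to follow the scheme of Theorem~4.1 in \cite{JNPS18} and Theorem~A.4 in \cite{MN18}, with the crucial modification occurring in the uniqueness step: the polynomial moment assumption on the class of eigenmeasures used in those works will be replaced by the tail estimate \eqref{E0}, whose rate $p^{-1}(\rho,R,r)\GG(R)$ is calibrated to match the $R$--dependence of the irreducibility constant.

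\textbf{Existence of the eigenfunction.} Fix $R_0$ as in the growth condition and set $g_k := \|\PPPP_k\mathbf{1}\|_{R_0}^{-1}\PPPP_k\mathbf{1}$. The uniform Feller property gives equicontinuity of $\{g_k\}_{k\ge 0}$ on each $\XXXX_R$ with $R\ge R_0$, while the growth condition yields the uniform bound $\|g_k\|_{L^\infty_\wwww}\lesssim 1$, hence uniform boundedness on every $\XXXX_R$. A diagonal Arzel\`a--Ascoli extraction produces a subsequence $k_n\to\infty$ along which $g_{k_n}$ converges uniformly on $\XXXX_R$ for every $R$, to a limit $h_0\in C(\XXXX_\infty)$. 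Density of $\XXXX_\infty$ combined with the weighted bound extends $h_0$ to a function $h\in C_\wwww(\XXXX)$. Extracting a further subsequence if needed, the ratio $c := \lim_n \|\PPPP_{k_n+1}\mathbf{1}\|_{R_0}/\|\PPPP_{k_n}\mathbf{1}\|_{R_0}$ exists and passing to the limit in $\PPPP g_{k_n} = (\|\PPPP_{k_n+1}\mathbf{1}\|_{R_0}/\|\PPPP_{k_n}\mathbf{1}\|_{R_0})g_{k_n+1}$ gives $\PPPP h = ch$. Positivity of $h$ on $\XXXX$ will follow from uniform irreducibility: for each $u\in\overline{B_\XXXX(R)}$ and $\hat u\in\XXXX_\rho$, the inequality $\PPPP_l h(u)\ge p(\rho,R,r)\inf_{B_{\XXXX_\rho}(\hat u,r)}h$ forbids $h$ from vanishing anywhere.

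\textbf{Convergence \eqref{E1}.} For $f\in\CC$ and $R\ge R_0$, the plan is to show that $\{c^{-k}\PPPP_k f\}$ is Cauchy in $C_b(\XXXX_R)$ by a contraction argument: the uniform Feller property together with the normalization $\|\PPPP_k\mathbf{1}\|_{R_0}$ produces an equicontinuous family, while uniform irreducibility mixes the mass of any two initial conditions in $\overline{B_\XXXX(R)}$ towards the same profile. The limit must be of the form $\Lambda(f)\, h$ for some linear functional $\Lambda:\CC\to\R$; identifying $\Lambda(f)=\langle f,\mu\rangle$ uses the eigenmeasure relation $\PPPP^*\mu = c\mu$ together with the uniform tail probability estimate \eqref{E0-3} to pass integrals against $\mu$ through the limit. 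The extension from $C_b(\overline{B_\XXXX(R)})$ to $L^1(\XXXX,\mu)$ exploits \eqref{E0-3} combined with $\mu\in\PP_\wwww(\XXXX)$.

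\textbf{Uniqueness.} Suppose $\mu_1,\mu_2\in\PP_\wwww(\XXXX)$ are eigenmeasures satisfying \eqref{E0} with eigenvalues $c_1,c_2$. For $f\in\CC$ split
\[c_i^{-k}\langle\PPPP_k f,\mu_i\rangle = \int_{\overline{B_\XXXX(R)}}c_i^{-k}\PPPP_k f\,d\mu_i+\int_{\overline{B_\XXXX(R)}^c}c_i^{-k}\PPPP_k f\,d\mu_i.\]
Since $\overline{B_\XXXX(R)}\cap\XXXX\subset\XXXX_R$ in view of \eqref{E0-1}, the uniform convergence from the previous step handles the first term. For the tail term, the growth condition and eigenvalue relation yield $\|c_i^{-k}\PPPP_k f\|_{L^\infty_\wwww}\lesssim 1$ uniformly in $k$; then \eqref{E0}, with $(\rho,r)$ chosen compatibly with the irreducibility scale used to prove \eqref{E1}, makes the tail arbitrarily small as $R\to\infty$. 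Passing $k\to\infty$ then $R\to\infty$ gives $\langle f,\mu_i\rangle = \langle f,\mu\rangle\langle h,\mu_i\rangle$ where $\mu$ denotes the limit measure extracted in Step~2. Since $\CC$ is determining, $\mu_1$ and $\mu_2$ are proportional, and both being probability measures they coincide; comparing eigenvalues gives $c_1=c_2$.

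The main obstacle is the uniform--in--$k$ tail control in the uniqueness step. Bounding $\|c_i^{-k}\|\PPPP_k f\|_{L^\infty_\wwww}\lesssim 1$ without polynomial moment information requires combining a hitting--time decomposition in the spirit of Proposition~\ref{propositionHER}--Proposition~\ref{propositionGC} with the exact rate $p^{-1}(\rho,R,r)\GG(R)$ appearing in \eqref{E0}. Any mismatch between this rate and the growth of $\|\PPPP_k\|_{\LL(L^\infty_\wwww)}/c^k$ would destroy the argument, so verifying that the irreducibility lower bound and the growth upper bound are of the same order is the delicate point; this is precisely the content of \eqref{E0} and the reason for its unusual weighted form.
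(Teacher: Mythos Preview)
Your overall plan---follow the argument of Theorem~4.1 in \cite{JNPS18} and modify only the uniqueness step via the tail condition \eqref{E0}---is exactly the paper's approach; the paper's proof is in fact just a one-paragraph sketch pointing to this modification.

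However, there is a genuine error in your uniqueness argument. You write ``$\overline{B_\XXXX(R)}\cap\XXXX\subset\XXXX_R$ in view of \eqref{E0-1}'', but \eqref{E0-1} gives the \emph{reverse} inclusion $\XXXX_R\subset\overline{B_\XXXX(R)}$. This is not a typo: it is the real obstacle. The uniform Feller property and the Arzel\`a--Ascoli extraction live on the compact sets $\XXXX_R$, so the convergence you produce in Step~2 is a priori on $\XXXX_R$, not on the (generally non-compact) ball $\overline{B_\XXXX(R)}$. Your splitting of $\langle f,\mu_i\rangle$ over $\overline{B_\XXXX(R)}$ and its complement therefore leaves the first integral unhandled, and the second integral is controlled by \eqref{E0} only if the matching interior estimate is available on all of $\overline{B_\XXXX(R)}$.

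The paper resolves this by exploiting exactly what is new in the hypotheses: the uniform irreducibility here is assumed for starting points $u\in\overline{B_\XXXX(R)}$, not merely $u\in\XXXX_R$ as in \cite{JNPS18}. This stronger assumption lets one derive the key estimate (4.23) of \cite{JNPS18} with $\XXXX_R$ replaced by $\overline{B_\XXXX(R)}$. The contradiction argument of Step~6 then closes because the tail condition \eqref{E0} is an integral over $\overline{B_\XXXX(R)}^c$, which now matches the domain on which (4.23) holds. Your sketch recasts uniqueness as a direct limit rather than a contradiction, which is fine in principle, but the passage from $\XXXX_R$ to $\overline{B_\XXXX(R)}$ must be made explicit and hinges on this stronger irreducibility (together with the concentration property and condition~5 for the extension of \eqref{E1} itself).
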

	\begin{proof}[Sketch of the proof]
		The proof of Theorem 4.1 in \cite{JNPS18} remains applicable to this case, with the only adjustment required in Step 6. More precisely, by utilizing the stronger uniform irreducibility condition, one can derive the estimate (4.23) in \cite{JNPS18} with $\XXXX_R$ replaced by $\overline{B_{\XXXX}(R)}$. This, combined with the new condition \eqref{E0}, leads to the same contradiction.
	\end{proof}
	The following result is an improved version of Theorem A.4 in \cite{MN18}.
	\begin{proposition}\label{propositionE2}
		Let $\{P_t(u,\cdot)|u\in \XXXX,t\ge0\}$ be a generalized Markov family of transition kernels satisfying the following conditions.
		\begin{enumerate}
			\item \textbf{Concentration property.} There is a number $l_0> 0$ such that $P_{t}(u,\cdot)$ is concentrated on $\XXXX_{\infty}$ for any $u\in \XXXX$ and $t\ge l_0$.
			\item \textbf{Growth condition.} There is a weight function $\wwww$ satisfying \eqref{E0-2} and an integer $R_0\ge 1$ such that 
			\begin{align}\sup_{t\ge 0}\frac{\|\PPPP_t\wwww\|_{L^{\infty}_{\wwww}}}{\|\PPPP_t\mathbf{1}\|_{R_0}}<\infty,\label{E4}\\\label{E5}
				\sup_{t\in[0,1]}\|\PPPP_t\mathbf{1}\|_{L^{\infty}(\XXXX)}<\infty,
			\end{align}
			where $\mathbf{1}$ is the function on $\XXXX$ identically equal to $1$, and $\|\cdot\|_R$ denotes the $L^{\infty}$-norm on $\XXXX_{R}$.
			\item \textbf{Time-continuity.} For any function $f\in C_{\wwww}(\XXXX)$ and $u\in \XXXX$, the function $t\mapsto \PPPP_tf(u)$ is continuous from $[0,\infty)$ to $\R$.
			\item \textbf{Dissipativity condition.} For any integer $R\ge 1$ and $T>0$, there is an integer $\RR=\RR(R,T)\ge 1$ such that 
			\begin{align}\label{E2}
				\inf_{t\in[0,T]}\inf_{u\in \overline{B_{\XXXX}(R)}}P_t(u,\overline{B_\XXXX(\RR)})>0.
			\end{align}
			\item \textbf{Uniform irreducibility.} For any integers $\rho,R\ge 1$ and $r>0$, there are positive numbers $l=l(\rho,R,r)$ and $p=p(\rho,R,r)$ such that 
			\begin{align}\label{E3}
				P_l(u,B_{\XXXX_{\rho}}(\hat u,r))\ge p, 
			\end{align}
			where $u\in \overline{B_{\XXXX}(R)}$, $\hat{u}\in \XXXX_{\rho}$, and $B_{\XXXX_{\rho}}(\hat u,r):=\XXXX_{\rho}\mcap B_\XXXX(\hat u,r).$			\item \textbf{Uniform Feller property.} There is a determining family $\CC\subset C_b(\XXXX)$ and an integer $R_0\ge 1$ such that $\mathbf{1}\in\CC$ and the sequence $\{\|\PPPP_t\mathbf{1}\|^{-1}_R\PPPP_t f\}_{t\ge 0}$ is uniformly equicontinuous on $\XXXX_R$ for any $f\in\CC$ and $R\ge R_0$.
			\item \textbf{Uniform tail probability estimate.} There is a number $l_0>0$ such that for any integer $\RR\ge 1$ and $t\ge l_0$,
			\begin{align*}\sup_{u\in \overline{B_\XXXX(\RR)}}\int_{\XXXX_R^c}\wwww(v)P_t(u,\dd v)\to0,\end{align*}
			as $R\to+\infty$.
		\end{enumerate}
		Then, the following properties hold.
		\begin{enumerate}
			\item For any $t>0$, there is at most one measure $\mu_t\in \PP_{\wwww}(\XXXX)$ satisfying 
			\begin{align}
				\label{E7-1}\PPPP_t^*\mu_t=c_t\mu_t
			\end{align}
			and the tail probability estimate \eqref{E0}.
			\item If such a measure $\mu_t$ exists for all $t>0$, then it is independent of $t$ (we thus set $\mu:=\mu_t$), the corresponding eigenvalue $c_t$ is of the form $c_t=c^t, c>0$, the support of $\mu$ coincides with $\XXXX$, and there is a positive function $h\in C_{\wwww}(\XXXX)$ such that 
			\begin{align}\label{E7}
				\langle h,\mu\rangle=1,\qquad\PPPP_t h=c^t h.
			\end{align}
			Moreover, for any $f\in\CC$ and integer $R\ge 1$, we have 
			\begin{align}\label{E8}
				c^{-t}\PPPP_tf\to\langle f,\mu\rangle h
			\end{align}
			in $C_b(\overline{B_\XXXX(R)})\mcap L^1(\XXXX,\mu)$, as $t\to+\infty$.
		\end{enumerate}
	\end{proposition}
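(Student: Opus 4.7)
The plan is to reduce Proposition~\ref{propositionE2} to its discrete-time counterpart Proposition~\ref{propositionE1} by applying the latter, for each fixed $t_0>0$, to the discrete family $\{\PPPP_{nt_0}\}_{n\in\N}$, and then using the semigroup property, the time-continuity condition, and uniqueness to propagate the conclusions to continuous time. The concentration, growth, uniform Feller, and uniform tail hypotheses of Proposition~\ref{propositionE1} follow by restricting the corresponding continuous-time conditions to $t=nt_0$ and choosing $n$ so that $nt_0\ge l_0$; the growth condition additionally uses \eqref{E5} to bound $\PPPP_s\wwww$ for $s\in[0,t_0]$. The only delicate verification is the uniform irreducibility on the grid $t_0\N$: given $\rho,R\ge 1$ and $r>0$, dissipativity (condition~4) with $T=t_0$ yields $\RR=\RR(R,t_0)$ and $\delta>0$ with $P_s(u,\overline{B_\XXXX(\RR)})\ge\delta$ for every $u\in\overline{B_\XXXX(R)}$ and $s\in[0,t_0]$, while irreducibility (condition~5) applied with $(\rho,\RR,r)$ furnishes $\ell>0$ and $p'>0$ with $P_\ell(v,B_{\XXXX_\rho}(\hat u,r))\ge p'$ for $v\in\overline{B_\XXXX(\RR)}$; setting $n_\star=\lceil\ell/t_0\rceil$ and decomposing via Chapman--Kolmogorov,
\begin{align*}
P_{n_\star t_0}(u,B_{\XXXX_\rho}(\hat u,r))\ge\int_{\overline{B_\XXXX(\RR)}} P_\ell(v,B_{\XXXX_\rho}(\hat u,r))\,P_{n_\star t_0-\ell}(u,\dd v)\ge p'\delta,
\end{align*}
since $n_\star t_0-\ell\in[0,t_0]$.

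Proposition~\ref{propositionE1} then delivers, for each $t_0>0$, the uniqueness statement of Proposition~\ref{propositionE2} at time $t_0$, and, under the existence hypothesis, produces $c_{t_0}>0$, a positive $h_{t_0}\in C_\wwww(\XXXX)$ with $\langle h_{t_0},\mu_{t_0}\rangle=1$, $\supp\mu_{t_0}=\XXXX$, $\PPPP_{t_0}h_{t_0}=c_{t_0}h_{t_0}$, together with the discrete convergence $c_{t_0}^{-n}\PPPP_{nt_0}f\to\langle f,\mu_{t_0}\rangle h_{t_0}$ in $C_b(\overline{B_\XXXX(R)})\cap L^1(\mu_{t_0})$ for $f\in\CC,R\ge 1$. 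Independence of $t_0$ is then argued as follows. The identity $\PPPP_{kt_0}^*\mu_{t_0}=c_{t_0}^k\mu_{t_0}$ and uniqueness at time $kt_0$ give $\mu_{kt_0}=\mu_{t_0}$, $c_{kt_0}=c_{t_0}^k$; taking common refinements yields $\mu_t=\mu:=\mu_1$ and $c_t=c^t$ with $c:=c_1$ for every rational $t$. For irrational $t$, choose rationals $t_n\to t$: the bound $|\PPPP_{t_n}f(u)|\le\|f\|_{L^\infty_\wwww}\PPPP_{t_n}\wwww(u)\lesssim\wwww(u)$, uniform in $t_n$ on bounded intervals by \eqref{E4}--\eqref{E5}, combined with the pointwise time-continuity (condition~3), permits dominated convergence in $\int\PPPP_{t_n}f\,\dd\mu=c^{t_n}\int f\,\dd\mu$, yielding $\PPPP_t^*\mu=c^t\mu$; uniqueness at time $t$ then forces $\mu_t=\mu$ and $c_t=c^t$. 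A parallel argument using $\PPPP_s\PPPP_1=\PPPP_1\PPPP_s$ and uniqueness of the normalized eigenfunction of $\PPPP_1$ produces $h:=h_1$ with $\PPPP_sh=c^sh$ for all $s>0$.

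For the continuous-time convergence \eqref{E8}, write $t=n+s$ with $n\in\N$, $s\in[0,1)$ and use the commutativity $\PPPP_n\PPPP_s=\PPPP_s\PPPP_n$ to obtain
\begin{align*}
c^{-t}\PPPP_tf=c^{-s}\PPPP_s(c^{-n}\PPPP_nf).
\end{align*}
The discrete convergence together with $\PPPP_sh=c^sh$ yields the limit in $L^1(\mu)$ via the contraction $\|\PPPP_sg\|_{L^1(\mu)}\le c^s\|g\|_{L^1(\mu)}$ (immediate from $\PPPP_s^*\mu=c^s\mu$); the $C_b(\overline{B_\XXXX(R)})$ convergence is obtained by splitting $\PPPP_sg(u)=\int g\,\dd P_s(u,\cdot)$ according to $v\in\XXXX_{R'}$ versus $v\notin\XXXX_{R'}$, with the uniform Feller condition~6 controlling the former on compacts and the uniform tail condition~7 the latter, as $R'\to\infty$.

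The hardest part is the uniform-irreducibility reduction in Step~1: the continuous-time irreducibility parameter $\ell$ generally lies off the grid $t_0\N$, so the process must be transported to a grid time without losing uniform control on the mass in a given ball. The dissipativity condition~4 is precisely the ingredient tailored for this transport, and its absence in Theorem~4.1 of \cite{JNPS18} is what prevents the classical criterion from covering the weakly dissipative boundary case analyzed in the present paper.
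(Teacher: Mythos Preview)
Your proposal is correct and follows essentially the same route as the paper's proof: reduce to Proposition~\ref{propositionE1} for the discretized family $\{P_{nt_0}\}_{n\in\N}$, use the dissipativity condition~\eqref{E2} together with the continuous-time irreducibility~\eqref{E3} and Chapman--Kolmogorov to obtain irreducibility on the grid, and then pass from rational to arbitrary $t$ via the time-continuity property and~\eqref{E5}. Your treatment of the grid-irreducibility step is in fact more explicit than the paper's, which relegates it to a footnote.

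One small remark: for the identity $h_t=h_1$ on rational times, you invoke commutativity $\PPPP_s\PPPP_1=\PPPP_1\PPPP_s$ together with uniqueness of the normalized positive eigenfunction of $\PPPP_1$. This works, but uniqueness of the eigenfunction is not stated in Proposition~\ref{propositionE1} (though it is implicit in the underlying Perron--Frobenius framework of \cite{JNPS18}). The paper instead takes $f=\mathbf{1}$ in the discrete convergence~\eqref{E1} and observes that for rational $t=p/q$ the sequences $\{kt\}_{k\in\N}$ and $\N$ share a common subsequence, forcing the two limits $h_t$ and $h_1$ to agree; this avoids any appeal to eigenfunction uniqueness. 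Either route is valid. Your side comment that the growth condition ``additionally uses~\eqref{E5}'' is unnecessary for verifying the discrete growth hypothesis---\eqref{E4} at $t=nt_0$ suffices---but~\eqref{E5} is indeed used later for the dominated-convergence passage to irrational $t$, exactly as you describe.
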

	\begin{proof}[Sketch of the proof] Let $t>0$ be an arbitrary number, and consider the family of transition kernels $\{\tilde P_k:=P_{kt},k\in\N\}$. The conditions of Proposition~\ref{propositionE1} are satisfied\footnote{The uniform irreducibility of $\{\tilde P_k\}_{k\ge 0}$ follows from \eqref{E2} and \eqref{E3}, and the others follow directly.} for $\{\tilde P_k(u,\cdot), u\in\XXXX, k\in\N\}$. Thus, the first statement holds.
		
		Suppose that the measure $\mu_t$ exists for all $t>0$, then by using \eqref{E7-1} and the Kolmogorov--Chapman relation, we have 
		\[\mu_t=\mu_1=:\mu,\qquad c_t=c_1^t=:c^t\]
		for any $t$ in the set $\mathbb{Q}_+$ of positive rational numbers. Using the time-continuity property and \eqref{E5}, we get 
		\[\PPPP^*_t\mu=c^t\mu\]
		for any $t>0$. Moreover, it is clear that $\supp \mu=\XXXX$, and there is a positive function $h_t\in C_{\wwww}(\XXXX)$ such that $\langle h_t,\mu\rangle=1$ and 
		\[\PPPP_th_t=c^th_t\]
		and 
		\begin{align}\label{E6}
			c^{-tk}\PPPP_{tk}f\to\langle f,\mu\rangle h_t
		\end{align}
		in $C_b(\overline{B_\XXXX(R)})\mcap L^1(\XXXX,\mu)$, as $k\to+\infty$ for any integer $R\ge 1$ and $f\in\CC$. Taking $f=1$ in \eqref{E6}, we see that $h_t=h_1=:h$ for all $t\in\mathbb{Q}_+$, which implies \eqref{E7} by using the time-continuity property. The derivation of \eqref{E8} follows in a similar way as in \cite{MN18} and is omitted. 
	\end{proof}

	\bibliographystyle{alpha}
	\bibliography{reference}
\end{document}